\documentclass[a4paper,numbers=noenddot, fontsize = 10pt, oneside, final]{article}

\usepackage[margin=2.5cm]{geometry} 

\usepackage{amsmath, bm, mathtools}
\usepackage{amsthm}
\usepackage{amssymb}
\usepackage{soul}

\usepackage{enumitem}

\usepackage[final]{hyperref}
\hypersetup{
	unicode,
	colorlinks=true,
	linkcolor=blue,
	citecolor=red, 
	filecolor=magenta,      
	urlcolor=blue,
	pdfauthor={Author One, Author Two, Author Three}, 
	pdfkeywords={article, template, simple},
	pdfproducer={LaTeX}
}

\theoremstyle{plain}
\newtheorem{theorem}{Theorem}

\newtheorem{lemma}[theorem]{Lemma}

\newtheorem{definition}[theorem]{Definition}
\theoremstyle{definition}

\newtheorem{remark}[theorem]{Remark}

\usepackage{graphicx, color}
\graphicspath{{fig/}}

\usepackage{mathrsfs} 

\usepackage{lipsum}

\title{Existence of Weak Solutions to a Cahn--Hilliard--Biot System}
\author{Helmut Abels\thanks{Fakultät für Mathematik, Universität Regensburg, 93040 Regensburg, Germany, email: \href{mailto:helmut.abels@ur.de}{helmut.abels@ur.de},}\and 
Harald Garcke\thanks{Fakultät für Mathematik, Universität Regensburg, 93040 Regensburg, Germany, email: \href{mailto:harald.garcke@ur.de}{harald.garcke@ur.de},} 
\and Jonas Haselböck \thanks{Fakultät für Mathematik, Universität Regensburg, 93040 Regensburg, Germany, email:
		\href{mailto:jonas.haselboeck@ur.de}{jonas.haselboeck@ur.de} } }

\date{}

\newcommand\numberthis{\addtocounter{equation}{1}\tag{\theequation}}
\newcommand{\norm}[1]{ \|   #1 \|  }

\newcommand{\abs}[1]{ |    #1 |}
\newcommand{\N}{\mathbb{N}}
\newcommand{\inN}{\in \N}
\newcommand{\OT}{\Omega_T}
\newcommand{\Ot}{\Omega_t}
\newcommand{\vphi}{\varphi}
\newcommand{\vepsilon}{\varepsilon}
\newcommand{\Tau}{\mathcal{T}}
\newcommand{\pt}{\partial_t}

\newcommand{\Wp}{W_{,\vphi}}
\newcommand{\E}{\mathcal{E}}
\newcommand{\WE}{W_{,\E}}
\newcommand{\F}{\mathcal{F}}

\newcommand{\R}{\mathbb{R}}
\newcommand{\bu}{\bm{u}}

\newcommand{\dx}{\, d\mathbf{x}}
\newcommand{\dy}{\, d\mathbf{y}}

\newcommand{\dt}{\, dt}
\newcommand{\dtx}{ \, d(t, \bm{x})}
\newcommand{\dH}{\, d\mathcal{H}^{n-1}}
\newcommand{\vr}{\varrho}

\newcommand{\C}{\mathbb{C}}

\DeclareMathOperator\supp{supp}
\DeclareMathOperator\spn{span}

\newcommand{\new}[1]{{#1}}

\def\Xint#1{\mathchoice
	{\XXint\displaystyle\textstyle{#1}}%
	{\XXint\textstyle\scriptstyle{#1}}%
	{\XXint\scriptstyle\scriptscriptstyle{#1}}%
	{\XXint\scriptscriptstyle\scriptscriptstyle{#1}}%
	\!\int}
\def\XXint#1#2#3{{\setbox0=\hbox{$#1{#2#3}{\int}$ }
		\vcenter{\hbox{$#2#3$ }}\kern-.6\wd0}}

\def\dashint{\Xint-}

\begin{document}
	
	\maketitle
	
	\begin{abstract}
		We prove existence of weak solutions to a diffuse interface model describing the flow of a fluid through a deformable porous medium consisting of two phases. 
		The system non-linearly couples Biot's equations for poroelasticity, including phase-field dependent material properties, with the Cahn--Hilliard equation to model the evolution of the solid, and is further augmented by a visco-elastic regularization \new{of Kelvin--Voigt type.}
		To obtain this result, we approximate the problem in two steps, where first a semi-Galerkin ansatz is employed to show existence of weak solutions to regularized systems, for which later on compactness arguments allow limit passage. Notably, we also establish a maximal regularity theory for linear visco-elastic problems. 
	\end{abstract}
	
\noindent
\textbf{Key words:} Cahn--Hilliard equation, Biot’s equations, poroelasticity, existence analysis, mixed boundary conditions, maximal regularity\\ 
\textbf{AMS-Classification:} 35A01, 35D30, 35K41, 74B10


\section{Introduction}
\label{sec:intro}

Interactions of fluid flow, elastic effects and phase-separation can be observed in diverse natural, biological and mechanical situations with many practical applications of relevance. 
Due to the intricate interplay between several physical laws and thermodynamic relations, accurate mathematical models of such are often challenging and require subtle techniques to analyze. 
A prime example is the flow of a fluid through a deformable porous medium consisting of different phases with distinct properties, as exhibited in, e.g., biogrout processes and tumor growth. 
\par 
This paper is concerned with the analysis of a diffuse interface, Cahn--Hilliard--Biot model recently proposed on this topic by Storvik et al. \cite{STORVIK2022107799} and aims to establish the existence of weak solutions under general assumptions on material parameters, boundary conditions and source terms. While the quasi-static Biot equations are a standard model to describe single-phase flow through porous materials subject to linear elasticity, the coupling to the Cahn--Hilliard equation allows for the inclusion of phase changes in the solid, where the resulting elastic deformations, changing material properties and fluid pressure all mutually affect each other. This leads to a three-way coupled system of parabolic-elliptic type featuring several nonlinearities. Moreover, we account for \new{viscous effects} by augmenting the equation for linear elasticity with a visco-elastic term that has a dampening effect on the system and allows us to deduce higher regularity. 
The analysis is further aided by an underlying generalized gradient flow structure, cf. \cite{STORVIK2022107799}, which is crucial for the derivation of \textit{a priori} estimates. We note that the visco-elastic term is purely dissipative and therefore retains this property. 
\par
The Cahn--Hilliard equation was originally proposed in \cite{cahn1958free} as a model for phase separation in nonuniform, binary alloys and has been studied extensively over the past decades, cf.~e.g.~\cite{blowey1991cahn, cahn1996cahn, elliot_garcke00}. The coupling of this system with linear elasticity goes back to Cahn and Larché \cite{larcht1982effect} and Onuki \cite{onuki1989ginzburg}, for which analytical results can be found in \cite{bonetti2002model,MR1807441, garcke_2003, GARCKE2005165}. Moreover, there are various extensions of these models to describe different phenomena. In particular, for well-posedness results which also incorporate flow fields, we refer to \cite{ebenbeck2019analysis, colli2017asymptotic, garcke16darcy, lowengrub2013analysis, MR4126782}. 
\par 
Poroelasticity, as proposed by Biot \cite{biot1941general}, combines Darcy's law for fluid flow within a saturated porous medium with elastic effects and has many applications ranging from petroleum engineering to biological tissues. Fundamental analytical results on these equations can be found in \cite{auriault1980dynamic,vzenivsek1984existence, SHOWALTER2000310}, while more recent works include \cite{ bociu2016analysis,bociu2021multilayered, van2023mathematical}. We particularly refer to \cite{bociu2023mathematical} and the references cited therein. 
\new{Moreover, results that include nonlinear relations in the Biot model can be found in \cite{MR1876882, MR2102316, MR3794344, MR4649995}.}
\par
As mentioned earlier, we include a visco-elastic regularization in the equation for linear elasticity, which is not only an established approach in nonlinear poroelasticity, cf. e.g. \cite{SHOWALTER2000310,  bociu2016analysis, both21global}, but can also be justified in view of physical applications, as we observe both elastic and visco-elastic behavior in biological tissues, cf. \cite{bociu2023mathematical, mow1980biphasic, sacco2019comprehensive}. Moreover, poro-visco-elasticity was already discussed by Biot in \cite{biot1956theory} and a chapter in Coussy's book \cite{coussy2004poromechanics} is dedicated to this subject. 
\par
The principle unknowns we consider in this work are the phase-field variable $\vphi$ with an associated chemical potential $\mu$, the displacement $\bm{u}$ of the solid due to (visco-)elastic deformations and the volumetric fluid content $\theta$ along with the pore pressure $p$. The full system, including a detailed derivation, can be found in Section \ref{sec:derivation}. \\ 
The main difficulties in the analysis arise due to the nonlinear coupling of the Cahn--Hilliard equation with linear elasticity and fluid flow. As we will discuss later on, the equation for the chemical potential $\mu$ includes quadratic dependencies on the displacement $\bm{u}$ and the volumetric fluid-content $\theta$. We also allow for phase-field depended material properties, mixed boundary conditions on the displacement $\bm{u}$, body forces and external loading, as well as general source terms, which will further complicate the analysis.  
\par 
Our strategy can be briefly summarized as follows: We will first consider a regularized system, featuring an additional bi-Laplacian of $\vphi$ in the equation for $\mu$ along with an additional Laplacian of $\theta$ in the equation for $\mu$, and employ a semi-Galerkin scheme to show existence of weak solutions. This includes splitting the equations into two subsystems and solving these separately in a first step utilizing both ODE methods and maximal regularity. Finding an approximate solution to the whole system is then reduced to a fixed-point problem, which we solve by using the Leray--Schauder principle. With the help of the Aubin--Lions--Simon theorem (in the version with translation), we can deduce strong convergence for the pressure $p$, which in turn yields the strong convergences of $\bm{u}$. 
Finally, we pass to the limit in which the previously added regularization vanishes. \par 
At this point, it is important to point out that a Faedo--Galerkin ansatz should also suffice for the existence proof. 
Our approach, however, exploits the evolutionary structure of the visco-elastic equation, which allows us to consider the problem in the framework of maximal regularity and establish well-posedness results, while also providing useful tools for further investigations regarding higher regularity and the existence of strong solutions. 
\par 
\medskip 
We would like to discuss two other recently published, independent papers \cite{fritz2023wellposedness, riethmüller2023wellposedness}, which also study the Cahn--Hilliard--Biot system we present here. While both discuss well-posedness of these equations, \cite{fritz2023wellposedness} also includes numerical experiments and highlights its application in tumor growth. A study of solution strategies can also be found in \cite{storvik2024sequential} and a structure-preserving approximation is the subject of \cite{brunk2024structurepreservingapproximationcahnhilliardbiot}. Since the present work is merely concerned with existence of weak solutions, the following comparison of results is limited to this aspect. \par 
In the paper by Fritz \cite{fritz2023wellposedness}, the underlying assumptions are very general and permit all material parameters and source terms to depend on the phase-field $\vphi$. Moreover, he considers mixed boundary conditions for the displacement $\bm{u}$, albeit restricted to the homogeneous case, and forgoes any regularization in the form of visco-elastic terms. To obtain existence of solutions, he uses a Faedo--Galerkin ansatz and derives \textit{a priori} estimates from energy methods, exploiting the gradient flow structure. However, the treatment of the nonlinearities in the equation \new{defining the} chemical potential \new{contain erroneous arguments for the limit passage from the discrete to the continuous problem}. In particular, there are quadratic dependencies \new{on the gradient of the displacement $\bm{u}$ and the volumetric fluid content $\theta$} but the author \new{merely} discusses weak convergence for the involved functions. \par 
On the other hand, Riethmüller et al.~\cite{riethmüller2023wellposedness} enhance the Cahn--Hilliard--Biot model with the regularizing, visco-elastic term $\nabla \cdot (\eta \pt (\nabla \cdot \bm{u}))$, which is a slightly weaker assumption compared to $\nabla \cdot (\C_\nu(\vphi) \E(\pt \bm{u}))$, controlling the full $H^1$-norm, we use. We note that they also mention general laws of our form, but only consider homogeneous visco-elastic properties. We further remark that while some of our arguments would not work in their visco-elastic setting due to missing ellipticity (cf. Theorem \ref{thm:elliptic_sobolev}), it is easy to see that the main arguments yielding the necessary convergence properties in our study also work in their setting. \\ 
Moreover, unlike our analysis, the source terms in \cite{riethmüller2023wellposedness} are assumed to be independent of the evolution, and homogeneous Dirichlet boundary conditions are prescribed for the displacement $\bm{u}$. Our results further differ in the notion of weak solutions, where we use a weak identity for the pressure $p$, while the formulation of Riethmüller et al. exploits the fluid flux. \\ 
Most significant is the dependency of the Biot--Willis coefficient $\alpha$ and the compressibility $M$ on the phase-field $\vphi$ in our model, whereas $\alpha, M$ are assumed to be constant by the other authors. This relaxation is also discussed in their work, cf. \cite[Rem.~4, Rem.~5, §5]{riethmüller2023wellposedness}. Taking advantage of the weaker assumption, a Faedo--Galerkin ansatz is employed by Riethmüller et al.~to show the existence of weak solutions, where the strong compactness result for the projected pressure $\pi_k$ in \cite[Lem.~12]{riethmüller2023wellposedness} follows with the help of the Aubin--Lions--Simon theorem. However, they proceed by trying to establish the convergence $\norm{p_k - \pi_k} \rightarrow 0$ of the difference between the pressure $p_k$ and its projection, which would yield the crucial strong convergence of $p_k$. To this end, they use that the orthogonal projection converges to the identity in operator-norm, see \cite[Lem.~12]{riethmüller2023wellposedness}, which does not hold. \new{Hence, the compactness arguments are insufficient, rendering the proof in the current version of \cite{riethmüller2023wellposedness} incomplete.}\\ 
While we encounter a similar problem in Lemma \ref{lem:p_strong}, we exploit the additional regularity of $\vphi$, originating from the regularization, which allows us to apply another version of the Aubin--Lions--Simon theorem, utilizing translations in time instead of derivatives, and consider these in a weaker space, thus obtaining the strong convergence of $p$. We further point out that at this point the regularization is necessary since the projection $\Pi_k^y$ is not regular enough due to the mixed boundary conditions. \\ 
\indent
\new{Before concluding the discussion of related literature, let us mention that the results on uniqueness and continuous dependence in both \cite{fritz2023wellposedness} and \cite{riethmüller2023wellposedness} are limited to quite restrictive cases and assume $M, \kappa, \alpha, M$ and $\C$ to be constant. Moreover, there are also further restrictions to $\psi, \Tau$ and in \cite{riethmüller2023wellposedness} it is assumed that the partial visco-elastic regularization vanishes. Since, in general, uniqueness even for the Cahn--Hilliard equation with variable mobility is still an open problem, better results seem to be out of reach for now. While some results concerning weak-strong uniqueness for systems which include the Cahn--Hilliard equation are known,  e.g. \cite{juengelweak,han2014existence}, the complexity of the system at hand adds many difficulties to finding an appropriate relative entropy functional and could be subject of future investigations. 
}\\ 
\indent
\new{A main novelty of our work is that we use maximal regularity theory to obtain an existence result under very general assumptions on the constitutive relationship between material properties and coefficient functions, while also allowing for the physically more relevant mixed boundary conditions.} 
\indent
\par 
\medskip 
This paper is structured as follows. We start in Section \ref{sec:derivation} with the derivation of the system as a generalized gradient flow, especially focusing on the constitutive relations, balance laws and \new{Kelvin--Voigt visco-elastictiy}. This is followed by the precise formulation of the required assumptions and our main theorem in Section~\ref{sec:assumptions}. Before starting with the analysis, Section~\ref{sec:preliminaries} introduces notation and the principle spaces for our solutions along with relevant results on elliptic regularity theory. Moreover, we recall an important theorem on maximal $L^p$-regularity of non-autonomous abstract Cauchy-problems, for which an additional bound is shown under suitable assumptions. \par 
The main part of this article starts in Section \ref{sec:regularized_approx}, where the Cahn--Hilliard--Biot system is enhanced with regularizing terms and a semi-Galerkin ansatz is employed to show existence of weak solutions. More precisely, we use ODE-methods to find solutions of one subsystem in linear subspaces, while maximal regularity theory yields solvability on the whole space for the other. After reducing the problem to a quasi-linear partial differential equation, a fixed-point argument exploiting the Leray--Schauder principle establishes the existence of consolidated approximate solutions. We proceed with the derivation of \textit{a priori} estimates, relying on the generalized gradient flow structure, and deduce first compactness properties. Next, we derive additional strong convergence for $p$, which follows immediately with the help of a version of the Aubin--Lions--Simon theorem once we have found an estimate for the differences of time-translations of $p$ in the space $W^{-3, 2}(\Omega)$. This allows us to also deduce strong convergence for $\bm{u}$. 
\par 
It is in Section \ref{sec:existence} that we obtain a solution to our original problem as the limit of weak solutions to the regularized problems. Without being restricted to a projection identity for the pressure, strong convergence for $p$ can be derived without the aid of a more regular $\vphi$. Finally, it remains to show strong convergence of $\theta$ without relying on the regularization, for which we exploit that the operators $(- \vr \Delta + 1)^{-1}$ pointwise converge to the identity in $L^2(\Omega)$ as $\vr \searrow 0$.

\section{Derivation of the system} \label{sec:derivation}

In this section, we carefully introduce all principle variables and derive the full Cahn--Hilliard--Biot system under investigation. In particular, we highlight the underlying balance laws, thermodynamic relations and constitutive assumptions, especially stressing the generalized gradient flow structure and inclusion of visco-elastic effects.  

\subsubsection*{Free energy}
The total energy of the system can be split into the following three parts: 
\begin{equation*}
	\F(\vphi, \bm{u}, \theta) = \F_{\textnormal{gl}}(\vphi) + \F_{\textnormal{el}}(\vphi, \bm{u}) + \F_{\textnormal{f}}(\vphi, \bm{u}, \theta),
\end{equation*}
where $\vphi$ is a phase-field variable acting as an order parameter of the diffuse interface, $\bm{u}$ is the displacement of the solid with respect to a reference configuration and $\theta$ is the volumetric fluid content. \par   
Here, $\F_{\textnormal{gl}}$ is the Ginzburg-Landau interfacial energy defined by 
\begin{equation*}
	 \F_{\textnormal{gl}}(\vphi) \coloneqq \int_\Omega \frac{\vepsilon}{2} | \nabla \vphi|^2 + \frac{1}{\vepsilon} \psi(\vphi) \dx, 
\end{equation*}
which penalizes rapid changes of concentration through the first term and deviations from the pure phases through the second one. Note that the small parameter $\vepsilon > 0$ is proportional to the thickness of the interface between the phases. A typical choice for the free energy density $\psi$ is
\begin{equation*}
	\psi(\vphi) \coloneqq \alpha (1- \vphi^2)^2, \quad \alpha \in \R_{>0}, 
\end{equation*}
featuring a double-well shape with global minima at $\psi(-1) = \psi(1) = 0$. Other examples include the so-called logarithmic potential, which becomes singular as $|\vphi| \rightarrow 1$ and was already considered by Cahn and Hilliard in the original derivation \cite{cahn1958free}, as well as the obstacle potential, see \cite{PhysRevA.38.434, blowey1991cahn}, as its pointwise limit.\\
\new{
For the promising application of the Cahn--Hilliard--Biot system in tumor growth simulations, this diffuse interface approach allows us to model malignant and healthy tissue by associating them with the values $ \pm 1$ of the order parameter $\vphi$, respectively. Moreover, the system is also fit to account for interactions between the elastic displacement in the cellular matrices and the interstitial fluid flow and pressure, which will be discussed below. 
} 
\par 
\medskip  
The second contribution to the energy is due to inherent elastic effects and takes the form 
\begin{equation*}
	\F_{\textnormal{el}}(\vphi, \bm{u}) \coloneqq \int_\Omega \C(\vphi) (\E(\bm{u})- \Tau(\vphi)):  (\E(\bm{u}) - \Tau(\vphi)) \dx , 
\end{equation*}
where $\E(\bm{u}) = \frac{1}{2} (\nabla \bm{u} + \nabla \bm{u}^{T})$ denotes the symmetrized gradient of the displacement $\bm{u}$, corresponding to the linearized strain tensor under the assumption of infinitesimal deformations. The function $\Tau$ is a suitable interpolation of the eigenstrains of the pure phases, i.e., the strain the material would attain if it was uniform in the phases associated with the values $\vphi = \pm 1$ and unstressed. Moreover, we denote by $\C(\vphi)$ the elasticity tensor, characterizing the stiffness of the material depending on the order parameter $\vphi$. 
\par 
\medskip 
Lastly, the fluid energy $\F_{\textnormal{f}}$ is given by 
\begin{equation*}
	\F_{\textnormal{f}}(\vphi, \bm{u}, \theta) \coloneqq \int_\Omega \frac{M(\vphi)}{2} (\theta - \alpha(\vphi) \nabla \cdot \bm{u})^2 \dx, 
\end{equation*}
where the compressibility $M(\vphi)$ and the pressure-deformation coupling coefficient $\alpha(\vphi)$, usually referred to as Biot--Willis coefficient, cf. \cite{biot1957elastic, SHOWALTER2000310},  are functions of the phase-field. \\
\new{Note that in order to exploit the gradient-flow structure of the system, this study assumes uniform positivity for the compressibility coefficient $M$, i.e., $M > c > 0$; since the fluid content is usually given as $\theta = c_0 p + \alpha \nabla \cdot \bm{u}$, other treatments of the Biot equation also consider the degenerate case $c_0 = 0$, cf. e.g. \cite{bociu2023mathematical, SHOWALTER2000310} and the references cited therein. 
}
\subsubsection*{Balance laws}

We assume the evolution of the phase field to adhere to 
\begin{equation*}
	\pt \vphi + \nabla \cdot \bm{J} = R, 
\end{equation*}
where $\bm{J}$ is the phase-field flux and $R$ is a reaction term. Denoting by $\bm{q}$ the fluid flux and by $S_f$ some source term, we impose the following volume balance law on the fluid
\begin{equation*}
	\pt \theta + \nabla \cdot \bm{q} = S_f. 
\end{equation*}
With the assumption that the mechanical equilibrium is attained at a much faster time scale than the diffusion processes take place, we obtain the quasi-static law 
\begin{equation*}
	- \nabla \cdot \bm{\sigma} = \bm{f}, 
\end{equation*}
where $\bm{\sigma}$ is the stress tensor, describing the balance of elastic forces within the medium and external body forces $\bm{f}$.

\subsubsection*{Thermodynamic relations}

We start by defining the chemical potential as the derivation of the total energy with respect to the phase-field variable $\vphi$
\begin{equation*}
	\mu = \delta_{\vphi} \F(\vphi, \bm{u}, \theta) =  
	\delta_{\vphi} \F_{\textnormal{gl}}(\vphi) 
	+  \delta_{\vphi}  \F_{\textnormal{el}}(\vphi, \bm{u}) 
	+ \delta_{\vphi} \F_{\textnormal{f}}(\vphi, \bm{u}, \theta) 
\end{equation*}
where 
\begin{align*}
	\delta_{\vphi} \F_{\textnormal{gl}}(\vphi) &= - \vepsilon \Delta \vphi + \psi'(\vphi), \\ 
	\delta_{\vphi} \F_{\textnormal{el}}(\vphi, \bm{u}) &= \frac{1}{2} \C'(\vphi)(\E(\bm{u}) - \Tau(\vphi)): \E(\bm{u} - \Tau(\vphi)  ) -  \C(\vphi)(\E(\bm{u}) - \Tau(\vphi)): \Tau'(\vphi),  \\ 
	\delta_{\vphi} \F_{\textnormal{f}}(\vphi, \bm{u}, \theta) &= \frac{M'(\vphi)}{2} (\theta - \alpha(\vphi) \nabla \cdot \bm{u})^2 - M(\vphi)(\theta - \alpha(\vphi) \nabla \cdot \bm{u})\alpha'(\vphi)\nabla \cdot \bm{u}. 
\end{align*}
\par 
\medskip 
Assuming the solid material to exhibit Kelvin--Voigt type visco-elastic behavior, we can decompose the total stress $\bm{\sigma}$ into the elastic stress $\bm{\sigma}_{e}$ and a visco-elastic part $\bm{\sigma}_v$, where the former is defined by
\begin{equation*}
	\bm{\sigma}_{e} = \delta_{\bm{u}} \F(\vphi, \bm{u}, \theta) =  
	\delta_{\bm{u}}  \F_{\textnormal{el}}(\vphi, \bm{u})
	+ \delta_{\bm{u}} \F_{\textnormal{f}}(\vphi, \bm{u}, \theta) 
\end{equation*}
with 
\begin{align*}
	\delta_{\bm{u}} \F_{\textnormal{el}}(\vphi, \bm{u}) &= \C(\vphi) (\E(\bm{u} - \Tau(\vphi))),  \\ 
	\delta_{\bm{u}} \F_{\textnormal{f}}(\vphi, \bm{u}, \theta) &= M(\vphi) \alpha (\vphi) (\theta - \alpha (\vphi) \nabla \cdot \bm{u}) \bm{I}. 
\end{align*}
Denoting by $\C_{\nu}(\vphi)$ the phase-field dependent modulus of visco-elasticity, the viscous contribution is given as 
\begin{equation*}
	\bm{\sigma}_v = \C_{\nu}(\vphi) \E( \pt \bm{u}). 
\end{equation*}
For the inclusion of viscous effects in this manner, we to refer to \cite[Sec.~2.1]{BOCIU2023127462} and the references cited therein. These ideas are also exploited in \cite{bociu2016analysis} for the analysis of a nonlinear poro-visco-elastic model. Moreover, similar assumptions are made in \cite[Sec. 4]{both2019gradient}, with the difference that the authors distinguish between the standard elastic strain and stored visco-elastic energy.  \par 
\medskip 
Lastly, the pore pressure is given as the derivation of the total energy with respect to volumetric fluid content 
\begin{equation*}
	p  = \delta_{\theta} \F_{\textnormal{f}}(\vphi, \bm{u}, \theta)  = M(\vphi) (\theta - \alpha(\vphi) \nabla \cdot \bm{u}). 
\end{equation*}

\subsubsection*{Constitutive equations}
We assume the phase-field flux to be given by
\begin{equation*}
	\bm{J} = - m(\vphi) \nabla \mu
\end{equation*}
according to Fick's law. Moreover, the fluid flux is required to obey Darcy's law, i.e., 
\begin{equation*}
	\bm{q} = - \kappa(\vphi) \nabla p.
\end{equation*}
Here, the mobility $m(\vphi)$ and the permeability $\kappa(\vphi)$ are both functions of the phase-field. 
\par 
\medskip 

\subsubsection*{Gradient flow structure}
The system at hand was derived by Storvik et al. in \cite{STORVIK2022107799} as a generalized gradient flow of the energy 
	\begin{align}
		\F (\vphi, {\bm{u} }, {\theta} ) 
		= \int_\Omega \frac{\vepsilon}{2} |\nabla \vphi|^2 + \frac{1}{\vepsilon} \psi(\vphi) \dx 
		+ \int_\Omega W(\vphi, \E(\bm{u})) \dx 
		+ \int_\Omega  \frac{M(\vphi)}{2} (\theta - \alpha(\vphi) \nabla \cdot \bm{u})^2 \dx, 
	\end{align}
	modeling the flow of a fluid through a deformable porous media consisting of multiple phases. We wish to point out that they did not include the linear visco-elastic term $\nabla \cdot ( \C_\nu \E(\pt \bm{u}))$, which is purely dissipative and therefore preserves the generalized gradient flow structure.\\ 
	\begin{subequations} 
	This work investigates the following system of partial differential equations: 
	\begin{align}
		\partial_t \vphi  &= \nabla \cdot \left( m(\vphi ) \nabla \mu \right) + R(\vphi, \E(\bu), \theta)  \label{eq:strong_formulation_phi} & & \textrm{in }  (0, T) \times \Omega ,\\
		&\begin{aligned}[b]
			\! \! \! \mu = -\vepsilon \Delta \vphi + \frac{1}{\vepsilon} \psi'(\vphi) + W_{,\vphi} (\vphi, \E(\bu)) &-M(\vphi) (\theta - \alpha(\vphi) \nabla \cdot \bm{u}) \alpha'(\vphi) \nabla \cdot \bm{u}  \\ 
			& +  \frac{M'(\vphi)}{2} (\theta - \alpha(\vphi) \nabla \cdot	 \bm{u})^2
		\end{aligned} & & \textrm{in } (0, T) \times \Omega , \label{eq:strong_formulation_mu} \\ 
		 \nabla \cdot \bm{\sigma} &= \bm{f} & & \textrm{in } (0, T) \times \Omega , \label{eq:strong_formulation_u} \\ 
		\bm{\sigma} &= W_{,\E} (\vphi, \E(\bu))  +\C_{\nu}(\vphi)  \E( \partial_t \bm{u})  - \alpha (\vphi) M(\vphi) (\theta - \alpha(\vphi) \nabla \cdot \bm{u})\, \bm{I} & & \textrm{in } (0, T) \times \Omega , \\ 
		\partial_t \theta &=  \nabla \cdot (\kappa(\vphi) \nabla p) + S_f(\vphi, \E(\bu), \theta) 	& & \textrm{in } (0, T) \times \Omega , \label{eq:strong_formulation_theta}\\   
		p& = M(\vphi) (\theta - \alpha (\vphi) \nabla \cdot \bm{u}) & & \textrm{in }  (0, T) \times \Omega \label{eq:strong_formulation_p}
	\end{align}
	together with the boundary conditions
	\begin{align}
		\nabla \vphi \cdot \bm{n} &= 0  \quad  \textrm{and} \quad 
		\nabla \mu \cdot \bm{n} = 0  \quad  &\textrm{on }  (0, T) \times \partial \Omega, \\ 
		\bu &= \bm{0}   \quad \textrm{and} \quad 
		 p = 0 \quad   &\textrm{on }  (0, T) \times \Gamma_D,\\
		 \bm{\sigma}\,  \bm{n} &= \bm{g}  \quad  &\textrm{on }  (0, T) \times \Gamma_N, \\ 
		\nabla p \cdot \bm{n} &= 0 \quad  &\textrm{on }  (0, T) \times \Gamma_N
	\end{align}
	and the initial conditions 
	\begin{align}
		\vphi (0) = \vphi_0  \quad  \textrm{and} \quad
		\bm{u}(0) = \bm{u}_0 \quad  \textrm{and} \quad 
		\theta(0) = \theta_0  \quad  \textrm{in } \Omega . 
	\end{align}
	Note that here, we split the boundary in a Dirichlet part $\Gamma_D$ and a Neumann part $\Gamma_N$.  \\ 
		For sufficiently smooth solutions, we obtain the following energy dissipation differential inequality 
	\begin{align*}
		\frac{d}{dt} \mathcal{F}(\vphi, \bm{u}, \theta) 
		+ \norm{\pt \bm{u}}_{\bm{H}^1}^2 
		&+  \norm{\nabla \mu}_{L^2}^2 
		+  \norm{\nabla p}_{L^2}^2\\ 
		& \leq 
		C\left( \int_\Omega R(\vphi, \E(\bm{u}), \theta) \mu \dx  +\norm{S_f(\vphi, \E(\bm{u}), \theta)}_{L^2}^2 +  \norm{\bm{f}}_{\bm{L}^2}^2 +   \norm{\bm{g}}_{\bm{L}^2(\Gamma_N)}^2 \right). 
	\end{align*}  
	Note that since we can only use the Poincarè--Wirtinger inequality for $\mu$, in order to obtain an estimate for the term $\int_\Omega R(\vphi, \E(\bm{u}))\mu \dx$, we first have to estimate the mean value $\dashint \mu  \coloneqq \dashint_\Omega \mu \dx$ . 
	
	\new{
	\begin{remark}
		\begin{enumerate}[nosep, label*=(\roman*)]
			\item Note that in contrast to the partial visco-elastic regularization $\nabla \cdot (\eta \pt (\nabla \cdot \bm{u}))$ employed by Riethmüller et al. in \cite{riethmüller2023wellposedness}, which only accounts for volumetric effects, we exploit full Kelvin--Voigt visco-elasticity via $\nabla \cdot (\C_{\nu}(\vphi)  \E( \partial_t \bm{u}))$, utilizing the full stain rate tensor. Though this assumption is slightly stronger, we obtain the fully parabolic evolution equation \eqref{eq:strong_formulation_u} for the displacement $\bm{u}$, giving us the chance to apply maximal regularity theory and set up helpful tools for further investigations regarding the existence of strong solutions. \par 
			Moreover, we stress that visco-elastic effects are physically meaningful and do not only serve as a mathematical regularization. \\  
			Lastly, let us point out that full visco-elasticity is sorely needed to deduce invertibility and boundedness of the operators $ \bm{v} \mapsto \int_\Omega \C_{\nu}(\vphi ) \E(\bm{v}): \E(\cdot) \dx$ in suitable spaces, cf. \eqref{iq:operators_uniform}, which is essential for the application of maximal regularity and the fixed point argument. In particular, the main arguments yielding compactness for the pressure $p$, and hence for $\bm{u}$ and $\theta$, cf. Lemma~\ref{lem:p_strong}, Lemma~\ref{lem:convergence_u_reg}, Lemma~\ref{lem:p_strong_non_regular}, Lemma~\ref{lemma:u_strong_not_reg}, do not rely on this property and can easily be adopted for the more general case in \cite{riethmüller2023wellposedness}. 
			
			\item We continue with a short discussion, whether other nonlinear dependencies, specifically in the permeability $\kappa$, could be allowed, as is the case in other studies, e.g., \cite{MR1876882, bociu2016analysis, van2023mathematical}. While there are some technicalities, in particular in the proof of the "Uniqueness and continuous dependence"-part of Lemma \ref{lemma:existence_galerkin}, our strong convergence results could accommodate a permeability of the form 
			\begin{equation*}
					\kappa = \kappa (\vphi, \nabla \cdot \bm{u}, \theta, p). 
			\end{equation*} 
			Permitting similar dependencies in $\alpha, M$ would give raise to chain rules due to the gradient flow structure and (possibly) significantly complicate the system.  
			\item Observe that by inserting \eqref{eq:strong_formulation_p} into \eqref{eq:strong_formulation_theta} we could eliminate the pressure $p$ altogether, but would obtain an equation that is of third order in the displacement $\bm{u}$. As regularity theory for the elastic subproblem is rather intricate, splitting the system as in \eqref{eq:strong_formulation_phi}-\eqref{eq:strong_formulation_p} is analytically preferable. 
		\end{enumerate}
 	\end{remark}
	}
\end{subequations}


\section{Assumptions and main theorem}\label{sec:assumptions} 

We start by introducing all necessary assumptions and the notion of a weak solution to the Cahn--Hilliard--Biot system. Moreover, we precisely state the main theorem we aim to establish in this work.  

\begin{enumerate}[label = {(A\arabic*)} ]
	\item  \label{A:domain}
	Let $\Omega \subset \mathbb{R}^n$ be a bounded and connected $C^{1,1}$-domain in dimension $n \leq 3$;
	 additionally, we assume that the subset $\Gamma_D \subset \partial \Omega$ is relatively closed and satisfies $\mathcal{H}^{n-1} (\Gamma_D) > 0$. Setting $\Gamma_N\coloneqq \partial \Omega \setminus \Gamma_D$, it holds that $\Gamma_N \cap \Gamma_D = \emptyset $ and $ \Gamma_N \cup \Gamma_D = \partial \Omega $, which we further require to be regular in the sense of Gröger, cf. \cite[Def. 2]{groger1989aw}, Remark \ref{remark:assumptions}\ref{groeger}.  
	\item  \label{A:psi} 
	The potential is of the form
	\begin{equation*}
		\psi(\vphi) = \psi_1(\vphi) + \psi_2(\vphi) \quad \textrm{for all} \quad \vphi \in \mathbb{R},
	\end{equation*}	
	with $\psi_1, \psi_2 \in C^1 (\mathbb{R}, \mathbb{R})$ and $\psi_1$ convex. Moreover, $\psi'$ is required to be Lipschitz continuous on all bounded intervals.  
	\begin{enumerate} [label= {(A\arabic{enumi}.\arabic*)}]
		\item \label{A:psi_0}
		The potential is non-negative, i.e., 
		\begin{equation*}
			\psi(z) \geq 0  \quad \textrm{for all} \quad z \in \mathbb{R}. 
		\end{equation*}
		\item \label{A:psi_1}
		We further require $\psi_1$ to satisfy the following growth conditions: there exist $\gamma_{\psi_1}, c_{\psi_1} > 0 $ and $p > 2$ such that 
		\begin{align*}
			\gamma_{\psi_1} \abs{z} ^p -c_{\psi_1} \leq \psi_1(z), \\ 
			|\psi'(z)| \leq \rho_{\psi_1} \psi(z)+ C_{\rho_{\psi_1}}
		\end{align*}
		for all $z \in \R$, $\rho_{\psi_1} > 0$ and constants $c_{\psi_1}, C_{\rho_{\psi_1}} > 0$, where the latter may depend on $ \rho_{\psi_1}$. 
		\item  \label{A:psi_2}
		There exists a constant $C_2 > 0$ such that 
		\begin{equation*}
			\abs{ \psi''_2(z)}  \leq C_2   \quad \textrm{for all} \quad z \in \mathbb{R}. 
		\end{equation*} 
	\end{enumerate}
	\item \label{A:W}
	The elastic free energy density $W \in C^1(\mathbb{R} \times \mathbb{R}^{n \times n}_{sym})$ is of the form 
	\begin{equation*}
		W(\vphi', \E') = \C(\vphi') (\E' - \Tau(\vphi')):  (\E' - \Tau(\vphi')), 
	\end{equation*} 
	where $\C: \R \rightarrow \mathcal{L}(\R^{n \times n}_{sym})$ is a bounded, Lipschitz continuous and differentiable tensor whose derivative $\C'$ is also bounded and Lipschitz continuous. We require it to fulfill the standard assumptions of linear elasticity, i.e., $\C(\vphi')$ is  symmetric and uniformly positive definite on $\R^{n \times n}_{sym}$, mapping symmetric matrices to symmetric matrices such that 
	\begin{align*}
		\E: \C(\vphi') \E &\geq c |\E|^2,\\ 
		\mathcal{D} : \C(\vphi') \E &= \C(\vphi')\mathcal{D} : \E
	\end{align*}
	for all symmetric matrices $\E, \mathcal{D} \in \R^{n\times n}_{sym}$ and all $\vphi' \in \R$.  \\ 
	The eigenstrain $\Tau : \R \rightarrow \R^{n \times n}_{sym}$ is a Lipschitz continuous, differentiable, matrix-valued function with Lipschitz continuous derivative $\Tau'$,  such that $\Tau(\vphi)$ is symmetric for any $\vphi' \in \R$.
	\item \label{A:C_nu} The modulus of visco-elasticity $\C_{\nu}$ satisfies the same assumptions as the elasticity tensor $\C$. 
	\item \label{A:g}
	The function $\bm{g} : \Gamma_N \rightarrow \mathbb{R}^n$, modeling applied outer forces, fulfills $\bm{g} \in \bm{L}^2(\Gamma_N)$.
	\item \label{A:f}
	The function $\bm{f} \colon \Omega \rightarrow \R^n$, modeling body forces, satisfies $\bm{f} \in \bm{L}^2(\Omega)$.
	\item \label{A:m}
	There exist constants $\underline{m}, \overline{m} > 0$ such that the mobility $m \in C^{0, 1}(\mathbb{R})$ fulfills 
	\begin{equation*}
		\underline{m} \leq m(z) \leq \overline{m} \quad \textrm{for all} \quad z \in \mathbb{R}. 
	\end{equation*}
	\item \label{A:kappa}
	There exist constants $\underline{\kappa}, \overline{\kappa} > 0$ such that the function $\kappa \in C^{0, 1}(\mathbb{R})$ fulfills 
	\begin{equation*}
		\underline{\kappa} \leq \kappa(z) \leq \overline{\kappa} \quad \textrm{for all} \quad z \in \mathbb{R}. 
	\end{equation*}
	\item \label{A:phase_coefficients}
	The maps $\alpha, M \in C^{1, 1}(\R)$ are non-negative and satisfy
	\begin{align*}
		 \alpha(z) + |\alpha'(z)| \leq \overline{\alpha}  &\quad  \textrm{for all} \quad z \in \mathbb{R} \quad \textrm{and some } \overline{\alpha} > 0, \\ 
		 \underline{M} \leq M(z) \leq \overline{M}  &\quad  \textrm{for all} \quad z \in \mathbb{R} \quad \textrm{and some } \overline{M}, \underline{M}  > 0. 
	\end{align*}
	We further assume $M'$ to be bounded, i.e., $|M'(z)| \leq \overline{M}$ for all $z \in \mathbb{R}$. 
	\item \label{A:source_terms}
	The source terms $R, S_f$ are in the space $C^{0, 1}(\R \times \R^{n \times n} \times \R)$. Moreover, we require $R$ to be a bounded function.
	\item \label{A:initial}
	The initial datum $\vphi_0$ satisfies $\vphi_0 \in H^1(\Omega)$ and $\int_\Omega \psi(\vphi_0) d\bm{x} < \infty$. 
\end{enumerate}

\begin{remark} \label{remark:assumptions}
	
	\begin{enumerate}[nosep, label*=(\roman*)]
		\item \label{groeger} \new{
			The subset $\Omega \cup \Gamma_N$ is regular in the sense of Gröger, cf. \cite[Def. 2]{groger1989aw}, if for every $y \in \partial \Omega$ there exists an open neighborhood $U \subset \R^n$ of $y$ and a bi-Lipschitz map $\Phi : U \rightarrow \R^n$ such that $\Phi(y) = 0$ and $\Phi(U \cap ( \Omega \cup \Gamma_N))$ equals one of the following sets: 
			\begin{align*}
				E_1 &\coloneqq \{ x \in \R^n \colon \abs{x} < 1, x_n < 0\}, \\ 
				E_2 &\coloneqq \{ x \in \R^n \colon \abs{x} < 1, x_n \leq 0\}, \\ 
				E_3 &\coloneqq \{ x \in \R^n \colon x_n < 0 \textnormal{ or } x_1 > 0\}. 
			\end{align*}
			In particular, it is explicitly allowed that $\overline{\Gamma_D} \cap\overline{ \Gamma_N}  \neq \emptyset$. 
		}
		\item The following results are immediate consequences of the assumptions above. 
		
		\begin{enumerate}[label = (\roman*)]
			\item [{(A3.1)}] \label{A:W_strongly_monotone}
			The mapping $\E' \mapsto W_{,\E}(\vphi', \E')$ is strongly monotone uniformly in $\vphi'$ in the following sense: there exists a $C_1 > 0$ such that for all symmetric $\E_1', \E_2' \in  \mathbb{R}^{n \times n} $ it holds, 
			\begin{equation*}
				\left(W_{,\E}(\vphi', \E_2') - W_{,\E}(\vphi', \E_1') \right) : \left( \E_2' - \E_1' \right) \geq C_1 \abs{\E_1' -\E_2'}^2. 
			\end{equation*}
			\item [ {(A3.2)}] \label{A:W_growth_conditions}
			There exists a constant $C_2 > 0$ such that for all $\vphi' \in \mathbb{R}$ and all symmetric $\E' \in  \mathbb{R}^{n \times n}$,
			\begin{align*}
				\abs{W(\vphi', \E') } & \leq C_2 \left( \abs{\E'}^2 + \abs{\vphi'}^2 +1 \right), \\
				\abs{W_{,\vphi}(\vphi', \E') } & \leq C_2 \left( \abs{\E'}^2 + \abs{\vphi'}^2 +1 \right), \\
				\abs{W_{,\E}(\vphi', \E')} & \leq C_2 \left( \abs{\E'} + \abs{\vphi'} +1 \right). 
			\end{align*}
			\item [(ii.i)]
			Using the fundamental theorem of calculus, \hyperref[A:W_strongly_monotone]{(A3.1)}, \hyperref[A:W_growth_conditions]{(A3.2)} and Young's inequality, it easily follows that 
			\begin{align} \label{ineq:W}
				W(\vphi', \E') 
				& \geq -C' \left( \abs{\vphi'}^2 +1   \right) + \frac{C_1}{4} \abs{\E'}^2. 
			\end{align}
		\end{enumerate}
	\end{enumerate}
\end{remark}

\par 
\medskip 

A weak solution of the system derived in Section \ref{sec:derivation} is defined as follows, where the notation for the function spaces will be defined in the following section. 

\begin{definition}[Weak solutions]\label{def:weak_solution}
	For any $T  > 0$ a quintuple $(\vphi, \mu, \bm{u}, \theta, p)$ with the properties 
	\begin{align*}
		\vphi &\in L^2(0, T; H^1(\Omega)) \cap H^1(0, T; H^1(\Omega)'), \\ 
		\mu  &\in L^2(0, T; H^1(\Omega)), \\ 
		\bm{u} & \in H^1(0, T; \bm{H}^1_{\Gamma_D}(\Omega)), \\ 
		\theta &\in L^2(\OT) \cap \new{H^1}(0, T; (H^{1}_{\Gamma_D}(\Omega))'), \\ 
		p &\in L^2(0, T; H^1_{\Gamma_D}(\Omega ))
	\end{align*}
	is a weak solution to the Cahn--Hilliard--Biot system if the following equations are satisfied: 
	\begin{subequations}
		\begin{align}\label{eq:weak1}
			\int_0^T  \prescript{}{(H^1)'}{}\langle \pt \vphi, \zeta \rangle_{H^1}\, \dt =   \int_{\OT} - m(\vphi) \nabla \mu \cdot \nabla \zeta + R(\vphi, \E(\bu), \theta) \zeta \; \dtx
		\end{align}
		for all $\zeta \in L^2(0, T; H^1(\Omega) )$;
		\begin{align*}\label{eq:weak2}
			\int_{\OT} \mu\, \zeta \dtx 
			=   \int_{\OT} \vepsilon \nabla \vphi \cdot \nabla \zeta &+
			\Big[ \frac{ 1}{\vepsilon} \psi'(\vphi)  
			 \begin{aligned}[t]
			 	&+ \Wp(\vphi, \E(\bm{u}))  
				+ \frac{M'(\vphi)}{2} (\theta - \alpha(\vphi) \nabla \bm{u})^2  \\ 
				&-M(\vphi) (\theta - \alpha(\vphi) \nabla \cdot \bm{u}) \alpha'(\vphi) \nabla \cdot \bm{u} \Big]\,   \zeta \; \dtx
			\end{aligned}		
			\numberthis
		\end{align*}
		for all $\zeta \in L^\infty(\OT) \cap L^2(0, T; H^1(\Omega))$;
		\begin{align*} \label{eq:weak3}
			&\int_{\OT} \C_\nu (\vphi) \E(\partial_t \bm{u}) :  \E(\bm{\eta}) + \WE(\vphi, \E(\bm{u})) : \E(\bm{\eta}) - M(\vphi) \alpha(\vphi) (\theta - \alpha(\vphi) \nabla \cdot \bm{u}) \nabla \cdot \bm{\eta} \dtx \\ 
			& \quad = \int_{\OT} \bm{f} \cdot \bm{\eta} \dtx + \int_0^T \int_{\bm{\Gamma_N}} \bm{g} \cdot \bm{\eta} \dH \dt \numberthis 
		\end{align*}
		for all $\bm{\eta} \in L^2(0, T; \bm{H}^1_{\Gamma_D}(\Omega))$;
		\begin{align}\label{eq:weak4}
			\int_0^T  \prescript{}{(H^1_{\Gamma_D})'}{}\langle \pt \theta, \xi \rangle_{H^1_{\Gamma_D}}\, \dt =   \int_{\OT} - \kappa(\vphi) \nabla p \cdot \nabla \xi + S_f(\vphi, \E(\bu), \theta) \xi \; \dtx
		\end{align}
		for all $\xi \in L^2(0, T; H^1_{\Gamma_D}(\Omega))$; 
		\begin{align}\label{eq:weak5}
		p = M(\vphi) (\theta - \alpha (\vphi) \nabla \cdot \bm{u}) \quad \textrm{a.e. in } \ \ \Omega_T 
		\end{align}
		and 
		\begin{equation}\label{eq:weak_initial}
			\theta(0) = \theta_0 \quad \textrm{in} \quad (H^1_{\Gamma_D}(\Omega))' \quad \textrm{and} \quad 
			(\vphi, \bm{u})_{|t= 0} = (\vphi_0, \bm{u}_0) \quad \textrm{a.e. in } \ \ \Omega
		\end{equation}
	\end{subequations}	
\end{definition} 
\par 
\medskip 
The primary goal of this work is to show the following existence theorem. In what follows, we refer to later sections for the definitions of the function spaces involved.  

\begin{theorem}\label{th:existence}
	Assume $T > 0$ and that  \ref{A:domain}-\ref{A:initial} hold. Moreover, suppose that $\theta_0 \in L^2(\Omega)$ and $\bm{u}_0 \in \bm{H}^1_{\Gamma_D}(\Omega)$. Then, there exists at least one weak solution to the Cahn--Hilliard--Biot system in the sense of Definition \ref{def:weak_solution} such that 
		\begin{align*}
		\vphi &\in L^\infty(0, T; H^1(\Omega)) \cap H^1(0, T; H^1(\Omega)'), \\ 
		\mu  &\in L^2(0, T; H^1(\Omega)), \\ 
		\bm{u} &\in  H^1(0, T; \bm{H}^1_{\Gamma_D}(\Omega)), \\ 
		\theta &\in L^\infty(0, T; L^2(\Omega)) \cap\new{ H^1}(0, T; (H^{1}_{\Gamma_D}(\Omega))'), \\ 
		p &\in L^2(0, T; H^1_{\Gamma_D}(\Omega )) \cap L^\infty(0, T; L^2(\Omega)). 
	\end{align*}
	Moreover, the weak solution satisfies the following estimate 
		\begin{align*}
		\norm{\vphi}_{L^\infty(H^1)} 
		&+ \norm{\vphi}_{H^1((H^1)')}
		+ \norm{\mu}_{L^2(H^1)}
		+ \norm{\psi(\vphi)}_{L^\infty(L^1)} \\ 
		&+ \norm{\bm{u}}_{H^1(\bm{H}_{\Gamma_D}^1)}
		+  \norm{\theta}_{L^\infty(L^2)} 
		+ \norm{\theta}_{H^1((H^{1}_{\Gamma_D})')}
		+ \norm{p}_{L^2(H^1_{\Gamma_D})}
		\leq C, 
	\end{align*}
	where $C > 0$ only depends on the data and on $T$. \\ 
	 If $\bm{u}_0 \in \bm{W}^{1, r}_{\Gamma_D}(\Omega)$ with $r > 2$, then there exists some $2 < q \leq p$ such that $\bm{u} \in  L^2(0, T; \bm{W}^{1, q}_{\Gamma_D}(\Omega))$. 
\end{theorem}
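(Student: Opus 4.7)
The plan is to follow the two-level approximation strategy outlined in the introduction: first regularize the problem, solve the regularized system by a semi-Galerkin ansatz combined with Leray--Schauder, derive uniform bounds, and then remove both the Galerkin truncation and the regularization.

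\emph{Step 1: Regularized system and semi-Galerkin scheme.} First I replace \eqref{eq:strong_formulation_mu}, \eqref{eq:strong_formulation_theta} by adding the regularizing terms $\vr \Delta^2 \vphi$ in the equation for $\mu$ and $-\vr \Delta \theta$ in the equation for $\theta$, with $\vr > 0$ small. Next, for fixed $\vr$, I split the coupled system into two subsystems. The Cahn--Hilliard part for $(\vphi,\mu)$ is discretized in the basis of eigenfunctions of the Neumann Laplacian and solved, for frozen $(\bm{u},\theta,p)$, by standard ODE theory in a $k$-dimensional subspace. For the poro-visco-elastic subsystem for $(\bm{u},\theta,p)$, with frozen $(\vphi,\mu)$, I write it as a non-autonomous abstract Cauchy problem in the spaces $\bm{H}^1_{\Gamma_D}(\Omega) \times H^1_{\Gamma_D}(\Omega)'$ and apply the maximal $L^p$-regularity result recalled in Section~\ref{sec:preliminaries}; the visco-elastic ellipticity provided by \ref{A:C_nu} is essential here. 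Composing the two solution operators defines a fixed-point map $\Fk$ on a bounded subset of, say, $C([0,T];\R^k) \times L^2(0,T;\bm{H}^1_{\Gamma_D}(\Omega))$; continuity and compactness follow from the regularizations and the $H^2$-control on $\vphi$, so Leray--Schauder provides an approximate solution indexed by $(k,\vr)$.

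\emph{Step 2: Uniform a priori estimates.} Testing the subsystems with the natural energetic test functions $(\mu,\pt\vphi,\pt\bm{u},p)$ and exploiting the generalized gradient flow structure yields the energy dissipation inequality stated at the end of Section~\ref{sec:derivation}. Combined with Korn's inequality on $\bm{H}^1_{\Gamma_D}$ (available thanks to $\mathcal{H}^{n-1}(\Gamma_D)>0$), the coercivity of $\C_\nu$, the growth condition \ref{A:psi_1} needed to estimate $\dashint\mu$, and \ref{A:phase_coefficients}, this gives the bounds listed in the conclusion of the theorem, uniformly in $k$ and $\vr$. The standard duality arguments then yield $\pt\vphi \in L^2(0,T;H^1(\Omega)')$ and $\pt\theta \in L^2(0,T;(H^1_{\Gamma_D})')$.

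\emph{Step 3: Passage to the limit $k\to\infty$ for fixed $\vr>0$.} Weak(-$*$) compactness delivers limits of all quantities. The critical step is to upgrade the convergence of $p_k$ and $\bu_k$ to strong convergence in $L^2(\OT)$, since \eqref{eq:weak2} and \eqref{eq:weak3} contain terms depending quadratically on $\theta-\alpha(\vphi)\nabla\cdot\bu$. To this end I test the $\theta$-equation against $H^3_{\Gamma_D}$ functions to obtain an estimate on the time-translations $p_k(\cdot+h)-p_k(\cdot)$ in $W^{-3,2}(\Omega)$, and combine it with the uniform $L^2(H^1_{\Gamma_D})$-bound via the translation version of the Aubin--Lions--Simon theorem; this is exactly the step where the extra $\vr$-regularity of $\vphi_k$ is crucial, since it compensates for the lack of regularity of the orthogonal projection onto the Galerkin space under mixed boundary conditions. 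Once $p_k \to p$ strongly in $L^2(\OT)$, the relation \eqref{eq:weak5} combined with the visco-elastic ellipticity and the $\bu$-equation yields strong convergence of $\bu_k$ in $L^2(0,T;\bm{H}^1_{\Gamma_D})$, which is enough to pass to the limit in all nonlinear terms.

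\emph{Step 4: Removing the regularization.} The bounds of Step~2 are independent of $\vr$, so the same weak compactness applies to the family of weak solutions $(\vphi_\vr,\mu_\vr,\bu_\vr,\theta_\vr,p_\vr)$ of the regularized problem. Strong convergence of $p_\vr$ is again obtained by estimating time-translations in a negative-order space, but now directly on $p_\vr$ (no projection is needed), and strong convergence of $\bu_\vr$ follows as before. The main obstacle I anticipate is the strong convergence of $\theta_\vr$: the uniform control is only in $L^\infty(L^2) \cap L^2((H^1_{\Gamma_D})')$ and the $\vr$-regularization on $\theta$ is lost in the limit. The way around this, as indicated in the introduction, is to apply the resolvents $(-\vr\Delta+1)^{-1}$ to $\theta_\vr$: they converge pointwise to the identity on $L^2(\Omega)$ as $\vr\searrow 0$, and the smoothed quantities admit time derivative estimates that allow Aubin--Lions, while the already established strong convergences of $p_\vr$ and $\bu_\vr$ together with \eqref{eq:weak5} give strong convergence of $\theta_\vr - \alpha(\vphi_\vr) \nabla \cdot \bu_\vr$ up to the $M(\vphi)$ factor. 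Combining these finally yields strong convergence of $\theta_\vr$ in $L^2(\OT)$, so one can pass to the limit in \eqref{eq:weak2}--\eqref{eq:weak4} and obtain a weak solution satisfying Definition~\ref{def:weak_solution} together with the asserted bounds. The additional integrability statement $\bu \in L^2(0,T;\bm{W}^{1,q}_{\Gamma_D})$ when $\bu_0 \in \bm{W}^{1,r}_{\Gamma_D}$ follows from the elliptic regularity theorem on Gröger-regular domains recalled in Section~\ref{sec:preliminaries} (Theorem~\ref{thm:elliptic_sobolev}) applied to the visco-elastic equation.
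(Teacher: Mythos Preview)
Your outline captures the two-level strategy and the key compactness mechanism (time-translation Aubin--Lions--Simon for $p$, then strong convergence of $\bm u$, then of $\theta$), but the way you split the semi-Galerkin scheme differs from the paper and creates an internal inconsistency in your Step~3.

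In the paper, \emph{all four} scalar unknowns $(\vphi,\mu,\theta,p)$ are discretized via Galerkin---$(\vphi,\mu)$ in the Neumann--Laplace eigenbasis $Z_k$ and $(\theta,p)$ in the mixed-BC eigenbasis $Y_k$---and only $\bm u$ is solved on the whole space by maximal regularity. The fixed-point map is $\bm u\mapsto(\vphi,\mu,\theta,p)\mapsto\bm u$ on $H^1(0,T;\bm X(\Omega))$ alone. Your proposal instead discretizes only $(\vphi,\mu)$ and treats the full poro-visco-elastic block $(\bm u,\theta,p)$ by maximal regularity as a coupled Cauchy problem. That route is not obviously wrong, but the maximal regularity machinery recalled in Section~\ref{sec:preliminaries} is set up for a single operator $\mathcal A(t)\in\mathcal L(D,Y)$, and the paper exploits precisely that $\bm u$ alone satisfies an equation of the form $\partial_t\bm u+\mathcal B^{-1}(\vphi)\mathcal C(\vphi)\bm u=\hat{\bm f}$; extending this to the cross-coupled $(\bm u,\theta)$-system would need a separate argument you do not provide.

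More importantly, your Step~3 explanation (``the extra $\vr$-regularity of $\vphi_k$ compensates for the lack of regularity of the orthogonal projection onto the Galerkin space under mixed boundary conditions'') is the paper's reason, not yours: that projection is $\Pi_k^y$ acting on the $p$-identity, and it only appears because $(\theta,p)$ are Galerkin-discretized. In your own splitting there is no such projection, so this justification does not fit your construction. Finally, the paper introduces a third regularization you omit: the term $\alpha(\vphi)p\,\nabla\!\cdot\bm\eta$ in the $\bm u$-equation is replaced by $\alpha(\vphi)p\,(\nabla\!\cdot\bm\eta)*\phi$ with a mollifier $\phi$, needed so that the $\vr\nabla\theta$-term can be tested against $\partial_t\bm u$ in the energy estimate; this convolution is removed in a separate final step. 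For the $\vr\to0$ limit of $\theta_\vr$, the paper works with $(-\vr\Delta+M(\vphi_\vr))^{-1}$ applied directly to the $p$-identity rather than with $(-\vr\Delta+1)^{-1}$ and Aubin--Lions.
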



\section{Notation and preliminaries} \label{sec:preliminaries}

This section begins with the introduction of relevant function spaces, particularly Sobolev spaces and Bessel potential spaces, along with associated notation and embedding properties.
We proceed by recalling a theorem on elliptic regularity theory, which will be used later on. This is followed by a brief excursion into maximal $L^p$-regularity theory, where we revisit an existence theorem for non-autonomous abstract Cauchy-problems by Arendt et al. \cite{ARENDT20071} and, under similar assumptions as we will encounter when investigating the Cahn--Hilliard--Biot model, extend the proof by an additional estimate for the solutions in terms of the right-hand side. Lastly, we also recall the Leray--Schauder principle. \par 
\medskip 
For brevity we write $\OT$ instead of $(0, T) \times \Omega$ for any $T > 0$. 

\subsubsection*{Notation}

For $p \in [1, \infty]$, we denote by $W^{1, p}(\Omega)$ the usual Sobolev spaces and use the abbreviations
\begin{equation*}
	H^1(\Omega) \coloneqq W^{1, 2}(\Omega), \quad H^2_{\bm{n}}(\Omega) \coloneqq \{ f \in W^{2, 2}(\Omega):  \nabla f \cdot \bm{n}  = 0 \quad \textrm{on } \partial \Omega\}, 
\end{equation*}
where $\bm{n}$ is the outer normal vector on $\partial \Omega$. 
Moreover, we define 
\begin{equation*}
	W^{1, p}_{\Gamma_D}(\Omega) \coloneqq \overline{\{ f_{| \Omega} : f \in C^\infty_c(\R^n, \R), \ \supp f \cap \Gamma_D = \emptyset \}}^{\norm{\cdot}_{W^{1, p}}}, 
	\quad 
	W^{-1, p}_{\Gamma_D} (\Omega) \coloneqq (W^{1, p}_{\Gamma_D} (\Omega))'
\end{equation*}
and set 
\begin{equation*}
	X (\Omega) \coloneqq W^{1, 2}_{\Gamma_D}(\Omega), \quad 
	\bm{W}^{1, p}_{\Gamma_D} (\Omega) \coloneqq \prod_{i = 1}^n W^{1, p}_{\Gamma_D}(\Omega), \quad 
	\bm{X} (\Omega) \coloneqq \bm{W}^{1, 2}_{\Gamma_D}(\Omega).
\end{equation*}
We further denote by $\bm{W}^{-1, p}_{\Gamma_D}(\Omega)$ the duals of the vector-valued Sobolev spaces and also use the abbreviation $X'(\Omega)$ and $H^1(\Omega)'$ for the duals of $X(\Omega)$ and $H^1(\Omega)$, respectively. \\ 
Finally, we write $\prescript{}{\cdot} \langle \cdot , \cdot \rangle_{\cdot}$ for the duality product between spaces specified in the indices and denote by $\langle\cdot , \cdot \rangle$ the $L^2$-inner product. \\ 
\new{For a given function, e.g., $f(x, y)$ with multiple variables $x, y$, we express the directional derivatives by $  f_{,x}=  f_x = \frac{\partial }{\partial x}f$ and $f_{,x} = f_y =  \frac{\partial }{\partial y} f$, respectively.}

\begin{definition}[Bessel-potential spaces, {\cite[Def. 2.42]{Abels+2012}, \cite[Thm. 2.3.3]{triebel1978interpolation}, \cite[Sec. 4]{haller2019higher}}]\hfill 
	\begin{enumerate}[nosep, label = (\roman*)] 
		\new{\item 	Let $s \in \R$ and let $1 < p < \infty$. Then the ($L^p$)-Bessel potential space of order $s$ is denoted as $H^{s,p}(\R^n)$.}
		\item For $\frac{1}{p} < s < 1+ \frac{1}{p}$ set
		\begin{equation*}
			H^{s, p}_{\Gamma_D} (\R^n) \coloneqq \Biggl\{ f \in H^{s, p}(\R^n) : 
			\lim_{r \searrow 0} \frac{1}{|B_r(x)|} \int_{B_r(x)} f (y) \dy= 0 \quad \textrm{for } \mathcal{H}^{d-1}\textrm{-a.e. } x \in \Gamma_D
			\Biggr\}
		\end{equation*}
		with $\norm{\cdot}_{H^{s, p}_{\Gamma_D} (\R^n)} = \norm{\cdot}_{H^{s, p} (\R^n)} $. 
		\item Finally, set $H^{s, p}_{\Gamma_D}(\Omega) \coloneqq \{ f_{| \Omega} : f \in H^{s, p}_{\Gamma_D} (\R^n) \} $ endowed with the norm
		\begin{equation*}
			\norm{f}_{H^{s, p}_{\Gamma_D}(\Omega) } \coloneqq \inf \biggl\{ \norm{g}_{H^{s, p}_{\Gamma_D}(\R^n)} : g \in H^{s, p}_{\Gamma_D}(\R^n), g_{| \Omega} = f \biggr\}. 
		\end{equation*}
	\end{enumerate}
\end{definition}

\begin{remark}
	\begin{enumerate}[nosep, label*=(\roman*)]
		\item As before, we define $\bm{H}^{s, p}_{\Gamma_D}(\Omega) \coloneqq \prod_{i = 1}^{n} H^{s, p}_{\Gamma_D}(\Omega)$. 
		\item To shorten notation, we set $H^{s}_{\Gamma_D}(\Omega) \coloneqq H^{s, 2}_{\Gamma_D}(\Omega)$ and $\norm{\cdot}_{H^{s}_{\Gamma_D}} \coloneqq\norm{\cdot}_{H^{s, 2}_{\Gamma_D}(\Omega)}$, with the analogous conventions for $\bm{H}^{s}_{\Gamma_D}(\Omega)$. 
		\item As discussed in \cite[Prop. 1]{haller2019higher}, \cite[Cor. 3.8]{EGERT20141419}, the spaces $W^{1, 2}_{\Gamma_D}(\Omega)$ and $H^{1}_{\Gamma_D}(\Omega)$ are isomorphic. Therefore, it holds $X(\Omega) \cong H^{1}_{\Gamma_D}(\Omega)$ and $\bm{X}(\Omega) \cong \bm{H}^{1}_{\Gamma_D}(\Omega)$. 
		\item It is well known, e.g. cf. \cite[Ch. 2.5]{edmunds_triebel_1996}, that the embedding 
		\begin{equation*}
			H^{1+ \delta, p}(\Omega) \xhookrightarrow{cpt} H^{1, p}(\Omega)
		\end{equation*}
		is compact.
	\end{enumerate}
\end{remark}

\subsubsection*{Elliptic regularity}

Treating problems in linear elasticity with classic elliptic theory yields a unique solution in $\bm{H}^1_{\Gamma_D}(\Omega)$. The following theorem is more subtle, allowing us to find solutions in $\bm{W}^{1, p}_{\Gamma_D}(\Omega)$,
and provides uniform estimates in terms of the right-hand side. 

\begin{theorem}[{\cite[Thm.~1.1, Prop.~1.2]{HERZOG2011802}}] \label{thm:elliptic_sobolev}
	Assume that $\Omega$ satisfies \ref{A:domain} and let $\mathcal{M}$ be a set of (nonlinear) measurable functions $\bm{c}: \Omega \times \R^{n \times n}_{sym} \rightarrow \R^{n \times n}_{sym}$ such that for all $\bm{c} \in \mathcal{M}$ it holds
	\begin{enumerate}[label*=(\roman*), nosep]
		\item $\bm{c}(\cdot, \bm{0}) \in L^\infty(\Omega,  \R^{n \times n}_{sym})$, 
		\item $\bm{c}(\cdot, \E')$ is measurable for all $\E' \in \R^{n \times n}_{sym}$, 
		\item $(\bm{c}(\bm{x}, \E'_1)- \bm{c} (\bm{x}, \E'_2)): (\E'_1 - \E'_2) \geq c_{\mathcal{M}} | \E'_1 - \E'_2|^2 $, 
		\item $|\bm{c}(\bm{x}, \E'_1) - \bm{c}(\bm{x}, \E'_2)| \leq C_{\mathcal{M}} | \E'_1 - \E'_2|$
	\end{enumerate}
	for almost all $\bm{x} \in \Omega$ and all $\E'_1, \E'_2 \in \R^{n \times n}_{sym}$ with constants $0 < c_{\mathcal{M}} \leq C_{\mathcal{M}}$ independent of $\bm{c} \in \mathcal{M}$. Then there exists some $p^* > 2$ such that for all $p \in [2, p^*]$ the operators 
	\begin{equation*}
		\mathfrak{C}: \bm{W}^{1, p}_{\Gamma_D}(\Omega) \rightarrow \bm{W}^{-1, p}_{\Gamma_D}(\Omega), \quad 
		\bm{v} \mapsto \int_\Omega \bm{c}(\bm{x}, \E(\bm{v})) : \E(\cdot) \dx 
	\end{equation*}
	are continuously invertible. Moreover, their inverses share a uniform Lipschitz constant, i.e., there exists some $L_{\mathcal{M}} > 0$ such that for all $f_1, f_2 \in \bm{W}^{-1, p}_{\Gamma_D}(\Omega)$
	\begin{equation*}
		\norm{\mathfrak{C}^{-1}f_1 - \mathfrak{C}^{-1}f_2 }_{\bm{W}^{1, p}_{\Gamma_D}} \leq L_{\mathcal{M}} \norm{f_1 - f_2}_{\bm{W}^{-1, p}_{\Gamma_D}}. 
	\end{equation*}
\end{theorem}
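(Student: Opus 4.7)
The plan is to reduce this nonlinear $W^{1,p}$-isomorphism statement to Gröger's linear regularity theorem via a Banach contraction argument. First, I would dispatch the case $p=2$ by standard monotone operator theory. The operator $\mathfrak{C}$ is Lipschitz continuous and strongly monotone by (iii), (iv), and combined with Korn's inequality on $\bm{W}^{1,2}_{\Gamma_D}(\Omega)$ (which applies because $\mathcal{H}^{n-1}(\Gamma_D) > 0$), it is bounded, hemicontinuous, coercive and strongly monotone as an operator $\bm{W}^{1,2}_{\Gamma_D}(\Omega) \to \bm{W}^{-1,2}_{\Gamma_D}(\Omega)$. Zarantonello's (or Browder--Minty's) theorem then produces a Lipschitz homeomorphism, whose inverse's Lipschitz constant depends only on $c_{\mathcal{M}}$ and the Korn constant of $\Omega$, hence uniformly over $\bm{c} \in \mathcal{M}$.

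To upgrade this to $p>2$, I would fix a scalar reference tensor $\bm{c}_0(\bm{x},\E') := \kappa \E'$ with $\kappa := (c_{\mathcal{M}}+C_{\mathcal{M}})/2$, and denote by $\mathfrak{C}_0$ the associated linear operator. Gröger's regularity theorem applies to $\mathfrak{C}_0$ thanks to (A1), yielding a threshold $p^* > 2$ such that $\mathfrak{C}_0 : \bm{W}^{1,p}_{\Gamma_D}(\Omega) \to \bm{W}^{-1,p}_{\Gamma_D}(\Omega)$ is a topological isomorphism for every $p \in [2,p^*]$, with $\|\mathfrak{C}_0^{-1}\|$ depending continuously on $p$. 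I would rewrite $\mathfrak{C}\bm{v} = f$ as the fixed-point equation
\begin{equation*}
\bm{v} = T_\tau \bm{v} := \bm{v} - \tau\, \mathfrak{C}_0^{-1}(\mathfrak{C}\bm{v} - f) = \mathfrak{C}_0^{-1}\bigl(\mathfrak{C}_0 \bm{v} - \tau(\mathfrak{C}\bm{v}-f)\bigr).
\end{equation*}
Expanding the square and using (iii), (iv) yields the pointwise inequality
\begin{equation*}
|\kappa(\E'_1-\E'_2) - \tau(\bm{c}(\bm{x},\E'_1) - \bm{c}(\bm{x},\E'_2))|^2 \leq \bigl(\kappa^2 - 2\kappa \tau c_{\mathcal{M}} + \tau^2 C_{\mathcal{M}}^2\bigr) |\E'_1 - \E'_2|^2,
\end{equation*}
which, for the optimal step $\tau^* := \kappa c_{\mathcal{M}} / C_{\mathcal{M}}^2$, reduces to a contraction factor of $\kappa q$ with $q := \sqrt{1 - c_{\mathcal{M}}^2/C_{\mathcal{M}}^2} < 1$, uniformly in $\bm{x}$. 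Integrating in $L^p$ and composing with $\mathfrak{C}_0^{-1}$ produces
\begin{equation*}
\|T_{\tau^*}\bm{v} - T_{\tau^*}\bm{w}\|_{\bm{W}^{1,p}_{\Gamma_D}} \leq L_p(\bm{c}_0)\cdot q \cdot \|\bm{v}-\bm{w}\|_{\bm{W}^{1,p}_{\Gamma_D}},
\end{equation*}
where $L_p(\bm{c}_0) = \|\mathfrak{C}_0^{-1}\|_{L(\bm{W}^{-1,p}, \bm{W}^{1,p})} \cdot \|\mathfrak{C}_0\|_{L(\bm{W}^{1,p}, \bm{W}^{-1,p})}$. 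Once strict contraction is established, Banach's fixed-point theorem yields the unique solution of $\mathfrak{C} \bm{v} = f$, and the standard stability computation for fixed-point iterations translates the contraction constant into the uniform Lipschitz bound $L_{\mathcal{M}}$ on $\mathfrak{C}^{-1}$.

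The main obstacle is to guarantee $L_p(\bm{c}_0) \cdot q < 1$ for $p$ strictly larger than $2$. At $p = 2$ one has $L_2(\bm{c}_0) = 1$ by the isometric pairing of $\mathfrak{C}_0$ between the Korn-equivalent energy norms on $\bm{W}^{1,2}_{\Gamma_D}$ and $\bm{W}^{-1,2}_{\Gamma_D}$, so the contraction constant is exactly $q < 1$. For $p \neq 2$ this isometry is lost and $L_p(\bm{c}_0)$ may exceed $1$. The decisive input from Gröger's theorem is the continuity of $p \mapsto \|\mathfrak{C}_0^{\pm 1}\|$ near $p = 2$, which is itself obtained by complex interpolation between the $p = 2$ bound and a Gröger bound at some larger exponent. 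Shrinking $p^*$ if necessary, one therefore keeps $L_p(\bm{c}_0) \cdot q < 1$ uniformly on $[2, p^*]$; since the pointwise factor $q$ depends only on $c_{\mathcal{M}}, C_{\mathcal{M}}$ and $L_p(\bm{c}_0)$ is independent of $\bm{c} \in \mathcal{M}$, the resulting Lipschitz constant $L_{\mathcal{M}}$ is uniform over the whole class.
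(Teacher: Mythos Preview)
The paper does not prove this theorem; it is quoted from \cite{HERZOG2011802} and used as a black box. Your sketch is correct and is essentially the argument given in that reference: the $p=2$ case by strong monotonicity and Korn's inequality, and the lift to $p>2$ by writing $\mathfrak{C}\bm{v}=f$ as a fixed-point equation through a linear reference operator whose $W^{1,p}$-isomorphism property for $p$ near $2$ comes from a Gr\"oger-type regularity result, with the pointwise contraction constant $q=\sqrt{1-c_{\mathcal{M}}^2/C_{\mathcal{M}}^2}$ exactly as you compute. One small caveat: Gr\"oger's original theorem is for scalar equations, so what is actually needed for $\mathfrak{C}_0$ is its extension to the Lam\'e system with mixed boundary conditions, which is precisely what \cite{HERZOG2011802} establishes before running the contraction argument; you should cite that rather than Gr\"oger directly.
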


\begin{remark}\label{rem:applicability_elliptic}
	Since the functions $\C$ and $\C_{\nu}$ satisfy \ref{A:W} and \ref{A:C_nu}, respectively, the assumptions of Theorem~\ref{thm:elliptic_sobolev} are fulfilled with constants $c_{\mathcal{M}}, C_{\mathcal{M}}$ independent of $\vphi' \in L^2(\Omega)$. Moreover, as $C(\vphi'), \C_{\nu}(\vphi')$ are linear maps on $\R^{n \times n}_{sym}$ for all $\vphi' \in \R$, the induced operators are topological isomorphisms of $ \bm{W}^{1, p}_{\Gamma_D}(\Omega)$ and $ \bm{W}^{-1, p}_{\Gamma_D}(\Omega)$ with a common bound for the inverses. 
\end{remark}

\subsubsection*{Maximal $L^p$-regularity}

The following definitions and results are taken from \cite{ARENDT20071}. Here, we assume $D, Y$ to be Banach spaces such that the embedding $D \xhookrightarrow{d} Y$ is dense. 

\begin{definition}
	An operator $A \in \mathcal{L}(D, Y)$ is said to have $L^p$-\textit{maximal regularity} for $p \in (1, \infty)$ if for some interval $(a, b)$ and all $f \in L^p(a, b; Y)$ there exists a unique $u \in W^{1, p}(a, b; Y) \cap L^p(a, b; D)$ satisfying
	\begin{equation*}
		\pt u + Au = f \quad \textit{a.e. on } (a, b), \quad u(a) = 0. 
	\end{equation*}
	In this case, $A$ already has $L^p$-\textit{maximal regularity} for all bounded intervals and all $p \in (1, \infty)$, cf. \cite{ARENDT20071}, and we write $A \in \mathcal{MR}$. 
\end{definition}

The following theorem is a modification of \cite[Thm.~2.7]{ARENDT20071} and yields a uniform estimate for the solution under some additional assumptions. 

\begin{theorem}\label{th:max_reg_non_autonomous}
	Let the family $\mathcal{A}_i : [0, \tau] \rightarrow \mathcal{L}(D, Y)$, $i \in \mathcal{I}$, be strongly measurable, $\tau > 0$, and suppose that there are $t^*_i \in [0, T]$ such that $\mathcal{A}_i(t^*_i) \in \mathcal{MR}$ with $\mathcal{A}_i(t^*_i) - \mathcal{A}_j(t^*_j) \in \mathcal{L}(Y)$ and $\norm{\mathcal{A}_i(t^*_i) - \mathcal{A}_j(t^*_j)}_{\mathcal{L}(Y)} \leq C_{\mathcal{I}}$ for some $C_\mathcal{I} > 0$ and any two $i, j \in \mathcal{I}$.   
	Moreover, assume that there exists $\eta = \eta(M) \geq 0$ such that for all $x \in D$ and $s, t \in [0, T]$, $i \in \mathcal{I}$
	\begin{equation}\label{eq:relative_continuity}
			\norm{\mathcal{A}_i(t)x - \mathcal{A}_i(s)x}_{Y} \leq \frac{1}{2M} \norm{x}_{D} + \eta \norm{x}_{Y}, 
	\end{equation}
	where $M$ is the constant from Lemma \ref{lemma:bound_M}. 
	Then $\mathcal{A}_i \in \mathcal{MR} = \mathcal{MR}_p(0, T)$ for all $p \in (1, \infty)$ and all $i \in \mathcal{I}$. \par 
	In particular, for each $f \in L^p(0, \tau; Y)$ and each $x \in (Y, D)_{\frac{1}{p'}, p}$ there exists a unique function $u \in W^{1, p}(0, \tau; Y) \cap L^p(0, \tau; D)$ satisfying
	\begin{align*}
		\begin{cases}
			\pt \bm{u} +\mathcal{A}_i(t) \bm{u} = \bm{f} \quad  \textrm{a.e. on } (0, \tau), \\ 
			\bm{u}(0)  = x. 
		\end{cases}
	\end{align*}
	Moreover, there exists a constant $C > 0$, independent of $i \in \mathcal{I}$, such that 
	\begin{equation}\label{eq:estimate_solution}
		\norm{u}_{\textnormal{MR}(0, T)} 
		\leq C \norm{x}_{(Y, D)_{\frac{1}{p'}, p}} +  4 M\textnormal{e}^{T\lambda} \norm{f}_{L^p(a, b; Y)}, 
	\end{equation}
	where $C, M, \lambda$ depend on $C_{\mathcal{I}}$. 
\end{theorem}

\begin{proof}
	The proof of this theorem can be found in Appendix \ref{app:proof_max_reg}. 
\end{proof}

\begin{remark}
	Comparing this theorem to \cite[Thm.~2.7]{ARENDT20071}, we would like to point out that while we only require $\mathcal{A}(t) \in \mathcal{MR}$ for a single $t^* \in [0, T]$, our assumption \eqref{eq:relative_continuity} is much stricter than in the original, where a similar estimate is only required to hold in small neighborhoods. However, without any further assumptions, an estimate as in \eqref{eq:estimate_solution} could not be obtained. 
\end{remark}

\subsubsection*{Leray--Schauder principle}

The Leary--Schauder principle, which is sometimes also referred to as Schaefer's theorem, is a well-known consequence of Schauder's fixed point theorem. The following formulation was taken from {\cite[II Lem.~3.1.1]{sohr2001navier}}. 

\begin{definition}
	A, not necessarily linear, operator $B: Y \rightarrow Y$ is called \textit{completely continuous } if
	\begin{enumerate}[label*=(\roman*), nosep]
		\item $B$ is continuous; 
		\item for each bounded sequence $(y_n)_{n \inN} \subset Y$ the sequence $(By_n)_{n \inN}$ contains an in $Y$ strongly converging subsequence.
	\end{enumerate}
\end{definition}

\begin{theorem}[Leray-Schauder principle]\label{leray_schauder}
	Let $Y$ be a Banach space and $B: Y \rightarrow Y$ be a completely continuous operator. Assume there exists some $r >0$ such that for all $x \in Y$, $\lambda \in [0, 1]$ satisfying $x = \lambda Bx$ it holds $\norm{y}_{Y} \leq r$. Then there exists at least one $y^* \in Y$ such that $y^* = By^*$ and $\norm{x^*}_{Y} \leq r$.   
\end{theorem}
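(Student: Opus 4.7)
The plan is to reduce the statement to Schauder's fixed-point theorem via the standard radial retraction trick. First, I would fix some radius $R > r$ and define the radial retraction $\rho_R: Y \rightarrow \overline{B_R(0)}$ by
\begin{equation*}
\rho_R(y) = \begin{cases} y & \text{if } \norm{y}_Y \leq R, \\ \dfrac{R\, y}{\norm{y}_Y} & \text{if } \norm{y}_Y > R.\end{cases}
\end{equation*}
A short computation shows $\rho_R$ is continuous on all of $Y$ (the two cases agree on the sphere), and clearly $\rho_R$ maps bounded sets into the bounded set $\overline{B_R(0)}$. Now set $\tilde{B} \coloneqq \rho_R \circ B$, which maps the closed, bounded, convex set $\overline{B_R(0)} \subset Y$ into itself.

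Next I would verify that $\tilde{B}$ is completely continuous on $\overline{B_R(0)}$. Continuity follows since $B$ is continuous and $\rho_R$ is continuous. For the compactness of images of bounded sequences, let $(y_n) \subset \overline{B_R(0)}$. By complete continuity of $B$, there is a subsequence $(y_{n_k})$ such that $B(y_{n_k})$ converges strongly in $Y$, and applying the continuous map $\rho_R$ gives strong convergence of $\tilde{B}(y_{n_k})$. Hence Schauder's fixed point theorem applies and yields some $y^* \in \overline{B_R(0)}$ with $y^* = \tilde{B}(y^*) = \rho_R(B(y^*))$.

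It remains to rule out the truncation case and to derive the norm bound. Assume for contradiction that $\norm{B(y^*)}_Y > R$. Then
\begin{equation*}
y^* = \frac{R}{\norm{B(y^*)}_Y}\, B(y^*) = \lambda\, B(y^*), \qquad \lambda \coloneqq \frac{R}{\norm{B(y^*)}_Y} \in (0,1),
\end{equation*}
so in particular $\norm{y^*}_Y = R$. However, the a priori hypothesis states that any solution of $x = \lambda B x$ with $\lambda \in [0,1]$ satisfies $\norm{x}_Y \leq r < R$, contradicting $\norm{y^*}_Y = R$. Therefore $\norm{B(y^*)}_Y \leq R$, which means $\rho_R(B(y^*)) = B(y^*)$ and hence $y^* = B(y^*)$. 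Finally, applying the a priori hypothesis with $\lambda = 1$ to the fixed point $y^*$ yields $\norm{y^*}_Y \leq r$, as claimed.

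The main conceptual step is the retraction construction, which allows passage from an uniform a priori estimate on the affine homotopy $x = \lambda Bx$ to an honest Schauder setup on a closed, bounded, convex set. Once this is in place, the rest is a routine contradiction argument exploiting that the retraction acts trivially precisely outside the ball of radius $R$. No technical difficulty is expected beyond checking that the composition $\rho_R \circ B$ inherits complete continuity, which is immediate from the definition.
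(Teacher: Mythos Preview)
The paper does not actually prove this theorem; it simply quotes the statement from \cite[II Lem.~3.1.1]{sohr2001navier} and remarks that it is a well-known consequence of Schauder's fixed-point theorem. Your proof via the radial retraction $\rho_R$ onto $\overline{B_R(0)}$ with $R>r$, followed by Schauder and the contradiction argument ruling out the truncated case, is the standard proof and is correct. One minor slip: in your closing paragraph you write that the retraction ``acts trivially precisely outside the ball of radius $R$''; it is of course the other way around---$\rho_R$ is the identity \emph{inside} the closed ball and genuinely retracts only outside it---but your actual argument in the preceding paragraph uses this correctly.
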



\section{Existence of weak solutions to a regularized system} \label{sec:regularized_approx}

The goal of this section is to prove the existence of weak solutions to a regularized Cahn--Hilliard--Biot system with an additional bi-laplacian of $\vphi$ in the equation for the chemical potential $\mu$. To this end, we employ a semi-Galerkin ansatz, solving the subsystem consisting of \eqref{eq:strong_formulation_phi}, \eqref{eq:strong_formulation_mu}, \eqref{eq:strong_formulation_theta}, \eqref{eq:strong_formulation_p} and some fixed $\bm{u}$ for $\vphi, \mu, \theta, p$ in subspaces of $H^1(\Omega)$ and $X(\Omega)$, respectively. Here, we reduce the differential-algebraic system to a system of ordinary differential equations for $\vphi, \theta$ and exploit a Lipschitz-estimate along with a Gronwall-type argument to show uniqueness and continuous dependence on the data. On the other hand, for fixed $\vphi, p$, maximal $L^q$-regularity yields a unique solution $\bm{u}$ to \eqref{eq:strong_formulation_u} and we use a fixed-point argument relying on the Leray--Schauder principle to find an approximate solution to the whole system. \par
Exploiting the generalized gradient flow structure, we derive \textit{a priori} estimates and deduce first compactness results. Since these are not sufficient to pass to the limit, we use a lesser known version of the seminal Aubin--Lions--Simon theorem to deduce strong convergence of $p$ in $L^2(\OT)$, which in turn allows us to establish strong convergence of $\bm{u}$ in $L^2(0, T; \bm{H}^1_{\Gamma_D}(\Omega))$ and of $\theta$ in $L^2(\OT)$.\par
The main result of this section is the following theorem. 

\begin{theorem}[Existence of weak solutions to the regularized problem]\label{th:existence_reg} 
	Assume that the assumptions \linebreak \ref{A:domain}-\ref{A:initial} are fulfilled. Further, let $\vphi_0 \in H^2_{\bm{n}}(\Omega)$, $\bm{u}_0 \in \bm{H}^{1}_{\Gamma_D}(\Omega)$, $\theta_0 \in X(\Omega)$ and $\varrho > 0$. Finally, let $\phi \in C^\infty_c(\R^n)$ be a standard convolution kernel. 
	Then, there exist functions $(\vphi, \mu, \bm{u}, \theta, p)$ such that
	\begin{align*}
		\vphi &\in L^\infty(0, T; H_{\bm{n}}^2(\Omega)) \cap H^1(0, T; H^1(\Omega)'), \\ 
		\mu  &\in L^2(0, T; H^1(\Omega)), \\ 
		\bm{u} & \in H^1(0, T; \bm{X}(\Omega)), \\ 
		\theta &\in L^\infty(0, T; L^2(\Omega)) \cap H^1(0, T; X'(\Omega)), \\ 
		p &\in L^2(0, T; X(\Omega )) \cap L^\infty(0, T; L^2(\Omega))
	\end{align*}
	satisfying the following equations: 
	\begin{subequations}
		\begin{align}\label{eq:weak1_reg}
			\int_0^T  \prescript{}{(H^1)'}{}\langle \pt \vphi, \zeta \rangle_{H^1}\, \dt =   \int_{\OT}- m(\vphi) \nabla \mu \cdot \nabla \zeta + R(\vphi, \E(\bu), \theta) \zeta \; \dtx
		\end{align}
		for all $\zeta \in L^2(0, T; H^1(\Omega) )$;
		\begin{align*}\label{eq:weak2_reg}
			\int_{\OT} \mu\, \zeta \dtx 
			=   \int_{\OT} &\vepsilon \nabla \vphi \cdot \nabla \zeta +
			\varrho^{1/2} \Delta \vphi \Delta \zeta 
			+ \Big[ \frac{ 1}{\vepsilon} \psi'(\vphi)  + \Wp(\vphi, \E(\bm{u}))   \\ 
			&\begin{aligned}[t]
			&+ \frac{M'(\vphi)}{2} (\theta - \alpha(\vphi) \nabla \cdot \bm{u})^2  
			-M(\vphi) (\theta - \alpha(\vphi) \nabla \cdot \bm{u}) \alpha'(\vphi) \nabla \cdot \bm{u} \Big]\,   \zeta \; \dtx
			\end{aligned}
			\numberthis
		\end{align*}
		for all $\zeta \in L^\infty(\OT) \cap L^2(0, T; H_{\bm{n}}^2(\Omega))$;
		\begin{align*}\label{eq:weak3_reg}
			&\int_{\OT} \C_\nu (\vphi ) \E(\partial_t \bm{u}) :  \E(\bm{\eta}) + \WE(\vphi , \E(\bm{u})) : \E(\bm{\eta}) -  \alpha(\vphi) p\, (\nabla \cdot \bm{\eta})* \phi \dtx \\ 
			& \quad = \int_{\OT} \bm{f} \cdot \bm{\eta} \dtx + \int_0^T \int_{\bm{\Gamma_N}} \bm{g} \cdot \bm{\eta} \dH \dt
			- \vr \int_{\OT} \nabla \theta \cdot \nabla (\alpha(\vphi) (\nabla \cdot \bm{\eta}) * \phi ) \dtx 
			 \numberthis 
		\end{align*}
		for all $\bm{\eta} \in L^2(0, T; \bm{X}(\Omega))$;
		\begin{align}\label{eq:weak4_reg}
			\int_0^T  \prescript{}{X'}{}\langle \pt \theta, \xi \rangle_{X}\, \dt =   \int_{\OT} - \kappa(\vphi) \nabla p \cdot \nabla \xi + S_f(\vphi, \E(\bu), \theta) \xi \; \dtx
		\end{align}
		for all $\xi \in L^2(0, T; X(\Omega))$; 
		\begin{align}\label{eq:weak5_reg}
			\int_{\Omega_T} p \xi \dtx = \int_{\OT} \vr \nabla \theta \cdot \nabla \xi + M(\vphi) (\theta - \alpha(\vphi) \nabla \cdot \bm{u})\, \xi \dtx  
		\end{align}
		for all $\xi \in L^2(0, T; X(\Omega))$; 
		and 
		\begin{equation}\label{eq:weak_initial_reg}
			(\vphi, \theta, \bm{u})_{|t= 0} = (\vphi_0, \theta, \bm{u}_0) \quad \textrm{a.e. in}\ \  \Omega. 
		\end{equation}
	\end{subequations}
	Moreover, the weak solution satisfies 
	\begin{align*}
			\norm{\vphi_{\varrho}(t)}_{H^1}^2 
			&+ \norm{ \varrho^{1/4} \Delta \vphi_{\varrho}(t)}_{L^2}^2  
			+ \norm{\psi(\vphi_{\varrho})(t)}_{L^1} 
			+ \norm{\bm{u}_{\varrho}(t)}_{\bm{X}}^2
			+  \norm{\theta_{\varrho}(t)}_{L^2}^2 
			+ \norm{\vr^{1/2 }\nabla \theta_{\vr} (t)}_{L^2}^2\\ 
			&+\norm{\pt \varphi_{\varrho}}_{L^2((H^1)')}^2
			+\norm{\pt \theta_{\varrho}}_{L^2(X')}^2
			+\norm{ \mu_{\varrho}}_{L^2(H^1)}^2 
			+ \norm{ p_{\varrho}}_{L^2(X)}^2
			+ \norm{\pt \bm{u}_{\varrho}}_{L^2(\bm{X})}^2
			\leq C, 
	\end{align*} 
	for almost every $t \in (0, T)$ and some $C = C(T) > 0$ independent of $(\vphi, \mu, \bm{u}, \theta, p)$ and $\varrho \in (0, 1)$. 
\end{theorem}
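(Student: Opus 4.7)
The plan is to implement the semi-Galerkin strategy indicated in the introduction: solve the Cahn--Hilliard/fluid subsystem for $(\vphi, \mu, \theta, p)$ in finite-dimensional subspaces by ODE methods while keeping the visco-elastic displacement equation on the full space, where it is treated via Theorem \ref{th:max_reg_non_autonomous}; couple the two blocks through a Leray--Schauder fixed-point argument (Theorem \ref{leray_schauder}); and finally pass to the Galerkin limit.

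First I would fix orthonormal $L^2$-bases $\{w_i\}_{i \in \N} \subset H^2_{\bm{n}}(\Omega)$ and $\{v_i\}_{i \in \N} \subset X(\Omega)$ made up of eigenfunctions of the Neumann Laplacian, resp.\ of the mixed boundary Laplacian, and set $Y_k \coloneqq \spn\{w_1, \dots, w_k\}$, $Z_k \coloneqq \spn\{v_1, \dots, v_k\}$. Given any input $\bu \in H^1(0, T; \bm{X}(\Omega))$, the projections of \eqref{eq:weak1_reg}, \eqref{eq:weak2_reg}, \eqref{eq:weak4_reg}, \eqref{eq:weak5_reg} onto $Y_k \times Y_k \times Z_k \times Z_k$ form a differential-algebraic system for $(\vphi_k, \mu_k, \theta_k, p_k)$; because of the bi-Laplacian in \eqref{eq:weak2_reg} and the $\vr$-regularization in \eqref{eq:weak5_reg}, the components $\mu_k$ and $p_k$ are affine-linearly determined by $(\vphi_k, \theta_k, \bu)$, reducing the problem to a Lipschitz ODE system for the coefficient vectors of $(\vphi_k, \theta_k)$. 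Picard--Lindelöf together with the a priori estimates below yields solvability on $[0, T]$.

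For the displacement part, I would treat $\vphi, p$ as given and rewrite \eqref{eq:weak3_reg} as $\pt \bu + \mathcal{A}(t) \bu = \bm{F}(t)$ in $Y = \bm{X}(\Omega)'$ with $D = \bm{X}(\Omega)$, where $\mathcal{A}(t)$ encodes the visco-elastic operator $-\nabla \cdot (\C_\nu(\vphi(t)) \E(\cdot))$ perturbed by the lower-order term coming from $W_{,\E}(\vphi, \E(\bu))$. By Remark \ref{rem:applicability_elliptic}, $\C_\nu(\vphi(t))$ induces an isomorphism in this framework, while the Lipschitz continuity of $\C_\nu$ (assumption \ref{A:C_nu}) combined with $\vphi \in L^\infty(\OT)$ (which holds since $Y_k \subset H^2_{\bm{n}}(\Omega) \hookrightarrow L^\infty(\Omega)$ in dimension $n \le 3$) verifies the relative continuity condition \eqref{eq:relative_continuity}. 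Theorem \ref{th:max_reg_non_autonomous} then gives a unique $\bu \in H^1(0, T; \bm{X}(\Omega))$ and the quantitative bound \eqref{eq:estimate_solution}. Composing the two steps defines a completely continuous operator $B \colon \bu \mapsto \bu^*$ on $L^2(0, T; \bm{X}(\Omega))$, where compactness comes from $H^1(0, T; \bm{X}) \xhookrightarrow{cpt} L^2(0, T; \bm{X})$; the a priori bound required for $\bu = \lambda B \bu$, $\lambda \in [0, 1]$, is produced from the gradient flow identity by testing \eqref{eq:weak1_reg} with $\mu_k$, \eqref{eq:weak2_reg} with $-\pt \vphi_k$, \eqref{eq:weak3_reg} with $\pt \bu$ and \eqref{eq:weak4_reg} with $p_k$, and using \eqref{eq:weak5_reg} to cancel the mixed terms.

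Finally I would pass $k \to \infty$. The same energy identity (now with the extra regularizations present) gives the uniform bounds stated in the theorem; by the Banach--Alaoglu and Aubin--Lions theorems, I obtain the obvious weak/weak-$*$ convergences. The main obstacle is upgrading $p_k \rightharpoonup p$ to a strong convergence in $L^2(\OT)$, which is needed to pass to the limit in the nonlinear couplings $\alpha(\vphi_k)\, p_k$ and $M(\vphi_k)(\theta_k - \alpha(\vphi_k) \nabla \cdot \bu_k)$. Since no useful bound on $\pt p_k$ is available, I would instead estimate the time translations $\norm{p_k(\cdot+h) - p_k(\cdot)}_{L^2(0, T-h; W^{-3, 2}(\Omega))}$ by exploiting \eqref{eq:weak5_reg}: the regularization term $\vr \nabla \theta_k$ and the quantity $M(\vphi_k)(\theta_k - \alpha(\vphi_k) \nabla \cdot \bu_k)$ are controlled through the uniform bounds on $\pt \theta_k \in L^2(0, T; X'(\Omega))$ and $\pt \bu_k \in L^2(0, T; \bm{X}(\Omega))$, costing up to three spatial derivatives in duality. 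Combined with the uniform bound $p_k \in L^2(0, T; X(\Omega))$ and the compact embedding $X(\Omega) \xhookrightarrow{cpt} L^2(\Omega) \hookrightarrow W^{-3, 2}(\Omega)$, the translation version of the Aubin--Lions--Simon theorem yields $p_k \to p$ strongly in $L^2(\OT)$. The strong convergence $\bu_k \to \bu$ in $L^2(0, T; \bm{X}(\Omega))$ follows by testing the difference of the displacement equations with $\pt(\bu_k - \bu)$ and invoking \hyperref[A:W_strongly_monotone]{(A3.1)}; $\theta_k \to \theta$ strongly in $L^2(\OT)$ then follows from \eqref{eq:weak5_reg}. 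All remaining nonlinear terms pass to the limit by the continuity and growth assumptions of Section \ref{sec:assumptions}, which closes the proof.
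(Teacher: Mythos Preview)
Your overall strategy matches the paper's, but two points deserve scrutiny.

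\medskip
\textbf{Compactness for the fixed-point argument.} You claim that the operator $B\colon \bu \mapsto \bu^*$ is completely continuous on $L^2(0,T;\bm{X}(\Omega))$ because $H^1(0,T;\bm{X}) \hookrightarrow L^2(0,T;\bm{X})$ compactly. This embedding is \emph{not} compact when $\bm{X}$ is infinite-dimensional (take $u_n(t)\equiv e_n$ for an orthonormal basis $\{e_n\}$ of $\bm{X}$). The paper obtains complete continuity differently: the map $\bu \mapsto (\vphi_k,\theta_k,p_k)$ lands in finite-dimensional Galerkin spaces, so the compact embeddings $Z_k \hookrightarrow H^2_{\bm n}(\Omega)$ and $Y_k \hookrightarrow X(\Omega)$ combined with Aubin--Lions make this first leg compact; the subsequent map $(\vphi,\theta,p)\mapsto \bu^*$ is merely continuous. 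The composition is therefore completely continuous on $H^1(0,T;\bm X(\Omega))$, and the fixed-point argument is run there rather than on $L^2$.

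\medskip
\textbf{The maximal-regularity framework.} You set up the displacement equation with $Y=\bm X(\Omega)'$ and $D=\bm X(\Omega)$, which forces you to verify that the Lam\'e-type operator $-\nabla\cdot(\C_\nu(\vphi(t))\E(\cdot))$ has maximal $L^p$-regularity as an unbounded operator between these spaces---a nontrivial fact that you do not justify. The paper sidesteps this entirely by first inverting the visco-elastic operator $\mathcal B(\vphi)$ via Theorem~\ref{thm:elliptic_sobolev}, rewriting the equation as $\pt\bu + \mathcal B^{-1}(\vphi)\mathcal C(\vphi)\bu = \mathcal B^{-1}(\vphi)\bm f^*$ with $D=Y=\bm W^{1,q}_{\Gamma_D}(\Omega)$. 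Then $\mathcal A(\vphi)=\mathcal B^{-1}(\vphi)\mathcal C(\vphi)$ is a \emph{bounded} operator on a single Banach space, and bounded operators automatically have maximal regularity; Theorem~\ref{th:max_reg_non_autonomous} applies with the relative-continuity condition holding trivially.

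\medskip
A smaller point: in the limit passage you propose the order $p_k\to p$, then $\bu_k\to\bu$, then $\theta_k\to\theta$. But the Gronwall argument for $\bu_k\to\bu$ in $L^2(0,T;\bm X)$ must handle the term $\vr\int \nabla\theta_k\cdot\nabla(\alpha(\vphi_k)(\nabla\cdot\pt(\bu_k-\bu))*\phi)$ from \eqref{eq:weak3_reg}, which requires strong convergence of $\nabla\theta_k$. The paper therefore first upgrades $\theta_k$ via an elliptic-regularity estimate (Lemma~\ref{lem:elliptic_reg_theta}) placing $\theta_k$ in $L^2(0,T;H^{1+\gamma}_{\Gamma_D})$ uniformly, so that Aubin--Lions gives $\theta_k\to\theta$ in $L^2(0,T;X)$ \emph{before} tackling $\bu_k$.
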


\subsection{Definition of the discretized problem}
	As discussed before, we want to employ a semi-Galerkin ansatz, discretizing parts of the system in space while solving the other on the whole function space. First, we give a precise formulation of the approximate problems we investigate, for which we consider 
\begin{itemize}
	\item $\{z_i\}_{i \inN}$ a subset of eigenfunctions to the Neumann-Laplace operator with homogeneous boundary conditions. It is well known that these can be chosen as an orthonormal Schauder basis of $L^2(\Omega)$, which is simultaneously orthogonal in $H^1(\Omega)$ such that $z_1 \equiv \tfrac{1}{|\Omega|^{1/2}}$.  Moreover, it was shown in \cite[Sec.~3]{lam06} that these also form a Schauder basis of $H^2_{\bm{n}}(\Omega)$; 
	
	\item $\{y_i\}_{i \inN}$ a subset of eigenfunctions to the corresponding eigenvalue problem with mixed boundary conditions
		\begin{gather*}
			\begin{cases}
				- \Delta y= \lambda y \quad &\textrm{in } \Omega, \\ 
				y  = 0 \quad &\textrm{on } \Gamma_D, \\ 
				\nabla y \cdot \bm{n} = 0 & \textrm{on } \Gamma_N. 
			\end{cases}
		\end{gather*}
	 Spectral theory for self-adjoint, compact operators implies that these can be chosen to form an orthonormal basis of $L^2(\Omega)$, which is simultaneously a dense subset of $X(\Omega)$. 
\end{itemize}
Using the notation 
\begin{equation*}
	Z_k \coloneqq  \textnormal{span} \{ z_i : i \leq k \}, \quad \quad Y_k  \coloneqq \textnormal{span} \{ y_i : i \leq k \}
\end{equation*}
and defining $\Pi_k^z$, $\Pi_k^y$ as the orthogonal $L^2$-projections onto the spaces $Z_k$ and  $Y_k$, respectively, the goal is to find functions of the form 
\begin{subequations}
	\begin{align}
		&\vphi_k (t, \bm{x}) = \sum_{i = 1}^k a_i^k(t) z_i(\bm{x}), 
		\quad \mu_k (t, \bm{x}) = \sum_{i = 1}^k b_i^k(t) z_i(\bm{x}), \label{eq:galerkin_ansatz1}\\ 
		&\theta_k (t, \bm{x}) = \sum_{i = 1}^k c_i^k(t) y_i(\bm{x}), 
		\quad p_k (t, \bm{x}) = \sum_{i = 1}^k e_i^k(t) z_i(\bm{x})\label{eq:galerkin_ansatz2}, 
	\end{align}
\end{subequations}
together with a function $\bm{u}_k \in H^1(0, T; \bm{X}(\Omega))$ with $\bm{u}_k(0) = \bm{u}_0$  such that for all $t \in [0, T]$ and all $j \leq k$ the following equations hold: 
\begin{subequations}
	\begin{align}
		0 &= \langle \pt  \vphi_k, z_j \rangle + \langle m(\vphi) \nabla \mu_k, \nabla z_j \rangle - \langle R(\vphi_k,\E(\bu_k), \theta_k), z_j \rangle,  \label{eq:galerkin1}\\ 		
		0 &= \label{eq:galerkin2}
		\begin{aligned}[t]
			\langle \mu_k - \frac{1}{\vepsilon} \psi'(\vphi_k), z_j \rangle 
			&- \langle \vepsilon \nabla \vphi_k, \nabla z_j \rangle 
			- \varrho^{1/2} \langle \Delta \vphi_k, \Delta z_j \rangle  
			- \langle \Wp(\vphi_k , \E(\bm{u}_k)), z_j \rangle  \\
			& + \langle  M(\vphi_k) (\theta_k - \alpha(\vphi_k) \nabla \cdot \bm{u}_k) \alpha'(\vphi_k) \nabla \cdot \bm{u}_k, z_j \rangle \\
			& - \langle \tfrac{M'(\vphi_k)}{2} (\theta_k - \alpha(\vphi_k) \nabla \cdot \bm{u}_k)^2 , z_j \rangle ,
		\end{aligned}\\ 
		0 &= \langle \pt \theta_k , y_j\rangle + \langle \kappa(\vphi_k)\nabla p_k, \nabla y_j \rangle - \langle S_f(\vphi_k,\E(\bu_k), \theta_k), y_j \rangle,
		\label{eq:galerkin3} \\
		0 &= \langle p_k, y_j \rangle - \vr \langle \nabla \theta_k , \nabla y_j \rangle  - \langle M(\vphi_k) (\theta_k - \alpha(\vphi_k) \nabla \cdot \bm{u}_k), y_j \rangle,  \label{eq:galerkin4}\\ 
		\vphi_k(0) &= \vphi_{0, k} \coloneqq  \Pi_k^z (\vphi_0), \\
		\theta_k(0) &= \theta_{0, k} \coloneqq  \Pi_k^y (\theta_0), \label{eq:galerkin6}
	\end{align}
	where $\langle \cdot, \cdot \rangle$ denotes the $L^2$-inner product. 
	Moreover, for all $\bm{\eta} \in \bm{X}(\Omega) $ the following equation has to be satisfied for almost all $t \in (0, T)$
	\begin{align*}
		\int_{\Omega} \C_\nu (\vphi_k ) \E(\pt \bm{u}_k)  :  \E(\bm{\eta}) &+ \WE(\vphi_k , \E(\bm{u}_k) ) :  \E(\bm{\eta})  - \alpha(\vphi_k) p_k (\nabla \cdot \bm{\eta})* \phi \dx \\ 
		&= \int_{\Omega} \bm{f} \cdot \bm{\eta} \dx + \int_{\Gamma_D} \bm{g}\cdot \bm{\eta} \dH 
		- \vr \int_{\Omega} \nabla \theta_k \cdot \nabla (\alpha(\vphi_k) (\nabla \cdot \bm{\eta}) * \phi ) \dx .  \numberthis \label{eq:galerkin7}
	\end{align*} 
\end{subequations}

\begin{remark}\label{remark:projection_p}
	Owing to the properties of the projection, it follows from \eqref{eq:galerkin4} that for all $\xi \in H^1(\Omega)$ 
	\begin{align*}
		\int_\Omega p_k \xi \dx = \int_\Omega p_k (\Pi_k^y \xi) \dx 
		&= \int_\Omega \vr \nabla \theta_k \cdot \nabla \Pi_k^y \xi + M(\vphi_k) (\theta_k - \alpha(\vphi_k) \nabla \cdot \bm{u}_k) (\Pi_k^y \xi) \dx\\ 
		&= \int_\Omega \vr \nabla \theta _k\cdot \nabla  \xi + \Pi^y_k (M(\vphi_k) (\theta_k - \alpha(\vphi_k) \nabla \cdot \bm{u}_k) )\, \xi \dx \numberthis \label{eq:p_identity_new}
	\end{align*}
\end{remark}

\subsection{Energy estimates} \label{sec:energy_estimates}

Exploiting the gradient flow-like structure of the Cahn--Hilliard--Biot system, we will now derive the energy estimates on which all \textit{a priori} estimates in this work rely on. Note that we will frequently refer to this section instead of repeating these computations. 
\par 
Let us first point out that 
\begin{equation*}
	\frac{d}{dt} \int_\Omega W(\vphi_k , \E(\bm{u}_k)) \dx = \langle  \Wp(\vphi_k , \E(\bm{u}_k)), \pt \vphi_k  \rangle + \langle \WE (\vphi_k , \E(\bm{u}_k)), \E(\pt \bm{u}_k) \rangle, 
\end{equation*}
and 
\begin{equation*}
	\frac{d}{dt} \int \frac{\vepsilon}{2} \abs{\nabla \vphi_k}^2 + \frac{1}{\vepsilon} \psi(\vphi_k) \dx = \langle \vepsilon\nabla \vphi_k, \nabla \pt \vphi_k \rangle + \frac{1}{\vepsilon} \langle \psi'(\vphi_k) , \pt \vphi_k \rangle.  
\end{equation*}
We further find 
\begin{align*}
	\frac{d}{dt}& \int_\Omega \frac{M(\vphi_k)}{2} (\theta_k  - \alpha(\vphi_k) \nabla \cdot \bm{u}_k )^2
	\dx\\ 
	& = \langle  \frac{M'(\vphi_k)}{2} (\theta_k- \alpha(\vphi_k) \nabla \bm{u}_k)^2 ,  \pt \vphi \rangle 
	- \langle M(\vphi_k) (\theta_k - \alpha(\vphi_k) \nabla \cdot \bm{u}_k) \alpha'(\vphi_k)  \nabla \cdot \bm{u}_k , \pt \vphi_k \rangle   \\ 
	& \quad \quad - \langle M(\vphi_k) \alpha(\vphi_k)(\theta_k - \alpha(\vphi_k) \nabla \cdot \bm{u}_k),  \nabla \cdot \pt \bm{u}_k \rangle 
	+  \langle M(\vphi_k)(\theta_k - \alpha(\vphi_k) \nabla \cdot \bm{u}_k), \pt \theta_k \rangle  . 
\end{align*}

To obtain suitable \textit{a priori} estimates for solutions to the system of differential-algebraic equations, we test  \eqref{eq:galerkin1}-\eqref{eq:galerkin4} with suitable functions under the assumption that $\bm{u}_k \in H^1(0, T; \bm{X}(\Omega))$ is given. In particular, we multiply \eqref{eq:galerkin1} with $b_k^j$ and $\eqref{eq:galerkin2}$ with $(a_k^j)'$. Moreover, we multiply \eqref{eq:galerkin3} with $d_k^j$ and $\eqref{eq:galerkin4}$ with $(c_k^j)'$. Finally, summing from $j = 1$ to $k$ yields 
\begin{align*}
	0 &= \langle \pt \vphi_k, \mu_k \rangle + \langle m(\vphi) \nabla \mu_k, \nabla \mu_k \rangle - \langle R(\vphi_k,\E(\bu_k), \theta_k), \mu_k \rangle, \\ 		
	0 &= 
	\begin{aligned}[t]
		&\langle \mu_k - \frac{1}{\vepsilon} \psi'(\vphi_k), \pt \vphi_k \rangle 
		- \langle \vepsilon \nabla \vphi_k, \nabla \pt \vphi_k \rangle 
		- \varrho^{1/2} \langle \Delta \vphi_k, \Delta \pt \vphi_k\rangle  
		- \langle \Wp(\vphi_k , \E(\bm{u}_k)),\pt \vphi_k \rangle  \\
		&+\langle M(\vphi_k) (\theta_k - \alpha(\vphi_k) \nabla \cdot \bm{u}_k) \alpha'(\vphi_k)  \nabla \cdot \bm{u}_k , \pt \vphi_k\rangle 
		 - \langle\tfrac{M'(\vphi_k)}{2} (\theta_k- \alpha(\vphi_k) \nabla \bm{u}_k)^2 ,\pt \vphi_k \rangle ,
	\end{aligned}\\ 
	0 &= \langle \pt \theta_k, p_k \rangle + \langle \kappa(\vphi_k)\nabla p_k, \nabla p_k \rangle - \langle S_f(\vphi_k,\E(\bu_k), \theta_k), p_k \rangle,
 \\
	0 &= \langle p_k, \pt \theta_k \rangle - \vr \langle \nabla \theta_k , \nabla \pt \theta \rangle - \langle M(\vphi_k)(\theta_k - \alpha(\vphi_k) \nabla \cdot \bm{u}_k), \pt \theta_k \rangle. 
\end{align*}
Summing over these equations while adding and subtracting the term  
\begin{align*}
		\langle \WE(\vphi_k, \E(\bm{u}_k)), \E(\pt \bm{u}_k) \rangle - \langle M(\vphi_k)(\theta_k - \alpha(\vphi_k) \nabla \cdot \bm{u}_k) \alpha(\vphi_k) , \nabla \cdot \pt \bm{u}_k \rangle 
\end{align*}
leads to 
\begin{gather} \label{eq:galerkin_sum} 
	\begin{align*}
		&\norm{m(\vphi_k)^{1/2} \nabla \mu_k}_{L^2}^2
		+\norm{\kappa(\vphi_k)^{1/2} \nabla p_k}_{L^2}^2\\ 
		&\quad +\frac{d}{dt} \Big[
		\begin{aligned}[t]
			\int_\Omega \frac{\vepsilon}{2} \abs{\nabla \vphi_k}^2&+ \frac{1}{\vepsilon} \psi(\vphi_k) +  \frac{\varrho^{1/2}}{2} \abs{\Delta \vphi_k}^2 + \frac{\vr}{2} \abs{\nabla \theta_k}^2 \dx 
			+ \int_\Omega W(\vphi_k, \E(\bm{u}_k)) \dx \\ 
			&+ \int_\Omega \frac{M(\vphi_k)}{2}   (\theta_k - \alpha(\vphi_k) \nabla \cdot \bm{u}_k)^2 \dx
			\Big]
		\end{aligned}\\ 
		& = \begin{aligned}[t]
			\langle R(\vphi_k, \E(\bu_k), \theta_k), \mu_k \rangle &+ \langle S_f(\vphi_k,\E(\bu_k), \theta_k), p_k \rangle 
			+	\langle \WE(\vphi_k, \E(\bm{u}_k)), \E(\pt \bm{u}_k) \rangle \\ 
			& - \langle M(\vphi_k)(\theta_k - \alpha(\vphi_k) \nabla \cdot \bm{u}_k) \alpha(\vphi_k) , \nabla \cdot \pt \bm{u}_k \rangle . 
		\end{aligned}
	\end{align*}\numberthis 
\end{gather}

\subsubsection*{Estimates for the right-hand side }

Observe that the function which is constant with value $\tfrac{1}{|\Omega|^{1/2}}$ is an eigenfunction associated with the eigenvalue $0$ of the Neumann-Laplace operator. Hence, testing \eqref{eq:galerkin2} with this function and invoking Young's inequality yields, together with Poincaré's inequality and \ref{A:psi_1}, 
\begin{gather*}
	\begin{align*}
		| \Omega|^{1/2}  \Big| \dashint \mu_k  \Big|
		&= \frac{1}{|\Omega|^{1/2}}
		\begin{aligned}[t]
			  \Big|  \int_\Omega &\psi'(\vphi_k) 
			+ \Wp(\vphi_k , \E(\bm{u}_k))\\ 
			&+\tfrac{M'(\vphi_k)}{2} (\theta_k - \alpha(\vphi_k) \nabla \cdot \bm{u}_k)^2  
			-M(\vphi_k)  (\theta_k - \alpha(\vphi_k) \nabla \cdot \bm{u}_k) \alpha'(\vphi_k) \nabla \cdot \bm{u}\, \dx \Big|  
		\end{aligned}\\ 
		&\leq C(\norm{\psi'(\vphi_k)}_{L^1} + \norm{\vphi_k }_{L^2}^2 + \norm{\theta_k}_{L^2}^2 + \norm{\E(\bm{u}_k)}_{L^2}^2  +\norm{\nabla \cdot \bm{u}_k}_{L^2}^2 + 1) \\ 
		&\leq C(\norm{\psi(\vphi_k)}_{L^1} + \norm{\vphi_k}_{L^2}^2 + \norm{\theta_k}_{L^2}^2 + \norm{\bm{u}_k}_{\bm{X}}^2    + 1). 
	\end{align*}\numberthis \label{eq:mu_mean_value}
\end{gather*}
\par 
We rewrite the first term on the right-hand side of \eqref{eq:galerkin_sum} as 
\begin{equation*}
	\langle R(\vphi_k, \E(\bu_k), \theta_k), \mu_k \rangle = \langle R(\vphi_k, \E(\bu_k), \theta_k), \mu_k - \dashint \mu_k \rangle + \langle R(\vphi_k, \E(\bu_k), \theta_k), \dashint \mu_k \rangle
\end{equation*}
and deduce from the fact that $R$ is bounded
\begin{equation*}
	| \bigl< R(\vphi_k, \E(\bu_k), \theta_k), \dashint \mu_k \bigr> | \leq 
	C(\norm{\psi(\vphi_k)}_{L^1} + \norm{\vphi_k}_{L^2}^2 + \norm{\theta}_{L^2}^2 + \norm{\bm{u}_k}_{\bm{X}}^2   + 1) . 
\end{equation*}
The inequalities of Young and Poincaré further imply 
\begin{equation*}
	| \bigl<  R(\vphi_k,\E(\bu_k), \theta_k), \mu_k - \dashint \mu_k \bigr> |  \leq C + {\rho_\mu} \norm{\mu_k - \dashint \mu_k }_{L^2}^2
	\leq C + \rho_\mu C_p  \norm{\nabla \mu_k }_{L^2}^2, 
\end{equation*}
where $\rho_\mu > 0$ is a small parameter yet to be determined. 
\par 
Recalling the growth conditions for $S_f$, similar arguments as above show for all $\rho_p > 0$
\begin{align*}
	|\langle S_f(\vphi_k, \E(\bu_k), \theta_k), p_k \rangle | &\leq  C\norm{S_f(\vphi_k, \E(\bm{u}_k), \theta_k)}_{L^2}^2 + \rho_p \norm{p_k}_{L^2}^2\\ 
	& \leq 
	C(\norm{\vphi_k}_{L^2}^2 + \norm{\theta_k}_{L^2}^2 + \norm{\bm{u}_k}_{\bm{X}}^2) + \rho_p C_p\norm{ \nabla p_k}_{L^2}^2, 
\end{align*}
where the constant $C = C(\rho_p)$ may depend on $\rho_p$. 
\par 
It remains to estimate the terms we added artificially. Here, the growth conditions on the elastic energy along with Young's inequality imply that, for some small parameter $\rho_{\pt \bm{u}} > 0$, 
\begin{equation*}
	| \langle \WE(\vphi_k, \E(\bm{u}_k)), \E(\pt \bm{u}_k) \rangle |  \leq C(\norm{\vphi_k}_{L^2}^2 +\norm{\bm{u}_k}_{\bm{X}}^2 + 1) 
	+ \frac{ \rho_{\pt \bm{u}}}{2}  \norm{\pt \bm{u}_k}_{\bm{X}}^2. 
\end{equation*}
Invoking Young's inequality once again leads to 
\begin{align*}
 \langle M(\vphi_k)(\theta_k - \alpha(\vphi_k) \nabla \cdot \bm{u}_k) \alpha(\vphi_k) , \nabla \cdot \pt \bm{u}_k \rangle | 
	\leq C(   \norm{\bm{u}_k}_{\bm{X}}^2 + \norm{\theta_k}_{L^2}^2) + \frac{ \rho_{\pt \bm{u}}}{2}  \norm{\pt \bm{u}_k}_{\bm{X}}^2 . 
\end{align*}
\par 
Thus, the right-hand side can be estimated from above as 
\begin{align*}
	&| \langle R(\vphi_k,\E(\bu_k), \theta_k), \mu_k \rangle|  
	\begin{aligned}[t]
		&+ | \langle S_f(\vphi_k,\E(\bu_k), \theta_k), p_k \rangle | 
		+ |\langle \WE(\vphi_k, \E(\bm{u}_k)), \E(\pt \bm{u}) \rangle |\\ 
		& + |\langle M(\vphi_k)(\theta_k - \alpha(\vphi_k) \nabla \cdot \bm{u}_k) \alpha(\vphi_k) , \nabla \cdot \pt \bm{u}_k \rangle| 
	\end{aligned}\\ 
	&\quad \leq C(\norm{\psi(\vphi_k)}_{L^1} + \norm{\vphi_k}_{L^2}^2 + \norm{\theta_k}_{L^2}^2 + \norm{\bm{u}_k}_{\bm{X}}^2   + 1) +  \rho_p C_p \norm{ \nabla p_k}_{L^2}^2 + \rho_\mu C_p  \norm{\nabla \mu_k }_{L^2}^2 +  \rho_{\pt \bm{u}} \norm{\pt \bm{u}_k}_{\bm{X}}^2. 
\end{align*}

\begin{remark}
	We wish to point out that the term $C \norm{\pt \bm{u}_k}_{\bm{X}}^2$ only appears because we had to artificially add certain terms, which is not necessary when deriving \textit{a priori} estimates for the full system. 
	In the application of Gronwall's lemma later on, this term can be bounded by a constant. 
\end{remark}

\subsubsection*{Estimates for the left-hand side}

To establish the crucial estimates from below, we remind ourselves of the decomposition of the total energy into three components, as discussed in Section~\ref{sec:derivation}, and treat each contribution separately.  

\subparagraph*{Interface energy:}

\begin{subequations}
	Recalling the assumptions \ref{A:psi_1} and \ref{A:psi_2}, we calculate with the help of Young's inequality for some $\rho_{\psi_2} > 0$ and all $t \in [0, T]$
	\begin{align*}
		\int_{\Omega} \psi(\vphi_k) \dx 
		= 	\int_{\Omega} \psi_1(\vphi_k) + \psi_2(\vphi_k)\dx
		&\geq \int_{\Omega} \gamma_{\psi_1} \abs{\vphi_k}^p - c_{\psi_1}  - C_2 |\vphi_k|^2 - C_{\psi_2} \dx  \\ 
		&\geq (\gamma_{\psi_1} - \rho_{\psi_2}) \norm{\vphi_k}^p_{L^p} -  C . 
	\end{align*}
	Note that this is well-defined, as can be seen from $\vphi_k \in Z_k \hookrightarrow H^2(\Omega) \hookrightarrow L^\infty(\Omega)$. 
	Using this, we compute for all $t \in [0, T]$
	\begin{equation}\label{estimate_chemical_below}
		\int_\Omega\frac{\vepsilon}{2} \abs{\nabla \vphi_k}^2+ \frac{1}{\vepsilon} \psi(\vphi_k) \dx 
		\geq \frac{\vepsilon}{2} \norm{\nabla \vphi_k}_{L^2}^2 + \frac{1}{2 \vepsilon} \norm{\psi(\vphi_k)}_{L^1} +\frac{\gamma_{\psi_1} - \rho_{\psi_2}}{2 \vepsilon} \norm{\vphi_k}_{L^p}^p - C. 
	\end{equation}
	On the other hand, it holds 
	\begin{equation}\label{estimate_chemical_above}
		\int_\Omega\frac{\vepsilon}{2} \abs{\nabla \vphi_{0, k}}^2+ \frac{1}{\vepsilon} \psi(\vphi_{0, k}) \dx 
		\leq \frac{\vepsilon}{2} \norm{\vphi_{0, k}}_{H^1}^2 +  \frac{1}{\vepsilon}  \norm{\psi(\vphi_{0, k})}_{L^1}. 
	\end{equation}
\end{subequations}

\subparagraph{Elastic energy:}

\begin{subequations}
	Taking advantage of the estimate derived in \eqref{ineq:W} along with Korn's inequality and Young's inequality, we see
	\begin{align*}
		\int_\Omega W(\vphi_k , \E(\bm{u}_k)) \dx  
		&\geq \int_\Omega \frac{C_1}{4} \abs{\E(\bm{u}_k)}^2 - C'(\abs{\vphi_k}^2 + 1) \dx \\
		& \geq C_{\bm{u}} \norm{\bm{u}_k}_{\bm{X}}^2 - \rho_\vphi \norm{\vphi_k}_{L^p}^p - C, \numberthis \label{estimate_elastic_below}
	\end{align*}
	where the last step holds due to 
	\begin{align*}
		\norm{\vphi_k}^2_{L^2} &= \int_\Omega  \vphi_k^2 \dx 
		\leq  \int_\Omega  \dfrac{1}{q' ( \rho_\vphi q)^{\frac{q'}{q}} } + \rho_\vphi \vphi_k^{2 q} \dx  = \rho_\vphi \norm{\vphi_k}^p_{L^p} + C_{\rho_\vphi}
	\end{align*}
	with $q > 1$ such that $2q = p$. We remark that $\rho_\vphi > 0$ can be chosen as small as necessary. \par 
	\par 
	Moreover, the growth conditions (\hyperref[A:W_growth_conditions]{A3.2}) imply 
	\begin{equation}\label{eq:estimate_elastic_initial}
		\int_\Omega W(\vphi_{0,k}, \E(\bm{u}_0)) \dx 
		\leq C(\norm{\vphi_{0, k}}_{L^2}^2 + \norm{\bm{u}_0}_{\bm{X}}^2 +1 ). 
	\end{equation}
\end{subequations}

\subparagraph{Fluid energy:}

\begin{subequations}
	Using Young's inequality and the positivity of $M$, we calculate 
	\begin{align*}
		-  \theta_k (2 \left( \alpha(\vphi_k) \nabla \cdot \bm{u}_k) \right) 
		\geq - \rho_\theta \theta_k^2 + \frac{1}{\rho_\theta}  (\alpha(\vphi_k) \nabla \cdot \bm{u}_k)^2
	\end{align*}
	for all $\rho_\theta > 0$, from which we deduce 
	\begin{align*}
		 &\int_\Omega \tfrac{M(\vphi_k)}{2} (\theta_k - \alpha(\vphi_k) \nabla \cdot \bm{u}_k)^2 \dx
		 \geq \int_\Omega   \tfrac{\underline{M}}{2} \big[ \theta_k^2   
		- 2\theta_k  (\alpha(\vphi_k) \nabla \cdot \bm{u}_k) 
		+ (\alpha(\vphi_k) \nabla \cdot \bm{u}_k)^2  \big] \\ 
		&  \quad \geq \tfrac{\underline{M}}{2} \left((1-\rho_\theta) \norm{\theta_k}_{L^2}^2 -  ( \frac{1}{\rho_\theta} -1 )  \norm{\alpha(\vphi_k) \nabla \cdot \bm{u}_k}_{L^2}^2\right)
		 \geq \tfrac{\underline{M}} {2}(1-\rho_\theta) \norm{\theta_k}_{L^2}^2 +
		 \tfrac{\underline{M}}{2}(1- \frac{1}{\rho_\theta}) \overline{\alpha}^2  \norm{\bm{u}_k}_{\bm{X}}^2\numberthis \label{estimate_fluid_below}
	\end{align*}
	if we choose $\rho_\theta < 1$. 
	\par 
	Moreover, similar arguments show 
	\begin{align}\label{estimate_fluid_above}
		\int_\Omega \tfrac{M(\vphi_k)}{2}   (\theta_{k, 0} - \alpha(\vphi_{k, 0}) \nabla \cdot \bm{u}_0)^2 \dx 
		\leq C(\norm{\theta_{0, k}}_{L^2}^2 + \norm{\bm{u}_0}_{\bm{X}}^2). 
	\end{align}
\end{subequations}

\subsection*{Full energy estimate}

Integrating \eqref{eq:galerkin_sum} with respect to time and invoking the fundamental lemma of calculus along with the estimates \eqref{estimate_chemical_above}-\eqref{estimate_fluid_below} finally yields 
\begin{gather}\label{energy_inequality1}
	\begin{align*}
		&(\underline{m} - \rho_\mu C_p) \int_0^t \norm{\nabla \mu_k}^2 \dt  
		+(\underline{\kappa} - \rho_p C_p) \int_0^t \norm{ \nabla p_k}^2 \dt \\ 
		+&\frac{\vepsilon}{2} \norm{\nabla \vphi_k(t)}_{L^2}^2 + \frac{1}{2 \vepsilon} \norm{\psi(\vphi_k(t))}_{L^1}
		+ \frac{\varrho^{1/2}}{2} \norm{\Delta \vphi_k}_{L^2}^2
		 +(\frac{\gamma_{\psi_1} - \rho_{\psi_2}}{2 \vepsilon} -  \rho_\vphi ) \norm{\vphi_k(t)}_{L^p}^p  \\ 
		+ &(C_{\bm{u}} - \frac{\underline{M}} {2} \overline{\alpha}^2  (\frac{1}{\rho_\theta}- 1) )\norm{\bm{u}_k(t)}_{\bm{X}}^2
		+ \frac{\underline{M}} {2} (1-\rho_\theta) \norm{\theta(t)}_{L^2}^2 + \frac{\vr}{2} \norm{\nabla \theta_k(t)}_{L^2}^2 \\ 
		\leq& \begin{aligned}[t]
			&C \int_0^t \norm{\psi(\vphi_k)}_{L^1} + \norm{\vphi_k}_{L^2}^2 + \norm{\theta_k}_{L^2}^2+ \norm{\bm{u}_k}_{H^1}^2 +  \rho_{\pt \bm{u}} \norm{\pt \bm{u}_k}_{\bm{X}}^2  + 1\dt \\  
			&+ C( \norm{\vphi_{0, k}}_{H^1}^2 + \varrho^{1/2} \norm{\Delta \vphi_{0, k}}_{L^2}^2 + \vr \norm{\nabla \theta_{0, k}}_{L^2}
			^2 + \frac{1}{\vepsilon}  \norm{\psi(\vphi_{0, k})}_{L^1} +  \norm{\bm{u}_0}_{H^1}^2 + \norm{\theta_{0, k}}_{L^2}^2 + 1). 
		\end{aligned} 
	\end{align*}
	\numberthis 
\end{gather}

Choosing $\rho_\vphi, \rho_\mu, \rho_p, \rho_{\psi_2}, \rho_{\pt \bm{u}} > 0$ small enough and $0 < \rho_\theta <1$ sufficiently close to $1$ will allow us to find \textit{a priori} estimates by means of Gronwall's lemma. 

\subsection{Existence of approximate solutions}\label{sec:existence_semi_galerkin}

This section contains three major arguments. Firstly, we consider the system of differential-algebraic equations \eqref{eq:galerkin1}-\eqref{eq:galerkin6} and reduce it to a system of ordinary differential equations for the coefficient functions $\bm{a}$ and $\bm{d}$ with continuous right-hand side. Applying Peano's theorem, we obtain local in time solutions $\vphi, \mu, \theta, p$ on some interval, which can be extended to global solutions due to \textit{a priori} estimates relying on the energy estimates derived in the last section. In order to apply a fixed point theorem, the solution has to be unique and continuously depend on the data. We prove these properties by showing a Lipschitz-type estimate for the difference of the coefficient functions $\bm{a}_i, \bm{d_i}$, $i = 1, 2$, of two solutions and an application of Gronwall's lemma. \par 
Secondly, we define an abstract Cauchy-problem with unique solution which continuously depends on the data and also solves \eqref{eq:galerkin7}. Here, we apply maximal $L^p$-regularity theory using Theorem~\ref{th:max_reg_non_autonomous}, where regularity in space is a consequence of Theorem~\ref{thm:elliptic_sobolev}. \par
Building on these results, we can define an operator $\mathfrak{T}$ and apply the Leray--Schauder principle to find a fixed-point of $\mathfrak{T}$, which, by definition, gives rise to an approximate solution of the regularized system \eqref{eq:galerkin1}-\eqref{eq:galerkin7}. 

\begin{lemma}\label{lemma:existence_galerkin}
	For any given $\bm{u} \in H^1(0, T; \bm{X}(\Omega))$ there exist unique functions 
	\begin{equation*}
		(\vphi, \mu, \theta, p) \in C^1([0, T]; Z_k) \times C^0([0, T]; Z_k) \times C^1([0, T]; Y_k) \times C^0([0, T]; Y_k)
	\end{equation*}
	satisfying the system \eqref{eq:galerkin1}-\eqref{eq:galerkin7} and continuously depend on $\bm{u}$. Moreover, the weak derivative $\pt p_k$ exists and is in the space $L^2(0, T; L^2(\Omega))$. 
\end{lemma}

\begin{proof}
	\textbf{Existence:} We define the functions $\bm{a} \coloneqq (a_k^1, \ldots, a_k^k)^\intercal$, $\bm{b} \coloneqq (b_k^1, \ldots, b_k^k)^\intercal$, $\bm{c} \coloneqq (c_k^1, \ldots, c_k^k)^\intercal$  and $\bm{d} \coloneqq (d_k^1, \ldots, d_k^k)^\intercal$. Exploiting the orthonormality of the chosen basis, the system  \eqref{eq:galerkin1}-\eqref{eq:galerkin4} reduces to 
	\begin{subequations}
		\begin{align}
			\frac{d}{dt} a_k^j &= - \langle m(\vphi) \nabla \mu_k, \nabla z_j \rangle + \langle R(\vphi_k,\E(\bu), \theta_k), z_j \rangle, \label{eq:differential_algebraic1} \\ 		
			b_k^j &= 
			\begin{aligned}[t]
				&\langle \frac{1}{\vepsilon} \psi'(\vphi_k), z_j \rangle 
				 +\langle \vepsilon \nabla \vphi_k, \nabla z_j \rangle 
				+ \varrho^{1/2} \langle \Delta \vphi_k, \Delta z_j \rangle  
				+ \langle \Wp(\vphi_k , \E(\bm{u})), z_j \rangle  \\
				& - \langle M(\vphi_k)(\theta_k - \alpha(\vphi_k) \nabla \cdot \bm{u}) \alpha'(\vphi_k) \nabla \cdot \bm{u}, z_j \rangle 
				 + \langle \tfrac{M'(\vphi_k)}{2} (\theta_k - \alpha(\vphi_k) \nabla \cdot \bm{u})^2 , z_j \rangle ,
			\end{aligned} \label{eq:differential_algebraic2 }\\ 
			\frac{d}{dt} c_k^j &=  - \langle \kappa(\vphi_k)\nabla p_k, \nabla y_j \rangle + \langle S_f(\vphi_k, \E(\bu), \theta_k), y_j \rangle, \label{eq:differential_algebraic3}\\
			d_k^j &= \vr \langle \nabla \theta, \nabla y_j \rangle  + \langle M(\vphi_k) (\theta_k - \alpha(\vphi_k) \nabla \cdot \bm{u}), y_j \rangle \label{eq:differential_algebraic4}
		\end{align}
	\end{subequations}
	for all $1 \leq j \leq k$ and all fixed $k \inN$. 
	We observe that the right-hand side of the last equation \eqref{eq:differential_algebraic4} only depends on $\bm{a}, \bm{c}$ and due to the continuity of $M, \alpha$ and $\bm{u} \in H^1(0, T; \bm{X}(\Omega)) \hookrightarrow C^0([0, T]; \bm{X}(\Omega))$, the function $\bm{H}_p$ defined as 
	\begin{align*}
		\bm{H}_p : \R \times \R^k \times \R^k &\rightarrow \R^k, \\ (t, \bm{a},
		\bm{c}) &\mapsto \left(  \vr \langle \nabla \sum_{j = 1}^{k} c_k^j y_j, \nabla y_i \rangle +  \langle M(\sum_{j = 1}^k a_k^j z_j) ((\sum_{j = 1}^k c_k^j y_j)- \alpha(\sum_{j = 1}^k a_k^j z_j) \nabla \cdot \bm{u}(t)), y_j \rangle \right)_{j = 1}^k
	\end{align*}
	is continuous. Moreover, we define \new{$\bm{H}_{\mu}(t, \bm{a}, \bm{c})$ similarly} and note that due to the continuity of the functions $\psi', \Wp, M, M', \alpha, \alpha'$ and $\bm{H}_p$, this map is also continuous. Substituting the identities 
	\begin{align*}
		\bm{b}(t) = \bm{H}_\mu(t, \bm{a}(t), \bm{c}(t)), && \bm{d}(t) = \bm{H}_p(t, \bm{a}(t), \bm{c} (t))
	\end{align*}
	into \eqref{eq:differential_algebraic1} and \eqref{eq:differential_algebraic3}, we arrive at 
	\begin{align*}
		\frac{d}{dt} a_k^j (t) &= - \langle m(\sum_{j = 1}^k a_k^j(t) z_j) \nabla (\sum_{j = 1}^k \bm{H}_\mu^j(t, \bm{a}(t), \bm{c}(t)) z_j), \nabla z_j \rangle + \langle R(\sum_{j = 1}^k a_k^j(t) z_j, \E(\bu), \sum_{j = 1}^k c_k^j y_j), z_j \rangle, \\
		\frac{d}{dt} c_k^j (t) &=  - \langle \kappa(\sum_{j = 1}^k a_k^j(t) z_j)\nabla (\sum_{j = 1}^k \bm{H}_p^j(t, \bm{a}(t), \bm{c}(t)) y_j), \nabla y_j \rangle + \langle S_f(\sum_{j = 1}^k a_k^j(t) z_j, \E(\bu), \sum_{j = 1}^k c_k^j y_j), y_j \rangle, 
	\end{align*}
	i.e., the differential-algebraic system \eqref{eq:differential_algebraic1}-\eqref{eq:differential_algebraic4} reduces to a system of ordinary differential equations for $\bm{a}, \bm{c}$ with continuous right-hand side and initial conditions 
	\begin{align*}
		a_k^j(0) &= \langle \vphi_0, z_j \rangle \quad  \textrm{for all } j =1,\ldots,k ,  \\
		c_k^j(0) &= \langle \theta_0, y_j \rangle  \quad  \textrm{for all } j =1,\ldots,k. 
	\end{align*}
	Thus, we can apply Peano's theorem and obtain the existence of a, possibly small, $T^* \in (0, T)$ and local solutions $\bm{a}, \bm{c} \in C^1([0, T^*]; \R^k)$ giving rise to $\bm{b}, \bm{d} \in C^0([0, T^*]; \R^k)$. \par 
	Making use of the energy estimate \eqref{energy_inequality1} established above, we find for all applicable $t \in \R^+$
	\begin{align*}
		&\norm{\vphi_k(t)}_{H^1}^2 +\varrho^{1/2} \norm{\Delta \vphi_k(t)}_{L^2}^2 + \norm{\psi(\vphi_k(t))}_{L^1} +  \norm{\theta_k(t)}_{L^2}^2 + \vr \norm{\nabla \theta_k(t)}_{L^2}^2  \\ 
		&\quad \leq \begin{aligned}[t]
			& C \int_0^t \norm{\vphi_k}_{L^2}^2 + \norm{\psi(\vphi_k)}_{L^1} + \norm{\theta_k}_{L^2}^2  + \norm{\bm{u}}_{\bm{X}}^2 +  \norm{\pt \bm{u}}_{\bm{X}}^2 + 1 \dt \\  
			&+ C( \norm{\vphi_{0, k}}_{H^1}^2 + \varrho^{1/2} \norm{\Delta \vphi_{0, k}}_{L^2}^2 + \vr \norm{\nabla \theta_{0, k}}_{L^2}^2 +  \frac{1}{\vepsilon}  \norm{\psi(\vphi_{0, k})}_{L^1}  + \norm{\theta_{0, k}}_{L^2}^2 + \norm{\bm{u}_0}_{\bm{X}}^2  + 1). 
		\end{aligned} 
	\end{align*}
	By estimating the contributions from the fixed function $\bm{u}$ with its norm in the space $H^1(0, T; \bm{X}(\Omega))$, this expression simplifies to 
	\begin{align*}
		\norm{\vphi_k(t)}_{H^1}^2 +\varrho^{1/2} \norm{\Delta \vphi_k(t)}_{L^2}^2  + \norm{\psi(\vphi_k(t))}_{L^1} &+  \norm{\theta(t)}_{L^2}^2 + \vr \norm{\nabla \theta_k(t)}_{L^2}^2 \\ 
		&\leq  C \int_0^t \norm{\vphi_k}_{L^2}^2 + \norm{\psi(\vphi_k)}_{L^1} + \norm{\theta_k}_{L^2}^2  \dt + C
	\end{align*}
	and Gronwall's lemma yields for all applicable $t \in [0, T]$ the uniform estimate 
	\begin{equation}\label{eq:bound_galerkin}
		\norm{\vphi_k(t)}_{H^1}^2 + \norm{\psi(\vphi_k(t))}_{L^1} +  \norm{\theta_k(t)}_{X}^2 \leq C. 
	\end{equation}
	Since $\vphi_k, \theta_k$ lie in finite dimensional subspaces of $H^1(\Omega)$ and $L^2(\Omega)$, respectively, where all norms are equivalent, this implies the boundedness of $\bm{a}, \bm{c}$ on $[0, T^*]$. Hence, well known theorems for ordinary differential equations let us extend the local solution to the whole interval $[0, T]$ and we obtain functions 
	\begin{equation*}
		(\vphi_k, \mu_k, \theta_k, p_k) \in C^1([0, T]; Z_k) \times C^0([0, T]; Z_k) \times C^1([0, T]; Y_k) \times C^0([0, T]; Y_k)
	\end{equation*}
	satisfying the system  \eqref{eq:galerkin1}-\eqref{eq:galerkin7}. 
	\par 
	To establish $\pt p_k \in L^2(0, T; L^2(\Omega))$, we consider the derivative 
	\begin{align*}
		\frac{d}{dt} &\int_\Omega \vr \nabla \theta_k \cdot \nabla y_j + M(\vphi_k) (\theta_k - \alpha(\vphi_k) \nabla \cdot \bm{u})y_j \dx \\ 
		&= \int_\Omega \vr \nabla \pt \theta_k \cdot \nabla y_j \dx \\ 
		&\quad + \int_\Omega M'(\vphi) \pt \vphi (\theta_k - \alpha(\vphi_k) \nabla \cdot \bm{u})y_j
		+ M(\vphi) (\pt \theta - \alpha'(\vphi_k) \pt \vphi_k\nabla \cdot \bm{u} - \alpha(\vphi) \nabla \cdot \pt \bm{u})y_j\dx. 
	\end{align*}
	We take note of the fact that the basis functions satisfy $z_j \in H^2_{\bm{n}}(\Omega)$ for all $j \inN$ and recall the continuous embedding $H^2_{\bm{n}}(\Omega) \hookrightarrow C(\overline{\Omega}) \hookrightarrow L^\infty (\Omega)$ for $n \leq 3$. Therefore, $\pt \vphi_k \in C^0([0, T]; L^\infty(\Omega))$, implying that the expression above is well defined for all $t \in [0, T]$. In particular, an application of Hölders's inequality yields the estimate 
	\begin{equation*}
		| \frac{d}{dt} d_k^j | \leq C(\norm{\pt \vphi_k}_{L^\infty(L^2)}, k)\, ( \norm{\theta_k}_{L^2} + \norm{\pt \theta_k}_{L^2} +  \vr \norm{\nabla \pt \theta_k}_{L^2} + \norm{\bm{u}}_{\bm{X}} + \norm{ \pt \bm{u}}_{\bm{X}}).  
 	\end{equation*} 
 	Thus, we deduce from $\theta_k \in  C^1([0, T]; Y_k)$ and $\bm{u} \in H^1(0, T; \bm{X} (\Omega))$
 	\begin{align*}
 		\norm{\pt p_k}_{L^2(L^2)}^2 &= \int_0^T \norm{\sum_{i = 1}^k \frac{d}{dt} d_k^i y_i}_{L^2}^2 \dt 
 		= \int_0^T \sum_{i = 1}^{k} |\frac{d}{dt} d_k^i|^2  \norm{y_i}_{L^2}^2  \dt \\ 
 		&\leq    C(\norm{\pt \vphi_k}_{L^\infty(L^2)}, k) \int_0^T \norm{\theta_k}_{L^2}^2 + \norm{\pt \theta_k}_{L^2}^2  +  \vr \norm{\nabla \pt \theta_k}_{L^2}^2 + \norm{\bm{u}}_{\bm{X}}^2 + \norm{\pt \bm{u}}_{\bm{X}}^2 \dt 
 	\end{align*}
 	and therefore $\pt p_k \in L^2(0, T; L^2(\Omega))$. \par
 	\medskip 
 	\textbf{Uniqueness and continuous dependence:}
 	So far we have established the existence of at least one solution to the Galerkin system for any given $\bm{u} \in L^2(0, T; \bm{X}(\Omega))$. We proceed to show uniqueness of this solution and continuous dependence on the data. To this end, let $\bm{u}_1, \bm{u}_2 \in  L^2(0, T; \bm{X}(\Omega))$ and $(\vphi_i, \mu_i, \theta_i, p_i)$ $i = 1, 2$, be two solutions to the corresponding differential-algebraic system for some fixed $k \inN$. The aim is to find estimates for the differences $\sum |a_1^j - a_2^j| + |c_1^j - c_2^j|$, such that an application of Gronwall's lemma yields the desired result. Note that here, in general, the constants $C= C(k) > 0$ cannot be chosen uniformly in $k \inN$. 
 	\par 
 	\textit{Ad $\bm{d}$:} \begin{subequations}
 		Exploiting the orthogonality of our basis, the Lipschitz continuity and boundedness of $M$ along with the embedding $z_j \in H^2_{\bm{n}}(\Omega) \hookrightarrow L^\infty(\Omega)$, we find
 		\begin{align*}
 		&	| \langle M(\vphi_1) \theta_1 - M(\vphi_2) \theta_2, y_j \rangle | 
 		 \leq  | \langle M(\vphi_1) \sum_{i= 1}^k  (c_1^i - c_2^i) y_i , y_j \rangle| 
 		+ | \langle \theta_2 (M(\vphi_1) - M(\vphi_2)), y_j \rangle | \\ 
 		&\quad \leq  \overline{M} |c_1^j -c_2^j| \langle y_j, y_j \rangle 
 		+  L \sum_{i = 1}^k |a_1^i- a_2^i| \langle |z_j| |\theta_2|, |y_j| \rangle
 		\leq C (\sum_{i = 1}^k |a_1^i- a_2^i|  + |c_1^j -c_2^j|) .  \numberthis \label{eq:estimate_d_1}
 		\end{align*}
 		Exploiting the Lipschitz-continuity of $\alpha, M$ imposed in \ref{A:phase_coefficients} along with the fact that the product of two bounded Lipschitz functions is still Lipschitz continuous, we compute 
 		\begin{align*}
 			|M(\vphi_1)& \alpha(\vphi_1) \nabla \cdot \bm{u}_1 - M(\vphi_2) \alpha(\vphi_2) \nabla \cdot \bm{u}_2 |
 			 \leq C( |\nabla \cdot (\bm{u}_1 - \bm{u}_2) | +   \sum_{i = 1}^k |a_1^i- a_2^i| |z_i| |\nabla \cdot \bm{u}_2| ). 
 		\end{align*} 
 		Since $u_i \in H^1(0, T; \bm{X}(\Omega)) \hookrightarrow C([0, T]; \bm{X}(\Omega))$ implies an uniform bound on $\norm{\bm{u}_i(t)}_{\bm{X}}$ for all $t \in [0, T]$, $i = 1, 2$, we arrive at 
 		\begin{align*}
 			&| \langle M(\vphi_1) \alpha(\vphi_1) \nabla \cdot \bm{u}_1 - M(\vphi_2) \alpha(\vphi_2) \nabla \cdot \bm{u}_2, y_j \rangle |  \\ 
 			&\quad \leq C \langle |\nabla (\bm{u}_1 - \bm{u}_2) |, |y_j| \rangle + C \sum_{i = 1}^k |a_1^i - a_2^i| \langle |z_j| |\nabla \cdot  \bm{u}_2|, |y_j| \rangle 
 			\leq C(\sum_{i = 1}^k |a_1^i - a_2^i| + \norm{\bm{u}_1 - \bm{u}_2}_{\bm{X}} ), \numberthis \label{eq:eq_estimate_d_2}
 		\end{align*}
 		where we also used $z_j \in H^2_{\bm{n}}(\Omega) \hookrightarrow L^\infty (\Omega)$. \par 
 		We further obtain 
 		\begin{align*}
 			\vr \abs{\langle \nabla (\theta_1 - \theta_2) , \nabla y_j \rangle } \leq \vr \sum_{i = 1}^k \abs{c_1^i - c_2^i} \abs{\langle \nabla y_i, \nabla y_j \rangle } \leq C  \sum_{i = 1}^k \abs{c_1^i - c_2^i}
 			 \numberthis \label{eq:eq_estimate_d_3}
 		\end{align*}
 		Taking the difference of \eqref{eq:differential_algebraic4} for the two solutions, we conclude with the help of \eqref{eq:estimate_d_1} and \eqref{eq:eq_estimate_d_2} 
 		\begin{align*}
 			|d_1^j - d_2^j| 
 			&\leq  \vr \abs{\langle \nabla( \theta_1- \theta_2) , \nabla y_j \rangle }  
 			\begin{aligned}[t]
 				&+ |\langle M(\vphi_1) \theta_1 - M(\vphi_2) \theta_2, y_j \rangle| \\
 				&+ |	\langle M(\vphi_1) \alpha(\vphi_1) \nabla \cdot \bm{u}_1 - M(\vphi_2) \alpha(\vphi_2) \nabla \cdot \bm{u}_2, y_j \rangle| 
 			\end{aligned}\\ 
 			&\leq C(k)(\sum_{i = 1}^k |a_1^i - a_2^i|+ |c_1^j -c_2^j| + \norm{\bm{u}_1 - \bm{u}_2}_{\bm{X}} ).   \numberthis \label{eq:estimate_d}
 		\end{align*}
 	\end{subequations}
 	\par 
 	\textit{Ad $\bm{b}$:} To derive a similar estimate for the differences $|b_1^j - b_2^j|$, we consider the terms in \eqref{eq:differential_algebraic2 } separately. Taking advantage of orthogonality, it follows that 
 	\begin{subequations}
 	\begin{gather}
 		\vepsilon |\langle \nabla (\vphi_1  - \vphi_2), \nabla z_j \rangle| \leq \vepsilon \sum_{i = 1}^k  |a_1^i - a_2^i| |\langle \nabla z_i, \nabla z_j \rangle| \leq C |a_1^j - a_2^j|, \label{eq:estimate_b_1} \numberthis\\ 
 		\varrho^{1/2}  |\langle \Delta (\vphi_1  - \vphi_2), \Delta z_j \rangle| 
 		\leq \varrho^{1/2} \sum_{i = 1}^k  |a_1^i - a_2^i|  |\Delta z_i|^2_{L^2} \delta_{ij}  \leq C  \sum_{i = 1}^k |a_1^i - a_2^i | . 	\label{eq:estimate_b_1_b} \numberthis 
 	\end{gather}
 	\par 
 	Since the boundedness of $\bm{a}_1, \bm{a}_2$ in $[0, T]$ and the continuity of the basis functions $z_j$, $j = 1, \ldots, k$, in $\overline{\Omega}$ imply $|\vphi_i| \leq C$ in $\overline{\Omega}_T$, we find along with \ref{A:psi}, which stipulates that $\psi \in C^2(\R)$ and therefore implies local Lipschitz continuity, that $|\psi'(\vphi_1) - \psi'(\vphi_2)| \leq C |\vphi_1 - \vphi_2|$. Hence, 
 	\begin{equation}\label{eq:estimate_b_2}
 		\frac{1}{\vepsilon}  |\langle \psi'(\vphi_1) - \psi'(\vphi_2),  z_j \rangle| 
 		\leq \frac{L}{\vepsilon} \sum_{i = 1}^k  |a_1^i - a_2^i| |\langle  |z_j| ,  |z_i| \rangle| \leq C  \sum_{i = 1}^k  |a_1^i - a_2^i|. 
 	\end{equation}
 	\par 
 	Consider 
 	\begin{align*}
 		&M'(\vphi_1) (\theta_1 - \alpha(\vphi_1) \nabla \cdot \bm{u}_1 )^2 
 		-M'(\vphi_2) (\theta_2 - \alpha(\vphi_2) \nabla \cdot \bm{u}_2 )^2\\ 
 		 &  =  \begin{aligned}[t]
 		 	& \left(M'(\vphi_1) \theta_1^2 - M'(\vphi_2)\theta_2^2 \right)
 		 	- \left( M'(\vphi_1) \theta_1 \nabla \cdot \bm{u}_1 + M'(\vphi_2) \theta_2\nabla \cdot \bm{u}_2 \right) \\ 
 		 	&+ M'(\vphi_1) \alpha(\vphi_1)^2 (\nabla \cdot \bm{u}_1)^2 - M'(\vphi_2)\alpha(\vphi_2)^2 (\nabla \cdot \bm{u}_2)^2
 		 	 \eqqcolon I + II + III. 
 		 \end{aligned}
 	\end{align*}
 	For $I$, it holds that 
 	\begin{align*}
 		| M'(\vphi_1) \theta_1^2 - M'(\vphi_2)\theta_2^2  | 
 		& \leq \overline{M} \abs{\theta_1} \sum_{i = 1}^k | c_1^i - c_2^i| |y_i|  + \overline{M} \abs{\theta_2} \sum_{i = 1}^k | c_1^i - c_2^i| |y_i| + C |\theta_2^2|  \sum_{i = 1}^k | a_1^i - a_2^i| |z_i|, 
 	\end{align*}
 	which, along with our \textit{a priori} estimates, leads to
 	\begin{align*}
 		&| \langle M'(\vphi_1) \theta_1^2 - M'(\vphi_2)\theta_2^2  , z_j \rangle| \leq  C ( \sum_{i = 1}^k | a_1^i - a_2^i| +  \sum_{i = 1}^k | c_1^i - c_2^i|).  \numberthis 
 		\label{eq:estimate_b_3_1}
 	\end{align*}
	Since $II$ is of a similar structure, we proceed analogously, computing 
	\begin{align*}
		&| M'(\vphi_1) \theta_1 \nabla \cdot \bm{u}_1 - M'(\vphi_2) \theta_2\nabla \cdot \bm{u}_2 | \\ 
		& \quad \leq \overline{M} |\theta_1| |\nabla \cdot \bm{u}_1 - \nabla \cdot \bm{u}_2| 
		+ \overline{M} | \nabla \cdot \bm{u}_2|  \sum_{i = 1}^k | c_1^i - c_2^i| |y_i|  
		+  C  |\theta_2| |\nabla \cdot \bm{u}_2| \sum_{i = 1}^k | a_1^i - a_2^i| |z_i|
	\end{align*}
	and we arrive at 
	\begin{align*}
		| \langle M'(\vphi_1) \theta_1 \nabla \cdot \bm{u}_1 - M'(\vphi_2) \theta_2\nabla \cdot \bm{u}_2, z_j \rangle |
		  \leq C ( \sum_{i = 1}^k | a_1^i - a_2^i| +  \sum_{i = 1}^k | c_1^i - c_2^i| + \norm{\bm{u}_1 - \bm{u}_2}_{\bm{X}} ).  
		\numberthis \label{eq:estimate_b_3_2}
	\end{align*}
	Finally we observe that $III$ can be treated similarly to $I$, yielding the estimate 
	\begin{align*}
		|\langle M'(\vphi_1) \alpha(\vphi_1)^2 (\nabla \cdot \bm{u}_1)^2 - M'(\vphi_2)\alpha(\vphi_2)^2 (\nabla \cdot \bm{u}_2)^2, z_j \rangle | 
		\leq C( \sum_{i = 1}^k | a_1^i - a_2^i|  + \norm{\bm{u}_1 - \bm{u}_2}_{\bm{X}} ) \numberthis \label{eq:estimate_b_3_3}, 
	\end{align*}
	such that, along with \eqref{eq:estimate_b_3_1} and \eqref{eq:estimate_b_3_2}, we can conclude  
	\begin{align*}
		| \langle M'(\vphi_1) (\theta_1 - \alpha(\vphi_1) \nabla \cdot \bm{u}_1 )^2 
		&-M'(\vphi_2) (\theta_2 - \alpha(\vphi_2) \nabla \cdot \bm{u}_2 )^2   , z_j \rangle | \\ 
		&  \leq C ( \sum_{i = 1}^k | a_1^i - a_2^i| +  \sum_{i = 1}^k | c_1^i - c_2^i| + \norm{\bm{u}_1 - \bm{u}_2}_{\bm{X}} ).  
		\numberthis \label{eq:estimate_b_3}
	\end{align*}
	\par 
	Next, we look at the difference 
	\begin{equation*}
		 M(\vphi_1)(\theta_1 - \alpha(\vphi_1) \nabla \cdot \bm{u}_1) \alpha'(\vphi_1) \nabla \cdot \bm{u}_1
		 -  M(\vphi_2)(\theta_2 - \alpha(\vphi_2) \nabla \cdot \bm{u}_2) \alpha'(\vphi_2) \nabla \cdot \bm{u}_2
	\end{equation*}
	and observe that the structure of this difference is identical to the cases $II$ and $III$ from above. We therefore omit the relevant calculations and simply state the estimate
	\begin{align*}
		&| \langle M(\vphi_1)(\theta_1 - \alpha(\vphi_1) \nabla \cdot \bm{u}_1) \alpha'(\vphi_1) \nabla \cdot \bm{u}_1
		-  M(\vphi_2)(\theta_2 - \alpha(\vphi_2) \nabla \cdot \bm{u}_2) \alpha'(\vphi_2) \nabla \cdot \bm{u}_2, z_j \rangle |\\ 
		& \quad \leq C ( \sum_{i = 1}^k | a_1^i - a_2^i| +  \sum_{i = 1}^k | c_1^i - c_2^i| + \norm{\bm{u}_1 - \bm{u}_2}_{\bm{X}} ).  
		\numberthis \label{eq:estimate_b_4}
	\end{align*}  
 	At last, it remains to find suitable estimates for 
 	\begin{equation*}
 		\Wp(\vphi_1, \E(\bm{u}_1)) - \Wp(\vphi_2 , \E(\bm{u}_2)).  
 	\end{equation*}
 	Since
 	\begin{equation*}
 		2 \Wp(\vphi, \E(\bm{u})) = \begin{aligned}[t]
 			\C'(\vphi) \E(\bm{u}) : \E(\bm{u}) &- 2 \C'(\vphi) \E(\bm{u}) : \Tau(\vphi) + \C'(\vphi) \Tau(\vphi) : \Tau(\vphi)\\ 
 			& - 2 \C(\vphi) \E(\bm{u}): \Tau'(\vphi) + 2 \C(\vphi) \Tau'(\vphi) : \Tau(\vphi), 
 		\end{aligned}
 	\end{equation*}
 	we, once again, proceed termwise. Taking advantage of the boundedness  and Lipschitz continuity of $\C, \C'$ imposed in assumption \ref{A:W}, we see with the help of the Cauchy-Schwarz inequality 
 	\begin{align*}
 		& |\C'(\vphi_1) \E(\bm{u}_1) : \E(\bm{u}_1) - \C'(\vphi_2) \E(\bm{u}_2) : \E(\bm{u}_2) | \\ 
 		& \ \ = \begin{aligned}[t]
 			| \C'(\vphi_1) \E(\bm{u}_1) : (\E(\bm{u}_1) - \E(\bm{u}_2)) + \C'(\vphi_1) \E(\bm{u}_2) : (\E(\bm{u}_1) - \E(\bm{u}_2))
 			 + (\C'(\vphi_1) - \C'(\vphi_2)) \E(\bm{u}_2) : \E(\bm{u}_2) | 
 		\end{aligned}\\ 
 		& \ \  \leq C (\norm{\E(\bm{u}_1)} + \norm{\E(\bm{u}_2)}) \norm{\E(\bm{u}_1) - \E(\bm{u}_2) } + C \norm{\E(\bm{u}_2)}^2 \abs{\vphi_1 - \vphi_2}. 
 	\end{align*}
 	Moreover, assumption \ref{A:W} further requires $\Tau$ to be Lipschitz continuous, which along with \eqref{eq:bound_galerkin} yields the uniform bound $\norm{\Tau(\vphi_2)} \leq C$. Hence, 
 	\begin{align*}
 		&| \C'(\vphi_1) \E(\bm{u}_1) : \Tau(\vphi_1)  - \C'(\vphi_2) \E(\bm{u}_2) : \Tau(\vphi_2) | \\ 
 		& \ \leq C(\norm{ \E(\bm{u}_1) }  +  \norm{\E(\bm{u}_2)} ) \abs{\vphi_1 - \vphi_2} + C \norm{\E(\bm{u}_1) - \E(\bm{u}_2)}. 
 	\end{align*}
 	We note that $\C', \Tau$ are Lipschitz continuous functions and $\vphi_1, \vphi_2$ are bounded in $\OT$, which implies the following Lipschitz property for their product 
 	\begin{equation*}
 		| \C'(\vphi_1) \Tau(\vphi_1) : \Tau(\vphi_1) - \C'(\vphi_2) \Tau(\vphi_2) : \Tau(\vphi_2)|  \leq C \abs{\vphi_1 - \vphi_2}. 
 	\end{equation*}
 	Treating the fourth and fifth term analogously to the second and third, respectively, yields the necessary estimates to deduce 
 	\begin{align*}
 			&| \Wp(\vphi_1 , \E(\bm{u}_1)) - \Wp(\vphi_2, \E(\bm{u}_2))|  \\ 
 			& \quad \leq   C ( \abs{ \E(\bm{u}_1) }  +  \abs{\E(\bm{u}_2)} + \abs{\E(\bm{u}_2)}^2 + 1)\abs{\vphi_1 - \vphi_2}
 			+ C (\abs{ \E(\bm{u}_1) }  +  \abs{\E(\bm{u}_2)}  + 1) \abs{\E(\bm{u}_1) - \E(\bm{u}_2) }
 	\end{align*}
 	and we can conclude 
 	\begin{align*}
 		\abs{	\langle \Wp(\vphi_1, \E(\bm{u}_1)) - \Wp(\vphi_2 , \E(\bm{u}_2)), z_j  \rangle } 
 		 &\leq \begin{aligned}[t]
 			& C \sum_{i = 1}^k   \abs{a_1^i - a_2^i} \langle (\abs{ \E(\bm{u}_1) }  +  \abs{\E(\bm{u}_2)} + \abs{\E(\bm{u}_2)}^2 + 1) \abs{z_i}, \abs{z_j } \rangle \\ 
 			& \quad +C \langle (\abs{ \E(\bm{u}_1) }  +  \abs{\E(\bm{u}_2)}  + 1) \abs{\E(\bm{u}_1) - \E(\bm{u}_2)} ,  \abs{z_j } \rangle 
 		\end{aligned}\\ 
 		&  \leq C  (\sum_{i= 1}^k  \abs{a_1^i - a_2^i} + \norm{\bm{u}_1 - \bm{u}_2}_{\bm{X}}) .  \numberthis \label{eq:estimate_b_5}
 	\end{align*}
 	Finally, taking the difference of equation \eqref{eq:differential_algebraic2 } for the corresponding solutions $\bm{b}_1, \bm{b}_2$ and employing \eqref{eq:estimate_b_1}-\eqref{eq:estimate_b_5}, we obtain 
 	\begin{align*}
 		&\abs{b_1^j - b_2^j}  \leq C  (\sum_{i= 1}^k  \abs{a_1^i - a_2^i} +  \abs{c_1^i - c_2^i} +  \norm{ \bm{u}_1 - \bm{u}_2 }_{\bm{X}} ).  \numberthis \label{eq:estimate_b}
 	\end{align*}
 \end{subequations}
\par 
\medskip 
\textit{Ad $\bm{c}$:} Exploiting the Lipschitz continuity of $\kappa$ along with \eqref{eq:estimate_d} gives
\begin{align*}
	&| \kappa(\vphi_1) \nabla p_1 - \kappa(\vphi_2)\nabla p_2 | 
	 \leq \overline{\kappa} \abs{\nabla(p_1 - p_2)} + \abs{ \nabla p_2} \abs{\kappa(\vphi_1) - \kappa(\vphi_2)} \\ 
	& \quad \begin{aligned}
		\leq C \sum_{i = 1}^k \abs{\nabla y_i} \sum_{l = 1}^k \abs{a_1^l- a_2^l} +  \abs{c_1^l - c_2^l} + \norm{\bm{u}_1 - \bm{u}_2}_{\bm{X}}  
		+ C \abs{\nabla p_2} \sum_{i = 1}^k \abs{a_1^i - a_2^i} \abs{z_i}
	\end{aligned}
 \end{align*}
 and therefore 
 \begin{subequations}
 	\begin{equation}
 		\abs{\langle 	\kappa(\vphi_1) \nabla p_1 - \kappa(\vphi_2)\nabla p_2 , \nabla y_j  \rangle } 
 		\leq C  \Big( \sum_{i= 1}^k  \abs{a_1^i - a_2^i} +  \abs{c_1^i - c_2^i} +  \norm{\bm{u}_1 - \bm{u}_2}_{\bm{X}} \Big) .  \label{eq:estimate_c_1} 
 	\end{equation}
 Moreover, the Lipschitz continuity of $S_f$ leads to 
 \begin{align*}
 	&\abs{\langle S_f(\vphi_1, \E(\bm{u}_1), \theta_1) - S_f(\vphi_2, \E(\bm{u}_2), \theta_2), y_j  \rangle }
 	 \leq C  \Big( \sum_{i= 1}^k  \abs{a_1^i - a_2^i} +  \abs{c_1^i - c_2^i}  +  \norm{\bm{u}_1 - \bm{u}_2}_{\bm{X}} \Big) . 
 \end{align*}
 By taking the difference of \eqref{eq:differential_algebraic3} and integrating with respect to time, we obtain 
 \begin{align*}
 	\abs{c_1^j &-c_2^j} (t) \\ 
 	&\leq \int_0^t  \abs{\langle 	\kappa(\vphi_1) \nabla p_1 - \kappa(\vphi_2)\nabla p_2 , \nabla y_j  \rangle }  + 	\abs{\langle S_f(\vphi_1, \E(\bm{u}_1), \theta_1) - S_f(\vphi_2, \E(\bm{u}_2), \theta_2), y_j  \rangle }  \dt + \abs{c_1^j -c_2^j} (0) \\ 
 	& \leq C \int_0^t  \sum_{i= 1}^k  \abs{a_1^i - a_2^i} +  \abs{c_1^i - c_2^i} +  \norm{\bm{u}_1 - \bm{u}_2}_{\bm{X}}\dt  + \abs{c_1^j -c_2^j} (0). 
 	\numberthis \label{eq:estimate_c}
 \end{align*}
 \end{subequations}
\par 
\medskip 
\textit{Ad $\bm{a}$:} At last it remains to establish a similar estimate for the differences $ \abs{a_1^j - a_2^j}$. Here we use the Lipschitz continuity of $m$ together with \eqref{eq:estimate_b} to compute 
\begin{align*}
	| m(\vphi_1) \nabla \mu_1 &- m(\vphi_1)\nabla \mu_2 | 
	 \leq \overline{m} \abs{\nabla(\mu_1 - \mu_2)} + \abs{\nabla \mu_2} \abs{m(\vphi_1) - m(\vphi_2)} \\ 
	& \leq C \sum_{i = 1}^k \abs{\nabla z_i} \sum_{l = 1}^k \abs{a_1^l - a_2^l} +  \abs{c_1^l - c_2^l} + \norm{\bm{u}_1 - \bm{u}_2}_{\bm{X}}   + C \abs{\nabla \mu_2} \sum_{i = 1}^k \abs{a_1^i - a_2^i} \abs{z_i}, 
\end{align*}
yielding
\begin{equation*}
		\abs{\langle 	m(\vphi_1) \nabla \mu_1 - m(\vphi_1)\nabla \mu_2 , \nabla z_j  \rangle } 
	\leq C  \Big( \sum_{i= 1}^k  \abs{a_1^i - a_2^i} +  \abs{c_1^i - c_2^i} +  \norm{\bm{u}_1 - \bm{u}_2}_{\bm{X}} \Big).
\end{equation*}
Taking advantage of the Lipschitz continuity of $R$, cf. \ref{A:source_terms}, and arguing exactly as above, we arrive at 
 \begin{align*}
	\abs{a_1^j &-a_2^j} (t) \\ 
	&\leq \int_0^t  \abs{\langle 	m(\vphi_1) \nabla \mu_1 - m(\vphi_1)\nabla \mu_2 , \nabla z_j  \rangle } 
	+ 	\abs{\langle R(\vphi_1,\E(\bm{u}_1) , \theta_1) - R(\vphi_2,\E(\bm{u}_2), \theta_2), y_j  \rangle } \dt + \abs{a_1^j -a_2^j} (0) \\ 
	& \leq C \int_0^t  \sum_{i= 1}  \abs{a_1^i - a_2^i} +  \abs{c_1^i - c_2^i} +  \norm{\bm{u}_1 - \bm{u}_2}_{\bm{X}}\dt + \abs{a_1^j -a_2^j} (0). 
	\numberthis \label{eq:estimate_a}
\end{align*} 
\medskip
Summing over \eqref{eq:estimate_c} and \eqref{eq:estimate_a} for all $j \leq k$ leads to 
\begin{align*}
	\sum_{j = 1}^k 	&\abs{a_1^j -a_2^j} (t)  + 	\abs{c_1^j -c_2^j} (t) \\ 
	& \leq C \int_0 ^t 	\sum_{j = 1}^k  \abs{a_1^j -a_2^j} (\tau)  + 	\abs{c_1^j -c_2^j} (\tau) \dt + C \norm{\bm{u}_1 - \bm{u}_2}_{L^1(\bm{X})}
	+ \sum_{j = 1}^k  \abs{a_1^j -a_2^j} (0) +  \abs{c_1^j -c_2^j} (0).
\end{align*}
With the help of Gronwall's lemma we can thus conclude that for any $\bm{u} \in H^1(0, T; \bm{X}(\Omega))$ and matching initial conditions the solution $(\vphi, \mu, \theta, p)$ is unique. Moreover, if $\norm{\bm{u}_1 - \bm{u}_2}_{L^2(0, T; \bm{X})} \rightarrow 0$, then the corresponding solutions also converge in their respective spaces, i.e. the solution continuously depends on $\bm{u}$. 
\end{proof}

The lemma above gives rise to the continuous operator $\mathcal{G}^*$ defined by 
\begin{align*}
	H^1(0, T; \bm{X}(\Omega)) &\rightarrow C^1([0, T]; Z_k) \times C^0([0, T]; Z_k) \times C^1([0, T]; Y_k) \times C^0([0, T]; Y_k) \cap W^{1,2}(0, T; L^2(\Omega)), \\ 
	\bm{u} & \mapsto \mathcal{G}^*(\bm{u}) =  (\vphi, \mu, \theta, p), 
\end{align*}
mapping any given $\bm{u}$ to the unique solution of the corresponding system of differential-algebraic equations for some fixed $k \inN$. Observe that the embeddings 
\begin{gather*}
	Z_k \xhookrightarrow{cpt} H^2_{\bm{n}}(\Omega) \xhookrightarrow{c} H^1(\Omega)
	\quad \textrm{and} \quad 
	Y_k \xhookrightarrow{cpt} X(\Omega) \xhookrightarrow{c} L^2(\Omega)
\end{gather*}
satisfy the assumptions of the Aubin-Lions-Simon theorem, implying compactness for the following operator
\begin{gather}\label{def:G}
	\begin{aligned}
			\mathcal{G} : H^1(0, T; \bm{X}(\Omega))  &\xhookrightarrow{cpt} C^0([0, T]; H^2_{\bm{n}}(\Omega)) \times C^0([0, T]; X(\Omega)) ) \times C^0([0, T]; X(\Omega)), \\ 
		\bm{u}  &\mapsto (\mathcal{G}_1^*\bm{u}, \mathcal{G}^*_3 \bm{u}, \mathcal{G}^*_4\bm{u}) = (\vphi, \theta, p).
	\end{aligned}
\end{gather}

Before we can state the theorem that will allow us to derive solutions to the linear elasticity equation \eqref{eq:galerkin7} for any given $(\vphi, \mu, \theta, p)$, some preparations are necessary. \par 
For a fixed function $\vphi \in L^2(\Omega)$ and some $q \in \R$ close to $2$, we denote by $\mathcal{B}(\vphi)$and $\mathcal{C}(\vphi)$ the operators 
\begin{align*}
	\mathcal{B}(\vphi) &: \bm{W}^{1, q}_{\Gamma_D}(\Omega )  \rightarrow \bm{W}^{-1, q}_{\Gamma_D}(\Omega ) , 
	\quad \bm{v} \mapsto \int_\Omega \C_{\nu}(\vphi ) \E(\bm{v}): \E(\cdot) \dx,\\ 
	\mathcal{C}(\vphi) &: \bm{W}^{1, q}_{\Gamma_D} (\Omega ) \rightarrow \bm{W}^{-1, q}_{\Gamma_D}(\Omega ) , 
	\quad \bm{v} \mapsto \int_\Omega \C(\vphi ) \E(\bm{v}): \E(\cdot) \dx. 
\end{align*}
As observed in Remark \ref{rem:applicability_elliptic}, the assumptions for linear elasticity suffice to find Theorem~\ref{thm:elliptic_sobolev} to be applicable, i.e., for all $\vphi \in L^2(\Omega)$ the operators $\mathcal{B}(\vphi), \mathcal{C}(\vphi)$ are topological isomorphism between $\bm{W}^{1,q}_{\Gamma_D}(\Omega)$ and $\bm{W}^{-1, q}_{\Gamma_D}(\Omega)$ and there exists a common bound for the norm of the inverse. In particular, it holds
\begin{equation}\label{iq:operators_uniform}
	\sup_{\vphi \in L^2(\Omega)} 
	\begin{aligned}[t]
		&(\norm{\mathcal{B}(\vphi)}_{\mathcal{L}(\bm{W}^{1,q}_{\Gamma_D},\bm{W}^{-1, q}_{\Gamma_D} )}
		+  \norm{\mathcal{B}^{-1}(\vphi)}_{\mathcal{L}(\bm{W}^{-1,q}_{\Gamma_D},\bm{W}^{1, q}_{\Gamma_D} )}\\ 
		&+ \norm{\mathcal{C}(\vphi)}_{\mathcal{L}(\bm{W}^{1,q}_{\Gamma_D},\bm{W}^{-1, q}_{\Gamma_D} )}
		+  \norm{\mathcal{C}^{-1}(\vphi)}_{\mathcal{L}(\bm{W}^{-1,q}_{\Gamma_D},\bm{W}^{1, q}_{\Gamma_D} )}) 
		\leq C. 
	\end{aligned}	 
\end{equation} 
Hence, the operator 
\begin{align*}
	\mathcal{A}(\vphi) &: \bm{W}^{1, q}_{\Gamma_D}(\Omega) \rightarrow \bm{W}^{1, q}_{\Gamma_D}(\Omega), 
	\quad \bm{v} \mapsto \mathcal{B}^{-1}(\vphi) \mathcal{C}(\vphi) \bm{v}
\end{align*}
is a well-defined automorphism of $\bm{W}^{1,q}_{\Gamma_D} (\Omega)$. \par 
For the application of fixed-point methods, it is crucial that these operators are continuous, which we investigate in the following lemma.

\begin{lemma}\label{lemma:convergence_B_C}
	Suppose $(\vphi_n)_{n \inN} \subset L^2(\Omega)$ is a convergent sequence with limit $\vphi$ and assume that $(\vphi_n)_{n \inN}$ and $\vphi$ are bounded in $L^6(\Omega)$. Then it holds for all $\hat{\bm{f}} \in \bm{W}^{1,q}_{\Gamma_D}(\Omega)$ and all $\tilde{\bm{f}} \in \bm{W}^{-1,q}_{\Gamma_D}(\Omega)$, respectively,
	\begin{align*}
		\begin{aligned}
				\mathcal{B}(\vphi_n) \hat{\bm{f}} & \rightarrow 	\mathcal{B}(\vphi) \hat{\bm{f}} \\ 
				\mathcal{B}^{-1}(\vphi_n) \tilde{\bm{f}} & \rightarrow 	\mathcal{B}(\vphi)^{-1} \tilde{\bm{f}}				
		\end{aligned}
		\begin{aligned}
			 \quad \textrm{and} \quad \\ 
			  \quad \textrm{and} \quad 
		\end{aligned}
		\begin{aligned}			
			 \mathcal{C}(\vphi_n) \hat{\bm{f}} &\rightarrow 	\mathcal{C}(\vphi) \hat{\bm{f}}  \\ 
			 \mathcal{C}(\vphi_n)^{-1} \tilde{\bm{f}} &\rightarrow 	\mathcal{C}(\vphi)^{-1} \tilde{\bm{f}} 
		\end{aligned}		
		\begin{aligned}
			\quad 
			&\textrm{in} \quad \bm{W}^{-1, q}_{\Gamma_D}(\Omega ),\\ 
			\quad 
			&\textrm{in} \quad \bm{W}^{1, q}_{\Gamma_D}(\Omega ). 
		\end{aligned}
	\end{align*}
\end{lemma}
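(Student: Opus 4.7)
The plan is to first establish continuity of the forward operators $\mathcal{B}(\cdot)$ and $\mathcal{C}(\cdot)$, and then to deduce continuity of the inverses via a short algebraic identity combined with the uniform invertibility bound \eqref{iq:operators_uniform}. Since $\C$ and $\C_\nu$ satisfy the same structural assumptions, it suffices to carry out the argument for $\mathcal{B}$; the statements for $\mathcal{C}$ follow verbatim.

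For the forward convergence, the $L^2$-convergence $\vphi_n \to \vphi$ yields, up to passing to a non-relabeled subsequence, pointwise convergence a.e.\ in $\Omega$. By continuity of $\C_\nu$ we then have $\C_\nu(\vphi_n) \to \C_\nu(\vphi)$ a.e., while the $L^\infty$-boundedness of $\C_\nu$ provides the uniform domination $|\C_\nu(\vphi_n)-\C_\nu(\vphi)| \leq 2\norm{\C_\nu}_{L^\infty}$. Fixing $\hat{\bm{f}} \in \bm{W}^{1,q}_{\Gamma_D}(\Omega)$, the integrand $(\C_\nu(\vphi_n)-\C_\nu(\vphi))\E(\hat{\bm{f}})$ therefore converges to $0$ a.e.\ and is dominated by a constant multiple of $|\E(\hat{\bm{f}})|$, which lies in $L^q(\Omega) \hookrightarrow L^{q'}(\Omega)$ (the embedding holds because $q > 2 > q'$ and $\Omega$ is bounded). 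Dominated convergence supplies
\begin{equation*}
\norm{(\C_\nu(\vphi_n)-\C_\nu(\vphi))\,\E(\hat{\bm{f}})}_{L^{q'}(\Omega)} \to 0,
\end{equation*}
and testing against an arbitrary $\bm{\eta} \in \bm{W}^{1,q}_{\Gamma_D}(\Omega)$ via Hölder's inequality gives
\begin{equation*}
\norm{(\mathcal{B}(\vphi_n)-\mathcal{B}(\vphi))\hat{\bm{f}}}_{\bm{W}^{-1,q}_{\Gamma_D}(\Omega)} \leq \norm{(\C_\nu(\vphi_n)-\C_\nu(\vphi))\,\E(\hat{\bm{f}})}_{L^{q'}(\Omega)} \to 0.
\end{equation*}
Since the limit does not depend on the chosen subsequence, the standard subsequence principle upgrades this to convergence of the whole sequence.

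For the inverses, set $\bm{v}_n \coloneqq \mathcal{B}(\vphi_n)^{-1}\tilde{\bm{f}}$ and $\bm{v} \coloneqq \mathcal{B}(\vphi)^{-1}\tilde{\bm{f}}$ and exploit the identity
\begin{equation*}
\bm{v}_n - \bm{v} = \mathcal{B}(\vphi_n)^{-1}\bigl(\mathcal{B}(\vphi)-\mathcal{B}(\vphi_n)\bigr)\bm{v}.
\end{equation*}
Combining the forward statement (applied with the fixed argument $\hat{\bm{f}} = \bm{v}$) with the uniform operator-norm bound on $\mathcal{B}(\vphi_n)^{-1}$ from \eqref{iq:operators_uniform} immediately yields $\bm{v}_n \to \bm{v}$ in $\bm{W}^{1,q}_{\Gamma_D}(\Omega)$, and the assertions for $\mathcal{C}$ are obtained in exactly the same way.

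The only genuinely technical point is the bookkeeping of integrability exponents: the paper's convention identifies $\bm{W}^{-1,q}_{\Gamma_D}$ with the dual of $\bm{W}^{1,q}_{\Gamma_D}$ for the \emph{same} $q$, so the $L^q$-regularity of $\E(\bm{\eta})$ must be compensated by an $L^{q'}$-bound on the coefficient term; boundedness of $\Omega$ together with $q > 2$ delivers precisely the embedding $L^q \hookrightarrow L^{q'}$ that closes the dominated-convergence step. The $L^6$-boundedness hypothesis on $(\vphi_n)$ is not strictly needed for this route — it originates from the later applications where the $\vphi_n$ enjoy $H^1$-regularity via Sobolev embedding — but would allow an alternative, purely Lipschitz-based argument through interpolation between $L^2$ and $L^6$.
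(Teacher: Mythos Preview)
Your proof is correct, and the argument for the inverses is identical to the paper's. For the forward convergence, however, you take a genuinely different route: you rely on dominated convergence (via a pointwise-a.e.\ subsequence and the uniform $L^\infty$-bound on $\C_\nu$), together with the subsequence principle to recover the full sequence. The paper instead approximates $\hat{\bm{f}}$ by a smooth $\hat{\bm{f}}_\varepsilon$ via density, controls the remainder by the uniform bound on $\C_\nu$, and on the smooth part invokes the Lipschitz continuity of $\C_\nu$ together with the interpolation inequality $\norm{\vphi_n-\vphi}_{L^q}\le \norm{\vphi_n-\vphi}_{L^2}^{\vartheta}\norm{\vphi_n-\vphi}_{L^6}^{1-\vartheta}$---this is precisely where the $L^6$-boundedness hypothesis enters. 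Your approach is more elementary and, as you correctly note, does not require the $L^6$-bound at all; the paper's route, on the other hand, is subsequence-free and yields a quantitative estimate directly in terms of $\norm{\vphi_n-\vphi}_{L^2}$. One minor remark: dominated convergence in fact gives $L^q$-convergence of $(\C_\nu(\vphi_n)-\C_\nu(\vphi))\E(\hat{\bm{f}})$ outright (the dominant lies in $L^q$), so the detour through $L^{q'}$ and the discussion of the duality convention are unnecessary---either convention is covered.
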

\begin{proof}
	As the proof for $\mathcal{C}$ is completely analogous, we restrict ourselves in the following to the operator $\mathcal{B}$ and its inverse. 
	By definition, we obtain for all $\bm{\eta} \in \bm{W}^{1,q'}_{\Gamma_D}(\Omega)$
	\begin{align*}
		|(\mathcal{B}(\vphi)\hat{\bm{f}} - \mathcal{B}(\vphi_n) \hat{\bm{f}}) \bm{\eta}| 
		 = \Big| {\int_\Omega [\C_\nu (\vphi_n ) - \C_\nu (\vphi )]\E(\hat{\bm{f}}) : \E(\bm{\eta}) \dx } \Big|
		 \leq \norm{[\C_\nu (\vphi_n ) - \C_\nu (\vphi )]\E(\hat{\bm{f}})}_{L^q} \norm{\E(\bm{\eta}) }_{L^{q'}}
	\end{align*}
	allowing us to deduce 
	\begin{align*}
		\norm{\mathcal{B}(\vphi)\hat{\bm{f}} - \mathcal{B}(\vphi_n) \hat{\bm{f}}}_{ \bm{W}^{-1, q}_{\Gamma_D} }
		&= \sup_{\norm{\bm{\eta}}_ { \bm{W}^{1,q'}_{\Gamma_D} } = 1}    \Big| {\int_\Omega [\C_\nu (\vphi_n ) - \C_\nu (\vphi )]\E(\hat{\bm{f}}) : \E(\bm{\eta}) \dx} \Big|  \\ 
		 &\leq  \norm{[\C_\nu (\vphi_n ) - \C_\nu (\vphi )]\E(\hat{\bm{f}})}_{\bm{L}^q}, 
	\end{align*}
	which tends to zero as $n \rightarrow \infty$. 
	To see this, recall that $\{ \bm{h}_{|\Omega} : \bm{h} \in C^\infty_c(\R^n, \R^n), \supp f \cap \Gamma_D = \emptyset\}$ is dense in $\bm{W}^{1, p}_{\Gamma_D}(\Omega)$. Therefore, for any fixed $\hat{\bm{f}} \in \bm{W}^{1, p}_{\Gamma_D}(\Omega)$ and any $\vepsilon > 0$, there exists some smooth $\hat{\bm{f}}_{\vepsilon}$ such that $\norm{\hat{\bm{f}} - \hat{\bm{f}}_{\vepsilon}}_{\bm{W}^{1, p}_{\Gamma_D}} < \vepsilon$ and we compute with the help of the Lipschitz continuity of $\C_{\nu}$ that 
	\begin{align*}
		\norm{[\C_\nu (\vphi_n ) - \C_\nu (\vphi )]\E(\hat{\bm{f}})}_{\bm{L}^q} 
		&\leq \norm{[\C_\nu (\vphi_n ) - \C_\nu (\vphi )]\E(\hat{\bm{f}} - \hat{\bm{f}}_{\vepsilon)}}_{\bm{L}^q} 
		+ \norm{[\C_\nu (\vphi_n ) - \C_\nu (\vphi )]\E(\hat{\bm{f}}_{\vepsilon})}_{\bm{L}^q}\\ 
		& \leq C_\nu \norm{\hat{\bm{f}} - \hat{\bm{f}}_{\vepsilon}}_{\bm{W}^{1, p}_{\Gamma_D}} 
		+ \norm{\E(\hat{\bm{f}}_{\vepsilon})}_{\bm{L}^\infty} \norm{[\C_\nu (\vphi_n ) - \C_\nu (\vphi )]}_{\bm{L}^q}\\ 
		& <  C_\nu \vepsilon + L \norm{\E(\hat{\bm{f}}_{\vepsilon})}_{\bm{L}^\infty} \norm{\vphi_n - \vphi}^{\vartheta}_{L^2}  \norm{\vphi_n - \vphi}^{1- \vartheta}_{L^6},  \numberthis \label{conv:no_subsq}
	\end{align*}
	where $C_\nu, L$ only depend on $\C_\nu$ and $\vartheta = \tfrac{6-p}{2p}$. Since $\vphi_n \rightarrow \vphi$ in $L^2(\Omega)$ and this sequence is also bounded in $L^6(\Omega)$, we obtain for sufficiently large $n \inN$
	\begin{equation*}
		\norm{[\C_\nu (\vphi_n ) - \C_\nu (\vphi )]\E(\hat{\bm{f}})}_{\bm{L}^q} \leq C_\nu \vepsilon + \varepsilon. 
	\end{equation*}
	As $\varepsilon > 0$ was chosen arbitrarily, this entails the convergence
		\begin{equation*}
				[\C_\nu (\vphi_n ) - \C_\nu (\vphi )]\E(\hat{\bm{f}}) \rightarrow 0 \quad \textrm{in } L^q(\Omega). 
			\end{equation*} 
	\par 
	It remains to show strong convergence for the inverse $\mathcal{B}^{-1}$. Recalling that by \eqref{iq:operators_uniform} the norms of $\mathcal{B}^{-1}$ are uniformly bounded, a standard argument shows for all $\tilde{\bm{f}} \in \bm{W}^{-1, q}_{\Gamma_D}$
	\begin{align*}
		&\norm{\mathcal{B}^{-1} (\vphi_n)\tilde{\bm{f}} - \mathcal{B}^{-1}(\vphi)\tilde{\bm{f}}}_{\bm{W}^{1, q}_{\Gamma_D}}
		= \norm{(\mathcal{B}^{-1} (\vphi_n) [\mathcal{B}(\vphi) - \mathcal{B}(\vphi_n)] \mathcal{B}^{-1}(\vphi)) \tilde{\bm{f}}}_{\bm{W}^{1, q}_{\Gamma_D}} \\ 
		& \quad \leq \norm{\mathcal{B}^{-1} (\vphi_n)}_{\mathcal{L}(\bm{W}^{-1,q}_{\Gamma_D},\bm{W}^{1, q}_{\Gamma_D} )} 
		\norm{ [\mathcal{B}(\vphi) - \mathcal{B}(\vphi_n)] (\mathcal{B}^{-1}(\vphi) \tilde{\bm{f}}) }_{\bm{W}^{- 1, q}_{\Gamma_D}}
		 \leq  C \norm{ [\mathcal{B}(\vphi) - \mathcal{B}(\vphi_n)] (\mathcal{B}^{-1}(\vphi) \tilde{\bm{f}}) }_{\bm{W}^{- 1, q}_{\Gamma_D}}. 
	\end{align*}
	Thus, the result follows from the strong convergence of $\mathcal{B}$. 
\end{proof}

Given these operators, we now turn to study an abstract Cauchy-problem and already note that for the appropriate right-hand side, the solution also solves \eqref{eq:galerkin7}. 

\begin{lemma}\label{lem:cauchy_problem}
	Let $\vphi \in C^0([0, T]; L^2(\Omega)), \hat{\bm{f}} \in L^2(0, T;  \bm{W}^{1, q}_{\Gamma_D}(\Omega))$\ and $u_0 \in \bm{W}^{1, q}_{\Gamma_D}(\Omega)$. Then the  non-autonomous, abstract Cauchy-problem 
	\begin{align*}
		\begin{cases}
			\pt \bm{u}(t) + \mathcal{A}(\vphi(t)) \bm{u}(t) = \hat{\bm{f}}(t) \quad  \textrm{a.e. on } (0, T),\\ 
			\bm{u}(0)  = \bm{u}_0 
		\end{cases}
	\end{align*} 
	has a unique solution $\bm{u} \in H^1(0, T;  \bm{W}^{1, q}_{\Gamma_D}(\Omega))$. 
\end{lemma}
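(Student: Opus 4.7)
The plan is to apply Theorem~\ref{th:max_reg_non_autonomous} with the choice $D = Y = \bm{W}^{1,q}_{\Gamma_D}(\Omega)$, $p = 2$, and the family of operators $t \mapsto \mathcal{A}(\vphi(t))$. With this choice, the maximal regularity space $\textnormal{MR}(0,T)$ reduces to $H^1(0,T; \bm{W}^{1,q}_{\Gamma_D}(\Omega))$, which is precisely the desired solution space, and the trace space $(Y,D)_{1/p',p}$ coincides with $Y = \bm{W}^{1,q}_{\Gamma_D}(\Omega)$, so the assumption $\bm{u}_0 \in \bm{W}^{1,q}_{\Gamma_D}(\Omega)$ fits into this framework.

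The first step is to note that $\mathcal{A}(\vphi(t)) \in \mathcal{L}(\bm{W}^{1,q}_{\Gamma_D}(\Omega))$ with a uniform-in-$t$ bound, which follows directly from the definition $\mathcal{A}(\vphi) = \mathcal{B}^{-1}(\vphi)\mathcal{C}(\vphi)$ and the uniform estimate \eqref{iq:operators_uniform}. Since any bounded linear operator on a Banach space possesses $L^p$-maximal regularity via the explicit variation-of-constants formula associated with the $C_0$-semigroup $(\mathrm{e}^{-t\mathcal{A}(\vphi(t^*))})_{t \geq 0}$, the condition $\mathcal{A}(\vphi(t^*)) \in \mathcal{MR}$ is satisfied for every $t^* \in [0,T]$. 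Moreover, the relative continuity assumption \eqref{eq:relative_continuity} becomes, due to $D = Y$, simply
\begin{equation*}
	\norm{\mathcal{A}(\vphi(t))x - \mathcal{A}(\vphi(s))x}_Y \leq \Big(\tfrac{1}{2M} + \eta\Big)\norm{x}_Y,
\end{equation*}
which follows from the uniform bound $\norm{\mathcal{A}(\vphi(t)) - \mathcal{A}(\vphi(s))}_{\mathcal{L}(Y)} \leq 2C$ (with $C$ given by \eqref{iq:operators_uniform}) by choosing $\eta$ suitably large; the analogous boundedness hypothesis on differences in the statement of Theorem~\ref{th:max_reg_non_autonomous} is verified in the same way.

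The second step, and the main nontrivial ingredient, is the strong measurability of $t \mapsto \mathcal{A}(\vphi(t))$ as an $\mathcal{L}(\bm{W}^{1,q}_{\Gamma_D})$-valued map. I would in fact establish strong continuity: for a sequence $t_n \to t$ the assumption $\vphi \in C^0([0,T]; L^2(\Omega))$ gives $\vphi(t_n) \to \vphi(t)$ in $L^2$. Along a subsequence, convergence holds pointwise a.e., and the Lipschitz continuity and boundedness of $\C_\nu$ combined with dominated convergence yield $\mathcal{B}(\vphi(t_n))\hat{\bm{f}} \to \mathcal{B}(\vphi(t))\hat{\bm{f}}$ in $\bm{W}^{-1,q}_{\Gamma_D}$ for every $\hat{\bm{f}} \in \bm{W}^{1,q}_{\Gamma_D}$, and analogously for $\mathcal{C}$. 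The algebraic identity $\mathcal{B}^{-1}(\vphi(t_n)) - \mathcal{B}^{-1}(\vphi(t)) = \mathcal{B}^{-1}(\vphi(t_n))[\mathcal{B}(\vphi(t)) - \mathcal{B}(\vphi(t_n))]\mathcal{B}^{-1}(\vphi(t))$ together with \eqref{iq:operators_uniform} then transfers this convergence to $\mathcal{B}^{-1}$, exactly as in Lemma~\ref{lemma:convergence_B_C}. Composing, $\mathcal{A}(\vphi(t_n))x \to \mathcal{A}(\vphi(t))x$ in $\bm{W}^{1,q}_{\Gamma_D}(\Omega)$ along the subsequence, and uniqueness of the limit promotes this via the standard subsequence-of-subsequence principle to convergence along the full sequence.

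With all hypotheses verified, Theorem~\ref{th:max_reg_non_autonomous} supplies the unique solution $\bm{u} \in H^1(0,T; \bm{W}^{1,q}_{\Gamma_D}(\Omega))$, along with the quantitative estimate \eqref{eq:estimate_solution} controlling $\bm{u}$ by $\norm{\bm{u}_0}_{\bm{W}^{1,q}_{\Gamma_D}}$ and $\norm{\hat{\bm{f}}}_{L^2(0,T;\bm{W}^{1,q}_{\Gamma_D})}$. The main conceptual hurdle is the measurability argument, since the $L^6$ hypothesis of Lemma~\ref{lemma:convergence_B_C} is not at our disposal for a general $\vphi \in C^0([0,T]; L^2(\Omega))$; however, because we only need measurability rather than quantitative convergence, the subsequence extraction combined with dominated convergence and the boundedness of $\C_\nu$, $\C$ is sufficient, and everything else is routine.
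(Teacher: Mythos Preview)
Your proposal is correct and follows essentially the same route as the paper: both set $D = Y = \bm{W}^{1,q}_{\Gamma_D}(\Omega)$, verify that each frozen operator $\mathcal{A}(\vphi(t))$ is a bounded linear operator on the whole space and hence lies in $\mathcal{MR}$, check the relative continuity condition \eqref{eq:relative_continuity} via the uniform bounds \eqref{iq:operators_uniform}, and then invoke Theorem~\ref{th:max_reg_non_autonomous}. Your measurability argument is more explicit than the paper's (which simply asserts strong measurability from the continuity of $\vphi$ and the uniform estimates), and your observation that the $L^6$-bound from Lemma~\ref{lemma:convergence_B_C} is unnecessary here---boundedness of $\C_\nu$ and dominated convergence along a pointwise-a.e.\ subsequence suffice---is a nice clarification.
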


\begin{proof}
	Once we have verified that the assumptions of Theorem~\ref{th:max_reg_non_autonomous} are indeed satisfied, the result will follow immediately. We start by examining $\mathcal{A}$ and note that the discussion above already implies $\mathcal{A}(t)~\in~\mathcal{L}(\bm{W}^{1, q}_{\Gamma_D}(\Omega))$ for all $t \in [0, T]$. Moreover, the continuity of $\vphi$ in time together with the uniform estimates \eqref{iq:operators_uniform} yields that the map $t \mapsto \mathcal{A}(t)$ in $[0, T]$ is strongly measurable. \par 
	Concerning relative continuity, it holds for all $\bm{w} \in \bm{W}^{1, q}_{\Gamma_D}(\Omega)$ and any $t, s \in [0, T]$
	\begin{align*}
		&\norm{ \mathcal{A}(\vphi(t))\bm{w} - \mathcal{A}(\vphi(s))\bm{w} }_{\bm{W}^{1, q}_{\Gamma_D}}
		\leq \norm{\mathcal{B}^{-1}(\vphi(t)) \mathcal{C}(\vphi(t)) - \mathcal{B}^{-1}(\vphi(s)) \mathcal{C}(\vphi(s)) }_{\mathcal{L}(\bm{W}^{1, q}_{\Gamma_D}) } \norm{\bm{w}}_{\bm{W}^{1, q}_{\Gamma_D}}\\ 
		&\quad \leq  \begin{aligned}[t]
			(&\norm{\mathcal{B}^{-1}(\vphi(t))}_{\mathcal{L}(\bm{W}^{-1,q}_{\Gamma_D},\bm{W}^{1, q}_{\Gamma_D} )}
			\norm{\mathcal{C} (\vphi(t))}_{\mathcal{L}(\bm{W}^{1,q}_{\Gamma_D},\bm{W}^{-1, q}_{\Gamma_D} )}\norm{\bm{w}}_{\bm{W}^{1, q}_{\Gamma_D}} \\ 
			&+ \norm{\mathcal{B}^{-1}(\vphi(s))}_{\mathcal{L}(\bm{W}^{-1,q}_{\Gamma_D},\bm{W}^{1, q}_{\Gamma_D} )}
			  \norm{\mathcal{C}(\vphi(s))}_{\mathcal{L}(\bm{W}^{1,q}_{\Gamma_D},\bm{W}^{-1, q}_{\Gamma_D} )})   \norm{\bm{w}}_{\bm{W}^{1, q}_{\Gamma_D}}
			  \leq C \norm{\bm{w}}_{\bm{W}^{1, q}_{\Gamma_D}}. 
		\end{aligned}
	\end{align*}
	Setting $D = \bm{W}^{1, q}_{\Gamma_D} = Y$ in Theorem \ref{th:max_reg_non_autonomous}, we can choose $\eta = C$.\par 
	Moreover, $\mathcal{A}(\vphi(t))$ is a bounded, linear operator which is defined on the whole space $\bm{W}^{1, q}_{\Gamma_D}(\Omega)$ and therefore closed for every $t \in [0, T]$. It is well known that under these conditions $\mathcal{A}(\vphi(t)) \in \mathcal{MR}_q$ on every bounded interval and for all $q \in (1, \infty)$, cf. \cite{Dore93}. Hence, $\mathcal{A}(\vphi (t)) \in \mathcal{MR}$ for all $t \in [0, T]$, cf. \cite{ARENDT20071}. In particular, we can choose $t^*$ as any $t \in [0, T]$. \par 
	Observing that $u_0 \in \bm{W}^{1, q}_{\Gamma_D}(\Omega) =( \bm{W}^{1, q}_{\Gamma_D}(\Omega), \bm{W}^{1, q}_{\Gamma_D}(\Omega))_{\frac{1}{q^*}, } = (D, X)_{\frac{1}{p'}, p}$ concludes the proof.
\end{proof}

In particular, Lemma \ref{lem:cauchy_problem} establishes that for all $\vphi \in C^0([0, T]; L^2(\Omega))$ the bounded linear operator 
\begin{align*}
	\mathfrak{L}(\vphi) : H^1(0, T; \bm{W}^{1, q}_{\Gamma_D}(\Omega)) \rightarrow L^2(0, T; \bm{W}^{1, q}_{\Gamma_D}(\Omega)) \times \bm{W}^{1, q}_{\Gamma_D}(\Omega), \quad 
	\bm{u}  \mapsto (\pt \bm{u} + \mathcal{A} (\vphi) \bm{u}, \bm{u}(0))
\end{align*}
is invertible. Moreover, we deduce from \eqref{eq:estimate_solution} and \eqref{iq:operators_uniform} that the operators 
\begin{align*}
	\mathfrak{L}^{-1}(\vphi, \bm{u}_0) :  L^2(0, T;\bm{W}^{1, q}_{\Gamma_D}(\Omega))  \rightarrow H^1(0, T;\bm{W}^{1, q}_{\Gamma_D}(\Omega)), \quad 
	\hat{\bm{f}} & \mapsto \bm{u}
\end{align*}
are uniformly bounded in $\vphi$, i.e., 
\begin{equation} \label{iq:uniform_bound_L}
	\norm{\mathfrak{L}^{-1}(\vphi, \bm{u}_0)}_{\mathcal{L}( L^2(0, T; \bm{W}^{1, q}_{\Gamma_D}(\Omega)) \times \bm{W}^{1, q}_{\Gamma_D}(\Omega), H^1(0, T;\bm{W}^{1, q}_{\Gamma_D}(\Omega)))} \leq C
\end{equation}
for some $C > 0$ independently of $\vphi$. To see this, observe that since $Y = \bm{W}^{1, q}_{\Gamma_D}(\Omega) = D$, it holds that $\mathcal{A}(\vphi_1) - \mathcal{A}(\vphi_2) \in \mathcal{L}(X)$ for any $\vphi_1, \vphi_2 \in L^2(\Omega)$ with $\norm{\mathcal{A}(\vphi_1) - \mathcal{A}(\vphi_2)}_{\mathcal{L}(\bm{W}^{1, q}_{\Gamma_D}(\Omega))} \leq C \eqqcolon C_{\mathcal{I}}$. \par
As we shall see in the following lemma, these estimates imply that the dependency of $\mathfrak{L}^{-1}$ on $\vphi$ is also continuous. 

\begin{lemma}\label{lemma:convergece_L^1}
	Let $(\vphi_n)_{n \inN} \subset C^0([0, T]; L^2(\Omega))$ be a convergent sequence with limit $\vphi$. Then it holds for all $(\hat{\bm{f}}, \bm{u}_0)  \in  L^2(0, T; \bm{W}^{1, q}_{\Gamma_D}(\Omega)) \times \bm{W}^{1, q}_{\Gamma_D}(\Omega)$ 
	\begin{equation*}
		\mathfrak{L}^{-1}(\vphi_n, \bm{u}_0) (\hat{\bm{f}}) \rightarrow \mathfrak{L}^{-1}(\vphi,  \bm{u}_0) (\hat{\bm{f}}, \bm{u}_0)
		\quad \textrm{in} \quad H^1(0, T; \bm{W}^{1, q}_{\Gamma_D}(\Omega)). 
	\end{equation*}
\end{lemma}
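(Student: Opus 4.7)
The plan is to recast the difference $\bm{u}_n - \bm{u}$ as the solution of a perturbed Cauchy problem with vanishing initial datum, apply the uniform maximal-regularity estimate \eqref{iq:uniform_bound_L} to dominate the $H^1(\bm{W}^{1,q}_{\Gamma_D})$-norm of this difference by the $L^2(\bm{W}^{1,q}_{\Gamma_D})$-norm of a source term involving $(\mathcal{A}(\vphi) - \mathcal{A}(\vphi_n))\bm{u}$, and finally pass to the limit in this source term via Lebesgue's dominated convergence theorem, using Lemma~\ref{lemma:convergence_B_C} pointwise in~$t$.

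Concretely, setting $\bm{u}_n \coloneqq \mathfrak{L}^{-1}(\vphi_n, \bm{u}_0)(\hat{\bm{f}})$ and $\bm{u} \coloneqq \mathfrak{L}^{-1}(\vphi, \bm{u}_0)(\hat{\bm{f}})$, I would subtract the two defining Cauchy problems to see that $\bm{w}_n \coloneqq \bm{u}_n - \bm{u}$ satisfies
\begin{equation*}
	\pt \bm{w}_n + \mathcal{A}(\vphi_n) \bm{w}_n = (\mathcal{A}(\vphi) - \mathcal{A}(\vphi_n)) \bm{u} \quad \textrm{a.e. on } (0,T), \qquad \bm{w}_n(0) = 0.
\end{equation*}
The right-hand side lies in $L^2(0, T; \bm{W}^{1,q}_{\Gamma_D}(\Omega))$ because $\bm{u} \in H^1(0,T; \bm{W}^{1,q}_{\Gamma_D}(\Omega))$ and, by \eqref{iq:operators_uniform}, the operators $\mathcal{A}(\vphi), \mathcal{A}(\vphi_n)$ are uniformly bounded on $\bm{W}^{1,q}_{\Gamma_D}(\Omega)$. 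Invoking \eqref{iq:uniform_bound_L} with zero initial datum therefore gives
\begin{equation*}
	\norm{\bm{w}_n}_{H^1(0,T; \bm{W}^{1,q}_{\Gamma_D})}
	\leq C \, \norm{(\mathcal{A}(\vphi) - \mathcal{A}(\vphi_n)) \bm{u}}_{L^2(0,T; \bm{W}^{1,q}_{\Gamma_D})},
\end{equation*}
and everything reduces to proving that the right-hand side vanishes in the limit.

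For this remaining step, I would apply dominated convergence on $(0,T)$. The bound \eqref{iq:operators_uniform} yields the pointwise domination $\norm{(\mathcal{A}(\vphi(t)) - \mathcal{A}(\vphi_n(t))) \bm{u}(t)}_{\bm{W}^{1,q}_{\Gamma_D}} \leq 2C \norm{\bm{u}(t)}_{\bm{W}^{1,q}_{\Gamma_D}}$, which is in $L^2(0,T)$. For pointwise convergence at each fixed $t \in [0,T]$, the hypothesis $\vphi_n \to \vphi$ in $C^0([0,T]; L^2(\Omega))$ gives $\vphi_n(t) \to \vphi(t)$ in $L^2(\Omega)$, and Lemma~\ref{lemma:convergence_B_C} applied to the fixed element $\bm{u}(t) \in \bm{W}^{1,q}_{\Gamma_D}(\Omega)$ then supplies $\mathcal{A}(\vphi_n(t))\bm{u}(t) \to \mathcal{A}(\vphi(t))\bm{u}(t)$ in $\bm{W}^{1,q}_{\Gamma_D}(\Omega)$. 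The main subtlety to address is that Lemma~\ref{lemma:convergence_B_C} is stated under the additional hypothesis that $(\vphi_n)$ is bounded in $L^6(\Omega)$, which is not automatic from the present hypotheses; this is handled either by replacing the $L^2$--$L^6$ interpolation step in its proof by an $L^2$--$L^\infty$ interpolation of $\C_\nu(\vphi_n) - \C_\nu(\vphi)$ (using that $\C$ and $\C_\nu$ are globally bounded, so that mere $L^2$-convergence of $\vphi_n(t)$ suffices), or, in the subsequent fixed-point application, by noting that the relevant sequences $\vphi_n$ originate from the operator $\mathcal{G}$ and are thus automatically bounded in $L^\infty(0,T; H^1(\Omega)) \hookrightarrow L^\infty(0,T; L^6(\Omega))$ via \eqref{eq:bound_galerkin}.
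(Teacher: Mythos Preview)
Your proposal is correct and follows essentially the same route as the paper: the paper writes the operator identity $\mathfrak{L}^{-1}(\vphi_n)-\mathfrak{L}^{-1}(\vphi)=\mathfrak{L}^{-1}(\vphi_n)[\mathfrak{L}(\vphi)-\mathfrak{L}(\vphi_n)]\mathfrak{L}^{-1}(\vphi)$, applies the uniform bound \eqref{iq:uniform_bound_L}, and then reduces to $\mathcal{A}(\vphi_n)\bm{v}\to\mathcal{A}(\vphi)\bm{v}$ in $L^2(0,T;\bm{W}^{1,q}_{\Gamma_D})$ via Lemma~\ref{lemma:convergence_B_C} and dominated convergence---exactly your argument, only phrased in resolvent form rather than as a difference equation for $\bm{w}_n$. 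Your remark on the $L^6$ hypothesis of Lemma~\ref{lemma:convergence_B_C} is well spotted (the paper glosses over this) and both of your proposed fixes are valid.
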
 
\begin{proof}
	We start by observing that 
	\begin{align*}
		&\norm{\mathfrak{L}^{-1}(\vphi_n, \bm{u}_0) (\hat{\bm{f}})- \mathfrak{L}^{-1}(\vphi, \bm{u}_0) (\hat{\bm{f}}) }_{H^1(\bm{W}^{1, q}_{\Gamma_D})}
		= \norm{
			\mathfrak{L}^{-1}(\vphi_n, \bm{u}_0) [\mathfrak{L}(\vphi) - \mathfrak{L} (\vphi_n)] (\mathfrak{L}^{-1}(\vphi, \bm{u}_0) (\hat{\bm{f}})) }_{H^1(\bm{W}^{1, q}_{\Gamma_D})}\\ 
		& \quad  \leq \norm{\mathfrak{L}^{-1}(\vphi, \bm{u}_0)}_{\mathcal{L}( L^2(\bm{W}^{1, q}_{\Gamma_D}) \times\bm{W}^{1, q}_{\Gamma_D}, H^1(\bm{W}^{1, q}_{\Gamma_D}))}  \norm{
			 [\mathfrak{L}(\vphi) - \mathfrak{L} (\vphi_n)] (\mathfrak{L}^{-1}(\vphi, \bm{u}_0) (\hat{\bm{f}})) }_{L^2(\bm{W}^{1, q}_{\Gamma_D}) \times\bm{W}^{1, q}_{\Gamma_D}}\\ 
		& \quad  \leq C \norm{
			[\mathfrak{L}(\vphi) - \mathfrak{L} (\vphi_n)] (\mathfrak{L}^{-1}(\vphi, \bm{u}_0) (\hat{\bm{f}})) }_{L^2(\bm{W}^{1, q}_{\Gamma_D}) \times\bm{W}^{1, q}_{\Gamma_D} }, 
	\end{align*}
	where the last estimate is due to \eqref{iq:uniform_bound_L}. Therefore, it is sufficient to establish the convergence 
	\begin{equation}\label{eq:convergence_A}
		\mathfrak{L}(\vphi_n) \bm{v} \rightarrow  \mathfrak{L} (\vphi) \bm{v} \quad \textrm{in} \quad  L^2(0, T; \bm{W}^{1, q}_{\Gamma_D}(\Omega)) \times \bm{W}^{1, q}_{\Gamma_D}(\Omega)
	\end{equation}
	for all $\bm{v} \in H^1(0, T; \bm{W}^{1, q}_{\Gamma_D}(\Omega))$. Since by definition 
	\begin{equation*}
		(\mathfrak{L}(\vphi_n) \bm{v}) = (\pt \bm{v} + \mathcal{A} (\vphi_n) \bm{v}, \bm{v}(0)), 
	\end{equation*}
	the only term depending on $\vphi_n$ is $\mathcal{A} (\vphi_n) \bm{v}$. Hence,  the result will follow immediately from
	\begin{equation*}
		\mathcal{A}(\vphi_n) \bm{v} \rightarrow \mathcal{A}(\vphi) \bm{v}  \quad \textrm{in} \quad  L^2(0, T; \bm{W}^{1, q}_{\Gamma_D}(\Omega)). 
	\end{equation*} 
	As by definition $(\mathcal{A} (\vphi_n) \bm{v})(t) = \mathcal{B}^{-1}(\vphi_n(t)) \mathcal{C}(\vphi_n(t) ) \bm{v}(t)$, Lemma \ref{lemma:convergence_B_C} together with \eqref{iq:operators_uniform} imply for all $t \in [0 ,T]$ the convergence 
	\begin{align*}
		&\norm{ (\mathcal{A} (\vphi_n) \bm{v})(t) - (\mathcal{A} (\vphi) \bm{v})(t)}_{ \bm{W}^{1, q}_{\Gamma_D}} 
		= \norm{ \mathcal{B}^{-1}(\vphi_n(t)) \mathcal{C}(\vphi_n(t) ) \bm{v}(t) -  \mathcal{B}^{-1}(\vphi(t)) \mathcal{C}(\vphi(t) ) \bm{v}(t)}_{\bm{W}^{1, q}_{\Gamma_D}}\\ 
		& \quad \leq  \begin{aligned}[t]
		&\norm{ \mathcal{B}^{-1}(\vphi_n(t))}_{\mathcal{L}(\bm{W}^{-1,q}_{\Gamma_D},\bm{W}^{1, q}_{\Gamma_D} )} \norm{ \mathcal{C}(\vphi_n(t) ) \bm{v}(t) - \mathcal{C}(\vphi(t) ) \bm{v}(t)}_{ \bm{W}^{-1, q}_{\Gamma_D}}\\ 
		&\quad + \norm{[ \mathcal{B}^{-1}(\vphi_n(t)) -  \mathcal{B}^{-1}(\vphi(t))]  \mathcal{C}(\vphi(t) ) \bm{v}(t)}
		\rightarrow 0. 
		\end{aligned} 		 
	\end{align*}
	Observing that \eqref{iq:operators_uniform} yields a uniform bound for the norm $\norm{\mathcal{A}(\vphi_n)}_{\mathcal{L}(\bm{W}^{1, q}_{\Gamma_D} )}$, we estimate 
	\begin{equation}
		\norm{\mathcal{A}(\vphi_n(t)) \bm{v}(t)}_{\bm{W}^{1, q}_{\Gamma_D} }^2 \leq C \norm{\bm{v}(t) }_{\bm{W}^{1, q}_{\Gamma_D} }^2, 
	\end{equation}
	which allows us to deduce \eqref{eq:convergence_A} with the help of Lebesgue's convergence theorem, cf. \cite[Sec. 3.25]{alt2013lineare}. 
\end{proof}

Lastly we need to define the appropriate right-hand side for our abstract Cauchy-problem. Therefore, we set 
\begin{align*} 
	\mathfrak{F} : L^\infty(0, T; H^2_{\bm{n}}(\Omega)) \times L^2(0, T; X(\Omega)) \times L^2(0, T; X(\Omega)) \rightarrow L^2(0, T; \bm{W}^{1, q}_{\Gamma_D}(\Omega)), \quad 
	(\vphi, \theta, p) \mapsto   \mathfrak{F}(\vphi, \theta, p), 
\end{align*}
where 
\begin{gather*}			
	\mathfrak{F} (\vphi, \theta, p)(t)= \mathcal{B}^{-1} (\vphi(t) ) \bm{f}^*(\vphi, \theta, p)(t) , \\ 
	\begin{aligned}
		\bm{f}^*(\vphi, \theta, p)(t)\bm{\eta} \coloneqq \int_\Omega \C(\vphi ) \Tau(\vphi ) : \E(\bm{\eta}) 
		+ \alpha(\vphi) p (\nabla \cdot \bm{\eta}) * \phi
		&+ \bm{f} \cdot \bm{\eta} \dx + \int_{\Gamma_N} \bm{g} \cdot \bm{\eta} \dH\\ 
		&- \vr \int_{\Omega} \nabla \theta \cdot \nabla (\alpha(\vphi) (\nabla \cdot \bm{\eta}) * \phi ) \dx.
	\end{aligned}  
\end{gather*}

\begin{lemma}\label{lemma:F_continuity}
	The operator $\mathfrak{F}$ is well-defined and strongly continuous. 
\end{lemma}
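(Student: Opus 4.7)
The strategy is to decompose $\mathfrak{F}(\vphi, \theta, p)(t) = \mathcal{B}^{-1}(\vphi(t)) \bm{f}^*(\vphi, \theta, p)(t)$ and handle the two factors separately: $\mathcal{B}^{-1}$ via the uniform bound \eqref{iq:operators_uniform} and the pointwise continuity from Lemma \ref{lemma:convergence_B_C}, while $\bm{f}^*$ is treated by a termwise analysis that exploits the smoothing effect of the mollifier $\phi \in C^\infty_c(\R^n)$.

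For well-definedness, fix $\bm{\eta} \in \bm{W}^{1, q'}_{\Gamma_D}(\Omega)$ with $q' = q/(q-1)$ and estimate each contribution to $\bm{f}^*(\vphi, \theta, p)(t) \bm{\eta}$. Young's inequality for convolutions yields
\begin{equation*}
	\norm{(\nabla \cdot \bm{\eta}) * \phi}_{L^\infty(\R^n)} + \norm{(\nabla \cdot \bm{\eta}) * \nabla \phi}_{L^\infty(\R^n)} \leq C \norm{\bm{\eta}}_{\bm{W}^{1, q'}_{\Gamma_D}},
\end{equation*}
so by the boundedness of $\C, \Tau, \alpha$ (assumptions \ref{A:W}, \ref{A:phase_coefficients}) together with $\bm{f} \in \bm{L}^2(\Omega)$ and $\bm{g} \in \bm{L}^2(\Gamma_N)$, the contributions not involving $\nabla \theta$ are bounded by $C(1 + \norm{p(t)}_{L^2}) \norm{\bm{\eta}}_{\bm{W}^{1, q'}_{\Gamma_D}}$. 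The delicate term $\vr \int_{\Omega} \nabla \theta \cdot \nabla (\alpha(\vphi) (\nabla \cdot \bm{\eta}) * \phi) \dx$ is expanded via the product rule as
\begin{equation*}
	\vr \int_\Omega \nabla \theta \cdot \bigl[ \alpha'(\vphi) \nabla \vphi \, (\nabla \cdot \bm{\eta}) * \phi + \alpha(\vphi) (\nabla \cdot \bm{\eta}) * \nabla \phi \bigr] \dx
\end{equation*}
and estimated using $\alpha, \alpha' \in L^\infty(\R)$, the embedding $H^2(\Omega) \hookrightarrow L^\infty(\Omega)$ for $n \leq 3$, and the convolution bounds above, yielding a bound by $C(1 + \norm{\vphi(t)}_{H^2}) \norm{\theta(t)}_{X} \norm{\bm{\eta}}_{\bm{W}^{1, q'}_{\Gamma_D}}$. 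Squaring, integrating in $t$, and exploiting $\vphi \in L^\infty(0, T; H^2_{\bm{n}})$ together with $\theta, p \in L^2(0, T; X)$ shows $\bm{f}^*(\vphi, \theta, p) \in L^2(0, T; \bm{W}^{-1, q}_{\Gamma_D}(\Omega))$; applying $\mathcal{B}^{-1}(\vphi(t))$ and invoking \eqref{iq:operators_uniform} transfers this into $L^2(0, T; \bm{W}^{1, q}_{\Gamma_D}(\Omega))$.

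For strong continuity, let $(\vphi_n, \theta_n, p_n) \to (\vphi, \theta, p)$ in the stated spaces and decompose
\begin{equation*}
	\mathfrak{F}(\vphi_n, \theta_n, p_n) - \mathfrak{F}(\vphi, \theta, p) = \mathcal{B}^{-1}(\vphi_n) \bigl[ \bm{f}^*(\vphi_n, \theta_n, p_n) - \bm{f}^*(\vphi, \theta, p) \bigr] + \bigl[ \mathcal{B}^{-1}(\vphi_n) - \mathcal{B}^{-1}(\vphi) \bigr] \bm{f}^*(\vphi, \theta, p).
\end{equation*}
For the first summand, \eqref{iq:operators_uniform} reduces matters to showing $\bm{f}^*(\vphi_n, \theta_n, p_n) \to \bm{f}^*(\vphi, \theta, p)$ in $L^2(0, T; \bm{W}^{-1, q}_{\Gamma_D}(\Omega))$; this follows termwise from Lipschitz continuity of $\C \Tau$, $\alpha$ and $\alpha'$ combined with $\vphi_n \to \vphi$ in $L^\infty(0, T; L^\infty(\Omega))$ (via $H^2 \hookrightarrow L^\infty$ in $n \leq 3$) and the convolution estimates above. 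For the second summand, Lemma \ref{lemma:convergence_B_C} applied at each $t \in [0, T]$ (whose hypotheses hold since $\vphi_n(t) \to \vphi(t)$ in $L^2(\Omega)$ with a uniform $L^6$-bound) gives pointwise convergence in $\bm{W}^{1, q}_{\Gamma_D}(\Omega)$, and Lebesgue's dominated convergence theorem with majorant supplied by \eqref{iq:operators_uniform} closes the argument. The principal technical obstacle throughout is the $\nabla \theta$ term: one must let the convolution transfer derivatives onto the smooth kernel $\phi$ so that every pairing remains well defined at the available regularity of $\vphi$ and $\theta$.
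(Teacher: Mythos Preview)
Your proof is correct and follows essentially the same approach as the paper: both decompose $\mathfrak{F} = \mathcal{B}^{-1}(\vphi)\,\bm{f}^*(\vphi,\theta,p)$, handle $\mathcal{B}^{-1}$ via the uniform bound \eqref{iq:operators_uniform} together with Lemma~\ref{lemma:convergence_B_C}, and treat the delicate $\nabla\theta$-term by expanding $\nabla(\alpha(\vphi)(\nabla\cdot\bm{\eta})*\phi)$ with the product rule so that the mollifier absorbs the extra derivative. The only organisational difference is that the paper establishes convergence of $\bm{f}^*$ by passing to subsubsequences with pointwise a.e.\ convergence and invoking the generalized Lebesgue theorem, whereas you argue more directly via norm estimates exploiting $\vphi_n\to\vphi$ in $L^\infty(0,T;L^\infty(\Omega))$ (from $H^2\hookrightarrow L^\infty$); both routes are valid.
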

\begin{proof}
	To verify that $\mathfrak{F}$ is well-defined, i.e., $\mathfrak{F}(\vphi, p) \in L^2(0, T;  \bm{W}^{1, q}_{\Gamma_D}(\Omega))$ for all admissible $\vphi, \theta, p$, we recall the uniform bound on $\mathcal{B}^{-1}(\vphi)$, which implies that it suffices to show $\bm{f}^*(\vphi, p) \in L^2(0, T;  \bm{W}^{-1, q}_{\Gamma_D}(\Omega))$. To this end, let $\bm{\eta} \in  \bm{W}^{1, q'}_{\Gamma_D}(\Omega)$ and compute
	\begin{align*}
		\Big| \int_\Omega &\C(\vphi ) \Tau(\vphi ) : \E(\bm{\eta}) + \alpha(\vphi) p (\nabla \cdot \bm{\eta}) * \phi + \bm{f} \cdot \bm{\eta} \dx + \int_{\Gamma_N} \bm{g} \cdot \bm{\eta}\dH \Big|  \\ 
		&\leq C(\norm{\vphi }_{L^q} + 1) \norm{\E(\bm{\eta})}_{\bm{L}^{q'}} + C \norm{p}_{\bm{L}^q} \norm{\nabla \cdot \bm{\eta}}_{\bm{L}^{q'}} + C \norm{\bm{\eta}}_{\bm{L}^{q'}} + C \norm{\bm{\eta}}_{\bm{L}^{q'}(\Gamma_D)}\\ 
		&\leq C(\norm{\vphi }_{H^1} + 1) \norm{\bm{\eta}}_{\bm{W}^{1, q'}_{\Gamma_D}} + C \norm{p}_{X} \norm{ \bm{\eta}}_{\bm{W}^{1, q'}_{\Gamma_D}}. 
	\end{align*}
	Using Hölder's inequality and Young's inequality for convolutions, we further obtain
	\begin{align*}
	\left| \int_{\Omega} \nabla \theta \cdot \nabla (\alpha(\vphi) (\nabla \cdot \bm{\eta}) * \phi ) \dx \right| 
	&= \left| \int_{\Omega} \nabla \theta \cdot \left(\alpha'(\vphi) \nabla \vphi (\nabla \cdot \bm{\eta}) * \phi \right) + 
								\nabla \theta \cdot \left( \alpha(\vphi) (\nabla \cdot \bm{\eta} ) * \nabla \phi \right) \dtx \right|\\ 
	& \leq C \left( \norm{\nabla \theta}_{\bm{L}^2} \norm{\nabla \vphi}_{\bm{L}^4} \norm{(\nabla \cdot \bm{\eta}) * \phi }_{\bm{L}^4}
							+\norm{\nabla \theta}_{\bm{L}^2} \norm{(\nabla \cdot \bm{\eta}) * \nabla \phi }_{L^2}\right)\\ 
	& \leq C\left(  \norm{\nabla \theta}_{\bm{L}^2} \norm{\nabla \vphi}_{\bm{L}^4} \norm{\nabla \cdot \bm{\eta}}_{L^{q'}} \norm{\phi}_{L^s}
							+\norm{\nabla \theta}_{\bm{L}^2} \norm{\nabla \cdot \bm{\eta}}_{L^{q'}} \norm{\nabla \phi}_{\bm{L}^{r}} \right)								
	\end{align*}
	where $s = \tfrac{4q'}{5q' - 4}$ and $r = \tfrac{2q'}{3q'-2}$. 
	Hence, 
	\begin{align*}\label{eq:estimate_f*}
		\norm{\bm{f}^*(\vphi, \theta, p)(t)}_{ \bm{W}^{-1, q}_{\Gamma_D}} 
		&= \sup_{ \norm{\bm{\eta}}_{ \bm{W}^{1, q'}_{\Gamma_D}} =1} |\bm{f}^* (\vphi, \theta,  p)(t) \bm{\eta}| \\ 
		&\leq  C( \norm{\vphi(t)}_{H^1} +\norm{p(t)}_{X} + \norm{\nabla \theta}_{\bm{L}^2} \norm{\nabla \vphi}_{\bm{L}^6} + \norm{\nabla \theta}_{\bm{L}^2} +1)  , \numberthis
	\end{align*}
	which in turn yields, due to $\vphi \in L^\infty(0, T; H^2_{\bm{n}}(\Omega))$, that 
	\begin{align*}
		\norm{\bm{f}^*(\vphi, \theta, p)}_{L^2(\bm{W}^{-1, q}_{\Gamma_D})}^2 &= \int_0^T \norm{\bm{f}^*(\vphi, \theta, p)(t)}_{ \bm{W}^{-1, q}_{\Gamma_D}}^2 \dt \\ 
		&\leq \int_0^T C( \norm{\vphi(t)}_{H^1} +\norm{p(t)}_{X} + \norm{\nabla \theta}_{L^2} \norm{\nabla \vphi}_{L^6} + \norm{\nabla \theta}_{L^2} +1)^2 \dt < \infty . 
	\end{align*}
	\par 
	\medskip 
	To see that $\mathfrak{F}$ is strongly continuous, we take a convergent sequence 
	\begin{equation*}
			(\vphi_n, \theta_n, p_n)_{n \inN} \subset L^\infty(0, T; H^1(\Omega))  \times L^2(0, T; X(\Omega)) \times  L^2(0, T; X(\Omega))
	\end{equation*}
 and start by considering the convergence 
	\begin{equation}\label{eq:convergence_f*}
		\bm{f}^*(\vphi_n, \theta_n,  p_n) (t) \rightarrow \bm{f}^*(\vphi, \theta, p)(t) \quad \textrm{in} \quad \bm{W}^{-1, q}_{\Gamma_D}(\Omega)
	\end{equation}
	 for any $t \in [0, T]$, where $\vphi, \theta, p$ are the respective limits. Similar to the calculation above, we obtain the estimate 
	\begin{align*}
		&\left| \int_{\Omega} \nabla \theta_n \cdot \nabla (\alpha(\vphi_n) (\nabla \cdot \bm{\eta}) * \phi )  - \nabla \theta \cdot \nabla (\alpha(\vphi) (\nabla \cdot \bm{\eta}) * \phi )  \dx \right| \\ 
		& \  \leq  \norm{ \alpha'(\vphi_n) \nabla \theta_n \cdot \nabla \vphi_n - \alpha'(\vphi) \nabla \theta \cdot \nabla \vphi}_{L^{4/3}} \norm{(\nabla \cdot \bm{\eta}) * \phi }_{L^4} 
									+ \norm{\alpha(\vphi_n) \nabla \theta_n - \alpha(\vphi) \nabla \theta}_{\bm{L}^2} \norm{(\nabla \cdot \bm{\eta} ) * \nabla \phi}_{\bm{L}^2} \\ 
		&  \  \leq C \Big(  \norm{ \alpha'(\vphi_n) \nabla \theta_n \cdot \nabla \vphi_n - \alpha'(\vphi) \nabla \theta \cdot \nabla \vphi}_{L^{4/3}} +  \norm{\alpha(\vphi_n) \nabla \theta_n - \alpha(\vphi) \nabla \theta}_{\bm{L}^2} \Big) 
		\norm{\bm{\eta}}_{\bm{W}^{1, q'}_{\Gamma_D}}. 
	\end{align*}
	This leads to 
	\begin{align*}
		&\abs{	\bm{f}^*(\vphi_n, \theta_n, p_n)(t) \bm{\eta} - \bm{f}^*(\vphi, \theta, p)(t) \bm{\eta}} \\ 
		& \quad \leq \norm{\C(\vphi_n ) \Tau(\vphi_n ) - \C(\vphi ) \Tau(\vphi )}_{\bm{L}^q} \norm{\bm{\eta}}_{\bm{W}^{1, q'}}
		+ \norm{\alpha(\vphi_n) p_n - \alpha(\vphi) p}_{\bm{L}^q} \norm{\bm{\eta}}_{\bm{W}^{1, q'}} \\ 
		& \quad \quad +C \Big(  \norm{ \alpha'(\vphi_n) \nabla \theta_n \cdot \nabla \vphi_n - \alpha'(\vphi) \nabla \theta \cdot \nabla \vphi}_{L^{4/3}} +  \norm{\alpha(\vphi_n) \nabla \theta_n - \alpha(\vphi) \nabla \theta}_{\bm{L}^2} \Big) 
		\norm{\bm{\eta}}_{\bm{W}^{1, q'}}
	\end{align*}
	and thus
	\begin{align*}
		&\norm{\bm{f}^*(\vphi_n, \theta_n,  p_n)(t) - \bm{f}^*(\vphi, \theta, p)(t)}_{\bm{W}^{-1,q }_{\Gamma_D}} = 
		\sup_{\norm{\bm{\eta}}_{\bm{W}^{1, q'}} = 1} \abs{	\bm{f}^*(\vphi_n, \theta_n,  p_n)(t) \bm{\eta} - \bm{f}^*(\vphi,\theta,  p)(t) \bm{\eta}} \\ 
		 & \quad \begin{aligned}[t]
			&\leq \norm{\C(\vphi_n ) \Tau(\vphi_n) - \C(\vphi ) \Tau(\vphi )}_{\bm{L}^q} 
			+ \norm{\alpha(\vphi_n) p_n - \alpha(\vphi) p}_{L^q} \\ 
			&\quad + C \Big(  \norm{ \alpha'(\vphi_n) \nabla \theta_n \cdot \nabla \vphi_n - \alpha'(\vphi) \nabla \theta \cdot \nabla \vphi}_{L^{4/3}} +  \norm{\alpha(\vphi_n) \nabla \theta_n - \alpha(\vphi) \nabla \theta}_{\bm{L}^2} \Big). 
		\end{aligned}
	\end{align*}
	From the assumptions, it follows that for every subsequence $(n_k)_{k \inN}$ there exists another subsequence $(n_{k_l})_{l \inN}$ such that for almost all $t \in (0, T)$
	\begin{gather}\label{eq:convergence_phi_p}
		\begin{aligned}
			\vphi_{n_{k_l}} (t)& \rightarrow \vphi(t) \quad \textrm{in} \quad H^2_{\bm{n}}(\Omega) \quad \textrm{and}  \quad \textrm{a.e. in } \quad \Omega  \quad \textrm{and}  \quad &\nabla \vphi_{n_{k_l}} (t)  \rightarrow \nabla \vphi(t) \quad  \textrm{a.e. in } \quad \Omega, \\ 
			\theta_{n_{k_l}} (t)& \rightarrow \theta(t) \quad \, \textrm{in} \quad X(\Omega) \quad \ \, \textrm{and}  \quad \textrm{a.e. in } \quad \Omega  \quad \textrm{and}  \quad &\nabla \theta_{n_{k_l}} (t) \rightarrow \nabla \theta(t) \quad  \textrm{a.e. in } \quad \Omega, \\ 
			p_{n_{k_l}}(t)& \rightarrow p (t) \quad\, \textrm{in} \quad X(\Omega)  \quad \ \, \textrm{and}  \quad \textrm{a.e. in } \quad \Omega. 
		\end{aligned}\numberthis 
	\end{gather}
	The continuity of $\C, \Tau, \alpha, \alpha'$ therefore implies that for almost every $t \in (0, T)$
	\begin{align*}
		\C(\vphi_{n_{k_l}} (t)) \Tau(\vphi_{n_{k_l}}(t))  - \C(\vphi (t)) \Tau(\vphi(t)) & \rightarrow 0
		  \quad \textrm{and} \quad 
		\alpha(\vphi_{n_{k_l}}(t)) p_{n_{k_l}} (t)- \alpha(\vphi(t)) p(t) \rightarrow 0, 
	\end{align*}
	pointwise a.e.~in $\Omega$, along with the pointwise a.e.~convergences 
	\begin{align*}
		\alpha'(\vphi_{n_{k_l}}(t)) \nabla \theta_{n_{k_l}}(t) \cdot \nabla \vphi_{n_{k_l}}(t) - \alpha'(\vphi(t)) \nabla \theta(t) \cdot \nabla \vphi(t) &\rightarrow 0, \\ 
		\alpha(\vphi_{n_{k_l}}(t)) \nabla \theta_{n_{k_l}}(t)- \alpha(\vphi(t)) \nabla \theta(t) &\rightarrow 0. 
	\end{align*} Hence, the growth conditions on $\C, \Tau, \alpha, \alpha'$ and Lebesgue's generalized convergence theorem, cf. \cite[Sec. 3.25]{alt2013lineare}, along with \eqref{eq:convergence_phi_p} yield the strong convergences
		\begin{align*}
		\norm{\C(\vphi_{n_{k_l}} (t)) \Tau(\vphi_{n_{k_l}}(t) ) - \C(\vphi (t)) \Tau(\vphi (t))}_{\bm{L}^q} &\rightarrow 0, \\ 
		\norm{\alpha(\vphi_{n_{k_l}}(t)) p_{n_{k_l}} (t)- \alpha(\vphi(t)) p(t)}_{L^q} &\rightarrow 0,\\ 
		\norm{ \alpha'(\vphi_{n_{k_l}}(t)) \nabla \theta_{n_{k_l}}(t)\cdot \nabla \vphi_{n_{k_l}}(t) - \alpha'(\vphi(t)) \nabla \theta (t) \cdot \nabla \vphi (t)}_{L^{4/3}} & \rightarrow 0, \\ 
		\norm{\alpha(\vphi_{n_{k_l}}(t)) \nabla \theta_{n_{k_l}}(t) - \alpha(\vphi(t)) \nabla \theta(t)}_{\bm{L}^2} & \rightarrow 0
	\end{align*}
	for almost all $t \in (0, T)$. Along with Lemma \ref{lemma:convergence_B_C} and the uniform bound \eqref{iq:operators_uniform}, we infer, again for almost all $t \in (0, T)$, 
	\begin{align*}
		&\norm{ \mathcal{B}^{-1} (\vphi_{n_{k_l}}) \bm{f}^*(\vphi_{n_{k_l}}, \theta_{n_{k_l}}, p_{n_{k_l}}) - \mathcal{B}^{-1} (\vphi ) \bm{f}^*(\vphi, \theta, p)}_{  \bm{W}^{1, q}_{\Gamma_D}}\\ 
		& \quad  \leq \norm{ \mathcal{B}^{-1}(\vphi_{n_{k_l}}) (\bm{f}^*(\vphi_{n_{k_l}}, \theta_{n_{k_l}}, p_{n_{k_l}}) - \bm{f}^*(\vphi, \theta, p))}_{\bm{W}^{1, q}_{\Gamma_D}}
		+ \norm{( \mathcal{B}^{-1}(\vphi_{n_{k_l}}) -  \mathcal{B}^{-1}(\vphi)) \bm{f}^*(\vphi, \theta, p)}_{\bm{W}^{1, q}_{\Gamma_D}}\\ 
		& \quad \leq C  \norm{ \bm{f}^*(\vphi_{n_{k_l}}, \theta_{n_{k_l}}, p_{n_{k_l}}) - \bm{f}^*(\vphi, \theta, p)}_{\bm{W}^{-1, q}_{\Gamma_D}} 
		+ \norm{( \mathcal{B}^{-1}(\vphi_{n_{k_l}}) -  \mathcal{B}^{-1}(\vphi)) \bm{f}^*(\vphi, \theta, p)}_{\bm{W}^{1, q}_{\Gamma_D}} \rightarrow 0, 
	\end{align*}
	i.e., the chosen subsequence converges pointwise almost everywhere in $(0, T)$. 
	\par 
	Utilizing the convergences of $(p_n)_{n \inN}$ in $L^2(0, T; X(\Omega))$ and $(\vphi_n)_{n \inN}$ in $L^\infty(0, T; H^2_{\bm{n}}(\Omega))$ and $(\theta_n)_{n \inN}$ in $L^2(0, T; X(\Omega))$, the estimate \eqref{eq:estimate_f*} and the uniform bound on $\mathcal{B}^{-1}(\vphi_n)$, we infer with the help of Lebesgue's generalized convergence theorem, cf. \cite[Sec. 3.25]{alt2013lineare}, 
	\begin{align*}
		\mathfrak{F}(\vphi_{n_{k_l}}, \theta_{n_{k_l}}, p_{n_{k_l}}) \rightarrow \mathfrak{F}(\vphi, \theta, p)  \quad \textrm{in} \quad L^2(0, T; \bm{W}^{1, q}_{\Gamma_D}(\Omega)). 
	\end{align*}
	In summary, we obtain that every subsequence $(n_k)_{k \inN}$ of contains yet another subsequence such that the desired convergence holds. In particular, $(\mathfrak{F}(\vphi_n, p_n))_{n \inN}$ is precompact with a unique accumulation point, which yields the assertion.  
\end{proof}
\par 
\medskip 
Suppose we have found $(\vphi, p, \bm{u})$ in the corresponding spaces such that 
\begin{equation}\label{eq:galerkin7_equivalent}
	\mathfrak{L}_1(\vphi) \bm{u} = \mathfrak{F}(\vphi, \theta, p) 
\end{equation}
almost everywhere, or equivalently, 
\begin{align*}
	\pt \bm{u} +  \mathcal{B}^{-1}(\vphi) \mathcal{C}(\vphi) \bm{u}  =  \mathcal{B}^{-1} (\vphi ) \bm{f}^*(\vphi, \theta, p). 
\end{align*}
Multiplying $\mathcal{B} (\vphi )$ from the left side then yields that the equation
\begin{align*}
	\int_{\Omega} \C_\nu (\vphi ) \E(\pt \bm{u})  :  \E(\bm{\eta}) &+ \WE(\vphi_k , \E(\bm{u}) ) :  \E(\bm{\eta})  - \alpha(\vphi) p (\nabla \cdot \bm{\eta}) * \phi  \dx \\ 
	&= \int_{\Omega} \bm{f} \cdot \bm{\eta} \dx + \int_{\Gamma_N} \bm{g}\cdot \bm{\eta} \dH 
		- \vr \int_{\Omega} \nabla \theta \cdot \nabla (\alpha(\vphi) (\nabla \cdot \bm{\eta}) * \phi ) \dx
\end{align*} 
holds for all $\bm{\eta} \in \bm{W}^{1, q'}_{\Gamma_D}(\Omega)$, i.e., the functions $(\vphi, p, \bm{u})$ satisfy \eqref{eq:galerkin7} for almost all $t \in (0, T)$. Rewriting \eqref{eq:galerkin7_equivalent} with the help of $\mathcal{G}$ and exploiting that $\mathfrak{L}$ is invertible leads to the fixed point equation
\begin{equation}\label{eq:fixed_point}
	\bm{u} = \mathfrak{L}^{-1}(\mathcal{G}_1\bm{u}, \bm{u}_0) \left(\mathfrak{F}\mathcal{G}\bm{u}  \right) \eqqcolon \mathfrak{T} \bm{u}
\end{equation}
where 
\begin{align*}
	\mathfrak{T}: H^1(0, T; \bm{X}(\Omega)) \rightarrow H^1(0, T; \bm{X}(\Omega)), \quad 
	\bm{u}  \mapsto  \mathfrak{L}^{-1}(\mathcal{G}_1\bm{u}, \bm{u}_0) \left(\mathfrak{F}\mathcal{G}\bm{u} \right). 
\end{align*}
Note that the initial condition for the displacement is incorporated in the definition of the operator $\mathfrak{T}$.  
It is apparent from the discussion above that for a solution of \eqref{eq:fixed_point}, the quintuple $(\vphi, \mu, \theta, p, \bm{u}) \coloneqq (\mathcal{G}^*\bm{u}, \bm{u})$ satisfies \eqref{eq:galerkin1}-\eqref{eq:galerkin7}, i.e., it is a solution to the semi-discretized system.  

\begin{lemma} \label{lemma:leray_schauder_regular}
	There exists at least one $\bm{u} \in H^1(0, T; \bm{X} (\Omega))$ such that $\bm{u} = \mathfrak{T} \bm{u}$.  
\end{lemma}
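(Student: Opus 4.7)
The plan is to verify the hypotheses of the Leray--Schauder principle (Theorem~\ref{leray_schauder}) for the operator $\mathfrak{T}$ on the Banach space $Y = H^1(0, T; \bm{X}(\Omega))$. Two items must be checked: that $\mathfrak{T}$ is completely continuous, and that every solution of $\bm{u} = \lambda\mathfrak{T}\bm{u}$ with $\lambda \in [0,1]$ satisfies an a priori bound in $Y$ that is independent of $\lambda$.

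For the complete continuity, I would decompose $\mathfrak{T}$ as
\begin{equation*}
\bm{u} \longmapsto \mathcal{G}\bm{u} = (\vphi, \theta, p) \longmapsto \bigl(\mathcal{G}_1\bm{u},\, \mathfrak{F}\mathcal{G}\bm{u}\bigr) \longmapsto \mathfrak{L}^{-1}(\mathcal{G}_1\bm{u}, \bm{u}_0)(\mathfrak{F}\mathcal{G}\bm{u}).
\end{equation*}
The first map $\mathcal{G}$ from \eqref{def:G} is compact by Aubin--Lions--Simon, while $\mathfrak{F}$ and the $\vphi$-dependence of $\mathfrak{L}^{-1}$ are strongly continuous by Lemmas~\ref{lemma:F_continuity} and~\ref{lemma:convergece_L^1}, respectively. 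Composing a compact operator with continuous ones produces complete continuity.

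For the a priori bound, suppose $\bm{u} = \lambda\mathfrak{T}\bm{u}$. By the linearity of the abstract Cauchy problem, this is equivalent to $\pt\bm{u} + \mathcal{A}(\vphi)\bm{u} = \lambda\,\mathfrak{F}(\vphi, \theta, p)$ with $\bm{u}(0) = \lambda\bm{u}_0$, i.e.~the weak identity \eqref{eq:galerkin7} in which every term on the right-hand side carries an extra factor $\lambda$, while $(\vphi, \mu, \theta, p) = \mathcal{G}^*\bm{u}$ still solves the unmodified system \eqref{eq:galerkin1}--\eqref{eq:galerkin6}. Because the energy computation in Section~\ref{sec:energy_estimates} uses only those four equations (the $\bm{u}$-equation entering purely through added-and-subtracted terms treated as part of the right-hand side), the estimate \eqref{energy_inequality1} applies verbatim and controls $\vphi, \mu, \theta, p$ up to contributions involving $\|\bm{u}\|_{\bm{X}}^2$ and the free-parameter term $\rho_{\pt \bm{u}}\|\pt\bm{u}\|_{\bm{X}}^2$.

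To close the system, I would test the modified $\bm{u}$-equation with $\pt\bm{u}$. Coercivity of $\C_\nu$ together with Korn's inequality yields $c\|\pt\bm{u}\|_{\bm{X}}^2$ on the left; the growth bounds of Remark~\ref{remark:assumptions} for $\WE$, Young's inequality, and Young's convolution inequality for the terms $\lambda\alpha(\vphi)p(\nabla\cdot\pt\bm{u})*\phi$ and $\lambda\vr\nabla\theta\cdot\nabla(\alpha(\vphi)(\nabla\cdot\pt\bm{u})*\phi)$ yield an inequality of the shape
\begin{equation*}
\tfrac{c}{2}\|\pt\bm{u}\|_{L^2(0, t; \bm{X})}^2 + \|\bm{u}(t)\|_{\bm{X}}^2 \leq C\int_0^t\bigl(\|\vphi\|_{H^2}^2 + \|\bm{u}\|_{\bm{X}}^2 + \|p\|_{X}^2 + \|\theta\|_{X}^2 + 1\bigr)\, d\tau + C\|\bm{u}_0\|_{\bm{X}}^2,
\end{equation*}
uniformly in $\lambda \in [0,1]$, since all $\lambda$-factors are bounded by $1$ and $\|\bm{u}(0)\|_{\bm{X}}^2 = \lambda^2\|\bm{u}_0\|_{\bm{X}}^2$. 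Adding this to \eqref{energy_inequality1} and choosing $\rho_{\pt \bm{u}}$ small enough that the $\|\pt\bm{u}\|_{\bm{X}}^2$ contribution in \eqref{energy_inequality1} is absorbed into $\tfrac{c}{2}\|\pt\bm{u}\|_{\bm{X}}^2$, Gronwall's lemma will deliver a $\lambda$-independent radius $r > 0$ with $\|\bm{u}\|_{H^1(\bm{X})} \leq r$; Theorem~\ref{leray_schauder} then produces the fixed point. The hardest step is this absorption: it relies on the lower bound for $\C_\nu$ and the convolution inequalities being tight enough that the two coupled estimates can be triangularized, and it is here that the regularizing convolution with $\phi$ and the extra $\vr$-factor in front of the $\nabla\theta$-term are essential.
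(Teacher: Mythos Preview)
Your treatment of complete continuity is correct and matches the paper's argument exactly.

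The gap is in the a priori bound.  When you test the modified $\bm u$-equation with $\pt\bm u$ and propose to estimate the two coupling terms $\lambda\int_\Omega\alpha(\vphi)p\,(\nabla\!\cdot\!\pt\bm u)\!*\!\phi$ and $\lambda\vr\int_\Omega\nabla\theta\cdot\nabla\bigl(\alpha(\vphi)(\nabla\!\cdot\!\pt\bm u)\!*\!\phi\bigr)$ \emph{separately} by Young and the convolution inequality, the second one does not produce a right-hand side of the shape you claim.  Expanding the gradient via the chain rule gives a contribution $\vr\int_\Omega\alpha'(\vphi)(\nabla\theta\cdot\nabla\vphi)(\nabla\!\cdot\!\pt\bm u)\!*\!\phi$, and after any Young split that isolates $\delta\|\pt\bm u\|_{\bm X}^2$ the residual is a genuine product $C\vr^2\|\nabla\theta\|_{L^2}^2\|\nabla\vphi\|_{L^4}^2$ (or an equivalent three-factor combination), not a sum $\|\theta\|_X^2+\|\vphi\|_{H^2}^2$.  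Such a quadratic term in the Gronwall quantities leads only to a local-in-time bound, so the estimate does not close on $[0,T]$.  The first term is also delicate: it yields $C_\delta\|p\|_{L^2}^2$ with a large constant, and nothing in \eqref{energy_inequality1} puts $\|p\|_{L^2}^2$ among the Gronwall quantities---only $\int_0^t\|\nabla p\|^2$ sits on the left as a dissipative term with a \emph{fixed} coefficient, so absorption requires an additional small-weight trick you do not mention.

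The paper handles both terms in one stroke by exploiting the discrete pressure identity from Remark~\ref{remark:projection_p}: since $\alpha(\vphi)(\nabla\!\cdot\!\pt\bm v)\!*\!\phi\in H^1(\Omega)$, equation~\eqref{eq:p_identity_new} lets one rewrite
\[
\lambda\int_\Omega p\,\alpha(\vphi)(\nabla\!\cdot\!\pt\bm v)\!*\!\phi
=\lambda\vr\int_\Omega\nabla\theta\cdot\nabla\bigl(\alpha(\vphi)(\nabla\!\cdot\!\pt\bm v)\!*\!\phi\bigr)
+\lambda\int_\Omega\Pi_k^y\bigl[M(\vphi)(\theta-\alpha(\vphi)\nabla\!\cdot\!\bm v)\bigr]\,\alpha(\vphi)(\nabla\!\cdot\!\pt\bm v)\!*\!\phi.
\]
The first integral \emph{cancels exactly} against the $\vr\nabla\theta$-term already present in \eqref{eq:galerkin7}, and the second is bounded by $C(\|\theta\|_{L^2}^2+\|\bm v\|_{\bm X}^2)+\delta\|\pt\bm v\|_{\bm X}^2$ via the $L^2$-stability of $\Pi_k^y$.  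These are precisely Gronwall quantities, and the estimate closes.  This cancellation is the structural reason the convolution $\ast\phi$ and the $\vr\nabla\theta$-regularisation were inserted into \eqref{eq:galerkin7} together; treating them independently, as you do, forfeits it.
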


\begin{proof}
	Since this is a direct consequence of the Leray-Schauder principle, it remains to verify that 
	\begin{enumerate}[label*=(\roman*), nosep]
		\item $\mathfrak{T}$ is completely continuous; 
		\item there exists $r > 0$ such that for all $\bm{v}$ satisfying
			$\bm{v} = \lambda \mathfrak{T} \bm{v}  \textrm{ for any } \lambda \in [0, 1]$
		we have $\norm{\bm{v}}_{ H^1(\bm{X})} \leq R$. 
	\end{enumerate}
	\par 
	\medskip
	\noindent
	\textit{Ad (i):} Let $(\bm{u}_n)_{n \inN} \subset H^1(0, T; \bm{X}(\Omega))$ be a bounded sequence. Due to the discussion above \eqref{def:G}, we already know that the mapping 
\begin{equation*}
	\bm{u}_n \mapsto \mathcal{G}(\bm{u}_n) = (\vphi_n, \theta_n, p_n) \in C^0([0, T]; H^2_{\bm{n}}(\Omega)) \times C^0([0, T]; X(\Omega))  \times C^0([0, T]; X(\Omega))
\end{equation*}
is compact, allowing us to extract a convergent subsequence. Without relabeling, we can now apply Lemma \ref{lemma:F_continuity} and deduce 
\begin{equation}
	\mathfrak{F}\mathcal{G}\bm{u}_n = \mathfrak{F}(\vphi_n, \theta_n, p_n) \rightarrow \mathfrak{F}(\vphi, \theta, p) \quad \textrm{in} \quad L^2(0, T; \bm{X}(\Omega)). 
\end{equation}
Hence, Lemma \ref{lemma:convergece_L^1} along with the uniform estimate \eqref{iq:uniform_bound_L} yield
\begin{align*}
	&\norm{\mathfrak{L}^{-1}(\vphi_n, \bm{u}_0) (\mathfrak{F} (\vphi_n, \theta_n, p_n))- \mathfrak{L}^{-1}(\vphi, \bm{u}_0)(\mathfrak{F}(\vphi,\theta, p)) }_{H^1(\bm{X})}\\ 
	& \quad \leq
	\begin{aligned}[t]
		&\norm{\mathfrak{L}^{-1}(\vphi_n, \bm{u}_0)}_{\mathcal{L}( L^2(0, T; \bm{X}(\Omega)), H^1(0, T; \bm{X}(\Omega)))} \norm{\mathfrak{F} (\vphi_n,\theta_n, p_n) - \mathfrak{F}(\vphi, \theta, p)}_{L^2(\bm{X})} \\ 
		& \quad + \norm{(\mathfrak{L}^{-1}(\vphi_n, \bm{u}_0) - \mathfrak{L}^{-1}(\vphi, \bm{u}_0)) (\mathfrak{F}(\vphi, \theta,  p))}_{H^1(\bm{X})}  \rightarrow 0 
	\end{aligned}
\end{align*}
and we conclude that, along a suitable subsequence, 
\begin{equation*}
	\mathfrak{T}\bm{u}_n  \rightarrow \mathfrak{T}\bm{u} \quad \textrm{in} \quad H^1(0, T; \bm{X}(\Omega)). 
\end{equation*}
\par
\medskip
\textit{Ad (ii):} Suppose $\lambda \in [0, 1]$ and assume $\bm{v}$ satisfies the equation $\bm{v} = \lambda \mathfrak{T} \bm{v}$, or equivalently that $\mathfrak{L}_1(\mathcal{G} \bm{v}) \bm{v} = \lambda \mathfrak{F}\mathcal{G}\bm{v}$,
which follows from the definition of $\mathfrak{T}$ and the linearity of $\mathcal{L}^{-1}$. By definition, the functions $(\vphi, \mu, \theta, p) \coloneqq \mathcal{G}^*\bm{v}$ are a solution to the system \eqref{eq:differential_algebraic1}-\eqref{eq:differential_algebraic4} of differential-algebraic equations with the corresponding initial conditions.  Similar to above, we further obtain 
\begin{equation*}
		 \mathcal{B}(\vphi) \pt \bm{v} +  \mathcal{C}(\vphi) \bm{v}  = \lambda \bm{f}^*(\vphi, p)
\end{equation*}
for almost all $t \in [0, T]$ and therefore
\begin{align*}
		&\int_{\Omega} \C_\nu (\vphi ) \E(\pt \bm{v})  :  \E(\bm{\eta}) 
		+\C (\vphi ) \E( \bm{v})  :  \E(\bm{\eta})  \dx  \\ 
		 & \quad = 
		 \begin{aligned}[t]
		 	\lambda  \int_\Omega \C(\vphi )  \Tau(\vphi ) :  \E(\bm{\eta})
		 	+\alpha(\vphi) p \, (\nabla \cdot \bm{\eta}) * \phi
		 	&+\bm{f} \cdot \bm{\eta} \dx + \lambda  \int_{\Gamma_N} \bm{g}\cdot \bm{\eta} \dH \\ 
		 	&-  \lambda  \int_{\Omega} \vr \nabla \theta \cdot \nabla (\alpha(\vphi) (\nabla \cdot \bm{\eta}) * \phi ) \dx
		 \end{aligned}
\end{align*}
for all $\bm{\eta} \in \bm{X}(\Omega)$. Adding to and subtracting from the right-hand side the term
\begin{equation*}
	(1 - \lambda )  \int_\Omega \C(\vphi )  \Tau(\vphi ) :  \E(\bm{\eta}) \dx 
	+ \int_\Omega M(\vphi)(\theta - \alpha(\vphi) \nabla \cdot \bm{u}) \alpha(\vphi)  \nabla \cdot \bm{\eta} \dx 
\end{equation*}
leads to
\begin{align*}
		&\int_{\Omega} \C_\nu (\vphi ) \E(\pt \bm{v})  :  \E(\bm{\eta}) + \WE(\vphi_k , \E(\bm{v}) ) :  \E(\bm{\eta})  - M(\vphi)(\theta - \alpha(\vphi) \nabla \cdot \bm{u}) \alpha(\vphi)  \nabla \cdot \bm{\eta} \dx \\ 
		& \quad  =   \begin{aligned}[t]
			&\lambda  \int_\Omega  \bm{f} \cdot \bm{\eta} +\alpha(\vphi) p \,(\nabla \cdot \bm{\eta}) * \phi   \dx + \lambda  \int_{\Gamma_N} \bm{g}\cdot \bm{\eta} \dH \\ 
			&+ ( \lambda - 1)  \int_\Omega  \C(\vphi )  \Tau(\vphi ) :  \E(\bm{\eta})
			-M(\vphi)(\theta - \alpha(\vphi) \nabla \cdot \bm{u}) \alpha(\vphi)  \nabla \cdot \bm{\eta} \dx\\ 
			&-  \lambda  \int_{\Omega} \vr \nabla \theta \cdot \nabla (\alpha(\vphi) (\nabla \cdot \bm{\eta}) * \phi ) \dx.
		\end{aligned} \numberthis \label{eq:galerkin7_lambda}
\end{align*}
After testing the system of differential-algebraic equations \eqref{eq:differential_algebraic1}-\eqref{eq:differential_algebraic4} just like in Section \ref{sec:energy_estimates} and \eqref{eq:galerkin7_lambda} with $\pt \bm{v}$, we arrive at 
\begin{align*}
	&\norm{m(\vphi)^{1/2} \nabla \mu}_{L^2}^2 
	+\norm{\kappa(\vphi)^{1/2} \nabla p}_{L^2}^2
	+ \int_\Omega \C_{\nu} (\vphi ) \E(\pt \bm{v}) : \E(\pt \bm{v}) \dx \\ 
	&\quad +\frac{d}{dt} \Big[
	\begin{aligned}[t]
		\quad &\int_\Omega\frac{\vepsilon}{2} \abs{\nabla \vphi}^2
		+ \frac{\varrho^{1/2}}{2} \abs{\Delta \vphi}^2 
		+\frac{\vr}{2} \abs{\nabla \theta}^2 
		 + \frac{1}{\vepsilon} \psi(\vphi) \dx \\ 
		&+ \int_\Omega W(\vphi , \E(\bm{v})) - \lambda \bm{f} \cdot \bm{v}  \dx - \lambda \int_{\Gamma_N} \bm{g} \cdot \bm{v} \dH
		+ \int_\Omega  \frac{M(\vphi)}{2} (\theta - \alpha(\vphi) \nabla \cdot \bm{v})^2 \dx
		\Big]
	\end{aligned}\\ 
	&\quad =  \begin{aligned}[t]
		&\langle R(\vphi, \E(\bm{v}), \theta), \mu \rangle + \langle S_f(\vphi,\E(\bm{v}), \theta), p \rangle \\ 
		&+   \int_\Omega \begin{aligned}[t]
			&( \lambda - 1) \C(\vphi )  \Tau(\vphi ) :  \E( \pt \bm{v})
			-M(\vphi)(\theta - \alpha(\vphi) \nabla \cdot \bm{v}) \alpha(\vphi)  \nabla \cdot \pt \bm{v} \dx\\ 
			&+  \lambda  \int_{\Omega} \alpha(\vphi) p \, (\nabla  \cdot  \pt \bm{v}) * \phi -  \vr \nabla \theta \cdot \nabla (\alpha(\vphi) (\nabla \cdot \pt \bm{v}) * \phi ) \dx, 
		\end{aligned}
	\end{aligned}\numberthis \label{eq:leray_schauder_est_1}
\end{align*}
which only slightly differs from \eqref{eq:galerkin_sum}. Employing the Cauchy-Schwarz inequality and invoking a trace theorem yields for all $\rho_{\bm{v}} > 0$
\begin{align*}
	-\lambda  \int_\Omega \bm{f} \cdot \bm{v} \dx - \lambda \int_{\Gamma_N} \bm{g} \cdot \bm{v}\dH 
	\geq - \lambda \rho_{\bm{v}} \norm{\bm{v}}_{\bm{X}}^2 - \lambda C(\rho_{\bm{v}}, \bm{f}, \bm{g}) 
	\geq - \rho_{\bm{v}} \norm{\bm{v}}_{\bm{X}}^2 - C(\rho_{\bm{v}}, \bm{f}, \bm{g}), 
\end{align*}
where we used $\lambda \in [0, 1]$. Hence, instead of \eqref{estimate_elastic_below}, we obtain
for suitably small $\rho_{\bm{v}}$ and all $t \in [0, T]$, $\lambda \in [0, 1]$
	\begin{align*}
			\int_\Omega W(\vphi, \E(\bm{v})) - \lambda \bm{f} \cdot \bm{v} \dx - \lambda \int_{\Gamma_N} \bm{g} \cdot \bm{v} \dH
			\geq C_{\bm{v}} \norm{\bm{v}}_{\bm{X}}^2 - \rho_\vphi \norm{\vphi}_{L^p}^p - C. 
		\end{align*}
Moreover, instead of \eqref{eq:estimate_elastic_initial}, we estimate
\begin{equation*}
	\int_\Omega W(\vphi_{0, k} , \E(\bm{v}_0)) \dx 
	- \lambda \bm{f} \cdot \bm{v}_0 \dx - \lambda \int_{\Gamma_N} \bm{g} \cdot \bm{u}_0 \dH
	\leq C(\norm{\vphi_{0, k}}_{L^2}^2 + \norm{\bm{v}_0}_{\bm{X}}^2 +1 ). 
\end{equation*}
Finally, we exploit the properties of the tensor $\C_{\nu}$ and invoke Korn's inequality to obtain
\begin{equation*}\label{estimate_visco_below}
			\int_\Omega \C_{\nu} (\vphi) \E( \pt \bm{v}) : \E( \pt \bm{v}) \dx \geq C_\nu \norm{\pt \bm{v}}_{\bm{X}}^2.
		\end{equation*}
\par 
It remains to estimate the right-hand side uniformly in $\lambda$. Since $\abs{1- \lambda} \leq 1$, we find with the help of assumption \ref{A:W} and Young's inequality 
\begin{align*}
	- (1 - \lambda )  \int_\Omega \C(\vphi )  \Tau(\vphi ) :  \E( \pt \bm{v}) \dx 
	&\leq \abs{1-\lambda} (C(\norm{\vphi}_{L^2} + 1)  \norm{\pt \bm{v}}_{\bm{X}}) \\ 
	 &\leq C_{\bm{v}, 1} (\norm{\vphi}_{L^2}^2 + 1) +\frac{ \delta_{\bm{v}}}{3} \norm{\pt \bm{v}}_{\bm{X}}^2. 
\end{align*}
Taking note of the fact that $\alpha(\vphi) (\nabla \cdot \pt \bm{v}) * \phi \in H^1(\Omega)$, the identity \eqref{eq:p_identity_new} allows us to rewrite the last term in \eqref{eq:leray_schauder_est_1} as 
\begin{align*}
	 & \lambda \int_\Omega  p \alpha(\vphi)  (\nabla \cdot \pt \bm{v}) * \phi  \dx \\ 
	 \quad &= 
	 \lambda  \int_\Omega \vr \nabla \theta \cdot \nabla (\alpha(\vphi) (\nabla \cdot \pt \bm{v})* \phi) \dx + 
	\lambda  \int_\Omega \Pi_k^y(M(\vphi) (\theta - \alpha(\vphi) \nabla \cdot \bm{v})) \alpha(\vphi)  (\nabla \cdot \pt \bm{v}) * \phi  \dx
\end{align*}
and find that the the first integral cancels out against the last term in \eqref{eq:leray_schauder_est_1}. For the second term, we compute 
\begin{align*}
	\lambda  \int_\Omega \Pi_k^y(M(\vphi) (\theta - \alpha(\vphi) \nabla \cdot \bm{v})) \alpha(\vphi)  (\nabla \cdot \pt \bm{v} ) * \phi  \dx 
	 &\leq  \frac{\lambda}{4\delta_{\bm{v}}} \norm{\Pi_k^y(M (\theta - \alpha(\vphi) \nabla \cdot \bm{v}))}_{L^2}^2 + \frac{ \delta_{\bm{v}}}{3} \norm{\pt \bm{v}}_{H^1}^2\\ 
	&\leq C_{\bm{v}, 2}(\norm{\theta}_{L^2}^2 +  \norm{\bm{v}}_{\bm{X}}^2) + \frac{ \delta_{\bm{v}}}{3} \norm{\pt \bm{v}}_{\bm{X}}^2, 
\end{align*}
where we used \ref{A:phase_coefficients} and the stability estimate $\norm{\Pi_y^k(\cdot )}_{L^2} \leq \norm{\cdot}_{L^2}$. Similarly, we further obtain
\begin{align*}
	\Big( \int_\Omega M(\vphi)(\theta - \alpha(\vphi) \nabla \cdot \bm{v}) \alpha(\vphi)  \nabla \cdot \pt \bm{v} \dx \Big) 
	 \leq C_{\bm{v}, 3}(\norm{\theta}_{L^2}^2 +  \norm{\bm{v}}_{\bm{X}}^2) + \frac{ \delta_{\bm{v}}}{3} \norm{\pt \bm{v}}_{\bm{X}}^2. 
\end{align*}
\par 
We wish to point out that the constants $C_{\bm{v}, 1}, C_{\bm{v}, 2}, \delta_{\bm{v}}$ in the inequalities above are independent of $\lambda$. 
Hence, we can use the computation from Section \ref{sec:energy_estimates} to obtain 
\begin{gather}
	\begin{align*}
		&\begin{aligned}[t]
			&(\underline{m} - \rho_\mu C_p) \int_0^t \norm{\nabla \mu}^2 \dt  
			+(\underline{\kappa} - \rho_p C_p) \int_0^t \norm{ \nabla p}^2 \dt 
			+ (C_\nu -  \delta_{\bm{v}})  \int_0^t \norm{\pt \bm{v}}_{\bm{X}}^2 \dt  \\  
			&+\frac{\vepsilon}{2} \norm{\nabla \vphi(t)}_{L^2}^2 
			+ \varrho^{1/2} \norm{\Delta \vphi(t)}_{L^2}^2
			+ \frac{1}{2\vepsilon}  \norm{\psi(\vphi(t))}_{L^1} 
			+(\frac{\gamma_{\psi_1} - \rho_{\psi_2}}{2 \vepsilon} -  \rho_\vphi ) \norm{\vphi(t)}_{L^p}^p  \\ 
			&+ (C_{\bm{v}} - \frac{\underline{M}} {2}(1- \frac{1}{\rho_\theta}) \overline{\alpha}^2   )\norm{\bm{v}(t)}_{\bm{X}}^2
			+ \frac{\underline{M}} {2} (1-\rho_\theta) \norm{\theta(t)}_{L^2}^2 + \frac{\vr}{2} \norm{\nabla \theta (t)}_{L^2}^2
		\end{aligned}\\ 
		\leq \, & \begin{aligned}[t]
			&C \int_0^t \norm{\psi(\vphi)}_{L^1} + \norm{\vphi}_{L^2}^2 + \norm{\bm{v}}_{\bm{X}}^2   + 1\dt 
			 + \int_0^t ( C_{\bm{v}, 1} + C_{\bm{v}, 2} + C_{\bm{v}, 3}) (\norm{\theta}_{L^2}^2 +  \norm{\bm{v}}_{\bm{X}}^2 + 1) \dt \\ 
			&+ C( \norm{\vphi_{0, k}}_{H^1}^2 + \varrho^{1/2} \norm{\Delta \vphi_{0, k}}_{L^2}^2 +  \frac{1}{\vepsilon}  \norm{\psi(\vphi_{0, k})}_{L^1} +  \norm{\bm{v}_0}_{\bm{X}}^2 + \norm{\theta_{0,k}}_{L^2}^2 + \vr \norm{\nabla \theta_{k, 0}}_{L^2}^2 + 1). 
		\end{aligned} 
	\end{align*}
\end{gather}
As before, we can choose all parameters $\rho_\vphi, \rho_\mu,\rho_p,\rho_\theta, \rho_{\psi_2}$ and $\delta_{\bm{v}}$ suitably, such that this simplifies to  
\begin{align*}
	\norm{\vphi(t)}_{L^2}^2 &+ \norm{\psi(\vphi(t))}_{L^1} +  \norm{\theta(t)}_{L^2}^2 + \norm{\bm{v} (t)}_{\bm{X}}^2
	+ \int_0^T \norm{\pt \bm{v}}_{H^1}^2 \dt\\ 
	&\leq  C \int_0^t \norm{\vphi}_{L^2}^2 + \norm{\psi(\vphi)}_{L^1} +  \norm{\theta}_{L^2}^2   +  \norm{\bm{v}}_{\bm{X}}^2  \dt + C
\end{align*}
with some constant $C > 0$ independent of $\lambda \in [0, T]$. Hence, Gronwall's lemma in particular yields the existence of some $r > 0$ such that 
\begin{equation*}
	\norm{\bm{v}}_{H^1(\bm{X})} \leq r
\end{equation*}
independently of $\lambda$. Thus, the Leray-Schauder principle (Theorem \ref{leray_schauder}) is applicable and we deduce the existence of a fixed-point $\bm{u}$, as desired.  	
\end{proof}


\subsection{\textit{A priori} estimates and compactness results }

The following is concerned with the derivation of \textit{a priori} estimates and the extraction of (weakly) convergent subsequences of the approximate solutions whose existence we showed in the section above. These results heavily rely on the estimates from Section \ref{sec:energy_estimates}, the properties of the orthogonal projections $\Pi_k$,  as well as standard compactness theorems.

\begin{lemma}\label{lem:a_priori_regular}
	Suppose that $(\vphi_k, \mu_k, \bm{u}_k, \theta_k,p_k)$ is a solution to the system \eqref{eq:galerkin1}-\eqref{eq:galerkin7}. Then there exists a constant $C > 0$, independent of $k \inN$, such that 
	\begin{align*}
		\norm{\vphi_k}_{L^\infty(H^1)} &+ \norm{\varrho^{1/4} \Delta \vphi}_{L^\infty(L^2)}
		+ \norm{\vphi_k}_{H^1((H^1)')}
		+ \norm{\mu_k}_{L^2(H^1)}
		+ \norm{\psi(\vphi_k)}_{L^\infty(L^1)} 
		+ \norm{\bm{u}_k}_{H^1(\bm{X})}\\ 
		&+  \norm{\theta_k}_{L^\infty(L^2)} 
		+  \norm{ \vr^{1/2} \nabla \theta_k}_{L^\infty(L^2)} 
		+ \norm{\theta_k}_{H^1(X')}
		+ \norm{p_k}_{L^2(X)}
		+ \norm{p_k}_{L^\infty(L^2)}
		\leq C. 
	\end{align*} 
\end{lemma}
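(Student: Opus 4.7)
The plan is to bootstrap the semi-discrete energy balance \eqref{eq:galerkin_sum} derived in Section \ref{sec:energy_estimates} by also testing the visco-elastic equation \eqref{eq:galerkin7} with $\pt\bm{u}_k$. The latter yields the dissipation $\int\C_\nu(\vphi_k)\E(\pt\bm{u}_k):\E(\pt\bm{u}_k)$, which is bounded below by $c\,\norm{\pt\bm{u}_k}_{\bm{X}}^2$ thanks to \ref{A:C_nu} and Korn's inequality. The pressure--work contribution $\int\alpha(\vphi_k)\,p_k\,(\nabla\cdot\pt\bm{u}_k)*\phi$ is rewritten using the projected identity \eqref{eq:p_identity_new} of Remark \ref{remark:projection_p}: its $\vr$-gradient part cancels exactly against the $\vr\nabla\theta_k$-term on the right of \eqref{eq:galerkin7}, while the remainder $\int\Pi_k^y[M(\vphi_k)(\theta_k-\alpha(\vphi_k)\nabla\cdot\bm{u}_k)]\,\alpha(\vphi_k)\,(\nabla\cdot\pt\bm{u}_k)*\phi$ is split by Young's inequality so as to absorb the artificial $\pt\bm{u}_k$-terms that had to be added in the derivation of \eqref{eq:galerkin_sum}. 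Summing the two identities, choosing the auxiliary parameters $\rho_\vphi,\rho_\mu,\rho_p,\rho_\theta,\rho_{\psi_2},\rho_{\pt\bm{u}}$ exactly as in the paragraph after \eqref{energy_inequality1}, and applying Gronwall's lemma then produces the bounds on $\vphi_k$ in $L^\infty(H^1)$, $\vr^{1/4}\Delta\vphi_k$ in $L^\infty(L^2)$, $\psi(\vphi_k)$ in $L^\infty(L^1)$, $\bm{u}_k$ in $L^\infty(\bm{X})$, $\theta_k$ in $L^\infty(L^2)$, $\vr^{1/2}\nabla\theta_k$ in $L^\infty(L^2)$, $\mu_k$ in $L^2(H^1)$, $p_k$ in $L^2(X)$, and $\pt\bm{u}_k$ in $L^2(\bm{X})$, all uniformly in $k$.

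For the dual estimates on $\pt\vphi_k$ and $\pt\theta_k$, I would use that the bases $\{z_i\}$ and $\{y_i\}$ are simultaneously orthogonal in $L^2(\Omega)$ and in the corresponding energy inner products, so the $L^2$-projections $\Pi_k^z$ and $\Pi_k^y$ are stable on $H^1(\Omega)$ and $X(\Omega)$ respectively. Since $\pt\vphi_k\in Z_k$, equation \eqref{eq:galerkin1} gives $\langle\pt\vphi_k,\zeta\rangle = \langle\pt\vphi_k,\Pi_k^z\zeta\rangle$ for every $\zeta\in H^1(\Omega)$; the $L^2(H^1)$-bound on $\mu_k$ together with the boundedness of $R$ then yields $\pt\vphi_k\in L^2((H^1)')$ uniformly. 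An identical duality argument applied to \eqref{eq:galerkin3} via $\Pi_k^y$, combined with the $L^2(X)$-bound on $p_k$ and the Lipschitz growth of $S_f$, gives $\pt\theta_k\in L^2(X')$ uniformly.

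The $L^\infty(L^2)$-bound on $p_k$ is the step I expect to be the main obstacle, since the direct test of \eqref{eq:p_identity_new} with $\xi=p_k\in Y_k$ only produces an inequality of the form
\begin{equation*}
\norm{p_k(t)}_{L^2}^2 \leq 2\vr\norm{\nabla\theta_k(t)}_{L^2}\norm{\nabla p_k(t)}_{L^2} + C\bigl(\norm{\theta_k(t)}_{L^2}^2+\norm{\bm{u}_k(t)}_{\bm{X}}^2\bigr),
\end{equation*}
whose right-hand side is not pointwise controllable in $t$. To upgrade this, I would use the spectral representation
\begin{equation*}
p_k = -\vr\,\Delta\theta_k + \Pi_k^y\bigl[M(\vphi_k)(\theta_k-\alpha(\vphi_k)\nabla\cdot\bm{u}_k)\bigr]
\end{equation*}
that follows from \eqref{eq:galerkin4} together with $-\Delta y_j=\lambda_j y_j$, and bootstrap the fourth-order parabolic equation for $\theta_k$ resulting from substituting this representation into \eqref{eq:galerkin3}. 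For fixed $\vr>0$ this yields the pointwise-in-$t$ control on $\vr\norm{\Delta\theta_k(t)}_{L^2}$ needed to close the estimate with a $\vr$-dependent but $k$-uniform constant, consistent with the formulation of the lemma.
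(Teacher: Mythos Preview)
Your approach to the main energy bounds and the dual estimates is essentially identical to the paper's: the same testing procedure (test \eqref{eq:galerkin1}--\eqref{eq:galerkin4} as in Section~\ref{sec:energy_estimates} and \eqref{eq:galerkin7} with $\pt\bm{u}_k$), the same cancellation of the $\vr\nabla\theta_k$-contribution via \eqref{eq:p_identity_new}, the same Gronwall argument, and the same projection-stability duality for $\pt\vphi_k$ and $\pt\theta_k$. The paper additionally spells out why the projected initial data $\vphi_{0,k},\theta_{0,k}$ obey $k$-uniform bounds (via the Schauder-basis property of $\{z_i\}$ in $H^2_{\bm n}(\Omega)$ and the spectral stability of $\Pi_k^y$ on $X(\Omega)$), a routine point you omit.

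On the $L^\infty(L^2)$ bound for $p_k$: you are right that it does not drop out of the energy identity, and your diagnosis of the obstruction (the cross term $\vr\langle\nabla\theta_k,\nabla p_k\rangle$) is accurate. However, the paper's own proof does not derive this bound either---after obtaining $\norm{p_k}_{L^2(X)}$ via Poincar\'e it proceeds directly to $\mu_k$ and the dual estimates, and the $L^\infty(L^2)$ claim is never revisited. Since none of the subsequent compactness arguments use more than $p_k\in L^2(X)$ together with the translation estimate of Lemma~\ref{lem:p_strong}, this appears to be an overstatement in the lemma that can safely be dropped (for the limit $p$ in the unregularized system it follows trivially from \eqref{eq:weak5}). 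Your proposed fourth-order bootstrap is therefore unnecessary, and as sketched it also runs into trouble: after substituting $p_k=-\vr\Delta\theta_k+\Pi_k^y[M(\vphi_k)(\theta_k-\alpha(\vphi_k)\nabla\cdot\bm{u}_k)]$ into \eqref{eq:galerkin3} and testing with $-\vr\Delta\theta_k$, controlling the cross term requires an $X$-bound on the projected quantity, hence $H^1$-regularity of $\nabla\cdot\bm{u}_k$, which is not available.
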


\begin{proof}
	We use the same testing procedure as in Section \ref{sec:energy_estimates} and Lemma \ref{lemma:leray_schauder_regular}, i.e., 
	we test \eqref{eq:galerkin1} with $\mu_k$, \eqref{eq:galerkin2} with $\pt \vphi_k$, \eqref{eq:galerkin3} with $p_k$ and \eqref{eq:galerkin4} with $\pt \theta_k$. Moreover, we test \eqref{eq:galerkin7} with $\pt \bm{u}$ and perform the same computation as before to arrive at 
	\begin{align*}
		&\norm{m(\vphi)^{1/2} \nabla \mu_k}_{L^2}^2 + \norm{\kappa(\vphi_k)^{1/2} \nabla p_k}_{L^2}^2
		+ \int_\Omega \C_{\nu} (\vphi_k ) \E(\pt \bm{u}_k) : \E(\pt \bm{u}_k) \dx \\ 
		&\quad +\frac{d}{dt} \Big[
		\begin{aligned}[t]
			\quad &\int_\Omega\frac{\vepsilon}{2} \abs{\nabla \vphi_k}^2
			+ \frac{\varrho^{1/2} }{2} \abs{\Delta \vphi_k(t)}^2 
			+ \frac{\vr}{2} \abs{\nabla \theta_k}^2 
			+\frac{1}{\vepsilon} \psi(\vphi_k) \dx \\ 
			&+ \int_\Omega W(\vphi_k , \E(\bm{u}_k)) -  \bm{f} \cdot \bm{u}_k \dx - \int_{\Gamma_N} \bm{g} \cdot \bm{u}_k \dH
			+ \int_\Omega \frac{M(\vphi_k)}{2}  (\theta_k - \alpha(\vphi_k) \nabla \cdot \bm{u}_k)^2 \dx
			\Big]
		\end{aligned}\\ 
		&=  \begin{aligned}[t]
			&\langle R(\vphi_k, \E(\bm{u}_k), \theta_k), \mu_k \rangle + \langle S_f(\vphi_k,\E(\bm{u}_k), \theta_k), p_k \rangle\\ 
			 & + 
			 \begin{aligned}[t]
				 \int_\Omega  \alpha(\vphi_k) p_k (\nabla  \cdot  \pt \bm{u}_k) * \phi
				 &- \vr \nabla \theta \cdot \nabla (\alpha(\vphi) (\nabla \cdot \pt \bm{v}) * \phi )\\ 
				 &-M(\vphi_k)(\theta_k - \alpha(\vphi_k) \nabla \cdot \bm{u}_k) \alpha(\vphi_k)  \nabla \cdot \pt \bm{u}_k \dx. 
			 \end{aligned}
		\end{aligned}
	\end{align*}
Similar estimates as in Section \ref{sec:energy_estimates} now lead to 
	\begin{gather*}
		\begin{align*}
			&(\underline{m} - \rho_\mu C_p) \int_0^t \norm{\nabla \mu_k}^2 \dt  
			+(\underline{\kappa} - \rho_p C_p) \int_0^t \norm{ \nabla p_k}^2 \dt 
			+ (C_\nu - \delta_{\pt \bm{u}}) \int_0^t \norm{\pt \bm{u}_k}_{\bm{X}}^2 \dt  \\  
			+&\frac{\vepsilon}{2} \norm{\nabla \vphi_k(t)}_{L^2}^2 
			+ \frac{\varrho^{1/2}}{2} \norm{ \Delta \varphi_k (t)}_{L^2}^2 
			+ \frac{1}{2 \vepsilon } \norm{\psi(\vphi_k(t))}_{L^1} +(\frac{\gamma_{\psi_1}- \rho_{\psi_2}}{2 \vepsilon} -  \rho_\vphi ) \norm{\vphi_k(t)}_{L^p}^p  \\ 
			+ &(C_{\bm{u}} - \frac{\underline{M}} {2}(1- \frac{1}{\rho_\theta}) \overline{\alpha}^2   )\norm{\bm{u}_k(t)}_{\bm{X}}^2
			+ \frac{\underline{M}} {2} (1-\rho_\theta) \norm{\theta(t)}_{L^2}^2
			+ \frac{\vr}{2} \norm{\nabla \theta_k (t)}_{L^2}^2 \\ 
			\leq& \begin{aligned}[t]
				&C \int_0^t \norm{\psi(\vphi_k)}_{L^1} + \norm{\vphi_k}_{L^2}^2+ \norm{\theta_k}_{L^2}^2 + \norm{\bm{u}_k}_{H^1}^2  + 1\dt \\  
				&+ C\Big( \norm{\vphi_{0, k}}_{H^1}^2 + \varrho \norm{\Delta \vphi_{0, k}}_{L^2}^2  +  \frac{1}{\vepsilon}  \norm{\psi(\vphi_{0, k})}_{L^1} +  \norm{\bm{u}_0}_{\bm{X}}^2 + \norm{\theta_{0, k}}_{L^2}^2 + \vr \norm{\nabla \theta_{0, k}}_{L^2}^2+ 1 \Big). 
			\end{aligned} 
		\end{align*}\numberthis \label{eq:enegy_estimate_reg}
	\end{gather*}
	We recall that the eigenfunctions $\{ z_i\}$, $\{ y_i \}$ form an orthogonal basis of $L^2(\Omega)$ and that the projections satisfy
	\begin{gather*} 
		\norm{\vphi_{0, k}}_{L^2}^2 = \norm{\Pi_k^z \vphi_{0}}_{L^2}^2 \leq \norm{\vphi_0}_{L^2}^2 
		\quad \textrm{and} \quad 
		\norm{\theta_{0, k}}_{L^2}^2 = \norm{\Pi_k^y \theta_{0}}_{L^2}^2 \leq \norm{\theta_0}_{L^2}^2.
	\end{gather*}
	Moreover, due to orthogonality, it holds that $\norm{\vphi_{0, k}}_{H^1} \leq C \norm{\nabla \vphi_0}_{H^1} $. 
	Since by assumption we have $\vphi_0 \in H^2_{\bm{n}}(\Omega)$, it follows from \cite[§3]{lam06} that 
	\begin{equation*}
		\Pi_k^z \vphi_0 \rightarrow \vphi_0 \quad \textrm{in} \quad H^2_{\bm{n}}(\Omega). 
	\end{equation*}
	The embedding $ H^2_{\bm{n}}(\Omega) \hookrightarrow L^\infty(\Omega)$ and the properties of our basis imply, cf. \cite[Sec.~3.2]{MR4126782}, \cite[§3]{lam06}, 
	\begin{gather*}
		\norm{\Delta \vphi_{0, k}}_{L^2}^2 = \norm{\Delta \Pi_k^z \vphi_{0}}_{L^2}^2 = \norm{ \Pi_k^z \Delta \vphi_{0}}_{L^2}^2 \leq C \norm{\Delta \vphi_0}_{L^2}^2, \\ 
		\norm{\vphi_{0, k}}_{L^\infty} \leq C \norm{\Pi_k^z \vphi_{0}}_{H^2} \leq C \norm{\vphi_0}_{H^2}, 
	\end{gather*}
	and due to the continuity of $\psi$, we assert $\norm{\psi(\vphi_{0, k})}_{L^\infty} \leq C$
	for a constant $C > 0$ independent of $k$. Now, the strong convergence of $\vphi_{0, k}$ in $H^2_{\bm{n}}(\Omega)$ implies uniform convergence of $\psi(\vphi_{0, k})$ to $\psi(\vphi_0)$ and we conclude with the help of Lebesgue's theorem that 
	\begin{equation*}
		\psi(\vphi_{0, k}) \rightarrow \psi(\vphi_{0}) \quad \textrm{in} \quad L^1(\Omega). 
	\end{equation*}
	Finally, note that with the help of spectral theory and due to the choice of our basis $\{ y_i\}$, we can deduce $\norm{\Pi_k^y \zeta}_{X} \leq C \norm{ \zeta}_{X}$ for all $\zeta \in X(\Omega)$, $k \inN$, which entails 
	\begin{equation*}
		\norm{ \theta_{0, k}}_X =\norm{ \Pi_k^y \theta_{0}}_X   \leq C \norm{\theta_0}_{X}. 
	\end{equation*}
	Hence, without loss of generality, it holds 
	\begin{align*}
		\norm{ \vphi_k(t)}_{H^1}^2 
		&+ \norm{\varrho^{1/4} \Delta \vphi_k(t)}_{L^2}^2
		+ \norm{\psi(\vphi_k(t))}_{L^1}  
		+  \norm{\theta_k(t)}_{L^2}^2 
		+  \norm{ \vr^{1/2} \nabla \theta_k(t)}_{L^2}^2 
		+ \norm{\bm{u}_k(t)}_{\bm{X}}^2\\ 
		&+\int_0^t \norm{\nabla \mu_k}_{L^2}^2 \dt  
		+ \int_0^t \norm{ \nabla p_k}_{L^2}^2 \dt 
		+  \int_0^t \norm{\pt \bm{u}_k}_{\bm{X}}^2 \dt  \\  
		\leq& \begin{aligned}[t]
			&C \int_0^t \norm{\vphi_k}_{L^2}^2 +  \norm{\psi(\vphi_k)}_{L^1} + \norm{\theta_k}_{L^2}^2 +  \norm{\bm{u}_k}_{\bm{X}}^2  + 1\dt \\  
			&+ C\Big( \norm{\vphi_{0}}_{H^1}^2 + \varrho^{1/2} \norm{\Delta \vphi_{0}}_{L^2}^2+  \frac{1}{\vepsilon}  \norm{\psi(\vphi_{0})}_{L^1} +  \norm{\bm{u}_0}_{\bm{X}}^2 + \norm{\theta_{0}}_{L^2}^2 +  \vr \norm{  \theta_{0}}_{X}^2 + 1 \Big)
		\end{aligned} 
	\end{align*}
	and Gronwall's lemma yields for almost all $t \in (0, T)$
	\begin{align*}
		\norm{\vphi_k(t)}_{H^1}^2 
		&+\norm{\varrho^{1/4} \Delta \vphi_k(t)}_{L^2}^2
		+ \norm{\psi(\vphi_k)(t)}_{L^1} 
		+ \norm{\bm{u}(t)}_{\bm{H}^1}^2
		+  \norm{\theta_k(t)}_{L^2}^2 
		+  \norm{ \vr^{1/2} \nabla \theta_k(t)}_{L^2}^2 \\ 
		&+\norm{\nabla \mu_k}_{L^2(L^2)}^2 
		+ \norm{\nabla p_k}_{L^2(L^2)}^2
		+ \norm{\pt \bm{u}_k}_{L^2(\bm{X})}^2\\ 
		&\leq C \Big( \norm{\vphi_{0}}_{H^1}^2 +  \varrho^{1/2} \norm{\Delta \vphi_{0}}_{L^2}^2+ \frac{1}{\vepsilon}  \norm{\psi(\vphi_{0})}_{L^1} +  \norm{\bm{u}_0}_{\bm{X}}^2 + \norm{\theta_{0}}_{L^2}^2 +  \vr \norm{  \theta_{0}}_{X}^2  + 1 \Big). \numberthis \label{eq:energy_estimate_reg_1}
	\end{align*}
	With the help of the Poincaré inequality, we can deduce 
	\begin{equation}\label{eq:p_H^1}
		\norm{ p_k}_{L^2(X)}^2  
		\leq C( \norm{\vphi_{0}}_{H^1}^2 +  \varrho^{1/2} \norm{\Delta \vphi_{0}}_{L^2}^2+ \frac{1}{\vepsilon}  \norm{\psi(\vphi_{0})}_{L^1} +  \norm{\bm{u}_0}_{\bm{X}}^2  +  \vr \norm{  \theta_{0}}_{X}^2  + 1)  . 
	\end{equation}
	Similarly to \eqref{eq:mu_mean_value}, it follows that 
	\begin{align*}
		\Big| \dashint \mu_k  \Big| 
		\leq C\Big(\norm{\psi(\vphi_k)}_{L^1} + \norm{\vphi_k}_{L^2}^2 + \norm{\theta_k}_{L^2}^2 + \norm{\bm{u}_k}_{\bm{X}}^2 + 1 \Big) 
	\end{align*}
	 and with the help of the Poincaré--Wirtinger inequality, we get 
	 \begin{equation*}
	 	\norm{\mu_k}_{L^2(L^2)}^2 \leq C( \Big\| {\mu_k - \dashint \mu_k} \Big\|_{L^2(L^2)}^2 + \Big\|{\dashint \mu_k}\Big\|_{L^2(L^2)}^2)
	 	\leq C(\norm{\nabla \mu_k}_{L^2(L^2)}^2 + \Big\|{\dashint \mu_k}\Big\|_{L^2(L^2)}^2). 
	 \end{equation*}
	 Using the $L^\infty$-estimates in \eqref{eq:energy_estimate_reg_1}, we infer 
	 \begin{align*}
	 	 \norm{\dashint \mu_k}_{L^2(L^2)}^2
	 	 &\leq  \int_0^T \Big(C(\norm{\psi(\vphi_k)(t)}_{L^1} + \norm{\vphi_k(t)}_{L^2}^2 + \norm{\theta_k}_{L^2}^2 + \norm{\bm{u}_k(t)}_{\bm{X}}^2  + 1)\Big)^2 \dt \\ 
	 	 &\leq \int_0^T C \Big(  \norm{\vphi_{0}}_{H^1}^4 + \varrho \norm{\Delta \vphi_{0}}_{L^2}^4+  \frac{1}{\vepsilon}  \norm{\psi(\vphi_{0})}_{L^1}^2 +  \norm{\bm{u}_0}_{\bm{X}}^2 + \norm{\theta_{0}}_{L^2}^4 +   \vr^2 \norm{  \theta_{0}}_{X}^4   + 1 \Big) \dt \\ 
	 	 &\leq C\Big(  \norm{\vphi_{0}}_{H^1}^4 + \varrho \norm{\Delta \vphi_{0}}_{L^2}^4+  \frac{1}{\vepsilon}  \norm{\psi(\vphi_{0})}_{L^1}^2 + \norm{\theta_{0}}_{L^2}^4 +  \norm{\bm{u}_0}_{\bm{X}}^2 +  \vr^2 \norm{  \theta_{0}}_{X}^4  + 1 \Big) 
	 \end{align*}
	 and hence 
	 \begin{equation}\label{eq:mu_H^1}
	 	\norm{\mu_k}_{L^2(H^1)}^2
	 	 \leq 
	 	 C( \norm{\vphi_{0}}_{H^1}^4 + \varrho \norm{\Delta \vphi_{0}}_{L^2}^4+  \frac{1}{\vepsilon^2}  \norm{\psi(\vphi_{0})}_{L^1}^2 + \norm{\theta_{0}}_{L^2}^4+  \vr^2 \norm{  \theta_{0}}_{X}^4  + \norm{\bm{u}_0}_{\bm{X}}^2  + 1). 
	 \end{equation}
	 \par
	 Finally, it remains to find estimates for the time derivatives $\pt \vphi_k$ and $\pt \theta_k$. Since $\{ z_i \}$ is an orthogonal system in $L^2(\Omega)$, we derive from \eqref{eq:galerkin1} that for all $\zeta \in H^1(\Omega)$
	 \begin{equation*}
	 	\langle \pt \vphi_k, \zeta \rangle 
	 	= 	\langle \pt \vphi_k,   \Pi_k^z \zeta \rangle
	 	= - \langle m(\vphi_k)\nabla  \mu_k,  \nabla \Pi_k^z \zeta \rangle + \langle R(\vphi_k, \E(\bm{u}_k), \theta_k),  \Pi_k^z \zeta \rangle
	 \end{equation*}
	and exploiting \ref{A:m} along with $\norm{\Pi_k^z \zeta}_{H^1} \leq C \norm{ \zeta}_{H^1}$ for all $\zeta \in H^1(\Omega)$, we find
	\begin{equation}\label{eq:phi_H^1'}
		\norm{\pt \vphi_k}_{(H^1)'} \leq \overline{m} \norm{\mu_k}_{H^1} + C. 
	\end{equation}
	Recalling the orthogonality of $\{ y_i \}$ in $L^2(\Omega)$, the identity \eqref{eq:galerkin3} leads to 
	\begin{equation*}
			\langle \pt  \theta_k, \zeta \rangle 
		= 	\langle \pt \theta_k, \Pi_k^y \zeta \rangle
		= - \langle \kappa(\vphi_k)\nabla  p_k,  \Pi_k^y \zeta \rangle + \langle S_f(\vphi_k, \E(\bm{u}_k), \theta_k),  \Pi_k^y \zeta \rangle
	\end{equation*}
	for all $\zeta \in X(\Omega)$. With the help of spectral theory and due to the choice of our basis $\{ y_i\}$, we deduce $\norm{\Pi_k^y \zeta}_{X} \leq C \norm{ \zeta}_{X}$ for all $\zeta \in X(\Omega)$, $k \inN$ and therefore 
	\begin{equation}\label{eq:theta_H^1'}
			\norm{\pt \theta_k}_{X'} \leq C(\overline{\kappa} \norm{p_k}_{X} + \norm{\vphi_k}_{L^2} + \norm{\theta_k}_{L^2} + \norm{\bm{u}}_{\bm{X}} + 1). 
	\end{equation}
	Together with the estimates from above, we arrive at 
	\begin{equation*}
		\norm{\vphi_k}_{H^1((H^1)')}
		+ \norm{\theta_k}_{H^1(X')}
		\leq C. 
	\end{equation*}
\end{proof}

\begin{lemma}\label{lem:elliptic_reg_theta}
	The volumetric fluid content satisfies the regularity $\theta_k \in L^2(0, T; H^{1+\gamma}_{\Gamma_D}(\Omega))$ with 
	\begin{equation*}
		\norm{\theta_k}_{L^2(H^{1+\gamma}_{\Gamma_D})} \leq C
	\end{equation*}
	for some $C(\vr) > 0$, $\gamma > 0$ independent of $k \inN$. 
\end{lemma}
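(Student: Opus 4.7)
The strategy is to recast the Galerkin identity \eqref{eq:galerkin4} as an honest elliptic equation on all of $X(\Omega)$, and then apply Bessel-potential regularity theory for the mixed-boundary Laplacian on Gröger-regular domains.

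First I would rewrite \eqref{eq:galerkin4} as a strong-type elliptic problem. Set
\begin{equation*}
	h_k \coloneqq p_k - \Pi_k^y\!\left[ M(\vphi_k)(\theta_k - \alpha(\vphi_k) \nabla \cdot \bm{u}_k) \right] \in Y_k,
\end{equation*}
so that, by \eqref{eq:galerkin4} and $p_k \in Y_k$, one has $\vr \langle \nabla \theta_k, \nabla y_j \rangle = \langle h_k, y_j \rangle$ for every $j \leq k$. The key observation is that this relation extends from $Y_k$ to the whole space $X(\Omega)$: given $\xi \in X(\Omega)$, decompose $\xi = \Pi_k^y \xi + \xi^\perp$, where $\xi^\perp$ is $L^2$-orthogonal to $Y_k$. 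Since the eigenfunctions $y_j$ are simultaneously $L^2$- and $H^1$-orthogonal (different eigenvalues) and $\theta_k \in Y_k$, one computes $\int_\Omega \nabla \theta_k \cdot \nabla \xi^\perp \dx = \sum_{i \leq k} c_k^i \lambda_i \langle y_i, \xi^\perp \rangle = 0$; simultaneously $\langle h_k, \xi - \Pi_k^y \xi \rangle = 0$ because $h_k \in Y_k$. Therefore
\begin{equation*}
	\vr \int_\Omega \nabla \theta_k \cdot \nabla \xi \dx = \int_\Omega h_k\, \xi \dx \qquad \text{for all } \xi \in X(\Omega),
\end{equation*}
which is the weak form of $-\vr \Delta \theta_k = h_k$ with $\theta_k = 0$ on $\Gamma_D$ and $\nabla \theta_k \cdot \bm{n} = 0$ on $\Gamma_N$.

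Next I would verify the uniform bound $\norm{h_k}_{L^2(0,T;L^2(\Omega))} \leq C$. This follows at once from Lemma~\ref{lem:a_priori_regular}: indeed $\norm{p_k}_{L^2(L^2)} \leq \norm{p_k}_{L^2(X)} \leq C$, while the stability of the orthogonal projection, the boundedness of $M,\alpha$ from \ref{A:phase_coefficients}, and the uniform estimates on $\theta_k \in L^\infty(L^2)$ and $\bm{u}_k \in L^\infty(\bm{X})$ (the latter via the embedding $H^1(0,T;\bm{X}) \hookrightarrow C([0,T];\bm{X})$) control the second contribution.

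The main step is then to invoke elliptic regularity for the mixed-boundary Laplacian in Bessel-potential scale. Because $\Omega$ satisfies \ref{A:domain} (in particular, $\partial\Omega = \Gamma_D \cup \Gamma_N$ is regular in the sense of Gröger) and $\Omega$ is a $C^{1,1}$-domain, the results of Haller-Dintelmann--Jonsson--Knees--Rehberg (cf.\ \cite{haller2019higher}) and Egert (cf.\ \cite{EGERT20141419}) yield some $\gamma \in (0,1/2)$, depending only on the geometry, such that the operator $-\Delta \colon H^{1+\gamma}_{\Gamma_D}(\Omega) \to H^{-1+\gamma}_{\Gamma_D}(\Omega)$, augmented with the homogeneous Neumann condition on $\Gamma_N$ in the weak sense above, is a topological isomorphism. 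Exploiting the continuous embedding $L^2(\Omega) \hookrightarrow H^{-1+\gamma}_{\Gamma_D}(\Omega)$ and applying this isomorphism pointwise in time yields
\begin{equation*}
	\norm{\theta_k(t)}_{H^{1+\gamma}_{\Gamma_D}} \leq C\,\vr^{-1} \norm{h_k(t)}_{L^2(\Omega)}
\end{equation*}
for a.e.~$t \in (0,T)$, with $C$ depending only on $\Omega$. Squaring and integrating in time, together with the uniform $L^2(L^2)$ bound on $h_k$ obtained above, gives the claimed estimate $\norm{\theta_k}_{L^2(0,T;H^{1+\gamma}_{\Gamma_D})} \leq C(\vr)$ with a constant independent of $k$. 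The only delicate point I foresee is the invocation of the mixed-boundary elliptic regularity in the fractional scale; this is the reason behind the factor $\vr^{-1}$, i.e.\ the dependence of $C$ on $\vr$, and it is crucial that the domain assumption \ref{A:domain} is strong enough to guarantee the existence of such a $\gamma > 0$.
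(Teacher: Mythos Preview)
Your proposal is correct and follows essentially the same approach as the paper: both rewrite \eqref{eq:galerkin4} as $-\vr\Delta\theta_k = p_k - \Pi_k^y[M(\vphi_k)(\theta_k-\alpha(\vphi_k)\nabla\cdot\bm{u}_k)]$ on all of $X(\Omega)$ (the paper does this via the identity \eqref{eq:p_identity_new}, which is exactly your orthogonality argument), and then apply the mixed-boundary elliptic regularity from \cite{haller2019higher} in the Bessel-potential scale together with the uniform $L^2$-bounds from Lemma~\ref{lem:a_priori_regular}.
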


\begin{proof}
	Elliptic regularity theory, cf. \cite[Thm.~1, Cor.~1]{haller2019higher}, tells us that there exists some $\gamma > 0$ such that the operator 
	\begin{equation*}
		 -\vr \Delta : H^{1+ \gamma}_{\Gamma_D}(\Omega) \rightarrow H^{\gamma - 1}_{\Gamma_D}(\Omega), 
		 \quad 
		v \mapsto  \left( w \mapsto \int_\Omega \vr \nabla v \cdot \nabla \overline{w} \dx \right) 
	\end{equation*}
	is a topological isomorphism between $H^{1+ \gamma}_{\Gamma_D}(\Omega)$ and $H^{\gamma- 1}_{\Gamma_D}(\Omega)$. Since $L^2(\Omega) \hookrightarrow H^{\gamma- 1}_{\Gamma_D}(\Omega)$, cf. \cite[Rem.~2]{haller2019higher}, this entails that for any $f \in L^2(\Omega)$ the weak solution $v$ to the mixed boundary-value problem with variational formulation 
	\begin{equation*}
		 \int_\Omega \vr \nabla v \cdot \nabla \overline{w} \dx = \int_\Omega f \overline{w} \dx \quad \textrm{for all} \quad w \in H^{1- \gamma}_{\Gamma_D}(\Omega)
	\end{equation*} 
	is in the space $H^{1+ \gamma}_{\Gamma_D}(\Omega)$ and that there exists some $C > 0$ independent of $f \in L^2(\Omega)$ such that 
	\begin{equation}\label{eq:elliptic_H_eps}
		\norm{v}_{H^{1+ \gamma}_{\Gamma_D}} \leq C \norm{f}_{L^2}. 
	\end{equation}
	For $\theta_k^* = (-\vr \Delta)^{-1}\big( p_k - \Pi_k^y(M(\vphi_k) (\theta_k - \alpha(\vphi_k) \nabla \cdot \bm{u}_k)) \big)$ it therefore holds that 
	\begin{equation*}
		 \int_\Omega \vr \nabla v \cdot \nabla \overline{w} \dx = \int_\Omega  p_k \overline{w} - \Pi_k^y(M(\vphi_k) (\theta_k - \alpha(\vphi_k) \nabla \cdot \bm{u}_k)) \overline{w} \dx \quad \textrm{for all} \quad w \in H^{1- \gamma}_{\Gamma_D}(\Omega)
	\end{equation*}
	and since $X(\Omega) \subset H^{1- \gamma}_{\Gamma_D}(\Omega)$, \eqref{eq:p_identity_new} implies that $\theta_k$ and $\theta_k^*$ both solve the mixed boundary value problem 
	\begin{align*}
		\begin{cases}
				- \vr \Delta v =  p_k - \Pi_k^y(M(\vphi_k) (\theta_k - \alpha(\vphi_k) \nabla \cdot \bm{u}_k)) \quad &\textrm{in} \quad \Omega,\\
			v = 0 \quad &\textrm{on } \quad \Gamma_D, \\ 
			\nabla v \cdot \bm{n} = 0 \quad &\textrm{on} \quad \Gamma_N.
		\end{cases}
	\end{align*} 
	As the solution is unique, we have $\theta_k = \theta_K^*$ and \eqref{eq:elliptic_H_eps} along with Lemma~\ref{lem:a_priori_regular} yields 
	\begin{align*}
		\norm{\theta_k}_{L^2(H^{1+ \gamma}_{\Gamma_D})}^2 
		&\leq C \norm{ p_k - \Pi_k^y(M(\vphi_k) (\theta_k - \alpha(\vphi_k) \nabla \cdot \bm{u}_k)) }_{L^2(L^2)}^2\\ 
		&\leq C \left(  \norm{p_k}_{L^2(L^2)}^2 + \norm{\theta_k}_{L^2(L^2)}^2 + \norm{\bm{u}}_{L^2(\bm{X})}^2 \right) 
		\leq C . 
	\end{align*}
\end{proof}
Moreover, elliptic regularity theory yields the existence of some $C > 0$ such that 
\begin{equation*}
	\norm{\vphi_k}_{H^2} \leq C (\norm{\Delta \vphi_k}_{L^2} + \norm{\vphi_k}_{H^1} ) 
\end{equation*}
and together with Lemma \ref{lem:a_priori_regular}, we find 
\begin{equation}\label{est:phi_h^2}
	\norm{\vphi_k}_{L^\infty(H^2)} \leq C (\norm{\Delta \vphi_k}_{L^\infty(L^2)} + \norm{\vphi_k}_{L^\infty(H^1)}) \leq C(\varrho).   
\end{equation}
With these estimates, we can invoke the Aubin--Lions--Simon theorem to obtain
\begin{align}
	\vphi_k &\rightarrow  \vphi \quad  \textrm{in} \quad  C^0([0, T]; W^{1, r}(\Omega)),  \label{conv:phi_strong_regular}  \\
	\theta_k &\rightarrow \theta \quad \, \textrm{in} \quad  C^0([0, T]; L^r(\Omega)) \cap L^2(0, T; X(\Omega)),  \label{conv:theta_strong_regular}
\end{align}
where $r < \infty$ arbitrarily if $n \leq 2$ and $r < 6$ if $n = 3$. By passing to an appropriate subsequence, we can also assume that $\vphi_k$ converges to $\vphi$ pointwise almost everywhere. 
Hence, we can deduce the existence of functions $(\vphi, \mu, \theta, p)$ such that, along a not relabeled subsequence, 
\begin{equation*}\label{conv:comapactness_regular}
	\begin{aligned}
		\vphi_k \rightharpoonup \vphi \quad & \textrm{in} \quad  L^2(0, T; H^2(\Omega)) \cap H^1(0, T; H^1(\Omega)'),  \\ 
 		\mu_k \rightharpoonup \mu \quad & \textrm{in} \quad  L^2(0, T; H^1(\Omega)), \\ 
		\bm{u}_k  \rightharpoonup \bm{u} \quad & \textrm{in} \quad  H^1(0, T; \bm{H}^1_{\Gamma_D}(\Omega)),  \\ 
		\theta_k \rightharpoonup \theta \quad & \textrm{in} \quad   L^2(0, T; X(\Omega)) \cap H^1(0, T; X'(\Omega)),  \\ 
		p_k \rightharpoonup p \quad & \textrm{in} \quad  L^2(0, T; X(\Omega)). 
	\end{aligned} \numberthis 
\end{equation*}

Pointwise convergence of $\vphi_k$ almost everywhere along with the continuity of $\psi'$ further implies
\begin{equation*}
	\psi'(\vphi_k) \rightarrow \psi'(\vphi) \quad \textrm{pointwise a.e. in } \Omega_T. 
\end{equation*}
To prove convergence for this term in $L^1(\OT)$, we use the decomposition $\psi = \psi_1 + \psi_2$ from \ref{A:psi} and treat the two cases separately. First of all, we note that due to \ref{A:psi_1}, the family $\{ \psi'_1(\vphi_k)\}$ is uniformly integrable over $\OT$. Indeed, due to Lemma \ref{lem:a_priori_regular}, it holds for all subsets $E \subset \OT$ that 
\begin{align*}
	\int_E | \psi'_1(\vphi_k)| \dtx 
	\leq  \rho_{\psi_1} \int_E |\psi_1(\vphi_k) | \dtx + C_{\rho_{\psi_1}} |E|
	 &\leq  \rho_{\psi_1}\int_{\OT} |\psi_1(\vphi_k) | \dtx + C_{\rho_{\psi_1}} |E| \\
	 &\leq \rho_{\psi_1} C  + C_{\rho_{\psi_1}} |E|, 
\end{align*}
which converges to $\rho_{\psi_1} C$ uniformly in $k \inN$ as $|E| \rightarrow 0$. Since $\rho_{\psi_1}$ can be chosen arbitrarily small and the constant $C$ does not depend on $k \inN$ or the set $E \subset \Omega_T$, we obtain uniform integrability. 
After applying Vitali's convergence theorem, we find 
\begin{equation*}
		\psi_1'(\vphi_k) \rightarrow \psi'_1(\vphi) \quad \textrm{in} \quad L^1(\Omega_T). 
\end{equation*}
Moreover, the growth condition \ref{A:psi_2}, the pointwise convergence and Lebesgue's generalized convergence theorem, cf. \cite[Sec. 3.25]{alt2013lineare}, yield 
\begin{equation*}
		\psi_2'(\vphi_k) \rightarrow \psi'_2(\vphi) \quad \textrm{in} \quad L^2(\Omega_T)
\end{equation*}
so that we can conclude  
\begin{equation}\label{conv:psi_reg}
		\psi'(\vphi_k) \rightarrow \psi'(\vphi) \quad \textrm{in} \quad L^1(\Omega_T). 
\end{equation}

\subsection{Additional compactness results}
Taking advantage of \eqref{eq:weak5}, we can show a uniform estimate for the differences $\tau_h p_k - p_k$ such that an application of a version of the Aubin--Lions--Simon theorem yields strong convergence in the space $L^2(0, T; L^2(\Omega))$. \par 
Testing \eqref{eq:weak3} with both $\bm{u}_k - \bm{u}$ and the time derivatives $\pt \bm{u}_k - \pt \bm{u}$ further yields an estimate for the difference $\norm{\bm{u}_{k}(t) - \bm{u}(t)}_{\bm{X}}$ such that an application of Gronwall's lemma  allows us to derive pointwise a.e. convergence  in $\bm{X}$. Here, we crucially rely on the strong convergence of $p_k$, which was shown before. Finally, an $L^p$-$L^q$ compactness property delivers the strong convergence of $\bm{u}_k$ in $L^2(0, T; \bm{X})$. 

\begin{lemma}\label{lem:p_strong}
	There exists some subsequence of ${k} \rightarrow \infty$ such that, along this not relabeled subsequence, 
	\begin{equation*}
		p_{k} \rightarrow p \quad \textrm{in} \quad  L^2(0, T; L^2(\Omega)). 
	\end{equation*}
\end{lemma}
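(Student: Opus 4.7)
The plan is to invoke a translation based version of the Aubin--Lions--Simon compactness theorem. Lemma~\ref{lem:a_priori_regular} already gives the uniform bound $\|p_k\|_{L^2(X)} \leq C$, and since $X(\Omega) \xhookrightarrow{cpt} L^2(\Omega) \hookrightarrow W^{-3, 2}(\Omega)$, strong convergence in $L^2(0, T; L^2(\Omega))$ along a subsequence will follow once we prove
$$\sup_{k} \|\tau_h p_k - p_k\|_{L^2(0, T-h; W^{-3, 2}(\Omega))} \longrightarrow 0 \quad \text{as } h \searrow 0,$$
where $\tau_h$ denotes translation in time.

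To obtain this translation estimate I would use the algebraic identity \eqref{eq:p_identity_new}, which for any $\xi \in W^{3, 2}(\Omega)$ rewrites
$$\int_\Omega (\tau_h p_k - p_k) \xi \dx = \vr \int_\Omega \nabla (\tau_h \theta_k - \theta_k) \cdot \nabla \xi \dx + \int_\Omega (\tau_h - \mathrm{id})\bigl[\Pi_k^y\bigl(M(\vphi_k)(\theta_k - \alpha(\vphi_k) \nabla \cdot \bm{u}_k)\bigr)\bigr]\,\xi \dx.$$
For the first summand, two integrations by parts transfer both derivatives onto $\xi$ and one is left with a pairing of $\tau_h \theta_k - \theta_k$ against a $W^{1, 2}$-regular function; since $\pt \theta_k \in L^2(0, T; X'(\Omega))$ uniformly, the classical estimate $\|\tau_h \theta_k - \theta_k\|_{L^2(0, T-h; X')} \leq h \|\pt \theta_k\|_{L^2(X')}$ yields the desired $o(1)$ bound uniformly in $k$. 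The boundary contributions from the integration by parts vanish by the construction of $Y_k$ (eigenfunctions satisfy the mixed boundary conditions), respectively cancel against terms of the same structure on the other side of the identity.

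For the nonlinear contribution I would exploit self-adjointness of $\Pi_k^y$ on $L^2$ to shift the projection onto $\xi$, and then decompose
$$(\tau_h - \mathrm{id})\bigl(M(\vphi_k)(\theta_k - \alpha(\vphi_k) \nabla \cdot \bm{u}_k)\bigr) = I_\vphi + I_\theta + I_{\bm u},$$
isolating the contributions from $\tau_h \vphi_k - \vphi_k$, $\tau_h \theta_k - \theta_k$ and $\tau_h \bm{u}_k - \bm{u}_k$. Here the additional regularity $\vphi_k \in L^\infty(0, T; H^2(\Omega)) \hookrightarrow L^\infty(\Omega_T)$ provided by the bi-Laplacian regularization (cf.~\eqref{est:phi_h^2}) is crucial: combined with the Lipschitz continuity of $M, \alpha$ it produces $L^\infty$-bounds on $M(\vphi_k), \alpha(\vphi_k)$ and $\nabla \vphi_k \in L^\infty(L^2)$, so that each of $I_\vphi, I_\theta, I_{\bm u}$ can be paired with $\Pi_k^y \xi$ and estimated by a product of $h$ and the uniform bounds on $\pt \vphi_k, \pt \theta_k, \pt \bm{u}_k$ from Lemma~\ref{lem:a_priori_regular}, possibly after using Hölder interpolation and the embedding $H^2 \hookrightarrow L^\infty$.

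The main obstacle, as anticipated in the introduction, is precisely this nonlinear term, because the projection $\Pi_k^y$ is not uniformly bounded on $X(\Omega)$ due to the mixed boundary conditions, ruling out the direct operator norm argument used by Riethmüller et al. The regularization is exactly what allows us to bypass this by promoting $\vphi_k$ to $L^\infty(\Omega_T)$ and thereby keeping the nonlinearity in a space to which the translation estimates still apply after taking a supremum over $\|\xi\|_{W^{3, 2}} \leq 1$. Once the uniform translation estimate is established, the translation version of the Aubin--Lions--Simon theorem delivers a subsequence converging strongly in $L^2(0, T; L^2(\Omega))$, and uniqueness of the weak limit from \eqref{conv:comapactness_regular} identifies the limit with $p$.
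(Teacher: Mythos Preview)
Your overall strategy coincides with the paper's, but two points need correction. First, for $I_\vphi$ you cannot simply invoke a bound on $\pt\vphi_k$: the only available estimate is $\pt\vphi_k\in L^2(0,T;(H^1(\Omega))')$, which is too weak to produce an $h$-factor when paired against $\theta_k\in L^\infty(L^2)$ or $\nabla\cdot\bm u_k\in L^\infty(L^2)$ times $\Pi_k^y\xi$. The paper does not attempt a rate here; instead it observes that $(\vphi_k)_k$ is relatively compact in $C^0([0,T];L^3(\Omega))$ by Aubin--Lions--Simon, hence uniformly equicontinuous by the converse of Arzel\`a--Ascoli, giving $\sup_k\|\Delta_h\vphi_k\|_{L^1(0,T-h;L^3)}\to 0$ as $h\to 0$ without any explicit power of $h$. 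The terms $I_\theta$ and $I_{\bm u}$ do admit the direct $h$-bounds you describe.

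Second, you misstate the obstruction. The projection $\Pi_k^y$ \emph{is} uniformly bounded on $X(\Omega)$---the paper uses $\|\Pi_k^y\zeta\|_X\leq C\|\zeta\|_X$ repeatedly (see the proof of Lemma~\ref{lem:a_priori_regular} and the treatment of terms III--V). The actual issue is that $\Pi_k^y$ does not preserve regularity above $X$, so $\Pi_k^y\xi$ is not uniformly in $L^\infty(\Omega)$. What the bi-Laplacian regularization delivers is not merely $\vphi_k\in L^\infty(\Omega_T)$ but $\nabla\vphi_k\in L^\infty(0,T;\bm L^4(\Omega))$ via $H^2\hookrightarrow W^{1,4}$; this is precisely what the paper uses in the $I_\theta$ piece (its term~II) to verify that $\tau_h M(\vphi_k)\,\Pi_k^y\xi\in X(\Omega)$ uniformly in $k$, so that it can be paired with $\Delta_h\theta_k\in X'(\Omega)$. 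The $L^\infty$ bound on $\vphi_k$ that you emphasize does not by itself suffice for this step.
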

\begin{proof}
	 Due to the \textit{a priori} estimates we already know a uniform bound for all  $p_k$, $k \inN$ in $L^2(0, T; H^1(\Omega))$. For any suitable function $f$, let $\tau_h f(t) \coloneqq f(t+h)$ and define 
	 \begin{equation*}
	 	W^{3, 2}_{0}(\Omega) \coloneqq \overline{C^\infty_c(\Omega)}^{ \norm{\cdot }_{W^{3, 2}}},  
	 \end{equation*}
	 with $W^{-3, 2}(\Omega)$ denoting the dual space $(W^{3, 2}_{0}(\Omega))'$. 
	If we can show that 
	\begin{equation}\label{eq:translation_estimate}
		\norm{\tau_h {p_{k}} - {p_{k}}}_{L^1(0, T-h; W^{-3, 2}(\Omega))} \dt \rightarrow 0 
		\quad \textrm{as } h \rightarrow 0 \quad \textrm{uniformly in } k, 
	\end{equation}
	for some subsequence of ${k} \rightarrow \infty$, then \cite[Thm.~5]{simon1986compact} already yields the assertion, since \begin{equation*}
		X(\Omega) \xhookrightarrow{cpt} L^2(\Omega) \xhookrightarrow{c} W^{-3, 2}_{0}(\Omega). 
	\end{equation*} 
	Recalling the isomorphism $L^1(0, T-h; W^{-3, 2}(\Omega)) \cong (L^\infty(0, T-h; W^{3, 2}_{0}(\Omega)))'$, it suffices to show that 
	\begin{equation*}
		\sup_{\norm{\xi}_{L^\infty(0, T-h; W^{3, 2}_{0}(\Omega))} = 1} \left|\int_{\Omega_{T-h}} (\tau_h p_k - p_k) \xi \dtx\right| 
		 \rightarrow 0 
		\quad \textrm{as } h \rightarrow 0 \quad \textrm{uniformly in } k. 
	\end{equation*}
	With the help of \eqref{eq:p_identity_new} and the boundary conditions in the space $W^{3, 2}_{0}(\Omega)$, we obtain 
	\begin{align*}
		\int_{\Omega_{T-h}}p \xi \dtx &= 
		\int_{\Omega_{T-h}} \vr \nabla \theta \cdot \nabla  \xi + M(\vphi_k) (\theta_k - \alpha(\vphi_k) \nabla \cdot \bm{u}_k) \,  \Pi^y_k\xi \dtx\\ 
		&= \int_{\Omega_{T-h}} -\vr \theta\, \Delta \xi +M(\vphi_k) (\theta_k - \alpha(\vphi_k) \nabla \cdot \bm{u}_k) \,  \Pi^y_k \xi \dtx
	\end{align*}
	for all $\xi \in L^\infty(0, T-h; W^{3, 2}_{0}(\Omega))$. 
	Setting $\Delta_h f \coloneqq \tau_h f - f$, it follows that 
	\begin{align*}
		 &\int_{\Omega_{T-h}} \Delta_h p_k\, \xi \dtx \\ 
		&\quad = \begin{aligned}[t]
		 	&\int_{\Omega_{T-h}} - \vr \Delta_h \theta_k \, \Delta \xi \dtx 
		 	+\int_{\Omega_{T-h}}  \tau_h M(\vphi_k)\, \Delta_h \theta_k   \Pi^y_k \xi \dtx
		 	+ \int_{\Omega_{T-h}}  \theta_k \Delta_h M(\vphi_k)   \Pi^y_k \xi \dtx \\ 
		 	&+  \int_{\Omega_{T-h}}   \tau_h(M(\vphi_k) \alpha(\vphi_k)) \Delta_h(\nabla \cdot \bm{u}_k)    \Pi^y_k \xi \dtx
		 	+  \int_{\Omega_{T-h}}   \nabla \cdot \bm{u}_k \Delta_h(M(\vphi_k) \alpha(\vphi_k))   \Pi^y_k \xi \dtx
		 \end{aligned} \\ 
		 &\quad = I + II + III + IV + V, 
	\end{align*}
	which we need to treat separately. 
	\par  
	\medskip
	\textit{Ad $(IV)$}: Due to the well-known stability property $\norm{\Pi_k^y (\cdot) }_{L^2} \leq \norm{ \cdot }_{L^2} $ of the projection operator and since $M, \alpha$ are bounded functions, we find 
	\begin{align*}
		&\left|  \int_{\Omega_{T-h}}   \tau_h(M(\vphi_k) \alpha(\vphi_k)) \Delta_h(\nabla \cdot \bm{u}_k)    \Pi^y_k \xi \dtx \right|\\ 
		& \quad \leq \norm{ \tau_h(M(\vphi_k) \alpha(\vphi_k)) \Delta_h(\nabla \cdot \bm{u}_k)}_{L^1(0, T-h; L^2)} \norm{ \Pi^y_k \xi }_{L^\infty(0, T-h; L^2)}\\ 
		&\quad \leq  \norm{\Delta_h(\nabla \cdot \bm{u}_k)}_{L^1(0, T-h; L^2)} \norm{ \xi }_{L^\infty(0, T-h; W^{3, 2}_{0})}. 
		 \numberthis \label{eq:conv_p_1}
	\end{align*}
	\par 
	\medskip 
	\textit{Ad $(III)$ \& $(V)$}:
	Recalling the continuous embedding $X(\Omega) \hookrightarrow L^6(\Omega)$, we deduce with the help of Hölder's inequality and a duality argument that $L^{6/5}(\Omega) \hookrightarrow X'(\Omega). $
	As mentioned before, $\{ y_i \}$ is a basis of $X(\Omega)$ satisfying the stability property $\norm{\Pi_k^y (\cdot) }_{X} \leq C \norm{ \cdot }_{X} $ and $W^{3, 2}_{0} (\Omega) \xhookrightarrow{c} X(\Omega)$. Thus, it holds that 
	\begin{align*}
		\left| \int_{\Omega_{T-h}}  \theta_k \Delta_h M(\vphi_k)   \Pi^y_k \xi \dtx  \right|
		&\leq  \norm{\theta_k \Delta_h M(\vphi_k)}_{L^1(0, T-h; X')} \norm{ \Pi_y^k \xi}_{L^\infty(0, T-h; X)}\\  
		&\leq  C \norm{\theta_k \Delta_h M(\vphi_k)}_{L^1(0, T-h; L^{6/5})}\, \norm{ \xi}_{L^\infty(0, T-h; X)}\\  
		& \leq C \norm{\theta_k}_{L^\infty(0, T-h; L^2)}\, \norm{\Delta_h M(\vphi_k)}_{L^1(0, T-h; L^3)}  \, \norm{  \xi}_{L^\infty(0, T-h; W^{3, 2}_{ 0})}
		 \numberthis \label{eq:conv_p_2}
	\end{align*}
	Similarly, we find for $(V)$
	\begin{align*}
		&\left| \int_{\Omega_{T-h}}   \nabla \cdot \bm{u}_k \Delta_h(M(\vphi_k) \alpha(\vphi_k))   \Pi^y_k \xi \dtx \right| 
		\leq  \norm{ \nabla \cdot \bm{u}_k \Delta_h(M(\vphi_k) \alpha(\vphi_k)) }_{L^1(0, T-h; X')} \norm{ \Pi_y^k \xi}_{L^\infty(0, T-h; X)}\\  
		&\quad \leq C \norm{ \nabla \cdot \bm{u}_k \Delta_h(M(\vphi_k) \alpha(\vphi_k))}_{L^1(0, T-h; L^{6/5})}\, \norm{ \xi}_{L^\infty(0, T-h; X)}\\  
		&\quad  \leq C \norm{ \nabla \cdot \bm{u}_k}_{L^\infty(0, T-h; L^2)}\, \norm{\Delta_h(M(\vphi_k) \alpha(\vphi_k))}_{L^1(0, T-h; L^3)}  \, \norm{  \xi}_{L^\infty(0, T-h; W^{3, 2}_{ 0})}
		\numberthis \label{eq:conv_p_3}
	\end{align*}
	\par
	\medskip 
	\textit{Ad $(II)$:} Due to the uniform bound on $\vphi_k$ in $L^\infty(0, T; H^2_{\bm{n}}(\Omega))$, see \eqref{est:phi_h^2}, and the continuous embedding $H^2(\Omega) \hookrightarrow W^{1, 6}(\Omega)$, we deduce 
	\begin{align*}
		\norm{\tau_h  M(\vphi_k) \Pi_k^y\xi}_{X} 
		&\leq  \norm{\tau_h  M(\vphi_k) \Pi_k^y \xi}_{L^2} + \norm{\tau_h  (M'(\vphi_k)\nabla \vphi_k) \Pi_k^y \xi}_{\bm{L}^2} + \norm{\tau_h  M(\vphi_k) \nabla  \Pi_k^y \xi}_{\bm{L}^2}\\ 
		& \leq \overline{M} \norm{\xi}_{\bm{L}^2} + \overline{M} \norm{\tau_h  \nabla \vphi_k}_{\bm{L}^4} \norm{\Pi_k^y \xi}_{L^4} + \overline{M} \norm{ \Pi_k^y \xi}_{X}\\ 
		& \leq  \overline{M} \norm{\xi}_{L^2} + \overline{M} \norm{\tau_h  \vphi_k}_{H^2_{\bm{n}}} \norm{\xi}_{X} + C \overline{M} \norm{\xi}_{X}
		\leq C  \norm{\xi}_{W^{3, 2}_{0}} 
	\end{align*}
	for all $\xi \in W^{3, 2}_{0}(\Omega)$. Exploiting this estimate, we infer  
	\begin{align*}
		\left| \int_{\Omega_{T-h}}  \tau_h M(\vphi_k)\, \Delta_h \theta_k   \Pi^y_k \xi \dtx \right|
		&\leq \norm{\Delta_h \theta_k }_{L^1(0, T-h; X')} \norm{ \tau_h M(\vphi_k)\,   \Pi^y_k \xi}_{L^\infty(0, T-h; X)}\\ 
		& \leq  C \norm{\Delta_h \theta_k }_{L^1(0, T-h; X')} \norm{\xi}_{L^\infty(0, T-h; W^{3, 2}_{0})}. 
		  \numberthis \label{eq:conv_p_4}
	\end{align*}
	\par 
	\textit{Ad $(I)$:} Since $\Delta \xi \in X(\Omega)$ for all $\xi \in W^{2, 3}_{0}(\Omega)$, it holds 
	\begin{align*} 
		\left| \int_{\Omega_{T-h}} - \vr \Delta_h \theta_k \, \Delta \xi \dtx  \right| 
		&\leq \vr \norm{\Delta_h \theta_k}_{L^1(0, T-h; X')} \norm{ \Delta \xi}_{L^\infty(0, T-h; X)}\\ 
		&\leq  C \norm{\Delta_h \theta_k}_{L^1(0, T-h; X')} \norm{ \xi}_{L^\infty(0, T-h; W^{3, 2}_{0})}. 
		\numberthis \label{eq:translations_I}
	\end{align*}
	Here, we want to emphasize that this estimate is independent of all $\vr < 1$.
	\par
	Along with \eqref{eq:conv_p_1}, \eqref{eq:conv_p_2}, \eqref{eq:conv_p_3},  \eqref{eq:conv_p_4} and the Lipschitz continuity of $\alpha$ and $M$, see assumptions \ref{A:phase_coefficients}, as well as the uniform bounds on $\bm{u}_k$ in $L^\infty(0, T; H^1_{\Gamma_D}(\Omega))$ and $\theta_k$ in $L^\infty(0, T; L^2(\Omega))$, we conclude 
	\begin{align*}
			&\norm{\tau_h {p_{k}} - {p_{k}}}_{L^1(0, T-h; W^{-3, 2}_{00}(\Omega))} = \sup_{\substack{ \xi \in {L^\infty(0, T-h; W^{3, 2}_{0}(\Omega))} \\ \norm{\xi} = 1 }} \left|\int_{\Omega_{T-h}} (\tau_h p_k - p_k) \xi \dtx\right| \\ 
			&\quad \leq\sup_{\substack{ \xi \in {L^\infty(0, T-h; W^{3, 2}_{0}(\Omega))} \\ \norm{\xi} = 1 }} \begin{aligned}[t]
				C\Big( &\norm{\Delta_h \theta_k}_{L^1(0, T-h; X')} 
				+  \norm{\Delta_h M(\vphi_k)}_{L^1(0, T-h; L^3)} 
				+ \norm{\Delta_h(\nabla \cdot \bm{u}_k)}_{L^1(0, T-h; L^2)} \\ 
				&+ \norm{\Delta_h(M(\vphi_k) \alpha(\vphi_k))}_{L^1(0, T-h; L^3)}
				\Big)  \norm{ \xi}_{L^\infty(0, T-h; W^{3, 2}_{0})}
			\end{aligned}\\ 
			& \quad \leq C \Big( \norm{\Delta_h \theta_k}_{L^1(0, T-h; X')}  
			+ \norm{\Delta_h(\nabla \cdot \bm{u}_k)}_{L^1(0, T-h; L^2)} 
			+\norm{\Delta_h \vphi_k}_{L^1(0, T-h; L^3)} 
			\Big) 
	\end{align*}
	Recalling the estimates
	\begin{gather*}
		\norm{\Delta_h \theta_k}_{L^1(0, T-h; X') } 
		= \norm{\tau_h \theta_k - \theta_k}_{L^1(0, T-h; X')} 
		\leq hC \norm{\theta_k}_ {H^1(0, T; X')}, \\ 
		\norm{\Delta_h (\nabla \cdot \bm{u}_k) }_{L^1(0, T-h; L^2)} 
		= \norm{\nabla \cdot \tau_h \bm{u}_k - \nabla \cdot \bm{u}_k}_{L^1(0, T-h ; L^2)} 
		\leq h C \norm{\nabla \cdot \bm{u}_k}_ {H^1(0, T; L^2)}
	\end{gather*}
	from \cite[Lem.~4]{simon1986compact}, together with the \textit{a priori} estimate $\norm{\theta_k}_ {H^1(0, T; X')} + \norm{\bm{u}_k}_{H^1(0, T; H^1)} \leq C$,
	we obtain 
	\begin{align}\label{eq:translation_estimate_u_theta}
		\sup_{k \inN}  \Big( \norm{\Delta_h \theta_k}_{L^1(0, T-h; X')} 
		+  \norm{ \Delta_h (\nabla \cdot  \bm{u}_k)}_{L^1(0, T-h ; L^2)}  \Big)  \rightarrow 0
		\quad \textrm{as } h \rightarrow 0. 
	\end{align}
	Moreover, due to the compact embedding 
	\begin{equation*}
		L^\infty(0, T; H^1(\Omega)) \cap H^1(0, T; (H^1(\Omega)')) \xhookrightarrow{cpt} C^0([0, T]; L^3(\Omega)) 
	\end{equation*}
	it holds that, along a subsequence, 
	\begin{equation*}
		\vphi_{k} \rightarrow \vphi \quad \textrm{in } C^0([0, T]; L^3(\Omega)) \quad \textrm{as } k \rightarrow \infty. 
	\end{equation*}
	The Azelá-Ascoli theorem for Banach space valued functions asserts that this implies uniform equi-continuity, i.e., 
	\begin{equation*}
		\sup_{k \inN } \Big( \norm{\vphi_{k}(t) -  \vphi_{k}(s)}_{L^3} \Big) \rightarrow 0 \quad \textrm{as } \abs{t-s} \rightarrow 0.  
	\end{equation*}
	Thus, we conclude 
	\begin{equation*}
		\norm{ \Delta_h \vphi_{k}}_{L^1(0, T-h; L^{3})} \rightarrow 0 , 
	\end{equation*}
	which, along with \eqref{eq:translation_estimate_u_theta}, implies \eqref{eq:translation_estimate}. 
\end{proof}

With this convergence result, we finally turn to show the strong convergence of $\bm{u}_{k}$. 

\begin{lemma}\label{lem:convergence_u_reg}
	There exists a subsequence of ${k} \rightarrow \infty$ such that, along this not relabeled subsequence, 
	\begin{equation*}
		\bm{u}_{k} \rightarrow \bm{u} \quad \textrm{in} \quad  L^2(0, T; \bm{X}(\Omega)). 
	\end{equation*}
\end{lemma}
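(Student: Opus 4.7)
The plan is to derive a Gronwall-type estimate for $\bm{w}_k := \bm{u}_k - \bm{u}$ in $H^1(0,T;\bm{X}(\Omega))$ by testing the difference of the weak formulations with $\pt \bm{w}_k$. As a preparatory step, I first pass to the weak limit in \eqref{eq:weak3_reg}. Thanks to the bi-Laplacian regularization, $\vphi_k$ is uniformly bounded in $L^\infty(0,T;H^2_{\bm{n}}(\Omega))$; combined with $\pt \vphi_k \in L^2(0,T;(H^1)')$, Aubin--Lions together with the embedding $H^{2-\epsilon}(\Omega)\hookrightarrow L^\infty(\Omega)$ upgrades \eqref{conv:phi_strong_regular} to $\vphi_k\to \vphi$ in $C([0,T];L^\infty(\Omega))$. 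By the Lipschitz assumptions on $\C,\C_\nu,\alpha,\Tau$, each of the coefficient sequences $\C(\vphi_k),\C_\nu(\vphi_k),\alpha(\vphi_k),\Tau(\vphi_k)$ therefore converges strongly in $C([0,T];L^\infty(\Omega))$. Combined with the weak convergence of $\bm{u}_k$ in $H^1(0,T;\bm{X})$, Lemma~\ref{lem:p_strong} ($p_k\to p$ in $L^2(L^2)$) and \eqref{conv:theta_strong_regular} ($\theta_k\to\theta$ in $L^2(X)$), every term of \eqref{eq:weak3_reg} is a product of a strongly and a weakly convergent factor, and the weak limit $\bm{u}$ satisfies \eqref{eq:weak3_reg} with all $k$-indices dropped.

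Subtracting the two identities and testing with $\bm{\eta} = \pt \bm{w}_k$ yields
\begin{equation*}
\int_\Omega \C_\nu(\vphi_k)|\E(\pt \bm{w}_k)|^2 \dx = -\int_\Omega \C(\vphi_k)\E(\bm{w}_k):\E(\pt \bm{w}_k) \dx + \mathcal{R}_k(t),
\end{equation*}
where $\mathcal{R}_k(t)$ collects the coefficient-difference contributions $\int[\C_\nu(\vphi)-\C_\nu(\vphi_k)]\E(\pt\bm{u}):\E(\pt\bm{w}_k)\dx$, $\int[\C(\vphi)-\C(\vphi_k)]\E(\bm{u}):\E(\pt\bm{w}_k)\dx$, $\int[\C(\vphi_k)\Tau(\vphi_k)-\C(\vphi)\Tau(\vphi)]:\E(\pt\bm{w}_k)\dx$, the pressure difference $\int[\alpha(\vphi_k)p_k-\alpha(\vphi)p](\nabla\!\cdot\!\pt\bm{w}_k)\ast\phi\,\dx$, and the $\vr$-weighted flux differences involving $\nabla\theta_k-\nabla\theta$ and $\alpha(\vphi_k)-\alpha(\vphi)$. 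Korn's inequality and the coercivity of $\C_\nu$ bound the left-hand side from below by $c\norm{\pt\bm{w}_k}_{\bm{X}}^2$, while Young's inequality with a small parameter $\delta>0$---together with Young's inequality for convolutions $\norm{(\nabla\!\cdot\!\pt\bm{w}_k)\ast\phi}_{L^2}\leq\norm{\phi}_{L^1}\norm{\pt\bm{w}_k}_{\bm{X}}$ for the pressure/flux contributions---reduces the right-hand side to $\delta\norm{\pt\bm{w}_k}_{\bm{X}}^2 + C_\delta(\norm{\bm{w}_k}_{\bm{X}}^2 + F_k(t))$. After absorbing $\delta\norm{\pt\bm{w}_k}_{\bm{X}}^2$, one arrives at
\begin{equation*}
\tfrac{c}{2}\norm{\pt\bm{w}_k(t)}_{\bm{X}}^2 \leq C\norm{\bm{w}_k(t)}_{\bm{X}}^2 + CF_k(t),
\end{equation*}
with $F_k(t)$ a sum of squared $L^2(\Omega)$-norms of the difference coefficients---for example $\norm{[\C_\nu(\vphi_k)-\C_\nu(\vphi)]\E(\pt\bm{u})}_{L^2}^2\leq\norm{\C_\nu(\vphi_k)-\C_\nu(\vphi)}_{L^\infty}^2\norm{\pt\bm{u}}_{\bm{X}}^2$---each of which vanishes in $L^1(0,T)$ as $k\to\infty$ by the $C([0,T];L^\infty)$-convergence of $\vphi_k$ and the strong $L^2(L^2)$-convergence of $p_k$.

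Since $\bm{u}_k(0)=\bm{u}_0$ for every $k$ and the trace at $t=0$ is weakly continuous on $H^1(0,T;\bm{X})$, one has $\bm{u}(0)=\bm{u}_0$ and thus $\bm{w}_k(0)=0$; in particular $\norm{\bm{w}_k(t)}_{\bm{X}}^2\leq T\int_0^t\norm{\pt\bm{w}_k}_{\bm{X}}^2\,d\tau$. Integrating the pointwise inequality in $t$ and invoking Gronwall's lemma will give
\begin{equation*}
\int_0^T\norm{\pt\bm{w}_k}_{\bm{X}}^2\dt \leq Ce^{CT^2}\int_0^T F_k(t)\dt \xrightarrow{k\to\infty}0,
\end{equation*}
so $\bm{u}_k\to \bm{u}$ in $H^1(0,T;\bm{X}(\Omega))\hookrightarrow L^2(0,T;\bm{X}(\Omega))$. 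The main obstacle I anticipate is the $\vphi$-dependence of $\C$ and $\C_\nu$: a more naive argument testing with $\bm{w}_k$ itself would, after integration by parts in time, produce a term $\int \C'(\vphi_k)\pt\vphi_k|\E(\bm{w}_k)|^2\dx$, which is not meaningful since $\pt\vphi_k$ lies only in $L^2((H^1)')$ while $|\E(\bm{w}_k)|^2$ lies only in $L^\infty(L^1)$. Testing with $\pt\bm{w}_k$ avoids integration by parts in time at the cost of requiring uniform $L^\infty$-control on the coefficients; this is precisely what the bi-Laplacian regularization of $\vphi$ furnishes, and it is also why Lemma~\ref{lem:p_strong} was established first.
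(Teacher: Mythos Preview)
Your argument is correct and, in fact, slightly more economical than the paper's. Both proofs begin the same way: pass to the limit in \eqref{eq:weak3_reg} to obtain the equation for $\bm u$, subtract, and exploit the strong convergences of $\vphi_k$ (from the bi-Laplacian regularization), of $p_k$ (Lemma~\ref{lem:p_strong}), and of $\theta_k$ in $L^2(0,T;X)$. The difference lies in the energy argument. The paper tests the difference with \emph{both} $(\bm u_k-\bm u)\chi_{[0,t]}$ and $\pt(\bm u_k-\bm u)\chi_{[0,t]}$, adds the two identities, and extracts the good term $\tfrac{d}{dt}\norm{\E(\bm u_k-\bm u)}_{L^2}^2$; after Gronwall this yields $\norm{\bm u_k(t)-\bm u(t)}_{\bm X}\to 0$ for a.e.~$t$, and the $L^2(0,T;\bm X)$ convergence is then concluded via a dominated-convergence step. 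You test only with $\pt\bm w_k$, treat $\int\C(\vphi_k)\E(\bm w_k):\E(\pt\bm w_k)$ directly by Young's inequality, and close the loop with the elementary bound $\norm{\bm w_k(t)}_{\bm X}^2\le T\int_0^t\norm{\pt\bm w_k}_{\bm X}^2\,d\tau$ (valid because $\bm w_k(0)=0$). This yields convergence in $H^1(0,T;\bm X)$ outright, a formally stronger conclusion than the lemma states.

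Two small points worth tightening. First, the $\vr$-weighted flux difference is not fully covered by the $L^1\!\to\!L^2$ Young estimate you quote: after expanding $\nabla\bigl(\alpha(\vphi_k)(\nabla\!\cdot\!\pt\bm w_k)\ast\phi\bigr)$, the term $\alpha'(\vphi_k)\nabla\vphi_k\,(\nabla\!\cdot\!\pt\bm w_k)\ast\phi$ requires the smoothing estimate $\norm{(\nabla\!\cdot\!\pt\bm w_k)\ast\phi}_{L^4}\le\norm{\phi}_{L^{4/3}}\norm{\pt\bm w_k}_{\bm X}$ together with $\nabla\vphi_k$ bounded in $L^\infty(0,T;\bm L^4)$---which you have from \eqref{conv:phi_strong_regular}, not merely from $\vphi_k\in C([0,T];L^\infty)$. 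Second, your last paragraph about the ``obstacle'' is well observed but not quite the paper's motivation: the paper also tests with $\bm w_k$ and simply peels off the identity part of $\C_\nu(\vphi_k)$ to get the clean time derivative, absorbing the bounded remainder by Young; no $\C'(\vphi_k)\pt\vphi_k$ term appears because one never differentiates $\C(\vphi_k)$ in time.
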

\begin{proof}

	Recall that the weak formulation for the approximate problems \eqref{eq:galerkin7} asserts that for all test functions $\bm{\eta} \in L^2(0, T;\bm{X}(\Omega))$ it holds 
	\begin{align*}
			\int_{\Omega_T} \C_\nu (\vphi_k ) \E(\pt \bm{u}_k)&  :  \E(\bm{\eta}) + \WE(\vphi_k , \E(\bm{u}_k) ) :  \E(\bm{\eta})  - \alpha(\vphi_k) p_k (\nabla \cdot \bm{\eta})* \phi \dtx \\ 
		&= \int_{\Omega_T} \bm{f} \cdot \bm{\eta} \dtx + \int_0^T\int_{\Gamma_D} \bm{g}\cdot \bm{\eta} \dH \dt 
		- \vr \int_{\OT} \nabla \theta_k \cdot \nabla (\alpha(\vphi_k) (\nabla \cdot \bm{\eta}) * \phi ) \dtx. 
	\end{align*}
	Due to our \textit{a priori} estimates, we can pass to the limit and obtain the analogous equation for the weak limit $\bm{u}$, cf. Section \ref{sec:limit_reg}. Here we need to use that $ \WE(\vphi_k, \E(\bm{u}_k)) =  \C(\vphi_k  ) (\E(\bm{u}_k) - \Tau(\vphi_k ))$. Testing with the difference $\bm{u}_k \chi_{[0, t]} -  \bm{u}\chi_{[0, t]}  \in L^2(0, T; \bm{X}(\Omega))$ and subtracting the equations leads to  
	\begin{align*}
		\int_{\Ot} &[\C_\nu (\vphi_k ) \E(\partial_t \bm{u}_k) - \C_\nu (\vphi) \E(\partial_t \bm{u})] :  \E(\bm{u}_k - \bm{u})  
		+ [\C(\vphi_k ) \E( \bm{u}_k) - \C(\vphi) \E( \bm{u})] :  \E(\bm{u}_k - \bm{u})  \\ 
		&+ [\C (\vphi_k) \Tau(\vphi_k) - \C(\vphi) \Tau(\vphi )] :  \E(\bm{u}_k - \bm{u}) 
		-  (\alpha(\vphi_k) p_k - \alpha(\vphi) p)\nabla \cdot(\bm{u}_k - \bm{u}) * \phi \dtx \\ 
		& =-  \vr \int_{\Omega_t} \nabla \theta_k \cdot \nabla (\alpha(\vphi_k) (\nabla \cdot(\bm{u}_k - \bm{u})) * \phi )  - \nabla \theta \cdot \nabla (\alpha(\vphi) (\nabla \cdot(\bm{u}_k - \bm{u})) * \phi )  \dtx
	\end{align*}
	For the first term, we calculate 
	\begin{align*}
		\int_{\Ot} &[\C_\nu (\vphi_k ) \E(\partial_t \bm{u}_k) - \C_\nu (\vphi) \E(\partial_t \bm{u})] :  \E(\bm{u}_k - \bm{u}) \dtx \\ 
		&=\begin{aligned}[t]
			\int_0^t \frac{d}{dt} \frac{1}{2} \norm{\E(\bm{u}_k - \bm{u})}_{L^2}^2 \dt 
			&+ \int_{\Ot}  \partial_t  \E(\bm{u}_k - \bm{u}) : (\C_\nu(\vphi_k ) \E(\bm{u}_k - \bm{u})   - \E(\bm{u}_k - \bm{u}))\dtx \\ 
			&- \int_{\Ot}   (\C_\nu(\vphi) -  \C_\nu(\vphi_k )) \E(\pt \bm{u}) : \E(\bm{u}_k - \bm{u})\dtx. 
		\end{aligned}
	\end{align*}
	And for the second we have 
	\begin{align*}
		&\int_{\Ot}  [\C(\vphi_k ) \E( \bm{u}_k) - \C(\vphi) \E( \bm{u})] :  \E(\bm{u}_k - \bm{u}) \dtx \\ 
		& \quad =  	\int_{\Ot} \C(\vphi_k ) \E( \bm{u}_k - \bm{u}) :  \E(\bm{u}_k - \bm{u}) 
		- (\C(\vphi) - \C(\vphi_k ))  \E( \bm{u}) :  \E(\bm{u}_k - \bm{u})\dtx. 
	\end{align*}
	\par 
	We leave the remaining terms for now and test the weak formulations with $\pt\bm{u}_k \chi_{[0, t]} - \pt \bm{u}\chi_{[0, t]}$ to obtain
	\begin{align*}
		\int_{\Ot} &[\C_\nu (\vphi_k ) \E(\partial_t \bm{u}_k) - \C_\nu (\vphi) \E(\partial_t \bm{u})] :   \pt \E(\bm{u}_k - \bm{u})  
		+ [\C(\vphi_k) \E( \bm{u}_k) - \C(\vphi) \E( \bm{u})] : \pt  \E(\bm{u}_k - \bm{u})  \\ 
		&+ [\C (\vphi_k) \Tau(\vphi_k) - \C(\vphi) \Tau(\vphi)] :  \pt \E(\bm{u}_k - \bm{u}) \dtx
		-  (\alpha(\vphi_k) p_k - \alpha(\vphi) p) \nabla \cdot(\pt \bm{u}_k - \pt \bm{u}) * \phi \dtx  \\ 
		& =- \vr \int_{\Omega_t} \nabla \theta_k \cdot \nabla (\alpha(\vphi_k) (\nabla \cdot(\pt \bm{u}_k - \pt \bm{u})) * \phi )  - \nabla \theta \cdot \nabla (\alpha(\vphi) (\nabla \cdot(\pt \bm{u}_k - \pt \bm{u})) * \phi )  \dtx
	\end{align*}
	Again, we want to rewrite the first two terms as 
	\begin{align*}
		\int_{\Ot} &[\C_\nu (\vphi_k ) \E(\partial_t \bm{u}_k) - \C_\nu (\vphi) \E(\partial_t \bm{u})] : \pt \E(\bm{u}_k - \bm{u}) \dtx \\ 
		&= \int_{\Ot} \C_\nu(\vphi_k )   \partial_t  \E(\bm{u}_k - \bm{u}) :\pt  \E(\bm{u}_k - \bm{u})  
		- (\C_\nu(\vphi) -  \C_\nu(\vphi_k )) \E(\pt \bm{u}) :  \pt \E(\bm{u}_k - \bm{u}) \dtx 
	\end{align*}
	and 
	\begin{align*}
		\int_{\Ot}  &[\C(\vphi_k ) \E( \bm{u}_k) - \C(\vphi) \E( \bm{u})] : \pt  \E(\bm{u}_k - \bm{u}) \dtx \\ 
		&= \begin{aligned}[t]
			\int_0^t \frac{d}{dt} \frac{1}{2} \norm{\E(\bm{u}_k - \bm{u})}_{L^2}^2   \dt 
			&+ (\C(\vphi_k) \pt  \E(\bm{u}_k - \bm{u})  - \pt  \E(\bm{u}_k - \bm{u})  ) :\E(\bm{u}_k - \bm{u}) \\ 
			& - (\C(\vphi_k) - \C(\vphi))  \E(\bm{u}) : \pt  \E(\bm{u}_k - \bm{u})  \dtx. 
		\end{aligned}
	\end{align*}
	Using that the tensors $\C_\nu, \C$ are uniformly positive definite and applying Korn's inequality, we find 
	\begin{align*}
		\int_{\Omega_t}   \C(\vphi_k) \E( \bm{u}_k - \bm{u}) :  \E(\bm{u}_k - \bm{u})  \dx
		&+ \int_{\Ot} \C_\nu(\vphi_k)   \partial_t  \E(\bm{u}_k - \bm{u}) :\pt  \E(\bm{u}_k - \bm{u})   \dt \\
		&\geq C( \norm{\bm{u}_k - \bm{u}}_{L^2(\bm{X})}^2 +\norm{\pt (\bm{u}_k - \bm{u}) }_{L^2(\bm{X})}^2 ) . 
	\end{align*}
	With the help of Young's inequality, we further estimate 
	\begin{align*}
		- \int_{\Omega} & \partial_t  \E(\bm{u}_k - \bm{u}) : (\C_\nu(\vphi_k) \E(\bm{u}_k - \bm{u})   - \E(\bm{u}_k - \bm{u}))\\ 
		&+ \E(\bm{u}_k - \bm{u}) :(\C(\vphi_k) \pt  \E(\bm{u}_k - \bm{u})  - \pt  \E(\bm{u}_k - \bm{u})  ) \dx 
		\leq \rho_{\bm{u}} \norm{ \partial_t  (\bm{u}_k - \bm{u})}_{\bm{X}}^2 + C \norm{ \bm{u}_k - \bm{u}}_{\bm{X}}^2
	\end{align*}
	and remark that for sufficiently small $\rho_{\bm{u}}$ the first term can be absorbed on the right-hand side. Moreover, we observe that the last two integrals can be dealt with similarly to Lemma \ref{lemma:F_continuity}. 
	 Invoking the fundamental theorem of calculus and Korn's inequality therefore yields
	\begin{align*}
		&\norm{ \bm{u}_k - \bm{u}}_{\bm{X}}^2 (t) + \norm{\bm{u}_k - \bm{u}}_{L^2(0, t; \bm{X})}^2 + (1- \rho_{\bm{u}})\norm{\pt (\bm{u}_k - \bm{u}) }_{L^2(0, t; \bm{X})}^2 \\
		&\quad \leq C \int_0^t \norm{ \bm{u}_k - \bm{u}}_{\bm{X}}^2 (\tau)\, d\tau  + \norm{ \bm{u}_k - \bm{u}}_{\bm{X}}^2 (0) \\ 
		&\quad \  +C  \Big| \int_{\Omega_t} 
		\begin{aligned}[t]
			& [\C_\nu(\vphi) -  \C_\nu(\vphi_k)] \E(\pt \bm{u}) : \E(\bm{u}_k - \bm{u})
			 + [\C(\vphi) - \C(\vphi_k )]  \E( \bm{u}) :  \E(\bm{u}_k - \bm{u}) \\ 
			& + [\C_\nu(\vphi) -  \C_\nu(\vphi_k)] \E(\pt \bm{u}) :  \pt \E(\bm{u}_k - \bm{u}) 
			+ [\C(\vphi_k) - \C(\vphi) ] \E(\bm{u}) : \pt  \E(\bm{u}_k - \bm{u}) \dtx \Big| 
		\end{aligned}\\ 
		& \quad \ + \Big| \int_{\Omega_t} \begin{aligned}[t]
			&[\C (\vphi_k) \Tau(\vphi_k) - \C(\vphi) \Tau(\vphi)] :  \E(\bm{u}_k - \bm{u}) 
			+ [\alpha(\vphi_k) p_k - \alpha(\vphi) p] (\nabla \cdot(\bm{u}_k - \bm{u})) * \phi \\ 
			&\! + [\C (\vphi_k) \Tau(\vphi_k) - \C(\vphi) \Tau(\vphi)] :  \pt \E(\bm{u}_k - \bm{u}) 
			+  [\alpha(\vphi_k) p_k - \alpha(\vphi) p] (\nabla \cdot(\pt \bm{u}_k - \pt \bm{u})) * \phi \dtx \Big| 
		\end{aligned}\\ 
	& \quad \  +  \vr 
		 	\norm{\alpha'(\vphi_k) \nabla \theta_k \cdot \nabla \vphi_k - \alpha'(\vphi) \nabla \theta \cdot \nabla \vphi}_{L^2(0, t; L^{4/3})}
		 	\norm{\phi}_{L^\infty(0, t; L^{4/3})}\norm{\bm{u}_k - \bm{u}}_{H^1(0, t; \bm{X})}  \\ 
	& \quad \  +\vr  \norm{\alpha(\vphi_k) \nabla \theta_k - \alpha(\vphi) \nabla \theta}_{L^2(0, t; \bm{L}^2)} \norm{\nabla \phi}_{L^\infty(0, t; \bm{L}^1)}  \norm{\bm{u}_k - \bm{u}}_{H^1(0, t; \bm{X})}  .
	\end{align*}
	Observe that we can again employ Young's inequality on the last three integrals and obtain after absorbing some of the resulting terms on the left hand side that
		\begin{align*}
		&\norm{ \bm{u}_k - \bm{u}}_{\bm{X}}^2 (t) \\
		&\quad \leq C \int_0^t \norm{ \bm{u}_k - \bm{u}}_{\bm{X}}^2 (\tau)\, d\tau  + \norm{ \bm{u}_k - \bm{u}}_{\bm{X}}^2 (0) \\ 
		&\quad \quad +C \Big(
		\begin{aligned}[t]
			& \norm{[\C_\nu(\vphi) -  \C_\nu(\vphi_k)] \E(\pt \bm{u})}_{L^2(\bm{L}^2)}^2
			+ \norm{[\C(\vphi) - \C(\vphi_k )]  \E( \bm{u})}_{L^2(\bm{L}^2)}^2 \Big) 
		\end{aligned}\\ 
		& \quad \quad+ C \Big( \begin{aligned}[t]
			&\norm{[\C (\vphi_k) \Tau(\vphi_k) - \C(\vphi) \Tau(\vphi)]}_{L^2(\bm{L}^2)}^2 
			+ \norm{\alpha(\vphi_k) p_k - \alpha(\vphi) p}_{L^2(L^2)}^2 \Big) 
		\end{aligned}\\ 
		& \quad \quad+ C \Big( \begin{aligned}[t]
			\norm{\alpha'(\vphi_k) \nabla \theta_k \cdot \nabla \vphi_k - \alpha'(\vphi) \nabla \theta \cdot \nabla \vphi}_{L^2(L^{4/3})}^2
			+  \norm{\alpha(\vphi_k) \nabla \theta_k - \alpha(\vphi) \nabla \theta}_{L^2(\bm{L}^2)}^2 \Big) . 
		\end{aligned}
		\numberthis \label{eq:u_gronwall}
	\end{align*}
	On account of the strong convergence $\theta_k \rightarrow \theta$ in $L^2(0, T; X(\Omega))$, see \eqref{conv:theta_strong_regular}, and the strong convergence of the phase-field variable $\vphi_k \rightarrow \vphi$ in $C^0([0, T]; W^{1, 4}(\Omega))$, see \eqref{conv:phi_strong_regular} , along with the embedding $W^{1, 4}(\Omega)) \hookrightarrow L^\infty(\Omega)$ and the continuity of $\alpha'$, we deduce that 
	\begin{align*}
		(\alpha'(\vphi_k) \nabla \theta_k \cdot \nabla \vphi_k )(t) \rightarrow  (\alpha'(\vphi) \nabla \theta \cdot \nabla \vphi) (t) \quad &\textrm{in} \quad L^2(0, T; L^{4/3}(\Omega)), \\ 
		(\alpha(\vphi_k) \nabla \theta_k)(t) \rightarrow (\alpha(\vphi) \nabla \theta) (t) \quad &\textrm{in} \quad L^2(0, T; \bm{L}^2 (\Omega)). 
	\end{align*}
	Since $\bm{u}_ k (0) \rightarrow \bm{u}(0)$, cf. Section \ref{sec:limit_reg}, the strong convergences of $\vphi_k \rightarrow \vphi$ and $p_k \rightarrow p$ in $L^2(\OT)$ and Gronwall's lemma yield that along a suitable subsequence 
	\begin{equation*}
		\norm{ \bm{u}_k - \bm{u}}_{\bm{X}} (t) \rightarrow 0 \quad \textrm{a.e. in } [0, T]. 
	\end{equation*}
	Reminding ourselves of the embedding $H^1(0, T; \bm{X}(\Omega)) \rightarrow C^0([0, T]; \bm{X}(\Omega))$, we can employ $L^p$-$L^q$-compactness to obtain 
	\begin{equation*}
		\bm{u}_k \rightarrow \bm{u} \quad \textrm{in } L^2(0, T; \bm{X}(\Omega)).  
	\end{equation*}
	
\end{proof}

\subsection{Limit passage}\label{sec:limit_reg}

After all the preparation above, we are finally in the position to pass to the limit in the semi-Galerkin system \eqref{eq:galerkin1}-\eqref{eq:galerkin7}.
\par 
\textit{Ad \eqref{eq:weak1_reg}:} Starting with \eqref{eq:galerkin1}, we choose some $z_j$ and an arbitrary function $\vartheta \in C^\infty_c(0, T)$ to obtain $\vartheta z_j \in L^2(0, T; Z_k)$ for all $k \geq j$. Hence, integration with respect to time leads to 
\begin{align*}
	\int_0^T  \prescript{}{(H^1)'}{}\langle \pt \vphi_k, \vartheta z_j \rangle_{H^1}\, \dt =   \int_{\OT} - m(\vphi_k) \nabla \mu_k \cdot \vartheta \nabla z_j+ R(\vphi_k, \E(\bu_k), \theta_k) \vartheta z_j \; \dtx
\end{align*}
for all $k \geq j$. The weak convergence $\vphi_k \rightharpoonup \vphi$ in $L^2(0, T; (H^1(\Omega))')$ and $\vartheta z_j \in L^2(0, T; H^1(\Omega))$ allow us to pass to the limit on the left-hand side. Since $m$ is uniformly bounded and $(\vphi_k)_{k \inN}$ converges pointwise, we obtain with the help of dominated convergence that 
\begin{equation*}
	m(\vphi_k) \vartheta \nabla z_j \rightarrow m(\vphi) \vartheta \nabla z_j  \quad \textrm{in} \quad L^2(0, T; L^2(\Omega)). 
\end{equation*}  
Together with $\nabla \mu_k \rightharpoonup \nabla \mu$ in $L^2(0, T; \bm{L}^2(\Omega))$ and the weak-strong convergence principle, we can pass to the limit in the first term on the right-hand side. \\ 
At last, we deduce the strong convergence $R(\vphi_k, \E(\bu_k), \theta_k) \rightarrow R(\vphi, \E(\bu), \theta)$ in $L^2(\OT)$ from the bound on $R$ and the strong convergences of $\vphi_k$, $ \nabla \bm{u}_k$ and $\theta_k$ in $L^2(\OT)$, allowing us to pass to the limit in the last term.  
\par 
\medskip
\textit{Ad \eqref{eq:weak2_reg}:} Similarly, testing \eqref{eq:galerkin2} with $\vartheta z_j$ and integrating with respect to time yields 
\begin{align*}
	\int_{\OT} \mu_k  \vartheta z_j \dtx 
	=   \int_{\OT} \vepsilon \nabla \vphi_k \cdot \vartheta \nabla   z_j  &+
	\varrho^{1/2}  \Delta \vphi_k \cdot \vartheta \Delta z_j
	+ \Big[ \frac{ 1}{\vepsilon} \psi'(\vphi)  + \Wp(\vphi, \E(\bm{u}))   \\ 
	&\begin{aligned}[t]
		&+\frac{M'(\vphi_k)}{2} (\theta_k - \alpha(\vphi_k) \nabla \cdot \bm{u}_k)^2  \\ 
		&-M(\vphi_k) (\theta_k - \alpha(\vphi_k) \nabla \cdot \bm{u}_k) \alpha'(\vphi_k) \nabla \cdot \bm{u}_k \Big]\,   \vartheta z_j  \; \dtx.
	\end{aligned}
\end{align*} 
Since $\mu_k \rightharpoonup \mu$ in $L^2(0, T; H^1(\Omega))$ and $\vphi_k \rightharpoonup \vphi$ in $L^2(0, T; H^2(\Omega))$ and $\vartheta z_j \in L^2(0, T; H^2(\Omega))$, limit passage on the left-hand side and in the first two terms on the right-hand side is possible. \\
Exploiting $z_j \in H^2_{\bm{n}}(\Omega) \hookrightarrow L^\infty(\Omega)$ along with the strong convergence \eqref{conv:psi_reg}, we can invoke the dominated convergence theorem to find $ \psi'(\vphi_k)  \vartheta z_j  \rightarrow \psi'(\vphi)  \vartheta z_j $ in $L^1(\OT)$ and hence, 
\begin{equation*}
	 \int_{\OT}\frac{ 1}{\vepsilon} \psi'(\vphi_k)  \vartheta z_j \dtx \rightarrow  \int_{\OT} \frac{ 1}{\vepsilon} \psi'(\vphi)  \vartheta z_j  \dtx. 
\end{equation*}
Along with the strong convergence of $\bm{u}_k$ in $L^2(0, T; \bm{X}(\Omega))$ and the growth condition (\hyperref[A:W_growth_conditions]{A3.2}) imposed on $\WE$, generalized dominated convergence implies $\Wp(\vphi_k , \E(\bm{u}_k)) \rightarrow \Wp(\vphi , \E(\bm{u}))$ in $L^1(\OT)$
along a not relabeled subsequence. Since $\vartheta z_j  \in L^\infty(\OT)$, we can also pass to the limit in this term. 
Similarly, the strong convergences of $\bm{u}_k$ in $L^2(0, T; \bm{X})$ and $p_k, \theta_k$ in $L^2(\Omega_T)$ imply 
\begin{align*}
	\frac{M'(\vphi_k)}{2} &(\theta_k - \alpha(\vphi_k) \nabla \cdot \bm{u}_k)^2  
	-M(\vphi_k) (\theta_k - \alpha(\vphi_k) \nabla \cdot \bm{u}_k) \alpha'(\vphi_k) \nabla \cdot \bm{u}_k\\ 
	&\rightarrow 
	\frac{M'(\vphi)}{2} (\theta - \alpha(\vphi) \nabla \cdot \bm{u})^2  
	-M(\vphi) (\theta - \alpha(\vphi) \nabla \cdot \bm{u}) \alpha'(\vphi) \nabla \cdot \bm{u} \quad \textrm{in }\quad  L^1(\OT), 
\end{align*}
along some subsequence, and we can pass to the limit in the last two terms just as before. 
\par 
\medskip 
\textit{Ad \eqref{eq:weak3_reg}:} For an arbitrary $\eta \in \bm{X}(\Omega)$, we obtain from \eqref{eq:galerkin7} along with the identity $ \WE(\vphi_k, \E(\bm{u}_k)) =  \C(\vphi_k) (\E(\bm{u}_k) - \Tau(\vphi_k ))$ that 
\begin{align*}
	\int_{\Omega_T} &\C_\nu (\vphi_k ) \E(\pt \bm{u}_k)  :  \vartheta \E(\bm{\eta}) 
	+  \C(\vphi_k  ) (\E(\bm{u}_k) - \Tau(\vphi_k )) : \vartheta \E(\bm{\eta})
	- \alpha(\vphi_k) p_k (\nabla \cdot \vartheta \bm{\eta}) * \phi \dtx \\ 
	&= \int_{\Omega_T} \bm{f} \cdot \vartheta  \bm{\eta} \dtx + \int_0^T\int_{\Gamma_N} \bm{g}\cdot \vartheta \bm{\eta} \dH \dt  
		- \vr \int_{\Omega_T} \nabla \theta_k \cdot \nabla (\alpha(\vphi_k) (\nabla \cdot \bm{\eta}) * \phi ) \dtx. 
\end{align*} 
Using the properties of the tensors $\C, \C_\nu$, we can rewrite the first two terms as 
\begin{align*}
	&\int_{\Omega_T} \C_\nu (\vphi_k ) \E(\pt \bm{u}_k)  :  \vartheta \E(\bm{\eta}) 
	+  \C(\vphi_k ) \E(\bm{u}_k) : \vartheta \E(\bm{\eta}) \dtx \\ 
	& \quad = 	\int_{\Omega_T}  \E(\pt \bm{u}_k)  :  \C_\nu (\vphi_k ) \vartheta \E(\bm{\eta}) 
	+   \E(\bm{u}_k) : \C(\vphi_k ) \vartheta \E(\bm{\eta}) \dtx.
\end{align*}
Once again we deduce from the strong convergence $\vphi_k \rightarrow \vphi$ in $L^2(\OT)$  together with the boundedness of the tensors that, along a suitable subsequence, $\C_\nu (\vphi_k ) \vartheta \E(\bm{\eta})  \rightarrow \C_\nu (\vphi ) \vartheta \E(\bm{\eta}) $ and $\C(\vphi_k ) \vartheta \E(\bm{\eta}) \rightarrow \C(\vphi  ) \vartheta \E(\bm{\eta})$ in $\bm{L}^2(\Omega_T)$. By the same arguments, it follows that  $ \C(\vphi_k  ) \Tau(\vphi_k ) \rightarrow \C(\vphi) \Tau(\vphi )$ in $\bm{L}^2(\OT)$. \\ 
Taking advantage of the weak convergence $\bm{u}_k \rightharpoonup \bm{u}$ in $H^1(0, T; \bm{X}(\Omega))$, we obtain the desired limit.\\ 
Moreover, we deduce that $\alpha(\vphi_k) \vartheta(\nabla \cdot \bm{\eta}) * \phi \rightarrow \alpha(\vphi) \vartheta (\nabla \cdot \bm{\eta}) * \phi$ in $L^2(\OT)$ and exploit the weak convergence of $p_k$ to get 
\begin{equation*}
	\int_{\OT} p_k \alpha(\vphi_k) \vartheta (\nabla \cdot \bm{\eta}) * \phi \dtx 
	\rightarrow 
		\int_{\OT} p\alpha(\vphi) \vartheta ( \nabla \cdot \bm{\eta}) * \phi \dtx
		\quad \textrm{as } k \rightarrow \infty . 
\end{equation*}
Lastly, the strong convergence $\vphi_k \rightarrow \vphi$ in $C^0([0, T]; W^{1, 4}(\Omega))$ implies 
\begin{equation*}
	\nabla (\alpha(\vphi_k) (\nabla \cdot \bm{\eta}) * \phi ) \rightarrow \nabla (\alpha(\vphi) (\nabla \cdot \bm{\eta}) * \phi )
	\quad \textrm{in} \quad L^2(0, T; \bm{L}^2(\Omega)). 
\end{equation*} Along with $\theta_k \rightarrow \theta$ in $L^2(0, T; X(\Omega))$, this entails
\begin{equation*}
	\vr \int_{\Omega_T} \nabla \theta_k \cdot \nabla (\alpha(\vphi_k) (\nabla \cdot \bm{\eta}) * \phi ) \dtx 
	\rightarrow 
	\vr \int_{\Omega_T} \nabla \theta \cdot \nabla (\alpha(\vphi) (\nabla \cdot \bm{\eta}) * \phi ) \dtx. 
\end{equation*}
\par
\medskip 
\textit{Ad \eqref{eq:weak4_reg}:} For any fixed $y_j$, we obtain from \eqref{eq:galerkin3} that 
\begin{equation*}
	\int_0^T  \prescript{}{X'}{}\langle \pt \theta_k, \vartheta y_j \rangle_{X}\, \dt 
	=   \int_{\OT} - \kappa(\vphi_k) \nabla p_k \cdot \vartheta \nabla y_j + S_f(\vphi_k, \E(\bu_k), \theta_k) \vartheta y_j \; \dtx
\end{equation*} 
 for all $k \geq j$. The assumptions on $\kappa$ and $S_f$ along with the compactness results for $\theta_k$ and $p_k$ allow us to argue analogously to the first equation \eqref{eq:weak1_reg}.
 \par 
 \medskip
 \textit{Ad \eqref{eq:weak5_reg}:} At last, we test \eqref{eq:galerkin4} with $\vartheta y_j$ and find the postulated identity by similar arguments as above. 
\par 
\medskip 
Recall that $\spn \{ z_j : j \inN \}$ is a dense subset of $H^2_{\bm{n}}(\Omega)$ and $\spn \{y_j : j \inN \}$ is dense in $X(\Omega)$. Since $C^\infty_0(0, T)$ is a dense subspace of $L^2(0, T)$, the discussion above already suffices to conclude that the limit $(\vphi, \mu, \bm{u}, \theta, p)$ satisfies \eqref{eq:weak1_reg}-\eqref{eq:weak5_reg} for all applicable test functions $\zeta, \bm{\eta}$ and $\xi$, as postulated in Theorem \ref{th:existence_reg}. 

\subsubsection*{Recovery of initial conditions}

Due to the \textit{a priori} estimates from Lemma \ref{lem:a_priori_regular}, the Aubin--Lions--Simon theorem yields 
\begin{gather*}\label{conv:initial_strong_reg}
		\begin{aligned}
		\vphi_k \rightarrow \vphi \quad &\textrm{in} \quad C^0([0, T]; L^2(\Omega)),\\ 
		\bm{u}_{k} \rightarrow \bm{u} \quad &\textrm{in} \quad C^0([0, T]; L^2(\Omega)),\\ 
		\theta_k \rightarrow \theta \quad &\textrm{in} \quad C^0([0, T]; L^2(\Omega)). 
	\end{aligned}\numberthis 
\end{gather*}

In particular, this implies 
\begin{equation*}
	\vphi_k(0) \rightarrow \vphi(0) \quad \textrm{and} \quad \bm{u_k}(0) \rightarrow \bm{u}(0) \quad \textrm{and} \quad 
	\theta_k(0) \rightarrow \theta(0) \quad  \textrm{in} \quad L^2(\Omega).
\end{equation*}
Recalling that  as $k \rightarrow \infty$
\begin{equation*}
	\vphi_k(0) = \vphi_{0, k} = \Pi_k^z(\vphi_0) \rightarrow \vphi_0 \quad \textrm{and} \quad 
	\theta_k(0) = \theta_{0, k} = \Pi_k^z(\theta_0) \rightarrow \theta_0 \quad \textrm{in} \quad L^2(\Omega), 
\end{equation*}
the uniqueness of limits yield 
\begin{equation*}
	\vphi(0) = \vphi_0 \quad  \textrm{and} \quad 
	\theta(0) = \theta_0  
	\quad \textrm{a.e. in } \Omega. 
\end{equation*}
Lastly, we observe $\bm{u}_k(0) = \bm{u}_0$ for all $k \inN$ and deduce $\bm{u}(0) = \bm{u}_0 \textrm{ a.e. in } \Omega$. 

\subsubsection*{\textit{A priori} estimates} 

We start by collecting some results from Lemma \ref{lem:a_priori_regular}. Inserting \eqref{eq:mu_H^1} into \eqref{eq:phi_H^1'}, we obtain
\begin{equation}
	\norm{\pt \varphi_k}_{L^2((H^1)')}^2 \leq C \Big(  \norm{\vphi_{0}}_{H^1}^4 +  \varrho \norm{\Delta \vphi_{0}}_{L^2}^4+  \frac{1}{\vepsilon^2}  \norm{\psi(\vphi_{0})}_{L^1}^2 +  \norm{\bm{u}_0}_{\bm{X}}^2 + \norm{\theta_{0}}_{L^2}^4 + \vr^2 \norm{\theta_0}_{X}^4+ 1 \Big) . 
\end{equation}
Moreover, we insert \eqref{eq:p_H^1} into \eqref{eq:theta_H^1'} and deduce together with the estimates  \eqref{eq:energy_estimate_reg_1} that 
\begin{equation*}
	\norm{\pt \theta_k}_{L^2((H^1)')}^2 \leq 
	 C \Big(  \norm{\vphi_{0}}_{H^1}^2 +  \varrho^{1/2} \norm{\Delta \vphi_{0}}_{L^2}^2+  \frac{1}{\vepsilon}  \norm{\psi(\vphi_{0})}_{L^1} +  \norm{\bm{u}_0}_{\bm{X}}^2 + \norm{\theta_{0}}_{L^2}^2 + \vr \norm{\theta_{0}}_{X}^2 + 1 \Big) .
\end{equation*}
Thus, these two estimates together with \eqref{eq:energy_estimate_reg_1} establish 
\begin{align*}
		\norm{\vphi_k(t)}_{H^1}^2 
	&+ \norm{ \varrho^{1/4} \Delta \vphi_k(t)}_{L^2}^2  
	+ \norm{\psi(\vphi_k)(t)}_{L^1} 
	+ \norm{\bm{u}(t)}_{\bm{X}}^2
	+  \norm{\theta_k(t)}_{L^2}^2 
	+\norm{\vr^{1/2} \nabla \theta_k(t)}_{L^2}^2\\ 
	&+\norm{\pt \varphi_k}_{L^2((H^1)')}^2
	+\norm{\pt \theta_k}_{L^2(X')}^2
	+\norm{ \mu_k}_{L^2(H^1)}^2 
	+ \norm{ p_k}_{L^2(H^1)}^2
	+ \norm{\pt \bm{u}_k}_{L^2(\bm{X})}^2\\ 
	&\leq C( \norm{\vphi_{0}}_{H^1}^4 +  \varrho \norm{\Delta \vphi_{0}}_{L^2}^4+  \frac{1}{\vepsilon^2}  \norm{\psi(\vphi_{0})}_{L^1}^2 +  \norm{\bm{u}_0}_{\bm{X}}^4 + \norm{\theta_{0}}_{L^2}^4 +  \vr^2\norm{\theta_{0}}_{X}^4+ 1)
\end{align*} 
for almost all $t \in (0, T)$ and all $\vr < 1$. \\
Recall that \ref{A:psi_0} stipulates $\psi(\vphi') \geq 0$ for all $\vphi' \in \R$. Hence, $\psi(\vphi_k)$ is a nonnegative, continuous function and since $\vphi_k \rightarrow \vphi$ pointwise a.e. in $\OT$, applying Fatou's lemma yields 
\begin{equation*}
	\int_\Omega \psi(\vphi(t))  \dx \leq \liminf_{k \rightarrow \infty} \int_\Omega \psi(\vphi_k(t))\dx 
\end{equation*}
for almost all $t \in (0, T)$. By taking the limit inferior on both sides above and exploiting weak/weak* lower semi-continuity, we conclude 
\begin{align*}
	\norm{\vphi(t)}_{H^1}^2 
	&+ \norm{ \varrho^{1/2} \Delta \vphi(t)}_{L^2}^2  
	+ \norm{\psi(\vphi)(t)}_{L^1} 
	+ \norm{\bm{u}(t)}_{\bm{X}}^2
	+  \norm{\theta(t)}_{L^2}^2 
	+\norm{\vr^{1/2} \nabla \theta (t)}_{L^2}^2\\ 
	&+\norm{\pt \varphi}_{L^2((H^1)')}^2
	+\norm{\pt \theta}_{L^2(X')}^2
	+\norm{ \mu}_{L^2(H^1)}^2 
	+ \norm{ p}_{L^2(H^1)}^2
	+ \norm{\pt \bm{u}}_{L^2(\bm{X})}^2\\ 
	&\leq C( \norm{\vphi_{0}}_{H^1}^4 +  \varrho \norm{\Delta \vphi_{0}}_{L^2}^4+  \frac{1}{\vepsilon^2}  \norm{\psi(\vphi_{0})}_{L^1}^2 +  \norm{\bm{u}_0}_{\bm{X}}^4 + \norm{\theta_{0}}_{L^2}^4 +  \vr^2\norm{\theta_{0}}_{X}^4+ 1). 
	\numberthis \label{est:energy_explicit}
\end{align*}


\section{Existence of weak solutions} \label{sec:existence}

We now aim to establish the existence of weak solutions to the Cahn--Hilliard--Biot system without regularizations. To this end, we consider a family of weak solutions $(\vphi_\varrho, \mu_\varrho, \bm{u}_\varrho, \theta_\varrho, p_\varrho)$ to the regularized system and pass the limit $\varrho \rightarrow 0$. \par 
To recover our initial conditions, these need to be approximated in the corresponding spaces. In particular, we need to find a family $\{ \vphi_{\varrho, 0}\}_{\varrho}$ with $\vphi_{\varrho, 0} \rightarrow \vphi_0$ as $\varrho \searrow 0$ such that $\norm{\psi(\vphi_{\varrho, 0} ) }_{L^1}$ can be bounded by $\norm{\psi(\vphi_{ 0} ) }_{L^1}$. We note that this is not necessary for $\theta_0$ but refer to Section \ref{sec:vanishing_convolution} for a more detailed discussion. \\ 
Moreover, since we can no longer rely on the additional regularity for $\vphi_\varrho$, we need to slightly change the arguments which allowed us to deduce the strong convergences for $p$ and $\bm{u}$ in the previous section.  \par
Most importantly, we will deduce the strong convergence of $\theta$ in $L^2(\OT)$ even for the case that $\vr \searrow 0$.

\subsection{More general initial conditions} \label{sec:general_initial}

To obtain weak solutions for the regularized problem, we had to assume that the initial conditions satisfy $\vphi_0 \in H^2_{\bm{n}}(\Omega)$. We would like to weaken these assumptions and allow for more general initial conditions better fitted to the problem. 
\par 
\medskip
Suppose $\vphi_0 \in H^1(\Omega)$ such that $\psi(\vphi_0) \in L^1(\Omega)$. We want to employ a strategy by Colli, Frigeri and Grasselli \cite{COLLI2012428}, which was also used by Garcke, Lam and Signiori \cite{lam06}, to find a family of functions $\{ \vphi_{\varrho, 0}\}  \subset H^2_{\bm{n}}(\Omega)$ that converges to $\vphi_0$ in a suitable manner, while simultaneously the integral over $\psi(\vphi_{\varrho, 0})$ remains bounded. To this end, consider the elliptic problem 
\begin{gather*}
	\begin{cases}
		- \varrho^{1/2} \Delta \vphi_{\varrho, 0} + \vphi_{\varrho, 0} = \vphi_0 \quad & \textrm{in } \Omega, \\ 
		\nabla \vphi_{\varrho, 0} \cdot \bm{n} = 0  \quad & \textrm{on } \Gamma. 
	\end{cases}\numberthis\label{elliptic_phi}
\end{gather*}
Employing the Lax--Milgram theorem, we find that these problems admit unique weak solutions and elliptic regularity theory further implies $\vphi_{0, \varrho} \in H^2_{\bm{n}}(\Omega)$. We test the weak formulation with $\vphi_{\varrho, 0}$ and obtain with the help of Young's inequality 
\begin{equation}\label{eq:initial_1}
	2 \varrho^{1/2} \norm{\nabla \vphi_{\varrho, 0}}_{L^2}^2 + \norm{ \vphi_{\varrho, 0}}_{L^2}^2
	\leq \norm{ \vphi_{0}}_{L^2}^2. 
\end{equation}
On the other hand, integration by parts and the fundamental lemma of the calculus of variations imply that \eqref{elliptic_phi} already holds almost everywhere in $\Omega$. Multiplication with $\Delta \vphi_{\varrho,0}$ and integration by parts further yield
\begin{equation}\label{eq:initial_2}
	2 \varrho^{1/2} \norm{\Delta \vphi_{\varrho, 0}}_{L^2}^2 + \norm{ \nabla \vphi_{\varrho, 0}}_{L^2}^2
	\leq \norm{\nabla \vphi_{0}}_{L^2}^2. 
\end{equation} 
Together with \eqref{eq:initial_1}, this implies 
\begin{equation} \label{est:initial_4}
	\norm{\vphi_{\varrho, 0}}_{H^1}^2 + \varrho^{1/2} \norm{\Delta \vphi_{\varrho, 0}}_{L^2}^2 
	\leq C\norm{\vphi_{0}}_{H^1}^2 . 
\end{equation}
In summary, this tells us that the family $\{ \vphi_{\varrho, 0}\}$ is uniformly bounded in $H^1(\Omega)$ and we deduce the existence of some function $\vphi^* \in H^1(\Omega)$ such that 
\begin{equation*}
	\vphi_{\varrho, 0} \rightharpoonup \vphi^* \quad \textrm{in} \quad H^1(\Omega)
\end{equation*}
along a not relabeled subsequence. Exploiting $\varrho \nabla \vphi_{\epsilon, 0} \rightharpoonup 0$ when passing to the limit in the weak formulation gives 
\begin{equation*}
	\int_\Omega \vphi^* \zeta \dx = \int_\Omega \vphi_{0} \zeta \dx 
	\quad \textrm{for all} \quad \zeta \in H^1(\Omega)
\end{equation*}
and invoking the fundamental lemma of the calculus of variations, we find $\vphi^* = \vphi_0$. Moreover, the Rellich--Kondrachov theorem and the uniqueness of weak limits imply
\begin{equation}\label{conv:initial_strong}
		\vphi_{\varrho, 0} \rightarrow \vphi_0 \quad \textrm{in} \quad L^2(\Omega). 
\end{equation}
We proceed by defining the function $	G(s) \coloneqq \psi(\vphi) + \tfrac{1}{2}C_2 \vphi^2$,
which is nonnegative due to \ref{A:psi_0}. Moreover, using \ref{A:psi_1} and \ref{A:psi_2}, we compute 
\begin{equation*}
	G''(\vphi) = \psi_1''(\vphi) + \psi_2''(\vphi) + C_2 \geq 0
\end{equation*}
and find that $G$ is convex. Lastly, we would like to remark that due to \ref{A:initial}, we have $G(\vphi_0) \in L^1(\Omega)$. \\
From the embedding $H^2(\Omega) \hookrightarrow L^\infty(\Omega)$, we deduce $G'(\vphi_{\varrho, 0}) \in H^1(\Omega)$, which is therefore a valid test function in \eqref{elliptic_phi}. Hence, 
\begin{equation*}
	\int_\Omega (\vphi_{\varrho, 0} - \vphi_0) G'(\vphi_{\varrho, 0}) = - \int_\Omega \varrho G''(\vphi_{\varrho, 0}) |\nabla \vphi_{\varrho, 0}|^2 \dx \leq 0
\end{equation*} 
and together with the convexity of $G$ and the characterization \cite[E4.7(1)]{alt2013lineare}, we find 
\begin{equation*}
	\int_\Omega G(\vphi_{\varrho, 0}) \dx \leq \int_\Omega G(\vphi_0) + G'(\vphi_{\varrho, 0})(\vphi_{\varrho} - \vphi_0) \dx 
	\leq \int_\Omega G(\vphi_0) \dx  < \infty. 
\end{equation*}
Thus, by the strong convergence \eqref{conv:initial_strong}, we infer 
\begin{align*}
	\limsup_{\varrho \rightarrow 0} \int_\Omega \psi(\vphi_{\varrho, 0}) \dx  
	&\leq \limsup_{\varrho \rightarrow 0} \int_\Omega G(\vphi_{\varrho, 0})  \dx 
	-   \lim_{\varrho \rightarrow 0} \int_\Omega  \frac{C_2}{2} \vphi_{\varrho, 0}^2 \dx\\
	 &\leq  \int_\Omega G(\vphi_0) -  \frac{C_2}{2} \vphi_{0}^2 \dx 
	= \int_\Omega \psi(\vphi_0) \dx.  \numberthis \label{est:initial_3}
\end{align*}

\subsection{\textit{A priori} estimates and compactness results}

Suppose $\vphi_0  \in H^1(\Omega)$ such that $\psi(\vphi_0) \in L^1(\Omega)$ and assume $\bm{u}_0 \in H^1(\Omega)$. Moreover, we choose a family $\{ \vphi_{\varrho, 0}\}  \subset H^2_{\bm{n}} (\Omega)$ as in Section \ref{sec:general_initial} and obtain from Theorem \ref{th:existence_reg} the existence of weak solutions $(\vphi_\varrho, \mu_\varrho, \bm{u}_\varrho, \theta_\varrho, p_\varrho)$ to the regularized Cahn--Hilliard--Biot system with corresponding initial conditions $(\vphi_{\varrho, 0}, \bm{u}_0, \theta_0)$. Moreover, the energy estimate \eqref{est:energy_explicit} along with \eqref{est:initial_3} and \eqref{est:initial_4} yield the uniform estimate 
\begin{align*}
	\norm{\vphi_{\varrho}(t)}_{H^1}^2 
	&+ \norm{ \varrho^{1/4} \Delta \vphi_{\varrho}(t)}_{L^2}^2  
	+ \norm{\psi(\vphi_{\varrho})(t)}_{L^1} 
	+ \norm{\bm{u}_{\varrho}(t)}_{\bm{X}}^2
	+  \norm{\theta_{\varrho}(t)}_{L^2}^2 
	+ \norm{\vr^{1/2 }\nabla \theta_{\vr} (t)}_{L^2}^2\\ 
	&+\norm{\pt \varphi_{\varrho}}_{L^2((H^1)')}^2
	+\norm{\pt \theta_{\varrho}}_{L^2(X')}^2
	+\norm{ \mu_{\varrho}}_{L^2(H^1)}^2 
	+ \norm{ p_{\varrho}}_{L^2(X)}^2
	+ \norm{\pt \bm{u}_{\varrho}}_{L^2(\bm{X})}^2\\ 
	&\leq C( \norm{\vphi_{0}}_{H^1}^4 +  \frac{1}{\vepsilon^2}  \norm{\psi(\vphi_{0})}_{L^1}^2 +  \norm{\bm{u}_0}_{\bm{X}}^4 + \norm{\theta_{0}}_{L^2}^4 +  \vr^2\norm{\theta_{0}}_{X}^4+ 1), 
\end{align*} 
or more concisely, 
	\begin{align*}
	\norm{\vphi_\varrho}_{L^\infty(H^1)} &+ \norm{ \varrho^{1/2} \Delta \vphi_\varrho}_{L^\infty(L^2)}  
	+ \norm{\vphi_\varrho}_{H^1((H^1)')}
	+ \norm{\mu_\varrho}_{L^2(H^1)}
	+ \norm{\psi(\vphi_\varrho)}_{L^\infty(L^1)} \\ 
	&+ \norm{\bm{u}_\varrho}_{H^1(\bm{X})}
	+  \norm{\theta_\varrho}_{L^\infty(L^2)} 
	+  \norm{ \vr^{1/2} \nabla  \theta_\varrho}_{L^\infty(L^2)} 
	+ \norm{\theta_\varrho}_{H^1(X')}
	+ \norm{p_\varrho}_{L^2(X)}\\ 
	&\leq C( \norm{\vphi_{0}}_{H^1}^4 +  \frac{1}{\vepsilon^2}  \norm{\psi(\vphi_{0})}_{L^1}^2 +  \norm{\bm{u}_0}_{\bm{X}}^4 + \norm{\theta_{0}}_{L^2}^4 +  \vr^2\norm{\theta_{0}}_{X}^4+ 1),  \label{eq:a_priori_not_regularized} \numberthis
\end{align*}
where $C > 0$ does not depend on $\vr \in (0, 1)$. \\ 
Hence, we deduce the existence of functions $(\vphi, \mu, \bm{u}, \theta, p)$ such that, along suitable subsequences as $\varrho \rightarrow 0$, 
\begin{gather}\label{conv:comapactness}
	\begin{align*}
		\vphi_\varrho \rightharpoonup \vphi \quad & \textrm{in} \quad  L^2(0, T; H^1(\Omega)) \cap H^1(0, T; H^1(\Omega)'),  \\ 
		\varrho^{1/2} \Delta \vphi_\varrho \rightharpoonup 0\quad & \textrm{in} \quad   L^2(0, T; L^2(\Omega)), \\ 
		\mu_\varrho \rightharpoonup \mu \quad & \textrm{in} \quad  L^2(0, T; H^1(\Omega)), \\ 
		\bm{u}_\varrho  \rightharpoonup \bm{u} \quad & \textrm{in} \quad  H^1(0, T; \bm{H}^1_{\Gamma_D}(\Omega)),  \\ 
		\theta_\varrho \rightharpoonup \theta \quad & \textrm{in} \quad   L^2(0, T; L^2(\Omega)) \cap H^1(0, T; X'(\Omega)),  \\ 
		\vr \nabla \theta_\varrho \rightharpoonup 0 \quad & \textrm{in} \quad   L^2(0, T; \bm{L}^2(\Omega)), \\ 
		p_\varrho \rightharpoonup p \quad & \textrm{in} \quad  L^2(0, T; H^1(\Omega)). 
	\end{align*} \numberthis 
\end{gather}
As in Section \ref{sec:existence_semi_galerkin}, we further find 
\begin{align}
	\vphi_\varrho  &\rightarrow \vphi \quad \quad \ \ \textrm{in} \quad C^0([0, T]; L^r(\Omega)),\label{conv:phi_strong} \\ 
	\psi'(\vphi_\varrho) &\rightarrow \psi'(\vphi) \quad \textrm{in} \quad L^1(\Omega_T), \label{conv:psi}
\end{align}
where $1 \leq r < \infty$ if $n \leq 2$ and $1 \leq r < 6$ if $n = 3$. Lastly, we apply elliptic theory, which entails that, cf.  \cite[Prop.~5.7.2]{taylor2010partial}
\begin{equation}\label{eq:H^2_phi_not_regularized}
	\norm{\vr^{1/4} \vphi_\vr}_{L^\infty(H^2_{\bm{n}})}^2 \leq C \big( \norm{\vr^{1/4} \Delta \vphi_\vr}_{L^\infty(L^2)} ^2+  \norm{ \vr^{1/4} \vphi_\vr}_{L^\infty(H^1)}^2 \big) < C, 
\end{equation}
where $C > 0$ independent of $\vr \in (0, 1)$. 

\subsection{Additional compactness results}

As mentioned above, for the limit $\varrho \rightarrow 0$ we need a different argument to obtain the necessary compactness result for $p$. Observe that for the weak solutions to the regularized problem the identity \eqref{eq:p_identity_new} already holds without the projection and that the functions on the right-hand side are weakly differentiable with respect to time. Taking advantage of these properties, we can show an uniform estimate for the differences $\tau_h p_\varrho - p_\varrho$ and an application of a version of the Aubin--Lions--Simon theorem yields strong convergence in the space $L^2(0, T; L^2(\Omega))$. \par 

\begin{lemma} \label{lem:p_strong_non_regular}
	There exists a subsequence of ${\varrho} \rightarrow 0$ such that, along this subsequence, 
	\begin{equation*}
		p_{\varrho} \rightarrow p \quad \textrm{in} \quad  L^2(0, T; L^2(\Omega)). 
	\end{equation*}
\end{lemma}

\begin{proof}
	Due to the \textit{a priori} estimates, we already have $p_\varrho \in L^2(0, T; X(\Omega))$. Recall that, if we can further show 
	\begin{equation}
		\norm{\tau_h {p_{\vr}} - {p_{\vr}}}_{L^1(0, T-h; W^{-3, 2}  (\Omega))} \dt \rightarrow 0 
		\quad \textrm{as } h \rightarrow 0 \quad \textrm{uniformly in } k, 
	\end{equation}
	for some subsequence ${\varrho} \rightarrow 0$, then \cite[Thm.~5]{simon1986compact} already yields the assertion due to the compact embedding $X(\Omega) \xhookrightarrow{cpt} L^2(\Omega) \xhookrightarrow{c}W^{-3, 2}(\Omega)$. 
	Using the notation from Lemma~\ref{lem:p_strong}, the identity \eqref{eq:weak5_reg} gives rise to 
	\begin{align*}
			&\int_{\Omega_{T-h}} \Delta_h p_\varrho \, \xi \dtx \\ 
			&\quad = \begin{aligned}[t]
				&\int_{\Omega_{T-h}} - \vr \Delta_h \theta_\vr \, \Delta \xi \dtx 
				+\int_{\Omega_{T-h}}  \tau_h M(\vphi_\vr)\, \Delta_h \theta_\vr   \,  \xi \dtx
				+ \int_{\Omega_{T-h}}  \theta_\vr \Delta_h M(\vphi_\vr)\,  \xi \dtx \\ 
				&+  \int_{\Omega_{T-h}}   \tau_h(M(\vphi_\vr) \alpha(\vphi_\vr)) \Delta_h(\nabla \cdot \bm{u}_\vr)   \, \xi \dtx
				+  \int_{\Omega_{T-h}}   \nabla \cdot \bm{u}_\vr \Delta_h(M(\vphi_\vr) \alpha(\vphi_\vr))   \,\xi \dtx
			\end{aligned} \\ 
			&\quad = I + II + III + IV + V, 
	\end{align*}
	for all $\xi \in L^\infty(0, T-h; W^{3, 2}_{0}(\Omega))$
	and observe that $III, IV$ and $V$ can be treated analogously to the previous arguments. The same is true for $I$, since \eqref{eq:translations_I} holds independently for all $\vr \in (0, 1)$. 
	For $II$, we note that $W^{3, 2}_{0}(\Omega) \hookrightarrow L^\infty(\Omega)$ and compute 
		\begin{align*}
		\norm{\tau_h  M(\vphi_\varrho) \phi}_{X} 
		&\leq  \norm{\tau_h  M(\vphi_\varrho) \xi}_{L^2} + \norm{\tau_h  (M'(\vphi_\varrho)\nabla \vphi_\varrho)  \xi}_{\bm{L}^2} + \norm{\tau_h  M(\vphi_\varrho) \nabla  \xi}_{\bm{L}^2}\\ 
		& \leq \overline{M} \norm{\xi}_{L^2} + \overline{M} \norm{\tau_h  \nabla \vphi_\varrho}_{\bm{L}^2} \norm{ \xi}_{L^\infty} + \overline{M} \norm{\nabla \xi}_{\bm{L}^2}\\ 
		& \leq  \overline{M} \norm{\xi}_{L^2} + \overline{M} \norm{\tau_h  \vphi_\varrho}_{H^1} \norm{\xi}_{W^{3, 2}_{0}} + \overline{M} \norm{\xi}_{X}
		 \leq C  \norm{\xi}_{W^{3, 2}_{0}}. 
	\end{align*}
	Now we can argue just as before to find 
		\begin{align*}
			\left| \int_{\Omega_{T-h}}  \tau_h M(\vphi_\vr)\, \Delta_h \theta_\vr  \,  \xi \dtx \right|
		&\leq \norm{\Delta_h \theta_k }_{L^1(0, T-h; X')} \norm{ \tau_h M(\vphi_k)\, \xi}_{L^\infty(0, T-h; X)}\\ 
		& \leq  C \norm{\Delta_h \theta_k }_{L^1(0, T-h; X')} \norm{\xi}_{L^\infty(0, T-h; W^{3, 2}_{0})}. 
	\end{align*} 
	and conclude the proof. 
\end{proof}

	As in the previous section, we would now like to deduce 
	\begin{align*}
		\bm{u}_\varrho \rightarrow \bm{u} \quad \textrm{in} \quad L^2(0, T; \bm{X})  
		\quad \textrm{and} \quad 
		\theta_\varrho  \rightarrow \theta  \quad \;\textrm{in} \quad L^2(\OT). 
	\end{align*} 
	along suitable subsequences of $\varrho \searrow 0$. 
	Due to missing uniform estimates for $\theta_\vr$ in $L^2(0, T; H^{1+ \gamma}_{\Gamma_D}(\Omega))$, the respective arguments need to be modified. 
	
	\begin{lemma}\label{lemma:u_strong_not_reg}
		There exists a subsequence of $\vr \rightarrow 0$ such that, along this not relabeled subsequence, 
		\begin{equation*}
			\bm{u}_{\vr} \rightarrow \bm{u} \quad \textrm{in} \quad  L^2(0, T; \bm{X}(\Omega)). 
		\end{equation*}
	\end{lemma}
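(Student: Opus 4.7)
The strategy parallels Lemma \ref{lem:convergence_u_reg}, with the technical novelty being control of the vanishing viscosity contributions $\vr \int \nabla \theta_\vr \cdot \nabla(\alpha(\vphi_\vr)(\nabla \cdot \bm{\eta}) * \phi)$. First I would use \eqref{eq:a_priori_not_regularized} and the arguments of Section~\ref{sec:limit_reg} to pass to the limit in \eqref{eq:weak3_reg} and obtain a weak identity for $\bm{u}$ of the same form but without the $\vr$-term; this uses crucially that
\[
\norm{\vr \nabla \theta_\vr}_{L^\infty(\bm{L}^2)} \leq \vr^{1/2} \, \norm{\vr^{1/2} \nabla \theta_\vr}_{L^\infty(\bm{L}^2)} = O(\vr^{1/2}),
\]
so this term vanishes in the limit after integration against any fixed test function.

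Subtracting the limit identity from \eqref{eq:weak3_reg} and testing with $(\bm{u}_\vr - \bm{u}) \chi_{[0, t]}$ and $(\pt \bm{u}_\vr - \pt \bm{u}) \chi_{[0, t]}$, then summing and rearranging as in Lemma \ref{lem:convergence_u_reg}, I would extract the coercive contributions $\tfrac{d}{dt}\norm{\E(\bm{u}_\vr - \bm{u})}_{L^2}^2$ and $C_\nu \norm{\pt(\bm{u}_\vr - \bm{u})}_{L^2(0,t;\bm{X})}^2$ via Korn's inequality and the uniform positivity of $\C,\C_\nu$. The cross-terms $[\C_\nu(\vphi) - \C_\nu(\vphi_\vr)]\E(\pt \bm{u})$, $[\C(\vphi) - \C(\vphi_\vr)]\E(\bm{u})$, $[\C(\vphi_\vr)\Tau(\vphi_\vr) - \C(\vphi)\Tau(\vphi)]$, and $[\alpha(\vphi_\vr) p_\vr - \alpha(\vphi) p]$ are estimated via Young's inequality (absorbing the $\pt$-type factors on the left) and go to zero in $L^2(\bm{L}^2)$ by the Lipschitz continuity of the coefficients, the strong convergences $\vphi_\vr \to \vphi$ in $C^0([0, T]; L^r(\Omega))$ for $r < 6$, $p_\vr \to p$ in $L^2(\Omega_T)$, the uniform $L^2(\bm{L}^2)$-bounds on $\E(\bm{u})$ and $\E(\pt \bm{u})$, and Lebesgue's generalized dominated convergence theorem.

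For the $\vr$-contributions, expanding $\nabla(\alpha(\vphi_\vr) (\nabla \cdot \bm{\xi}) * \phi) = \alpha'(\vphi_\vr)\nabla \vphi_\vr \cdot ((\nabla \cdot \bm{\xi}) * \phi) + \alpha(\vphi_\vr)((\nabla \cdot \bm{\xi}) * \nabla \phi)$ and using Young's convolution inequality together with the uniform bound $\nabla \vphi_\vr \in L^\infty(\bm{L}^2)$ yields
\[
\Big| \vr \int_{\Omega_t} \nabla \theta_\vr \cdot \nabla(\alpha(\vphi_\vr) (\nabla \cdot \bm{\xi}) * \phi) \dtx \Big| \leq C \norm{\vr \nabla \theta_\vr}_{L^\infty(\bm{L}^2)} \norm{\bm{\xi}}_{L^2(0,t;\bm{X})} \leq C \vr^{1/2} \norm{\bm{\xi}}_{L^2(0,t;\bm{X})}
\]
for $\bm{\xi} \in \{\bm{u}_\vr - \bm{u}, \pt(\bm{u}_\vr - \bm{u})\}$, and a further Young inequality on the $\pt$-case produces $C \vr + \rho_{\bm{u}} \norm{\pt(\bm{u}_\vr - \bm{u})}_{L^2(\bm{X})}^2$ with the second summand absorbed into the coercive term. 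Since $\bm{u}_\vr(0) = \bm{u}(0) = \bm{u}_0$, Gronwall's lemma then delivers $\norm{\bm{u}_\vr(t) - \bm{u}(t)}_{\bm{X}} \to 0$ almost everywhere on $(0, T)$ along a suitable subsequence. As $(\bm{u}_\vr)_\vr$ is uniformly bounded in $H^1(0, T; \bm{X}(\Omega)) \hookrightarrow C^0([0, T]; \bm{X}(\Omega))$, an $L^p$-$L^q$ dominated convergence step, exactly as at the end of Lemma \ref{lem:convergence_u_reg}, upgrades pointwise a.e.\ convergence to strong convergence in $L^2(0, T; \bm{X}(\Omega))$.

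\textbf{Main obstacle.} The delicate point is controlling the vanishing viscosity terms without spoiling coercivity: one must exploit the \emph{sharp} decay $\norm{\vr \nabla \theta_\vr}_{L^\infty(\bm{L}^2)} = O(\vr^{1/2})$ rather than the weaker (and insufficient) $\norm{\vr^{1/2} \nabla \theta_\vr}_{L^2(\bm{L}^2)} \leq C$, which would only yield an $O(1)$ bound after Young and thus obstruct Gronwall. A secondary subtlety is that, in contrast to Lemma \ref{lem:convergence_u_reg}, no uniform $H^2$-bound on $\vphi_\vr$ is available; however, the uniform $L^\infty(H^1)$-estimate combined with the extra $\vr^{1/2}$-smallness turns out to be enough.
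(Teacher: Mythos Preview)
Your proposal is correct and follows essentially the same strategy as the paper: pass to the limit in \eqref{eq:weak3_reg}, subtract, test with $(\bm{u}_\vr-\bm{u})\chi_{[0,t]}$ and its time derivative, extract coercive terms via Korn's inequality, estimate the cross-terms using the strong convergences of $\vphi_\vr$ and $p_\vr$, and close with Gronwall.

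The one noteworthy difference is in how you handle the $\vr$-term. The paper expands $\nabla(\alpha(\vphi_\vr)(\nabla\cdot\bm{\eta})*\phi)$ and uses the H\"older split $L^2\times L^4\times L^4$, which forces $\nabla\vphi_\vr\in L^4$ and hence requires the weighted $H^2$-estimate $\norm{\vr^{1/4}\vphi_\vr}_{L^\infty(H^2_{\bm{n}})}\leq C$ from \eqref{eq:H^2_phi_not_regularized}; this yields $O(\vr^{1/4})$ decay. You instead exploit the smoothing of the fixed kernel $\phi$ to place $(\nabla\cdot\bm{\xi})*\phi\in L^\infty$ via Young's convolution inequality, so that only $\nabla\vphi_\vr\in L^\infty(\bm{L}^2)$ is needed; this gives the sharper $O(\vr^{1/2})$ decay and avoids \eqref{eq:H^2_phi_not_regularized} entirely. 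Your route is slightly cleaner, and your closing remark that the $H^1$-bound suffices is justified precisely by this observation.
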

	
\begin{proof}
	First of all, we obtain for $\vr \searrow 0$ that 
	\begin{align*}
		 \vr &\int_{\OT} \nabla \theta_\vr \cdot \nabla (\alpha(\vphi_\vr) (\nabla \cdot \bm{\eta}) * \phi ) \dtx\\ 
		&= \vr \int_{\OT}  \alpha'(\vphi_\vr) \nabla \theta_\vr \cdot \nabla \vphi_\vr \,(\nabla \cdot \bm{\eta}) * \phi +  \alpha(\vphi_\vr )\nabla \theta_\vr  \cdot  (\nabla \cdot \bm{\eta}) * \nabla \phi  \dtx \\ 
		& \leq \vr C \Big( 
		 \begin{aligned}[t]
			&\norm{\nabla \theta_\vr}_{L^\infty (\bm{L}^2)} \norm{\nabla \vphi_\vr}_{L^2(L^4)} \norm{\bm{\eta}}_{L^2(\bm{X})} \norm{\vphi}_{L^\infty(L^{4/3})}  \\ 
			&+ \norm{\alpha(\vphi_\vr)}_{L^\infty(L^\infty )}
			\norm{\nabla \theta_\vr}_{L^2(\bm{L}^2)} \norm{\nabla \phi}_{L^\infty(\bm{L}^1)} \norm{\bm{\eta}}_{L^2(\bm{X})}\Big)
		\end{aligned}\\ 
		&  \leq \vr^{1/4} C( \bm{\eta}) \Big( \norm{ \vr^{1/2} \nabla \theta_\vr}_{L^\infty (\bm{L}^2)} \norm{ \vr^{1/4} \vphi_\vr}_{L^\infty(H^2_{\bm{n}})} 
		+ \norm{\vr^{1/2} \nabla \theta_\vr}_{L^\infty (\bm{L}^2)} 
		 \Big)  \rightarrow 0, \label{eq:conv_u_not_regularized} \numberthis 
	\end{align*}
	since $\norm{ \vr^{1/2} \nabla \theta_\vr}_{L^\infty (\bm{L}^2)}  + \norm{ \vr^{1/4} \vphi_\vr }_{L^\infty(H^2_{\bm{n}})}  < C$ for
	some $C >0$ independent of $\vr > 0$, cf. \eqref{eq:a_priori_not_regularized} and \eqref{eq:H^2_phi_not_regularized}. As this holds for all $\bm{\eta} \in L^2(0, T; \bm{X}(\Omega))$, the compactness results \eqref{conv:comapactness} and similar arguments as in Section~\ref{sec:limit_reg} allow us to pass to the limit in \eqref{eq:weak3_reg} and we arrive at 
	\begin{align*}
		\int_{\OT} \C_\nu (\vphi ) \E(\partial_t \bm{u}) :  \E(\bm{\eta}) &+ \WE(\vphi , \E(\bm{u})) : \E(\bm{\eta}) -  \alpha(\vphi) p\, (\nabla \cdot \bm{\eta})* \phi \dtx \\ 
		&  = \int_{\OT} \bm{f} \cdot \bm{\eta} \dtx + \int_0^T \int_{\bm{\Gamma_N}} \bm{g} \cdot \bm{\eta} \dH \dt , 
	\end{align*}
	which again holds for all $\bm{\eta} \in L^2(0, T; \bm{X}(\Omega))$. As in Lemma~\ref{lem:convergence_u_reg}, we test this equation and \eqref{eq:weak3_reg} with both $\bm{u}_k \chi_{[0, t]} -  \bm{u}\chi_{[0, t]}$ and $\pt\bm{u}_k \chi_{[0, t]} - \pt \bm{u}\chi_{[0, t]}$. The same computations as before now lead to 
		\begin{align*}
		&\norm{ \bm{u}_\vr - \bm{u}}_{\bm{X}}^2 (t) \\
		& \quad    \leq C \int_0^t \norm{ \bm{u}_\vr - \bm{u}}_{\bm{X}}^2 (\tau)\, d\tau  + \norm{ \bm{u}_\vr - \bm{u}}_{\bm{X}}^2 (0) \\ 
		&\quad   +C \Big(
		\begin{aligned}[t]
			& \norm{[\C_\nu(\vphi) -  \C_\nu(\vphi_\vr)] \E(\pt \bm{u})}_{L^2(\bm{L}^2)}^2
			+ \norm{[\C(\vphi) - \C(\vphi_\vr )]  \E( \bm{u})}_{L^2(\bm{L}^2)}^2 \Big) 
		\end{aligned}\\ 
		&\quad    +C \Big(
		 \begin{aligned}[t]
			&\norm{[\C (\vphi_\vr) \Tau(\vphi_\vr) - \C(\vphi) \Tau(\vphi)]}_{L^2(\bm{L}^2)}^2 
			+ \norm{\alpha(\vphi_\vr) p_\vr - \alpha(\vphi) p}_{L^2(L^2)}^2 \Big) 
		\end{aligned}\\ 
	&    \quad  +  \vr \int_{\Omega_T} \left| \nabla \theta_\vr \cdot \nabla \big( \alpha(\vphi_\vr) (\nabla \cdot  (\bm{u}_\vr  -  \bm{u} ) * \phi ) \big)   \right| \dtx\\ 
	&\quad +  \vr \int_{\Omega_T} \left| \nabla \theta_\vr \cdot \nabla \big(  \alpha(\vphi_\vr) (\nabla \cdot  \pt (\bm{u}_\vr  -  \bm{u} ) * \phi ) \big)  \right| \dtx. 
	\end{align*}
	Hence, \eqref{eq:conv_u_not_regularized} and the arguments from above yield the assertion. 
\end{proof}

Finally, we have to show the strong convergence of the volumetric fluid content $\theta_\vr$. Here, we exploit that for all $\vr > 0$ the equation \eqref{eq:p_identity_new} defines an elliptic problem, whose solution operators, considered as an operator from $L^2(\Omega) \rightarrow L^2(\Omega)$, are uniformly bounded and strongly continuous. 

\begin{lemma}\label{lemma:convergence_p_not_reg}
	There exists a subsequence of $\vr \rightarrow 0$ such that, along this not relabeled subsequence, 
	\begin{equation*}
		\theta_\vr \rightarrow \theta \quad \textrm{in} \quad  L^2(0, T; L^2(\Omega)). 
	\end{equation*}
\end{lemma}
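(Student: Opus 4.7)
I would first reformulate \eqref{eq:weak5_reg} as the statement that $\theta_\vr(t) \in X(\Omega)$ is the unique weak solution of the elliptic problem $-\vr \Delta \theta_\vr + M(\vphi_\vr) \theta_\vr = g_\vr$ with mixed boundary conditions, where $g_\vr \coloneqq p_\vr + M(\vphi_\vr) \alpha(\vphi_\vr) \nabla \cdot \bm{u}_\vr$. The preceding lemmas give $p_\vr \to p$ in $L^2(\OT)$ and $\bm{u}_\vr \to \bm{u}$ in $L^2(0,T;\bm{X})$, while \eqref{conv:phi_strong} yields $\vphi_\vr \to \vphi$ in $C^0([0,T]; L^r(\Omega))$; together with the Lipschitz continuity of $M, \alpha$ these imply $g_\vr \to g \coloneqq p + M(\vphi) \alpha(\vphi) \nabla \cdot \bm{u}$ in $L^2(\OT)$. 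Passing to the limit in \eqref{eq:weak5_reg} (using $\vr \nabla \theta_\vr \rightharpoonup 0$ by \eqref{conv:comapactness}) establishes \eqref{eq:weak5}, which rewrites as the pointwise identity $M(\vphi) \theta = g$ on $\OT$; in particular $\theta = g/M(\vphi)$.

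Next, I would introduce the family of elliptic operators $A_\vr(t) \coloneqq -\vr \Delta + M(\vphi_\vr(t))$ mapping $X(\Omega)$ to $X'(\Omega)$. Testing $A_\vr(t) v = h$ with $v$ and using $M \geq \underline{M}$ gives the uniform bounds
\begin{equation*}
    \|A_\vr^{-1}(t) h\|_{L^2} \leq \underline{M}^{-1} \|h\|_{L^2}, \qquad \vr^{1/2} \|\nabla A_\vr^{-1}(t) h\|_{L^2} \leq \underline{M}^{-1/2} \|h\|_{L^2},
\end{equation*}
independently of $\vr \in (0,1)$ and $t \in [0,T]$. The decomposition $\theta_\vr = A_\vr^{-1} g_\vr = A_\vr^{-1}(g_\vr - g) + A_\vr^{-1} g$ then bounds the first summand in $L^2(\OT)$ by $\underline{M}^{-1} \|g_\vr - g\|_{L^2(\OT)} \to 0$, so the task reduces to showing $A_\vr^{-1} g \to \theta$ in $L^2(\OT)$, that is, that $A_\vr^{-1}$ applied to the fixed function $g$ converges pointwise to the multiplication operator $M(\vphi)^{-1}$.

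The core step is a pointwise-in-$t$ argument. For almost every $t \in (0,T)$ one has $\vphi_\vr(t) \to \vphi(t)$ in $L^r(\Omega)$ and (along a subsequence) almost everywhere in $\Omega$, hence $M(\vphi_\vr(t)) \to M(\vphi(t))$ in the same senses. Set $\xi_\vr = A_\vr^{-1}(t) g(t)$: the uniform estimates yield $\xi_\vr \rightharpoonup \xi^*$ weakly in $L^2(\Omega)$ and keep $\vr^{1/2} \nabla \xi_\vr$ bounded. Testing the weak formulation against $\zeta \in X(\Omega)$, the term $\vr \int \nabla \xi_\vr \cdot \nabla \zeta$ vanishes and the weak-strong pairing $\int M(\vphi_\vr(t)) \xi_\vr \zeta \to \int M(\vphi(t)) \xi^* \zeta$ identifies $M(\vphi(t)) \xi^* = g(t)$, i.e. $\xi^* = \theta(t)$, so the whole sequence converges weakly. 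For strong convergence, test the equation with $\xi_\vr$ itself: this yields $\vr \|\nabla \xi_\vr\|_{L^2}^2 + \int M(\vphi_\vr(t)) |\xi_\vr|^2 = \int g(t) \xi_\vr \to \int g(t) \theta(t) = \int M(\vphi(t)) |\theta(t)|^2$. Expanding
\begin{equation*}
	\int M(\vphi_\vr(t)) |\xi_\vr - \theta(t)|^2 \dx = \int M(\vphi_\vr(t)) |\xi_\vr|^2 \dx - 2 \int M(\vphi_\vr(t)) \xi_\vr \theta(t) \dx + \int M(\vphi_\vr(t)) |\theta(t)|^2 \dx,
\end{equation*}
each summand converges to $\int M(\vphi(t)) |\theta(t)|^2 \dx$ (the middle via weak-strong pairing, the last via dominated convergence), and $M \geq \underline{M}$ then delivers $\xi_\vr \to \theta(t)$ in $L^2(\Omega)$.

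Finally, the uniform bound $\|A_\vr^{-1}(t) g(t)\|_{L^2(\Omega)} \leq \underline{M}^{-1} \|g(t)\|_{L^2(\Omega)}$, together with $g \in L^2(\OT)$, supplies an integrable dominating function, so Lebesgue's dominated convergence theorem upgrades the pointwise convergence established in the preceding step to $A_\vr^{-1} g \to \theta$ in $L^2(\OT)$. Combined with the reduction above, this proves $\theta_\vr \to \theta$ in $L^2(\OT)$. The principal obstacle is the phase-field dependence of the elliptic operator: identifying the weak limit $\xi^*$ requires passing through the nonlinear product $M(\vphi_\vr(t)) \xi_\vr$, which is only possible through the strong pointwise-in-$t$ convergence \eqref{conv:phi_strong} inherited from the Cahn--Hilliard structure; no additional bound on $\vr \nabla \theta_\vr$ (which we lack uniformly as $\vr \searrow 0$) is needed.
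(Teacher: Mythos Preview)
Your proof is correct and follows the same overall architecture as the paper: reformulate \eqref{eq:weak5_reg} as an elliptic problem $A_\vr(t)\theta_\vr = g_\vr$, derive uniform $L^2\to L^2$ bounds on $A_\vr^{-1}$, decompose $\theta_\vr = A_\vr^{-1}(g_\vr-g) + A_\vr^{-1}g$, prove pointwise-in-$t$ convergence of $A_\vr^{-1}(t)g(t)\to M(\vphi(t))^{-1}g(t)$, and finish with dominated convergence.

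The difference lies in how the pointwise step is carried out. The paper first proves $A_\vr^{-1}(t)f \to M(\vphi(t))^{-1}f$ for $f\in C_c^\infty(\Omega)$ by observing that $v=M(\vphi(t))^{-1}f\in X(\Omega)$ (this uses $\vphi(t)\in H^1(\Omega)$), writing the equation satisfied by $v-v_j$, and testing with $v-v_j$ directly; the general case then follows by density and the uniform operator bound. You instead work directly with $g(t)\in L^2(\Omega)$: you extract a weak $L^2$-limit of $\xi_\vr=A_\vr^{-1}(t)g(t)$, identify it as $\theta(t)$ via the equation, and upgrade to strong convergence through the energy identity $\int M(\vphi_\vr)|\xi_\vr-\theta(t)|^2 = \int M(\vphi_\vr)|\xi_\vr|^2 - 2\int M(\vphi_\vr)\xi_\vr\theta(t) + \int M(\vphi_\vr)|\theta(t)|^2$. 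One small imprecision: the first summand does not converge but only satisfies $\limsup \int M(\vphi_\vr)|\xi_\vr|^2 \leq \int g(t)\theta(t)$ (the nonnegative term $\vr\|\nabla\xi_\vr\|^2$ is dropped); this is still enough, since taking $\limsup$ of the full expansion gives $\leq 0$. Your route avoids the density argument and the need for $M(\vphi(t))^{-1}f\in X(\Omega)$, at the cost of the slightly more delicate weak--strong expansion; both are standard and equally valid here.
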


\begin{proof}
	We define the family of operators 
	\begin{equation*}
		- \vr \Delta + M(\vphi_\vr(t))  : X(\Omega) \rightarrow X'(\Omega), 
		\quad  v \mapsto \left( w \mapsto \int_\Omega \vr \nabla v \cdot \nabla w + M(\vphi_\vr(t)) v w \dx  \right)
	\end{equation*}
	and deduce with the help of the Lax-Milgram theorem that these are bijective. Let $f \in L^2(\Omega) \subset X'(\Omega)$ and set $\tilde{v} = (- \vr \Delta + M(\vphi_\vr(t)) )^{-1}$, then we can test with $\tilde{v} \in X(\Omega)$ and obtain 
	\begin{equation*}
		\vr \int_\Omega \nabla \tilde{v} \cdot  \nabla \tilde{v} + M(\vphi_\vr (t)) \tilde{v}^2 \dx = \int_\Omega f \tilde{v} \dx . 
 	\end{equation*}
 	Recalling that $M$ is uniformly positive, an application of Young's inequality implies 
 	\begin{equation}\label{eq:boundeness_laplace_inverse}
 		\norm{\tilde{v}}_{L^2}^2 +  \vr \norm{ \nabla v}_{L^2}^2 \leq C \norm{f}_{L^2}^2, 
 	\end{equation}
 	where $C >0$ is independent of $t \in [0, T]$ and $\varrho >0$. In particular, the family of linear operators $(- \vr \Delta + M(\vphi_\vr(t)) )^{-1}$ is uniformly bounded, i.e., 
 	\begin{equation} \label{eq:boundeness_laplace_inverse_2}
 		(- \vr \Delta + M(\vphi_\vr(t)) )^{-1} : L^2(\Omega) \rightarrow L^2(\Omega),  \quad \norm{(- \vr \Delta + M(\vphi_\vr(t)) )^{-1}}_{\mathcal{L}(L^2)} <  C. 
 	\end{equation}
 	On account of the separability of $X(\Omega)$ and with the help of the fundamental lemma of the calculus of variations, it follows from \eqref{eq:weak5_reg} that for all $\xi \in X(\Omega)$
 	\begin{equation*}
 		\int_\Omega \vr_j \nabla \theta_{\vr_j}(t) \cdot \nabla \xi + M(\vphi_{\vr_j}(t))\theta_{\vr_{j}}(t) \xi \dx = 
 		\int_\Omega  \big(  p_{\vr_j} (t)+ M(\vphi_{\vr_j}(t)) \alpha (\vphi_{\vr_j}(t)) \nabla \cdot  \bm{u}_{\vr_j}(t) \big)  \, \xi \dx
 	\end{equation*} 
 	for almost all $t \in (0, T)$ and all $(\vr_j)_{j \inN}$ in some subsequence with $\vr_j \searrow 0$, i.e., 
 	\begin{equation*}
 		\theta_{\vr_j}(t) = (- \vr \Delta + M(\vphi_{\vr_j}(t)) )^{-1} \big( p_{\vr_j} (t)+ M(\vphi_{\vr_j}(t)) \alpha (\vphi_{\vr_j}(t)) \nabla \cdot  \bm{u}_{\vr_j}(t) \big). 
 	\end{equation*}
 	We will now show the convergence 
 	\begin{equation} \label{conv:strong_continuity_laplace}
 		(- \vr \Delta + M(\vphi_{\vr_j}(t)) )^{-1} f \rightarrow M^{-1}(\vphi(t)) f \quad \textrm{as} \quad j \rightarrow \infty
 	\end{equation}
 	 for all $f \in L^2(\Omega)$ and almost every $t \in (0, T)$, which implies pointwise a.e.~convergence of $\theta_{\vr_j}$ in $L^2(\Omega)$. To this end, let $f \in C^\infty_c(\Omega)$ and set $v_j \coloneqq (- \vr \Delta + M(\vphi_{\vr_j}(t)) )^{-1} f $ for all $j \inN$, as well as $v \coloneqq M^{-1}(\vphi(t)) f$. Since $\vphi \in L^2(0, T; H^1(\Omega))$, it follows that $v \in X(\Omega)$ for a.e.~$t \in (0, T)$ and hence, 
 	\begin{equation*}
 		\vr_j \int_\Omega \nabla v \cdot \nabla \xi  + M(\vphi_{\vr_j}(t)) v \, \xi \dx 
 		 = \int_\Omega f \xi + \vr_{j} \nabla v \cdot \nabla \xi + (M(\vphi_{\vr_j}) (t) - M(\vphi(t))) v \, \xi \dx
 	\end{equation*}
 	for all $\xi \in X(\Omega)$. On the other hand, all $v_j$, $j \inN$, satisfy a similar equation by definition and we obtain after subtraction 
 	\begin{equation*}
 		 \int_\Omega \vr_j \nabla (v - v_j) \cdot \nabla \xi + M(\vphi_{\vr_j}(t))(v- v_j) \, \xi \dx
 		= \int_\Omega \vr_j \nabla v \cdot \nabla \xi + (M(\vphi_{\vr_j}) (t) - M(\vphi(t))) v \, \xi \dx
 	\end{equation*} 
 	for all $\xi \in X(\Omega)$. Testing this equation with $v- v_j \in X(\Omega )$ yields 
 	\begin{align*}
 		\underline{M} \norm{v - v_j}_{L^2}^2 &+ \vr_j \norm{\nabla v - \nabla v_j}_{L^2}^2\\ 
 		&\leq \vr^{1/2}_j \norm{\nabla v}_{L^2} \left( \norm{\vr^{1/2}_j \nabla v}_{L^2} + \norm{ \vr^{1/2} \nabla  v_j}_{L^2} \right) 
 		+ \norm{(M(\vphi_{\vr_j}) (t) - M(\vphi(t))) v }_{L^2} \norm{v- v_j}_{L^2}. 
 	\end{align*}
 	Since the term $\norm{\nabla v}_{L^2} \left( \norm{\vr^{1/2}_j \nabla v}_{L^2} + \norm{ \vr^{1/2}_j \nabla  v_j}_{L^2} \right) $ is bounded, which easily follows from \eqref{eq:boundeness_laplace_inverse}, it vanishes as $\vr_j \rightarrow 0$. Due to the strong convergence $\vphi_\vr \rightarrow \vphi$ in $C^0([0, T]; L^2(\Omega))$, cf. \eqref{conv:phi_strong}, a similar argument as for \eqref{conv:no_subsq} shows $(M(\vphi_{\vr_j}) (t) - M(\vphi(t))) v \rightarrow 0$ in $L^2(\Omega)$. Thus, 
 	\begin{equation*}
 		v_j \rightarrow v \quad \textrm{in} \quad L^2(\Omega) \quad \textrm{as} \quad j \rightarrow \infty. 
 	\end{equation*}
 	As $C^\infty_c(\Omega)$ is dense in $L^2(\Omega)$ and the family $(- \vr \Delta + M(\vphi_{\vr_j}(t)) )^{-1}$ is bounded in $\mathcal{L}(L^2)$, we deduce the convergence postulated in \eqref{conv:strong_continuity_laplace}. This implies
 	\begin{align*}
 		\theta_{\vr_j} (t)
 		&= (- \vr \Delta + M(\vphi_{\vr_j}(t)) )^{-1} \big( p_{\vr_j} (t)+ M(\vphi_{\vr_j}(t)) \alpha (\vphi_{\vr_j}(t)) \nabla \cdot  \bm{u}_{\vr_j}(t) \big)\\ 
 		&=\begin{aligned}[t]
 			&(- \vr \Delta + M(\vphi_{\vr_j}(t)) )^{-1} \Big( p_{\vr_j} (t)+ M(\vphi_{\vr_j}(t)) \alpha (\vphi_{\vr_j}(t)) \nabla \cdot  \bm{u}_{\vr_j}(t) -  p (t) - M(\vphi(t)) \alpha (\vphi(t)) \nabla \cdot  \bm{u}(t) \Big) \\ 
 			&+ (- \vr \Delta + M(\vphi_{\vr_j}(t)) )^{-1} \Big(  p (t) + M(\vphi(t)) \alpha (\vphi(t)) \nabla \cdot  \bm{u}(t) \Big)
 		\end{aligned} \\ 
 	& \rightarrow M(\vphi)^{-1}\big(  p (t) + M(\vphi(t)) \alpha (\vphi(t)) \nabla \cdot  \bm{u}(t) \big), 
 	\end{align*}
 	where the first term vanishes due to \eqref{eq:boundeness_laplace_inverse_2} and since the compactness properties for $p, \bm{u}$ and $\vphi$ allow us to assume that without loss of generality 
 	\begin{equation*}
 		p_{\vr_j} (t)+ M(\vphi_{\vr_j}(t)) \alpha (\vphi_{\vr_j}(t)) \nabla \cdot  \bm{u}_{\vr_j}(t) \rightarrow p (t) + M(\vphi(t)) \alpha (\vphi(t)) \nabla \cdot  \bm{u}(t) \quad \textrm{in} \quad L^2(\Omega). 
 	\end{equation*}
	In particular, $\theta_{\vr_j}$ converges to $M(\vphi)^{-1}\big(  p (t) + M(\vphi(t)) \alpha (\vphi(t)) \nabla \cdot  \bm{u}(t) \big) = \theta$ pointwise a.e.~in $L^2(\Omega)$. Finally, we test \eqref{eq:weak5_reg} with $\vartheta \theta_{\vr_j}$ with $\vartheta \in C^\infty([0, T])$ and apply the fundamental lemma of the calculus of variations to obtain 
	\begin{equation*}
		\norm{\theta_{\vr_j}(t)}_{L^2}^2 \leq C \norm{ (p_{\vr_j} + M(\vphi_{\vr_j}) \alpha(\vphi_{\vr_j}) \nabla \cdot \bm{u}_{\vr_j}) (t)}_{L^2}^2 
	\end{equation*}
	for almost all $t \in (0, T)$. Since $ \norm{ (p_{\vr_j} + M(\vphi_{\vr_j}) \alpha(\vphi_{\vr_j}) \nabla \cdot \bm{u}_{\vr_j}) (t)}_{L^2}^2  \rightarrow  \norm{ p + M(\vphi) \alpha(\vphi) \nabla \cdot \bm{u}}_{L^2}^2 $ in $L^1(0, T)$, we can apply Lebesgue's generalized convergence theorem, concluding the proof. 
\end{proof}

\subsection{Limit process}

Limit passage is now very similar to Section \ref{sec:limit_reg}, with the difference that we do not have to restrict ourselves to test functions in linear subspaces.
We also point out that the regularizations $\varrho^{1/2} \Delta \vphi_\varrho$ and $\vr \nabla \theta$ vanishes in the limit, which can be seen by using the weak convergences $\varrho^{1/2} \Delta \vphi_\varrho \rightharpoonup 0$ and $\varrho \nabla \theta_{\vr} \rightharpoonup 0$ when passing to the limit in 
\begin{gather*}
	\int_{\OT} \varrho^{1/2} \Delta \vphi_\varrho \Delta \zeta \dtx \rightarrow 0,\\ 
		\int_{\OT} \varrho \nabla \theta_\varrho \Delta \xi \dtx \rightarrow 0, 
\end{gather*}
for all $\zeta \in L^2(0, T; H_{\bm{n}}^2(\Omega))$ and $\xi \in L^2(0, T; X(\Omega))$, respectively. Keeping these differences in mind, we can pass to the limit and find that $(\vphi, \mu, \bm{u}, \theta, p)$ satisfy the equations \eqref{eq:weak1}, \eqref{eq:weak2},\eqref{eq:weak4} and \eqref{eq:weak5}, but as we saw in Lemma~\ref{lemma:u_strong_not_reg}, we only obtain 
	\begin{align*}
	\int_{\OT} \C_\nu (\vphi ) \E(\partial_t \bm{u}) :  \E(\bm{\eta}) &+ \WE(\vphi , \E(\bm{u})) : \E(\bm{\eta}) -  \alpha(\vphi) p\, (\nabla \cdot \bm{\eta})* \phi \dtx \\ 
	&  = \int_{\OT} \bm{f} \cdot \bm{\eta} \dtx + \int_0^T \int_{\bm{\Gamma_N}} \bm{g} \cdot \bm{\eta} \dH \dt , 
\end{align*}
instead of \eqref{eq:weak3}. Below, we briefly remark on how to pass to the limit in which this convolution vanishes. 
\par 
Moreover, since $H^2(\Omega)$ is dense in $H^1(\Omega)$, we can infer that \eqref{eq:weak2} also holds for all $\zeta \in L^2(0, T; H^1(\Omega)) \cap L^\infty(\OT)$. Similarly, we find that \eqref{eq:weak5} also holds for all $\xi \in L^2(\OT)$. \par
\medskip
As before, the strong convergences 
\begin{align*}
	\vphi_\varrho \rightarrow \vphi \quad &\textrm{in} \quad C^0([0, T]; L^2(\Omega)),\\ 
	\bm{u}_{\varrho} \rightarrow \bm{u} \quad &\textrm{in} \quad C^0([0, T]; L^2(\Omega)),\\ 
	\theta_\varrho \rightarrow \theta \quad &\textrm{in} \quad C^0([0, T]; X'(\Omega)) \numberthis \label{conv:theta_strong_dual}
\end{align*}
follow with the help of the Aubin-Lions-Simon theorem. Along with \eqref{conv:initial_strong}, this implies that the initial conditions \eqref{eq:weak_initial} are also fulfilled. 
\par 
\medskip 
Lastly, we exploit weak/weak* lower semi-continuity and Fatou's lemma on \eqref{eq:a_priori_not_regularized} to find 
\begin{align*}
	\norm{\vphi(t)}_{H^1}^2 
	&+ \norm{\psi(\vphi)(t)}_{L^1} 
	+ \norm{\bm{u}(t)}_{\bm{X}}^2
	+  \norm{\theta(t)}_{L^2}^2 \\
	&+\norm{\pt \varphi}_{L^2((H^1)')}^2
	+\norm{\pt \theta}_{L^2(X')}^2
	+\norm{ \mu}_{L^2(H^1)}^2 
	+ \norm{ p}_{L^2(H^1)}^2
	+ \norm{\pt \bm{u}}_{L^2(\bm{X})}^2 \\ 
	&\leq C( \norm{\vphi_{0}}_{H^1}^4 +  \frac{1}{\vepsilon^2}  \norm{\psi(\vphi_{0})}_{L^1}^2 +  \norm{\bm{u}_0}_{\bm{X}}^4 + \norm{\theta_{0}}_{L^2}^4 + 1), \label{eq:a_priori_not_regularized_2} \numberthis 
\end{align*} 
for almost all $t \in (0, T)$ and some $C > 0$ which only depends on the initial conditions. In particular, the right-hand side is now independent of $\norm{\theta_0}_{X}$, which will be important for more general initial conditions.

\subsection{Vanishing convolution} \label{sec:vanishing_convolution}

Concerning the initial conditions, we can choose an arbitrary sequence $\{ \theta_{\vr, 0}\}_{\vr} \subset X(\Omega)$ such that $\theta_{\vr, 0} \rightarrow \theta_0$ in $L^2(\Omega)$ and since the right-hand side of \eqref{eq:a_priori_not_regularized_2} is independent of $\norm{\theta_{0, \vr}}_{X}$, we still obtain uniform \textit{a priori} estimates. The embedding $L^2(\Omega) \hookrightarrow X'(\Omega)$ along with \eqref{conv:theta_strong_dual} then imply that we obtain $\theta(0) = \theta_0$ for the limit. \par 
As we already remarked in the preliminaries, if $\phi$ is a standard convolution kernel it holds that $\eta * \phi_\vr \rightarrow \eta$ for all $\eta \in L^p(\Omega)$, $p \in [1, \infty)$, as $\vr \rightarrow 0$. Along with the estimate $\norm{\eta * \phi_\vr}_{L^p} \leq  \norm{\eta}_{L^p} \norm{\phi_\vr}_{L^1}$ and the fact that $\norm{\phi_\vr}_{L^1} = 1$, we obtain that 
\begin{equation*}
	(\nabla \cdot \bm{\eta})  * \phi_\vr \rightarrow \nabla \cdot \bm{\eta} \quad \textrm{in} \quad L^2(\OT)
\end{equation*}
for all $\bm{\eta} \in L^2(0, T; \bm{X}(\Omega))$. Therefore, we only need to verify that the same compactness properties as before can be deduced. Firstly, we remark that our \textit{a priori} estimates are independent of the convolution kernel $\phi$. Moreover, it is obvious that the argument for the strong convergence of $p$ in $L^2(0, T; L^2(\Omega))$ remains valid. For the strong convergence of $\bm{u}$, we find that 
\begin{align*}
	  &\Big|  \int_{\Omega_t} 
		 [\alpha(\vphi_k) p_\vr - \alpha(\vphi) p] (\nabla \cdot(\bm{u}_\vr - \bm{u})) * \phi_\vr 
		+  [\alpha(\vphi_k) p_\vr- \alpha(\vphi) p] (\nabla \cdot(\pt \bm{u}_\vr - \pt \bm{u})) * \phi_\vr \dtx  \Big| \\ 
	& \quad \leq C \norm{\alpha(\vphi_k) p_\vr - \alpha(\vphi) p}_{L^2(L^2)} + \rho_{\bm{u}} \norm{ \bm{u}_\vr - \bm{u} }_{L^2(\bm{X})}  \norm{\phi_\vr}_{L^\infty(L^1)}\\ 
	&\quad \leq C \norm{\alpha(\vphi_k) p_\vr - \alpha(\vphi) p}_{L^2(L^2)} + \rho_{\bm{u}} \norm{ \bm{u}_\vr - \bm{u} }_{L^2(\bm{X})} 
\end{align*}
for some $C = C(\rho_{\bm{u}}) > 0$ and $\rho_{\bm{u}} >0$ sufficiently small. We emphasize that $\norm{\phi_\vr}_{L^\infty(L^1)} = 1$ independently of $\vr >0$. Hence, a similar estimate as in \eqref{eq:u_gronwall}, but without the last line, holds with constants that are independent of the convolution kernel and we infer the desired strong convergence for $\bm{u}$. \par 
Lastly, we take advantage of the identity $p_\vr = M(\varphi_\vr)(\theta_\vr - \alpha(\vphi_{\vr}) \nabla \cdot \bm{u}_\vr)$, which follows from \eqref{eq:weak5} and holds pointwise almost everywhere, and deduce strong convergence for $\theta$ from the compactness properties of $\vphi_\vr, \bm{u}_\vr, p_\vr$. \par 
\medskip
After passing to the limit it only remains the use the pointwise identity for $p$ and replace the respective terms in the other equations.  

\begin{remark}
	Lastly, we point out that maximal regularity theory as applied in Section \ref{sec:existence_semi_galerkin} also yields the regularity $\bm{u} \in L^2(0, T; \bm{W}^{1, q}_{\Gamma_D}(\Omega))$ if $\bm{u}_0 \in  \bm{W}^{1, q}_{\Gamma_D}(\Omega)$ with $q > 2$ sufficiently small, cf. Lemma \ref{lem:cauchy_problem}. \\ 
	For regularized problems, one can even show $\bm{u}_\varrho \in L^2(0, T; \bm{H}^{1+ \delta}_{\Gamma_D}(\Omega))$, where $\bm{H}^{1+ \delta}_{\Gamma_D}(\Omega)$ is some Bessel potential space and $\delta > 0$. Again, the proof relies on maximal regularity theory as well as elliptic regularity in Bessel potential spaces, cf. \cite[Thm.~1]{haller2019higher}. Note that since these arguments utilize the $L^\infty(0, T; H^2_{\bm{n}}(\Omega))$ norm of $\vphi_\varrho$, we do not obtain an uniform estimate in the Bessel potential spaces.  
\end{remark}

	\paragraph{Acknowledgments} 
	The last author is supported by the Graduiertenkolleg 2339 IntComSin of the Deutsche Forschungsgemeinschaft (DFG, German Research Foundation) – Project-ID 321821685. The support is gratefully acknowledged. We would also like to thank Jonas Stange for his careful proofreading \new{ and the anonymous referees for helpful comments leading to the improvement of this paper. }
	
	\paragraph{Conflict of interests and data availability statement} There is no conflict of interests.
	There is no associated data to the manuscript.

	\bibliographystyle{siam}
	\small
	\bibliography{refs}

	\appendix 
	\new{
	\section{Proof of Theorem \ref{th:max_reg_non_autonomous}} \label{app:proof_max_reg}
	
	The following lemma studies the inverse operators to families of perturbed autonomous abstract Cauchy-problems associated with a family of operators $\{A_i\}$. 
	
	\begin{lemma}\label{lemma:bound_M}
		Assume that $\{A_i : i \in \mathcal{I}\} \subset \mathcal{L}(D, Y)$, for some index set $\mathcal{I}$, is a family of linear operators such that $A^i \in \mathcal{MR}$ and $A_i - A_j \in \mathcal{L}(Y)$ with $\norm{A_i - A_j}_{\mathcal{L}(Y)} \leq C_{\mathcal{I}}$ for all $i, j \in \mathcal{I}$. Moreover, define the operator  $\mathfrak{L}_{A_i}$ as  
		\begin{align*}
			D(\mathfrak{L}_{A_i}) &= \{u \in \textnormal{MR}(0,T) : u(0) = 0 \} 
			\quad \begin{aligned}[t]
				&\textrm{where} \quad
				\textnormal{MR}(a, b) \coloneqq W^{1, p}(a,b; Y) \cap L^p(a,b; D),  \\ 
				&\textrm{and} \quad \norm{\cdot}_{MR} \coloneqq \norm{\cdot}_{W^{1, p}(a, b; Y)} + \norm{\cdot}_{L^p(a, b; D)}, 
			\end{aligned}\\ 
			\mathfrak{L}_{A_i} u (t)&= \pt u (t)+ A_i u(t).   
		\end{align*}
		Then, there exists a constant $ M \geq 0$ such that 
		\begin{align*}
			\norm{(\lambda + \mathfrak{L}_{A_i})^{-1}}_{\mathcal{L}(L^p(a, b; Y),  \textnormal{MR}(a,b))} \leq M \quad  \textrm{and} \quad  
			\norm{(1+ \lambda)(\lambda + \mathfrak{L}_{A_i})^{-1}}_{\mathcal{L}(L^p(a, b; Y))} \leq M
		\end{align*} 
		for all intervals $(a, b) \subset (0, T)$, all $i \in \mathcal{I}$ and all $\lambda \geq 0$. 
	\end{lemma}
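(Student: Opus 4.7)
My approach is to reduce to a single reference operator by perturbation, and then exploit the known uniform-in-$\lambda$ estimates for the shifted problem. Fix some $A_{i_0}$ from the family. By hypothesis $A_{i_0} \in \mathcal{MR}$, which yields a bound $M_0$ for $(\mathfrak{L}_{A_{i_0}})^{-1}$ from $L^p(0,T;Y)$ to $\textnormal{MR}(0,T)$. To transfer this to arbitrary sub-intervals $(a,b) \subset (0,T)$, I would extend any $f \in L^p(a,b;Y)$ by zero to $\tilde f \in L^p(0,T;Y)$ and solve the global problem with zero initial datum; by uniqueness the solution vanishes on $(0,a)$ and its restriction to $(a,b)$ provides the desired solution with $u(a) = 0$, yielding a sub-interval bound with the same constant $M_0$.

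Next I would handle the shift $\lambda \geq 0$. The operator $A_{i_0} + \lambda I$ remains in $\mathcal{MR}$ since $\lambda I$ is a bounded perturbation on $Y$, and the crucial point is that the estimate
\begin{equation*}
(1+\lambda)\|u\|_{L^p(Y)} + \|\partial_t u\|_{L^p(Y)} + \|A_{i_0} u\|_{L^p(Y)} \leq C \|f\|_{L^p(Y)}
\end{equation*}
holds with $C$ independent of $\lambda \geq 0$ and of the sub-interval, where $u$ solves $\partial_t u + (A_{i_0}+\lambda) u = f$, $u(a) = 0$. This is the analytic-semigroup-type bound encoded in \cite{ARENDT20071}; the factor $(1+\lambda)$ reflects the fact that $\lambda u \approx f$ as $\lambda \to \infty$ and is precisely what produces the second of the two claimed estimates.

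For the passage from $A_{i_0}$ to an arbitrary $A_i$, set $B_i := A_i - A_{i_0} \in \mathcal{L}(Y)$ with $\|B_i\|_{\mathcal{L}(Y)} \leq C_\mathcal{I}$ and factor
\begin{equation*}
\lambda + \mathfrak{L}_{A_i} = (\lambda + \mathfrak{L}_{A_{i_0}})\bigl(I + (\lambda + \mathfrak{L}_{A_{i_0}})^{-1} B_i\bigr).
\end{equation*}
By the previous step, $\|(\lambda + \mathfrak{L}_{A_{i_0}})^{-1} B_i\|_{\mathcal{L}(L^p(Y))} \leq C_\mathcal{I} C / (1+\lambda)$, so for $\lambda$ larger than a threshold $\lambda^\ast$ depending only on $C_\mathcal{I}$ and $C$, the Neumann series inverts the second factor uniformly. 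For the bounded range $\lambda \in [0, \lambda^\ast]$ I would split $(a,b)$ into finitely many pieces of length $\tau$ small enough that $B_i$, viewed as a map $\textnormal{MR} \to L^p(Y)$ with norm at most $C_\mathcal{I} \cdot C_{\textnormal{emb}} \cdot \tau^{1/p}$ via $\textnormal{MR} \hookrightarrow C(Y)$, produces a contraction; concatenating these short-time estimates, by decomposing each piece into a homogeneous Cauchy problem with the inherited trace plus an inhomogeneous piece with zero trace, yields a uniform bound on the full sub-interval.

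The principal obstacle I expect is the uniform-in-$\lambda$ estimate of the second step: it rests on the analyticity of the semigroup generated by $-A_{i_0}$, which under $A_{i_0} \in \mathcal{MR}$ is guaranteed only in suitable Banach spaces (typically UMD) and is the key quantitative output of \cite{ARENDT20071}. A secondary technicality is ensuring that the constants in the concatenation argument remain independent of $i \in \mathcal{I}$ and $\lambda \in [0,\lambda^\ast]$, which follows because both the embedding constant $C_{\textnormal{emb}}$ and the splitting of the semigroup come from Step~2 and therefore inherit its uniformity.
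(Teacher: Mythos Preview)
Your strategy is workable but takes a substantially different route from the paper, and you misidentify the source of the crucial uniform-in-$\lambda$ bound. The paper does not invoke analyticity of the semigroup generated by $-A_{i_0}$ on $Y$ at all. Instead, following \cite[Sec.~1]{ARENDT20071}, it regards $-\mathfrak{L}_{A_i}$ itself as the generator of a $C_0$-semigroup $(T^i(t))_{t\geq 0}$ on $L^p(0,T;Y)$; because the underlying Cauchy problem lives on a finite interval, this semigroup is \emph{nilpotent} (it vanishes for $t>T$), so one may choose any negative growth exponent and write $\|T^i(t)\|\leq M_i\, e^{-t}$. Uniformity in $i$ then comes in a single stroke from Phillips' bounded-perturbation theorem for semigroup generators \cite{phillips1953perturbation}, since $\mathfrak{L}_{A_i}-\mathfrak{L}_{A_j}=A_i-A_j$ induces a bounded operator on $L^p(0,T;Y)$ with norm at most $C_\mathcal{I}$; this yields a common $M$ valid for all $i$, and both resolvent estimates are then immediate from the Hille--Yosida bound $\|(\lambda+\mathfrak{L}_{A_i})^{-1}\|\leq M/(1+\lambda)$. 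No case distinction in $\lambda$, no Neumann series, and no short-time concatenation is needed.

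Your UMD concern in Step~2 is also misplaced: maximal regularity \emph{always} forces $-A$ to generate an analytic semigroup on $Y$ (Dore's classical result, valid in arbitrary Banach spaces); it is only the converse that requires UMD. So your Step~2 can in fact be carried out---e.g.\ via the convolution representation $u(t)=\int_a^t e^{-\lambda(t-s)}e^{-(t-s)A_{i_0}}f(s)\,ds$ and Young's inequality for large $\lambda$, together with the substitution $w=e^{\lambda t}u$ and the plain MR bound for $\lambda$ in a bounded range---after which your Neumann-series/concatenation argument in Steps~3--4 would close the proof. Your route is correct but more laborious; the paper's nilpotent-semigroup viewpoint buys a unified and considerably shorter argument in which the uniformity in $i$ and in $\lambda$ are handled simultaneously.
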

	\begin{proof}
		From the arguments in \cite[Sec.~1]{ARENDT20071} it follows for all $i \in \mathcal{I}$ that the operators $- \mathfrak{L}_{A_i}$ have empty spectrum and generate nilpotent $C^0$-semigroups $(T^i(t))_{t \geq 0}$ on $L^p(0, T; Y)$. In particular, there exist constants $\omega_i \geq 0$ and $M_i \geq 1$ such that, cf. \cite[Thm.~2.2]{pazy2012semigroups}, 
		\begin{align*}
			\norm{T^i(t)}_{\mathcal{L}(L^p(0, T; Y))} \leq M_i \, e^{\omega_i t} \quad \textrm{for all} \quad t \geq 0. 
		\end{align*}
		For any two indices $i, j \in \mathcal{I}$, we compute for all $u \in D(\mathcal{-L}_{A_i}) = D(\mathcal{-L}_{A_j})$
		\begin{equation*}
			(\mathfrak{L}_{A_i}  -\mathfrak{L}_{A_j})u(t)= (A_i - A_j) u (t). 
		\end{equation*}
		Since $D(\mathcal{-L}_{A_i})$ is dense in $L^p(0, T; Y)$ and $A_i - A_j \in \mathcal{L}(Y)$, this implies $\mathfrak{L}_{A_i}  -\mathfrak{L}_{A_j} \in \mathcal{L}(L^p(0, T; Y))$ with 
		\begin{align*}
			\norm{\mathfrak{L}_{A_i}  -\mathfrak{L}_{A_j}}_{\mathcal{L}(L^p(0, T; Y))} \leq C_{\mathcal{I}}
		\end{align*} 
		Therefore, $- \mathfrak{L}_{A_j} = - \mathfrak{L}_{A_i} + (\mathfrak{L}_{A_i}  -\mathfrak{L}_{A_j})$ is a perturbation of the generator of a $C_0$-semigroup by a bounded, linear operator and \cite[Thm.~3.2]{phillips1953perturbation} implies 
		\begin{equation*}
			\norm{T^j(t)}_{\mathcal{L}(L^p(0, T; Y))} \leq M_i \, e^{\omega^* t} \quad \textrm{for all} \quad t \geq 0, 
		\end{equation*}
		where $\omega^* = \omega_i + M_i \norm{\mathfrak{L}_{A_i}  -\mathfrak{L}_{A_j}}_{\mathcal{L}(L^p(0, T; Y))} \leq \omega_i + C_{\mathcal{I}}$. 
		Hence, there exists some constant $C >0$ such that 
		\begin{equation*}
			\sup_{ \substack{i \in \mathcal{I}\\ t \in [0, T]}} \norm{T^i(t)}_{\mathcal{L}(L^p(0, T; Y))} \leq C
		\end{equation*}
		and since the semigroups $(T^i(t))_{t \geq 0}$ are nilpotent, i.e. $T^i(t) = 0$ for all $t > T$ and all $i \in \mathcal{I}$, we find some $M \geq 1$ such that 
		\begin{equation*}
			\norm{T^i(t)}_{\mathcal{L}(L^p(0, T; Y))} \leq M \, e^{- t} \quad \textrm{for all} \quad t \geq 0
		\end{equation*}
		for all $i \in \mathcal{I}$. 
		The generation theorem for semigroups \cite[II Thm.~3.8]{engel2006one} now yields 
		\begin{gather*}
			\norm{(\lambda + \mathfrak{L}_{A^i})^{-1}}_{\mathcal{L} (L^p(0, T;Y),  \textnormal{MR}(0,T))} \leq \frac{M}{\lambda +1} \leq M, \\ 
			\norm{(1+ \lambda)(\lambda + \mathfrak{L}_{A^i})^{-1}}_{\mathcal{L} (L^p(0, T;Y))}  \leq M
		\end{gather*}
		for all $\lambda \geq 0$. Now the result follows analogously to \cite[Lem.~1.2]{ARENDT20071}. 
	\end{proof}
	
	Now, we fix an operator $A \in \mathcal{L}(D,Y)$ and perturb the associated abstract Cauchy-problem with a time-dependent, operator valued, function $B$. This lemma is a modification of \cite[Prop.~1.3]{ARENDT20071} and includes an additional bound for the solution $u$ in terms of the right-hand side and the initial value. 
	
	\begin{lemma}\label{lemma:pertubation_result}
		Let $(a, b) \subset (0, T)$ and $A \in \mathcal{L}(D, Y)$ with $A \in \mathcal{MR}$. Suppose that $B: (a, b) \rightarrow \mathcal{L}(D, Y)$ is strongly measurable and that there exists some $\eta \geq 0$ such that 
		\begin{equation*}
			\norm{B(t) y}_{Y} \leq \frac{1}{2M}  \norm{y}_D + \eta \norm{y}_Y
		\end{equation*}  
		for all $y\in D$, $t \in (a, b)$, where $M$ is the constant from Lemma \ref{lemma:bound_M}. Then, the abstract Cauchy problem 
		\begin{equation}\label{eq:cauchy_perturbed}
			\pt u + A u + B(t) u = f \quad \textrm{a.e. on } (a, b), \quad u(a) = x
		\end{equation} 
		has a unique solution for all $x \in (Y, D)_{\frac{1}{p'}, p}$, $f \in L^p(a, b; Y)$ where $p \in (1, \infty)$ and $p' = \frac{p}{p-1}$. Moreover, there exists a constant $C > 0$ such that 
		\begin{equation*}
			\norm{u}_{\textnormal{MR}(a, b)} 
			\leq C \norm{y}_{(D, Y)_{\frac{1}{p'}, p}} + 4 M  \textnormal{e}^{|b-a|\lambda} \norm{f}_{L^p(a, b; Y)}, 
		\end{equation*}
		where $\lambda \geq 0$ only depends on $M$. 
	\end{lemma}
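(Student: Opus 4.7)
The plan is to adapt the Neumann-series strategy of \cite[Prop.~1.3]{ARENDT20071}, combined with an exponential rescaling in time that leverages both bounds provided by Lemma \ref{lemma:bound_M}. First, I would reduce to the homogeneous initial condition: since $(D,Y)_{1/p',p}$ is the trace space of $\textnormal{MR}(a,b)$ at $t=a$, one can pick an extension $u_0\in\textnormal{MR}(a,b)$ of $x$ with $\|u_0\|_{\textnormal{MR}}\leq C\|x\|_{(D,Y)_{1/p',p}}$; then $w:=u-u_0$ has to solve $\partial_t w + Aw + B(\cdot)w = \tilde{f}$ with $w(a)=0$ and right-hand side $\tilde{f}:=f-\partial_t u_0 - Au_0 - B(\cdot)u_0 \in L^p(a,b;Y)$ satisfying $\|\tilde f\|_{L^p(Y)}\leq \|f\|_{L^p(Y)} + C'\|x\|_{(D,Y)_{1/p',p}}$, thanks to $A\in\mathcal{L}(D,Y)$, the standing bound on $B(t)$, and the embedding $\textnormal{MR}\hookrightarrow L^\infty(Y)$ used to control $\|Bu_0\|_{L^p(Y)}$.

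Next I would introduce the rescaling $v(t):=e^{-\lambda(t-a)}w(t)$ for some $\lambda\geq 0$ to be chosen. Since $B(t)$ is linear, $v$ then solves $\partial_t v + (A+\lambda)v + B(\cdot)v = g$ with $v(a)=0$, where $g(t):=e^{-\lambda(t-a)}\tilde f(t)$ satisfies $\|g\|_{L^p(Y)}\leq \|\tilde f\|_{L^p(Y)}$. Factoring the operator on the left as $\mathfrak{L}_A + \lambda + B = (I + B(\mathfrak{L}_A+\lambda)^{-1})(\mathfrak{L}_A+\lambda)$, I would use the two bounds from Lemma \ref{lemma:bound_M} together with the pointwise assumption on $B(t)$ (integrated via Minkowski) to obtain
\[
\|B(\mathfrak{L}_A+\lambda)^{-1}\|_{\mathcal{L}(L^p(a,b;Y))} \leq \tfrac{1}{2M}\cdot M + \eta\cdot\tfrac{M}{1+\lambda} = \tfrac{1}{2} + \tfrac{\eta M}{1+\lambda},
\]
and choose $\lambda$ large enough (depending on $M$ and $\eta$) to push this arbitrarily close to $\tfrac{1}{2}$. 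The Neumann series then makes $I + B(\mathfrak{L}_A+\lambda)^{-1}$ invertible on $L^p(a,b;Y)$ with inverse norm close to $2$; composing with $(\mathfrak{L}_A+\lambda)^{-1}$ produces a bounded inverse of $\mathfrak{L}_A + \lambda + B$ from $L^p(a,b;Y)$ to $\textnormal{MR}_0(a,b) := \{v \in \textnormal{MR}(a,b) : v(a) = 0\}$, yielding existence and uniqueness of $v$, and hence of $w$ and of $u = u_0 + w$.

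Finally, undoing the rescaling yields
\[
\|w\|_{\textnormal{MR}} \leq e^{\lambda(b-a)}\bigl((1+\lambda)\|v\|_{L^p(Y)} + \|\partial_t v\|_{L^p(Y)} + \|v\|_{L^p(D)}\bigr),
\]
and it is precisely the second estimate $\|(1+\lambda)(\mathfrak{L}_A+\lambda)^{-1}\|_{\mathcal{L}(L^p(Y))}\leq M$ from Lemma \ref{lemma:bound_M} that keeps the $1+\lambda$ factor from spoiling the bound: together with the Neumann factor close to $2$ one obtains $(1+\lambda)\|v\|_{L^p(Y)} \leq 2M\|g\|_{L^p(Y)}$, and similarly $\|v\|_{\textnormal{MR}}\leq 2M\|g\|_{L^p(Y)}$, giving $\|w\|_{\textnormal{MR}}\leq 4Me^{\lambda(b-a)}\|\tilde f\|_{L^p(Y)}$. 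Substituting the bound on $\|\tilde f\|$ and absorbing all $\|x\|$-contributions into a single constant yields the asserted inequality. The main obstacle is precisely this sharp constant tracking through the rescaling: the factor $e^{\lambda(b-a)}$ is unavoidable, and without the decay of $(\mathfrak{L}_A+\lambda)^{-1}$ on $L^p(Y)$ supplied by Lemma \ref{lemma:bound_M}, an additional multiplicative $\lambda$ would appear and degrade the coefficient on the $f$-term.
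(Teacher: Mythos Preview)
Your proposal is correct and follows essentially the same Neumann-series-plus-exponential-rescaling strategy as the paper's proof (reduction to zero initial data via the trace space, factorisation $\mathfrak{L}_A+\lambda+B=(I+B(\mathfrak{L}_A+\lambda)^{-1})(\mathfrak{L}_A+\lambda)$, choice of $\lambda$ to make the Neumann series converge, then undoing the shift). In fact your treatment of the rescaling step is more careful than the paper's: you correctly isolate the extra $(1+\lambda)\|v\|_{L^p(Y)}$ term produced by differentiating $e^{\lambda(t-a)}v$ and absorb it via the second estimate of Lemma~\ref{lemma:bound_M}, whereas the paper simply writes $\|u\|_{\textnormal{MR}}\leq e^{\lambda(b-a)}\|v\|_{\textnormal{MR}}$ without making this explicit; your aside about needing the embedding $\textnormal{MR}\hookrightarrow L^\infty(Y)$ to bound $\|Bu_0\|_{L^p(Y)}$ is unnecessary, though, since the pointwise assumption on $B(t)$ already gives $\|Bu_0\|_{L^p(Y)}\leq \tfrac{1}{2M}\|u_0\|_{L^p(D)}+\eta\|u_0\|_{L^p(Y)}\leq C\|u_0\|_{\textnormal{MR}}$ directly.
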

	
	The following proof closely follows the arguments by Arendt et al.~in \cite{ARENDT20071}. 
	
	\begin{proof}
		For now we assume $y = 0$ and define the operator $\tilde{B} \in \mathcal{L}(\textnormal{MR}(a,b), L^p(a, b; Y))$ by  
		\begin{equation*}
			(\tilde{B}u)(t) = B(t) u(t). 
		\end{equation*} 
		The assumption on $B$ along with Minkowski's inequality then gives rise to 
		\begin{align*}
			\norm{\tilde{B}u}_{L^p(a, b; Y)} \leq \left(\int_a^b \norm{B(t)u(t)}_Y^p \dt \right)^{\frac{1}{p}} 
			\leq  \Big( \frac{1}{2M}  \norm{u}_{L^p(a,b; D)} + \eta \norm{u}_{L^p(a, b; Y)} \Big) . 
		\end{align*}
		Defining the operator $\mathfrak{L}$ analogously to above,  
		we see with the help of Lemma \ref{lemma:bound_M} that 
		\begin{align*}
			\norm{\tilde{B} (\lambda + \mathfrak{L})^{-1} f}_{L^p(0, T; Y)} 
			&\leq  \Big(  \frac{1}{2M}  \norm{(\lambda + \mathfrak{L})^{-1} f}_{L^p(a,b; D)} + \eta \norm{(\lambda + \mathfrak{L})^{-1} f}_{L^p(a, b; Y)} \Big) \\ 
			&\leq   \Big( \frac{1}{2M}  \norm{(\lambda + \mathfrak{L})^{-1} f}_{\textnormal{MR}(a,b)}  + \frac{\eta M}{1+ \lambda} \norm{f}_{L^p(a, b; Y)} \Big) \\ 
			& \leq   \frac{1}{2}\norm{ f}_{L^p(a, b; Y)} + \frac{ \eta M}{1+ \lambda} \norm{f}_{L^p(a, b; Y)}
		\end{align*}
		for all $\lambda \geq 0$. In particular, there exists some $\lambda = \lambda (M, \eta) \geq 0$ such that 
		\begin{equation*}
			\norm{\tilde{B} (\lambda + \mathfrak{L})^{-1} }_{\mathcal{L}(L^p(0, T; Y))} \leq \frac{3}{4}, 
		\end{equation*} 
		which implies that the operator $I + \tilde{B}(\lambda + \mathfrak{L})^{-1}$ is invertible. A simple calculation further yields the invertibility of 
		\begin{equation*}
			\lambda + \mathfrak{L} + \tilde{B} = (I + \tilde{B}(\lambda + \mathfrak{L})^{-1} )(\lambda + \mathfrak{L}) \in \mathcal{L} (D(\mathfrak{L}), L^p(a, b; y)) 
		\end{equation*}
		and therefore the unique solvability of the problem 
		\begin{equation*}
			\pt v + (A+ \lambda) v + B(t) v = g \quad \textrm{a.e. on } (a, b), \quad v(a) = 0
		\end{equation*}
		for all $g \in L^p(a, b; Y)$. Moreover, it holds that 
		\begin{equation*}
			(\lambda + \mathfrak{L} + \tilde{B})^{-1} = (\lambda + \mathfrak{L})^{-1} (I + \tilde{B}(\lambda + \mathfrak{L})^{-1} )^{-1}, 
		\end{equation*}
		from which we deduce 
		\begin{align*}
			\norm{(\lambda + \mathfrak{L} + \tilde{B})^{-1}}_{\mathcal{L}(L^p(a, b; Y); D(\mathfrak{L}))}
			&\leq \norm{(\lambda + \mathfrak{L})^{-1}}_{\mathcal{L}(L^p(a, b; Y), \textnormal{MR}(a, b))} \norm{(I + \tilde{B}(\lambda + \mathfrak{L})^{-1} )^{-1}}_{\mathcal{L}(L^p(a, b; Y))}\\ 
			&\leq 4M. 
		\end{align*}
		Hence, 
		\begin{equation*}
			\norm{v}_{\textnormal{MR}(a, b)} \leq 4M \norm{g}_{L^p(a, b; Y)}. 
		\end{equation*}
		\par 
		To get back to our original problem, we consider $g(t) = \textnormal{e}^{- \lambda (t-a)} f(t)$ and $y = 0$. Then, the function $u(t) = \textnormal{e}^{ \lambda (t- a)} v(t)$ is the unique solution of \eqref{eq:cauchy_perturbed} for $x = 0$ with 
		\begin{equation*}
			\norm{u}_{\textnormal{MR}(a, b)} \leq \textnormal{e}^{|b-a|\lambda} \norm{v}_{\textnormal{MR}(a, b)} 
			\leq 4M\textnormal{e}^{|b-a|\lambda} \norm{f}_{L^p(a, b; Y)}. 
		\end{equation*}
		\par 
		In order to treat non-trivial initial conditions, i.e. $y \in (D, Y)_{\frac{1}{p'}, p}$, we introduce the space 
		\begin{equation}\label{def:Tr}
			\textnormal{Tr} \coloneqq \{ u(a) : u \in \textnormal{MR}(a, b)\} \quad 
			\textnormal{with the norm} \quad 
			\norm{x}_{\textnormal{TR}} = \inf\{ \norm{u}_{\textnormal{MR}(a, b)}  : y = u(a) \}
		\end{equation}
		and note that, cf. \cite[Prop.~1.2.10]{lunardi1995analytic}, 
		\begin{equation}\label{eq:isomorp_Tr}
			(D, Y)_{\frac{1}{p'}, p} \cong \textnormal{Tr}. 
		\end{equation}
		Moreover, for any $w \in \textnormal{MR}(a, b)$ with $w(a) = y$, the results from above give rise to a unique $v \in \textnormal{MR}(a,b)$ such that 
		\begin{equation*}
			\pt v + (A + B(t))v = - \pt w - (A+B(t))w + f \quad \textrm{a.e. on  } (a,b), \quad  v(a) = 0. 
		\end{equation*}
		Therefore, $u \coloneqq v + w$ is the unique solution of \eqref{eq:cauchy_perturbed} and it holds that 
		\begin{align*}
			\norm{u}_{\textnormal{MR}(a, b)} \leq \norm{v + w}_{\textnormal{MR}(a, b)} 
			&\leq 4 M \textnormal{e}^{|b-a|\lambda} \norm{- \pt w - (A+B(t))w + f}_{L^p(a, b; Y)} + \norm{w}_{L^p(a,b; Y)}\\
			&\leq C \norm{w}_{\textnormal{MR}(a, b)} +  4 M\textnormal{e}^{|b-a|\lambda} \norm{f}_{L^p(a, b; Y)}. 
		\end{align*}
		Since this holds for all $w \in \textnormal{MR}(a, b)$ with $w(a) = y$, definition \eqref{def:Tr} and \eqref{eq:isomorp_Tr} lead to 
		\begin{equation*}
			\norm{u}_{\textnormal{MR}(a, b)} \leq C \norm{x}_{(D, Y)_{\frac{1}{p'}, p}} +  4 M\textnormal{e}^{|b-a|\lambda} \norm{f}_{L^p(a, b; Y)}, 
		\end{equation*}
		which concludes the proof. 
	\end{proof}
	
	As an immediate consequence, we obtain Theorem \ref{th:max_reg_non_autonomous}. 
	}

\end{document}